\pgfplotsset{compat=1.17}
\theoremstyle{plain}
\newtheorem{thm}{Theorem}[section]
\newtheorem*{thm*}{Theorem}
\newtheorem{cor}[thm]{Corollary}
\newtheorem{lem}[thm]{Lemma}
\newtheorem{claim}[thm]{Claim}
\newtheorem{prop}[thm]{Proposition}
\theoremstyle{definition}
\newtheorem*{ex}{Example}
\newtheorem{defn}[thm]{Definition}
\theoremstyle{remark}
\newtheorem{rem}[thm]{Remark}
\newtheorem{rmk}[equation]{Remark}
\newtheorem{eg}[equation]{Example}
\newtheorem{ques}[equation]{Question}
\newenvironment{subproof}[1][\proofname]{%
  \begin{proof}[#1]%
}{%
  \end{proof}%
}
\newcommand{\Z}{\mathbb{Z}}
\newcommand{\C}{\mathbb{C}}
\newcommand{\N}{\mathbb{N}}
\newcommand{\R}{\mathbb{R}}
\newcommand{\ngroup}{\trianglelefteq}
\newcommand{\cupm}{\bigcup\limits}
\renewcommand{\phi}{\varphi}
\renewcommand{\emptyset}{\varnothing}
\renewcommand{\tilde}[1]{\widetilde{#1}}
\def\Ddots{\mathinner{\mkern1mu\raise\p@
\vbox{\kern7\p@\hbox{.}}\mkern2mu
\raise4\p@\hbox{.}\mkern2mu\raise7\p@\hbox{.}\mkern1mu}}
\numberwithin{equation}{section}
\newcommand{\cM}{\mathcal{M}}
\newcommand{\cA}{\mathcal{A}}
\newcommand{\cB}{\mathcal{B}}
\newcommand{\cC}{\mathcal{C}}
\newcommand{\cD}{\mathcal D}
\newcommand{\cE}{\mathcal{E}}
\newcommand{\cL}{\mathcal{L}}
\newcommand{\cN}{\mathcal{N}}
\newcommand{\cP}{\mathcal{P}}
\newcommand{\cQ}{\mathcal{Q}}
\newcommand{\ra}{\rightarrow}
\theoremstyle{definition} 
\newtheorem*{genericthm*}{\thistheoremname}
\newcommand{\thistheoremname}{???}
\newcounter{genericthm}
\newenvironment{namedthm*}[1]
  {\renewcommand{\thistheoremname}{#1}%
   \refstepcounter{genericthm}%
   \begin{genericthm*}}
  {\end{genericthm*}}
\begin{document}

\title{Rigidity results for group von Neumann algebras with diffuse center} 

\author{Ionu\c{t} Chifan}

\author{Adriana Fernández Quero}

\author{Hui Tan}

\begin{abstract} We introduce the first examples of groups $G$ with infinite center which are completely recognizable from their von Neumann algebras, $\mathcal{L}(G)$. Specifically, assume that $G=A\times W$, where $A$ is an infinite abelian group and $W$ is an ICC wreath-like product group \cite{cios22,amcos23} with property (T) and trivial abelianization.  Then whenever $H$ is an \emph{arbitrary} group such that $\mathcal{L}(G)$ is $\ast$-isomorphic to $\mathcal L(H)$, it must be the case that $H= B \times H_0$ where $B$ is infinite abelian and $H_0$ is isomorphic to $W$. Moreover, we completely describe the $\ast$-isomorphism between $\mathcal L(G)$ and $\mathcal L(H)$. This yields new applications to the classification of group C$^*$-algebras, including examples of non-amenable groups which are recoverable from their reduced C$^*$-algebras but not from their von Neumann algebras.
\end{abstract}

\maketitle


\section{Introduction}

In their pioneering work \cite{mvn43}, Murray and von Neumann associated in a natural way a von Neumann algebra, denoted by $\mathcal L(G)$, to every countable discrete group G. More precisely, $\mathcal L(G)$ is defined as the weak operator closure of the complex group algebra $\mathbb C G$ acting by left convolution on the Hilbert space $\ell^2G$ of square-summable functions on $G$. One can see that the central elements of $\mathcal L(G)$ are supported on the finite conjugacy classes of $G$; thus, $\mathcal L(G)$ is a factor when $G$ is an ICC group. The classification of group von Neumann algebras has since been a central theme in operator algebras driven by the following fundamental question: what aspects of the group $G$ are retained by  $\mathcal L(G)$?

\vspace{2mm}

From the inception of the subject, it became evident that group von Neumann algebras $\mathcal{L}(G)$ typically do not retain the algebraic properties of the underlying groups $G$. In fact, for extensive categories of groups, it was observed a complete lack of memory. For instance, a classical result implies that every infinite abelian group $G$ gives rise to the same group von Neumann algebra $\mathcal{L}(G)\cong L^{\infty}([0,1])$. Here, the isomorphism is implemented, for instance, by the Fourier transform.

\vspace{2mm}

Moreover, this remarkable lack of retention phenomenon persists even within the significantly broader class of so-called amenable groups, \cite{Neumann1929, Dy}. In this direction, a celebrated result of A. Connes \cite{connes76} asserts that for any ICC amenable group $G$ its von Neumann algebra $\mathcal{L}(G)$ is isomorphic to the \emph{hyperfinite} II$_1$ factor $\mathcal{R}$ introduced by Murray and von Neumann in \cite{mvn43}.

\vspace{2mm}

Furthermore, von Neumann algebras of non-ICC amenable groups exhibit a similar behavior. For instance the two aforementioned results already imply that for every $G=A\times B$, where $A$ is infinite abelian and $B$ is ICC amenable, the von Neumann algebra $\cL(G)$ is $\ast$-isomorphic to $L^{\infty}([0,1])\:\overline{\otimes}\:\mathcal{R}$.  Moreover, using \cite[Theorem 5.1]{zellermeier} and Theorem \ref{isomcocycleintegral} one can see this isomorphism still holds for all central extensions $G = A \rtimes_c B$  with infinite center $A$ and ICC amenable central quotient $B$.

\vspace{2mm}

All these results collectively illustrate that, in general, canonical group invariants and constructions such as rank, torsion, generators and relations, semidirect product decompositions, splitting of central extension, among others, do not survive upon transitioning to the realm of von Neumann algebras.

\vspace{2mm}

When $G$ is non-amenable the classification is far more complex. Research in this direction over the last two decades reveals a wide range of striking rigidity phenomena. Specifically, through the powerful deformation/rigidity theory invented by Popa in 2001 (see \cite{popa07}), have been discovered many instances where canonical algebraic properties of groups $G$, as the ones described in the previous paragraphs, can be entirely reconstructed from $\cL(G)$ --- \cite{popabettinumbers01,op03,IPP05,popa06,popaozawa10,popaozawaII,ch08,CP10,peterson09,ioa10,ipv10,berbec,cdss16,kv15,ci18,cdk19,cdad20,CDHK20,cios22,cios22b,cios22c,dr21,,PV21,DV24}, just to enumerate a few. We invite the interested reader to also consult the following surveys \cite{vaes10,io18}. 

\vspace{2mm}

However, this impressive progress has mostly concentrated on group II$_1$ factors; i.e., group von Neumann algebras $\mathcal L(G)$ with $G$ an ICC group. The main reason for this emphasis stems from a classical result of von Neumann, which asserts that every von Neumann algebra can be decomposed as a direct integral of factors over its center \cite{vN49}. Thus, in a certain sense, factors serve as the building blocks of all von Neumann algebras, and understanding their structure is expected to, a priori, provide key insights into the structure of general von Neumann algebras. However, while this philosophy applies to general von Neumann algebras, studying structural aspects for specific classes of examples (e.g., group algebras) reveals a more intricate situation. For instance, while Bekka described in \cite[Theorem B]{bekka21} the integral decomposition of an arbitrary $\mathcal L(G)$ in terms of characters of the FC-center $G^{fc}$ of $G$ and some of its induced representations to $G$, analyzing this data proves to be fairly difficult, and  new developments are required when studying rigidity aspects for these algebras. Unfortunately, to our knowledge, there are only limited results available in the literature in this direction.

\vspace{2mm}

Thus, to expand the rigidity paradigm and spark new technological advancements in the field, we initiate a study into the reconstruction of groups $G$ with infinite center from their von Neumann algebras $\cL(G)$. While the groups we study in this paper are never $W^*$-superrigid in the sense of Popa, our interest lies in exploring the maximum extent to which a reconstruction is possible. In our upcoming work we will introduce property (T) groups with infinite centers that are $W^*$-superrigid.

\vspace{2mm}

\subsection{Statements of the main results}

Our investigation focuses on groups of the form $G= A\times W$, where $A$ is an infinite abelian group and $W$ is a property (T) wreath-like product group as constructed recently in \cite[Theorem 2.6]{cios22} through considerations in geometric group theory. In particular, we will address the following classification problem: Assume that $H$ is an arbitrary group satisfying $\mathcal L(G)\cong \mathcal L(H)$, via a $\ast$-isomorphism 
Our aim is to establish that $H$ possesses analogous characteristics to $G$, such as being a product group of the same nature.


\vspace{2mm}

Our primary objective is to establish that the mystery group $H$ resembles $G$ to the greatest possible extent, implying $H = B \times K$, where $B$ is infinite abelian and $K \cong W$. Achieving this goal involves several steps, including the development of new technology and the interplay between group theory and von Neumann algebras.

\vspace{2mm}

As a first step we show that the FC-center\footnote{$H^{fc} \lhd H$ is the (normal) subgroup of all elements of $H$ that have finite conjugacy class.} $H^{fc}$ of the group $H$ is an infinite subgroup which is close to being the actual center of $H$ (see statement of \ref{theoremA}). The prior theorem establishes that the unknown group $H$ is nearly a central extension. However, to uncover additional structure, such as being a split central extension, requires the development of new ideas. Despite the existence of efficient techniques designed to discern group direct product splitting from $W^*$-equivalence, as outlined in \cite{cdss16, kv15, dhi19}, these methods typically do not apply beyond ICC groups and II$_1$ factors.

\vspace{2mm}

To address this technical challenge, we turn to B. Bekka's work on integral decomposition for group von Neumann algebras, elaborated in \cite{bekka21}. Within this framework, we employ a set of techniques blending group theory and von Neumann algebras, which enables us to establish the splitting of group central extensions from $W^*$-equivalence. This yields the following affirmative resolution of our classification problem, marking the first principal outcome of this paper.

\begin{namedthm*}{Theorem A}\label{theoremB} Let $C$ be a non-trivial free abelian group, let $D$ be a non-trivial ICC subgroup of a hyperbolic group such that the action $D\curvearrowright I$ has amenable stabilizers. Let $W\in\mathcal{WR}(C,D\curvearrowright I)$ be any property (T) group with trivial abelianization. Let $A$ be any infinite abelian group and denote by $G= A\times W$.

\vspace{2mm}

Let $H$ be any countable discrete group and let $\Theta\,:\, \mathcal L(G) \ra \cL(H)$ be any $\ast$-isomorphism.

\vspace{2mm}

Then $H=Z(H)\times H/Z(H)$ where $H/Z(H)\cong W$. Moreover, there is a countable family of projections $\mathcal{P}\subset \cL(Z(H))$ with $\sum_{p\in\mathcal{P}}p=1$, group isomorphisms $\delta_p:W\to H/Z(H) $, for all $p\in\mathcal{P}$, and a unitary $w\in\mathscr{U}(\cL(H))$ such that

\vspace{1mm}

\begin{enumerate}
\item $\Theta(\mathcal L(A))=\mathcal L(Z(H))$, and
   \item  $\Theta(x\otimes u_g)=w\left(\sum_{p\in\mathcal{P}} (\Theta(x) p )\otimes v_{\delta_p(g)}\right)w^*, \quad \text{for all } x\in \cL(A) \text{ and } g\in W$.
\end{enumerate}

\vspace{1mm}

If we assume, in addition, that $\text{Out}(W)=1$, then we can take $\mathcal{P}=\{1\}$.

\vspace{1mm}

Here $(u_g)_{g\in W} \subset \mathcal L(W)$, $(v_h)_{h\in H/Z(H)} \subset \mathcal L(H/Z(H))$ are the canonical unitaries and $\tau_{\mathcal L(H)}$ is the canonical trace on $\mathcal L(H)$.
\end{namedthm*}

The proof of this result is based on several key steps, each of independent interest. First, through a combination of methods in \cite{po05,ioa10,ci10} and integral decomposition techniques which exploits a form of (weak) compactness for the conjugation action of $H$ on $\cL(H^{fc})$, we show the subalgebra $\mathcal L(H^{fc})$ intertwines, in the sense of Popa, into the center of $\mathcal L(H)$. This information is then used to show that the mystery group $H$ is somewhat close to a central extension itself. This portion of the proof can be summarized in the following statement.

\begin{namedthm*}{Theorem B}\label{theoremA} Let $W\in \mathcal{WR}(C,D\curvearrowright I)$ be a property (T) groups where $C$ is a non-trivial abelian, $D$ is a non-trivial ICC subgroup of a hyperbolic group, and $D\curvearrowright I$ has amenable stabilizers. Let $A$ be an infinite abelian group and put $G=W\times A$. Assume that $H$ is an arbitrary group such that $\cL(G)\cong \cL(H)$. 

\vspace{2mm}

If $H^{fc}$ denotes the FC-center of $H$ and $H^{hfc}$ the hyper-FC center\footnote{$H^{hfc}$ is the smallest normal subgroup of $H$ such that $H/H^{hfc}$ is an ICC group (see also Section \ref{section4})}, then $H^{fc}\leqslant H^{hfc}$ is finite index and the commutator $F:=[H^{fc},H^{fc}]$ is finite. Moreover, if we consider the central projection $p= |F|^{-1}\sum_{g\in F}  u_g \in \mathscr Z (\cL(H))$ then one can find a countable family of orthogonal projections $r_i \in \cL(H^{fc})p$ such that $\cL(H^{fc})p = \oplus_n \mathscr Z (\cL(H))r_n$.      


\vspace{2mm}

Moreover, the quotient $H/H^{hfc}$ has property (T) as well.
\end{namedthm*}

\vspace{1mm}



If one considers wreath-like product groups $G$ whose natural $2$-cocycles satisfy a uniform boundedness condition as in \cite{amcos23} (see Definition \ref{bounded2cocycle} in the sequel), in the prior theorem we can actually remove the property (T) assumption on $W$ (see Sections \ref{section4} and \ref{section5} and Theorem \ref{theoremone}). While this more general case is not necessary to derive the main results we believe it is of independent interest as its proof is based on very different arguments.  

\vspace{2mm}

Classes of examples of property (T) wreath-like product groups $W$ satisfying the conditions listed in \ref{theoremB} have been constructed in \cite{cios22, cios22b} using powerful methods in geometric group theory. We invite the interested reader to consult these results beforehand.

\vspace{2mm}

Proving rigidity results for von Neumann algebras of groups with infinite center goes hand-in-hand with establishing rigidity results for twisted group factors, the latter being part of the so-called \emph{fiber analysis}. In fact, this is the second part of the proof of \ref{theoremB}. Specifically, we prove several new W$^*$-rigidity results for embeddings and compressions of twisted group factors of property (T) wreath-like product groups (Theorems \ref{untwistcocycle} and \ref{trivialamplificationembedding}) which generalize some recent findings, \cite[Theorem 7.3]{cios22b}, \cite[Theorem 10.1]{ciosv1}. 

\vspace{2mm}

In this direction, very recently, Donvil and Vaes were able to establish, for the first time in the literature, several beautiful W$^*$-superrigidity results for virtual isomorphisms of twisted II$_1$ factors associated with various classes of left-right wreath product groups \cite{DV24}. Using their height criterion for W$^*$-superrigidity of twisted group factors \cite[Theorem 4.1]{DV24} in combination with some technical outgrowths of the prior methods \cite{cios22} we also obtained a somewhat similar W$^*$-superrigidity result for property (T) wreath-like product groups as in \ref{theoremB}; see the statement of \ref{notrivialcompressions}. We remark that, while this is sufficient to derive \ref{theoremB}, it is not as general as \cite[Theorem A]{DV24} which encompasses twisted group factors on both the target and source along with arbitrary amplifications and can even deal with virtual isomorphisms. 

\begin{namedthm*}{Theorem C}\label{notrivialcompressions} Let $G\in\mathcal{WR}(A,B\curvearrowright I )$ be a property (T) group where $A$ is a non-trivial abelian group, $B$ is an ICC subgroup of a hyperbolic group such that $B\curvearrowright I$ has amenable stabilizers. Let $H$ be any ICC group and let $c:H\times H \to \mathbb T$ be any $2$-cocycle. Let $0<t\leq 1$ be a scalar for which there exists  a $\ast$-isomorphism $\Phi:\cL(G)^t\to \cL_{c}(H)$. 

\vspace{2mm}

Then $t=1$, $c$ is a coboundary and $G\cong H$. Moreover, there exist a group isomorphism $\rho:G \to H$, a map $\xi: G \rightarrow\mathbb T$ with $c(\rho(g),\rho(h)) = \xi_g \xi_h \overline{\xi_{gh}}$, a multiplicative character $\eta: G \rightarrow \mathbb T$ and a unitary $w\in \mathcal L_c(H)$ such that $\Phi(u_g) = \eta_{g} \xi_{g} w v_{\rho(g)}w^*$.
\end{namedthm*}

\vspace{1mm}

We note in passing that if in the statement of \ref{theoremB} one assumes in addition that the outer automorphism group of $W$ is torsion free, then the result can be proved without appealing at all to \ref{notrivialcompressions} (and implicitly the results in \cite{DV24}) but instead just using the W$^*$-strong rigidity results Theorems \ref{untwistcocycle} and \ref{trivialamplificationembedding} along with group theoretic techniques à la J-P. Serre; see Remark \ref{indepenedentproof}.

\vspace{2mm}

Our results also provide brand-new insight into the classification of reduced group $C^*$-algebras, denoted $C^*_r(G)$, for countable groups $G$; a problem that has witnessed notable advancements in recent years.

\vspace{2mm}

A group $G$ is called \emph{abstractly $C^*$-superrigid} if whenever $H$ is an arbitrary group satisfying $C^*_r(G)\cong C^*_r(H)$, via a $\ast$-isomorphism that preserves the canonical traces, then $G\cong H$. A classical result on the classification of homogeneous $C^*$-algebras \cite{scheinberg} entails that every abelian, torsion free group is abstractly $C^*$-superrigid. By leveraging this observation together with other $C^*$-algebraic techniques, it has been demonstrated that the same property holds for various other groups that arise from abelian groups via a canonical construction. This includes certain Bieberbach groups \cite{knubyraumthielwhite17}, some two-step nilpotent groups \cite{er18} and all free nilpotent groups of finite class and rank \cite{omland19}. Notice that all these groups are amenable.

\vspace{2mm}

In the case of non-amenable groups, a significant number of $C^*$-superrigidity results have been obtained over the last years, \cite{ci18,cdad20,cdad21,dr21,cios22c}, by combining deep methods in deformation/rigidity theory of von Neumann algebras \cite{popa07,vaes10,io18} with fundamental prior results on the unique trace property and the projectioneless property of group $C^*$-algebras which emerged from \cite{bkko16} and respectively the extensive prior work on the Baum-Connes conjecture \cite{hk97,hk01,my01,oyono,la12}. However, as in the von Neumann algebraic case, all these results apply only to reduced group $C^*$-algebras of ICC groups. As far as we know, there are no $C^*$-superrigidity results available in the literature for non-amenable groups with non-trivial FC-center.  

\vspace{2mm}

In this paper we make progress on this problem by providing a fairly large family of non-amenable groups $G$ with infinite centers that are abstractly $C^*$-superrigid. Furthermore, for an arbitrary group $H$, we can fully describe \emph{all possible} $\ast$-isomorphisms $\Theta: C^*_r(G) \ra C^*_r(H)$ which preserve the canonical traces. The precise statement is the following.

\vspace{1mm}

\begin{namedthm*}{Corollary D}\label{C*superrigidity} Suppose $C$ is any non-trivial free abelian group and $D$ is any non-trivial ICC subgroup of a hyperbolic group such that the action $D\curvearrowright I$ has amenable stabilizers. Let $W \in \mathcal{WR}(C,D \curvearrowright I)$ be a property (T) group with trivial abelianization. Let $A$ be a torsion free abelian group. Denote by $G= A\times W$. Let $H$ be any countable group and let $\Theta\,:\, C_r^*(G) \ra C_r^*(H)$ be any $\ast$-isomorphism.

\vspace{2mm}

Then $H\cong G$. Moreover, there exist a countable family of projections $\mathcal{P}\subset \cL(Z(H))$ with $\sum_{p\in\mathcal{P}}p=1$, group isomorphisms $\delta_p:W\to H/Z(H)$ for all $p\in\mathcal{P}$, and a unitary $w\in\mathscr{U}(\cL(H))$ such that

\begin{enumerate}
    \item[(a)] $\Theta(C^*_r(A))=C^*_r(Z(H))$, and
    \item[(b)] $\Theta(u_g)=w\left(\sum_{p\in\mathcal{P}}p\otimes v_{\delta_p(g)}\right)w^*$, for all $g\in W$. 
\end{enumerate}

\vspace{1mm}

If we assume, in addition, that $\text{Out}(W)=1$, then we can take $\mathcal{P}=\{1\}$.

\vspace{1mm}

Here $(u_g)_{g\in W} \subset C^*_r(W)$, $(v_h)_{h\in H/Z(H)} \subset C^*_r(H/Z(H))$ are the canonical unitaries. 
\end{namedthm*}

\vspace{2mm}



This result should be compared with \ref{theoremB}, where all information on $G$ could be recovered from $\cL(G)$ with only the exception of its center. Therefore, altogether, \ref{theoremB} and \ref{C*superrigidity} enable us to identify, for the first time, examples of non-amenable groups which, in a certain sense, can be reconstructed from their C$^*$-algebra, but not from their von Neumann algebra. In the amenable realm, such examples have been known for a long time, e.g.\ any abelian free group.

\subsection{Organization of the article} Aside from the introduction, this article comprises eight additional sections. In Section 2 we establish several generic structural results for von Neumann algebras of group extensions using the language of cocycle action crossed products. Section 3 revisits properties of quasi-normalizers and Popa's intertwining techniques. In Section 4, we investigate additional properties of the FC-center $H^{fc}$ of a group $H$. In Section 5, we develop additional deformation/rigidity techniques pertaining to wreath-like product groups, following a similar approach as outlined in \cite{amcos23}, and we use them to provide an independent proof of \ref{theoremA} in the case of wreath-like product groups whose natural 2-cocycles satisfy a uniform boundedness condition. In Section 6, we study new aspects of integral decomposition of group von Neumann algebras and introduce a set of preliminary results that will be used later to recover splitting properties for group centers under von Neumann equivalence. This complex of techniques can be viewed as a von Neumann algebraic counterpart of a classical result of J. P. Serre. In Section 7 we present the proof of \ref{theoremA}. Section 8 establishes several rigidity results for twisted group factors of wreath-like product groups that will be used for the \emph{fiber analysis} in the proof of our main results. These results, also generalize some of the prior work \cite{cios22, cios22b, ciosv1} and are of independent interest. Finally, in Section 9 we present the proofs for our main results \ref{theoremB} and \ref{C*superrigidity}.

\subsection{Acknowledgments} We want to thank Professors Adrian Ioana and Stefaan Vaes for their helpful comments and suggestions. In particular, we are very grateful to Stefaan Vaes for pointing out an error in the first version of this paper. 

\vspace{2mm}

The first author was supported in part by the NSF Grant DMS-2154637. The second author received support from the Erwin and Peggy Kleinfeld Graduate Fellowship, the Graduate College Post-Comprehensive Research Fellowship, and the Graduate College Summer Fellowship 2023. The third author was supported in part by the NSF Grants DMS-1854074 and DMS-2153805. 

\vspace{3mm}

\section{Cocycle actions and cocycle crossed products}\label{section2}

Following \cite[pages 104-105]{br}, we recall the notion of cocycle action and the corresponding cocycle semidirect product of groups. This provides an equivalent description of group extensions. 

\begin{defn}\label{cocycledefinition} A \emph{cocycle action} of a group $B$ on a group $A$ consists of two maps $\alpha:B\to\text{Aut}(A)$ and $c:B\times B\to A$ which satisfy the following: 

\begin{enumerate}
    \item[(1)] $\alpha_g\alpha_h=\text{Ad}(c(g,h))\alpha_{gh}$ for every $g,h\in B$,
    \item[(2)] $c(g,h)c(gh,k)=\alpha_g(c(h,k))c(g,hk)$ for every $g,h,k\in B$, and
    \item[(3)] $c(g,1)=c(1,g)=1$ for every $g\in B$. 
\end{enumerate}

\vspace{1mm}

For brevity, henceforward, a cocycle action will be denoted by $B \curvearrowright^{\alpha,c} A$. The map $c$ is called \emph{a $2$-cocycle associated with $\alpha$}. Moreover, if $A \curvearrowright^{\alpha,c} B$ is a cocycle action, the set $A\times B$ endowed with the unit $1=(1_A,1_B)$ and the multiplication operation $(x,g)\cdot (y,h)=(x\alpha_g(y)c(g,h),gh)$ for all $(x,g),(y,h)\in A\times B$ is a group, denoted by $A\rtimes_{\alpha,c}B$, and called the \emph{cocycle semidirect product} group. We note in passing that any group extension of the form $A$-by-$B$ arises in this way, \cite[Chapter IV.6]{br}. 

\vspace{2mm}

Two cocycle actions $(\alpha, c)$ and $(\alpha',c')$ of $B$ on $A$ are said to be \textit{cohomologous} if there exists a map $\xi:B\to A$ such that $\xi_1 = 1$, $\alpha_g = \text{Ad}(\xi_g) \alpha'_g$, and $c(g,h)\xi_{gh}=\alpha_{g}(\xi_{h})\xi_{g} c'(g,h)$, for all $g,h\in B$. The map $c$ is said to be a \textit{$2$-coboundary for the action $\alpha$} if it is cohomologous to the trivial cocycle. The $2$-cocycle $c$ typically ``measures'' how far away $A\rtimes_{\alpha,c}B$ is from a semidirect product. For example, when $c=1$ we have that $A\rtimes_{\alpha, c}B=A \rtimes_\alpha B$. More generally, when $c$ is a $2$-coboundary, i.e., $c(g,h)= \alpha_{g}(\xi_{h}) \xi_{g} \xi_{gh}^{-1}$, for a map $\xi: B \to A$, the map $\beta_g:=\text{Ad} (\xi_g^{-1})\circ \alpha_g$ defines a genuine action of $B$ on $A$ which gives us $A\rtimes_{\alpha, c}B\cong A \rtimes_{\beta} B$.

\vspace{2mm}

When $A$ is abelian and $\alpha$ is trivial, $A\rtimes_{\alpha,c} B$ amounts to the central extension $A \rtimes_c B$ with the $2$-cocycle $c$.
\end{defn}

Next we record some elementary facts regarding subgroups of cocycle semidirect products that will be needed in the sequel.

\begin{prop}\label{subgroupscocyclecrossedproducts} Let $B \curvearrowright^{\alpha,c} A$ be a cocycle action.

\begin{enumerate}
    \item[(1)] Let $A_0\leqslant A$ be a subgroup satisfying $\alpha_g(A_0)=A_0$ for all $g\in B$, and let $B_0\leqslant B$ be a subgroup satisfying $c(B_0\times B_0)\subseteq A_0$. Then $A_0\rtimes_{\alpha,c}B_0$ is a subgroup of $A\rtimes_{\alpha,c}B$.

    \item[(2)] Assume $\alpha$ is trivial. If $Q$ is a subgroup of $A\rtimes_{c}B$ then one can find a subgroup $B_0\leqslant B$ such that $Q\cong (Q\cap A) \rtimes_{\alpha',c'}B_0$ where $(\alpha',c')$ is a $2$-cocycle action cohomologous to $c$ on $B_0$. In particular, if $B_0=B$ then $A\rtimes_{c}B\cong A\rtimes_{\alpha',c'}B$.
    \item[(3)] Assume $A\rtimes_c B$ is a central extension with property (T). Then, the group generated by $\langle\{c(g,h):g,h\in B\}, B\rangle$ is normal and has finite index in $A\rtimes_cB$. 
\end{enumerate}
\end{prop}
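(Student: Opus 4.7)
All three parts are direct manipulations in the cocycle semidirect product; the only real mathematical input is in (3), where property (T) enters.

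For (1), the plan is to verify closure of $A_0\times B_0$ under the cocycle multiplication $(x,g)\cdot(y,h)=(x\alpha_g(y)c(g,h),gh)$. The first coordinate lies in $A_0$ because $\alpha_g(A_0)=A_0$ and $c(B_0\times B_0)\subseteq A_0$, and the second coordinate $gh$ lies in $B_0$. For inverses, axioms (1) and (3) together force $\alpha_1=\mathrm{id}$, and a short calculation gives $(x,g)^{-1}=(\alpha_g^{-1}(x^{-1}c(g,g^{-1})^{-1}),g^{-1})$, which belongs to $A_0\rtimes_{\alpha,c}B_0$ because $\alpha_g^{-1}$ also preserves $A_0$ and $c(g,g^{-1})\in A_0$.

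For (2), the strategy is to untwist $Q$ along a set-theoretic section. Since $\alpha$ is trivial, $\pi\colon A\rtimes_c B\to B$, $(x,g)\mapsto g$, is a genuine homomorphism; set $B_0:=\pi(Q)$ and for each $g\in B_0$ choose $\xi_g\in A$ with $(\xi_g,g)\in Q$ and $\xi_1=1$. Every element of $Q$ then factors uniquely as $(y,1)\cdot(\xi_g,g)$ with $y\in Q\cap A$, so $\phi\colon(Q\cap A)\times B_0\to Q$, $\phi(y,g)=(y\xi_g,g)$, is a bijection. Transporting the group law of $Q$ along $\phi$ yields $(y,g)(y',h)=(y\,\alpha'_g(y')\,c'(g,h),\,gh)$ with $\alpha'_g=\mathrm{Ad}(\xi_g)$ and $c'(g,h)=\xi_g\xi_h c(g,h)\xi_{gh}^{-1}$. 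That $\alpha'_g(y')$ and $c'(g,h)$ lie in $Q\cap A$ is seen by identifying them as the products $(\xi_g,g)(y',1)(\xi_g,g)^{-1}$ and $(\xi_g,g)(\xi_h,h)(\xi_{gh},gh)^{-1}$ inside $Q$. Up to the sign convention in the definition of cohomologous cocycle actions, this exhibits $(\alpha',c')$ as cohomologous to the restriction of $(\mathrm{id},c)$ to $B_0$.

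For (3), set $A_0:=\langle c(g,h):g,h\in B\rangle\leqslant A$. Since $\alpha$ is trivial, $A_0$ is automatically $B$-invariant and $c$ takes values in $A_0$; by (1), $N:=A_0\rtimes_c B$ is a subgroup of $A\rtimes_c B$. The identity $(1,g)(1,h)(1,gh)^{-1}=(c(g,h),1)$ shows that $N$ coincides with the subgroup generated by the cocycle values and the set-theoretic section $\{(1,g):g\in B\}$, which is the natural interpretation of the subgroup in the statement. Normality is a direct conjugation check: conjugation by $(x,1)$ is trivial because $A$ is abelian, while conjugation by $(1,h)$ carries $(a,g)$ into $A_0\rtimes_c B$ since every cocycle factor that appears is in $A_0$ and $hgh^{-1}\in B$. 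For finite index, the assignment $(a,g)\mapsto aA_0$ is a well-defined surjective homomorphism $A\rtimes_c B\to A/A_0$ (well-defined precisely because $c(g,h)\in A_0$), and its kernel is exactly $N$, so $(A\rtimes_c B)/N\cong A/A_0$. Since property (T) passes to quotients, $A/A_0$ has property (T); an abelian group with property (T) is necessarily finite. The main conceptual input is this last abstract fact; the remainder is algebraic bookkeeping.
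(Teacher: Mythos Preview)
Your proof is correct and follows essentially the same approach as the paper. The only minor difference is in part (3): the paper invokes part (2) to write $Q\cong(A\cap Q)\rtimes_{c'}B$ and then reads off the quotient as $A/(A\cap Q)$, whereas you identify $N=A_0\rtimes_c B$ directly and build the quotient homomorphism $(a,g)\mapsto aA_0$ by hand; your route is slightly more self-contained and has the bonus that normality of $N$ falls out as ``kernel of a homomorphism,'' making your separate conjugation check redundant.
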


\begin{proof} We prove the second and third statements, as the first one is immediate. 

\vspace{2mm}

Let $B_0$ be the set of all $g\in B$ for which there exists $a \in A$ so that $ag\in Q$. One can see that $B_0\leqslant B$ is a subgroup. Next, for every $g\in B_0$, pick $a_g\in A$ with $a_gg\in Q$ and $a_1 =1$. Note that for any $a\in A$ satisfying $a g\in Q$ we have $a a_g^{-1}=a g g^{-1}a_g^{-1}= ag (a_g g)^{-1}\in Q\cap A$ which implies that $a \in (Q\cap A)a_g $. Altogether these imply that $Q=\{a a_gg: a\in Q\cap A, g\in B_0\}$. Moreover, for $a_{g_1}g_1,a_{g_2}g_2\in Q$ we have

\begin{equation*}
    Q\ni a_{g_1}g_1a_{g_2}g_2=a_{g_1}a_{g_2}c(g_1,g_2)g_1g_2=a_{g_1}a_{g_2}c(g_1,g_2)a_{g_1g_2}^{-1}a_{g_1g_2}g_1g_2.
\end{equation*}

\vspace{2mm}

Let $c'(g_1,g_2):=a_{g_1}a_{g_2}c(g_1,g_2)a_{g_1g_2}^{-1}\in Q\cap A$ for $g_1,g_2\in B_0$. Then, $c'$ is a $2$-cocycle for $\alpha': B_0 \ni b \mapsto \text{Ad}(a_b) \in \text{Aut}(A)$ which is cohomologous to $c$ on $B_0$, and $Q\cong (Q\cap A)\rtimes_{\alpha',c'}B_0$. This finishes the proof of (2).

\vspace{2mm}

To prove (3), denote by $Q:=\langle\{c(g,h):g,h\in B\}, B\rangle$. By (2), since $\alpha$ is trivial, there exists a cocycle $c'$ cohomologous to $c$ on $B$ with $Q\cong (A\cap Q)\rtimes_{c'}B$. Since $(A\rtimes_{c'}B)/Q\cong A/A\cap Q$ is abelian and has property (T), $Q\leqslant A\rtimes_cB$ has finite index.
\end{proof}

These notions admit natural generalizations in the context of von Neumann algebras.

\begin{defn}\label{cocycledefvN} A \emph{cocycle action} of a group $B$ on a tracial von Neumann algebra $(\cM,\tau)$ is a pair $(\alpha,c)$ consisting of two maps $\alpha:B\to {\text{Aut}}(\cM)$ and $c:B\times B\to\mathscr{U}(\cM)$ which satisfy the following:

\begin{enumerate}
    \item[(1)] $\alpha_g\alpha_h={\text {Ad}}(c(g,h))\alpha_{gh}$ for every $g,h\in B$,
    \item[(2)] $c(g,h)c(gh,k)=\alpha_g(c(h,k))c(g,hk)$ for every $g,h,k\in B$, and
    \item[(3)] $c(g,e)=c(e,g)=1$ for every $g\in B$.
\end{enumerate}

\vspace{1mm}

As in the case of groups, a cocycle action as in Definition \ref{cocycledefvN} will be denoted by $B \curvearrowright^{\alpha,c} (\mathcal M,\tau)$. The map $c$ is called a $2$-cocycle for the map $\alpha$. 

\vspace{2mm}

We say $2$-cocycle actions $(\alpha_i,c_i)$ of $B$ on $\cM_i$, $i=1,2$, are \textit{cocycle conjugate} if there exists an isomorphism $\Theta:\cM_1\to\cM_2$ and a map $\xi:B\to\mathscr{U}(\cM_2)$ such that $\Theta\alpha_1(g)\Theta^{-1}=\text{Ad}(\xi_g)\alpha_2(g)$, and $\Theta(c_1(g,h))=\xi_g\alpha_2(g)(\xi_h)c_2(g,h)\xi_{gh}^*$, for all $g,h\in B$. We say the $2$-cocycle $c$ associated to $\alpha$ \textit{vanishes} (or it is a \textit{coboundary}) if it is cocycle conjugate to some action $(\alpha',1)$.
\end{defn}

\begin{defn}\label{cocyclecross} Let  $B \curvearrowright^{\alpha,c} (\mathcal M,\tau)$ be a cocycle action.  The \emph{cocycle crossed product} von Neumann algebra, denoted by $\cM\rtimes_{\alpha,c}B$, is a tracial von Neumann algebra which is generated by a copy of $\cM$ and unitary elements $\{u_g\}_{g\in B}$ such that $u_gxu_g^*=\alpha_g(x)$, $u_gu_h=c(g,h)u_{gh}$, and $\tau(xu_g)=\tau(x)\delta_{g,e}$ for every $g,h\in B$ and $x\in \cM$. Notice that when $c$ is trivial, the classical crossed product von Neumann algebra is recovered.
\end{defn}

Basic instances of cocycle crossed products von Neumann algebra are twisted group von Neumann algebras. Let $G$ be a group, let $\alpha:G\to\C$ be the trivial action, and $c:G\times G\to \mathbb{T}$ be a 2-cocycle; that is, $c(g,h)c(gh,k)=c(h,k)c(g,hk)$, for all $g,h,k\in G$. Then, $G\curvearrowright^{\alpha,c}\C$ is a cocycle action. Moreover, the corresponding von Neumann algebra $\C\rtimes_{\alpha,c}G$ is the so-called \emph{twisted group von Neumann algebra} $\cL_c(G)$. Thus, $\cL_c(G)$ is generated by the elements $u_g$, with $g\in G$, satisfying the relation $u_gu_h=c(g,h)u_{gh}$ for all $g,h\in G$.

\vspace{2mm}

We establish several outcomes related to von Neumann algebras arising from cocycle crossed products. Although these results are likely well known to experts, we were unable to locate their explicit mention within the existing literature. 

\begin{lem}\label{finitedimfromcompatomic} Let $G\curvearrowright^{\alpha,c}(\mathcal{D},\tau)$ be a cocycle action where $\mathcal{D}$ is abelian and completely atomic. If $\mathcal{D}\rtimes_{\alpha,c}G$ is a factor, then $\mathcal{D}$ is finite dimensional.
\end{lem}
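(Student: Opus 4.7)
The plan is to exploit the fact that $\alpha$ induces a $G$-action on the set of atoms of $\mathcal{D}$, and then use the factor hypothesis to force this action to be transitive. Trace preservation will then force finitely many atoms.

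First, since $\mathcal{D}$ is abelian and completely atomic, I would write $\mathcal{D}=\bigoplus_{i\in X}\mathbb{C}p_i$, where $\{p_i\}_{i\in X}$ are the minimal projections of $\mathcal{D}$. Because any automorphism of an abelian von Neumann algebra carries minimal projections to minimal projections, $\alpha$ induces a genuine action (not just a cocycle action) $G\curvearrowright X$ on the index set, defined by $\alpha_g(p_i)=p_{g\cdot i}$.

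Next, I would fix a $G$-orbit $O\subseteq X$ and form the projection $P_O=\sum_{i\in O}p_i\in\mathcal{D}$ (the sum converges in the strong operator topology as a supremum of finite subprojections). By construction $\alpha_g(P_O)=P_O$ for all $g\in G$, and of course $P_O$ commutes with $\mathcal{D}$. Consequently, for every $g\in G$ we have
\[
u_g P_O u_g^*=\alpha_g(P_O)=P_O,
\]
so that $P_O$ commutes with the generating unitaries $\{u_g\}_{g\in G}$ and with $\mathcal{D}$; hence $P_O\in\mathscr Z(\mathcal{D}\rtimes_{\alpha,c}G)$. Since $\mathcal{D}\rtimes_{\alpha,c}G$ is a factor, $P_O\in\{0,1\}$. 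Faithfulness of $\tau$ rules out $P_O=0$, so $P_O=1$, which means there is only one orbit, i.e.\ $G\curvearrowright X$ is transitive.

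Finally, since each $\alpha_g$ is trace-preserving and transitively permutes the $p_i$, every atom satisfies $\tau(p_i)=\mu$ for a common value $\mu>0$. Adding over $X$ gives $1=\tau(1)=\sum_{i\in X}\tau(p_i)=\mu\cdot|X|$, so $|X|=1/\mu<\infty$, and therefore $\mathcal{D}$ is finite dimensional. No step presents a real obstacle; the only point that needs care is verifying that $P_O$ lies in the center of the crossed product, which reduces to the elementary observation that $\alpha_g$ fixes $P_O$ setwise because it permutes the atoms within the orbit $O$.
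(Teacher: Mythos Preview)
Your proof is correct. Both your argument and the paper's hinge on the same observation---that the sum of atoms over a $G$-orbit is central in the crossed product---but they organize the trace argument differently. The paper argues by contrapositive: assuming $\mathcal{D}$ is infinite dimensional, it first notes that for each fixed $\mu>0$ there are only finitely many atoms of trace $\mu$ (since the traces must be summable), hence each orbit is finite, and therefore the orbit-sum of any atom is a nontrivial central projection. You instead go directly: the orbit-sum is central regardless of whether the orbit is finite (it converges SOT), factoriality then forces a single orbit, and only afterward do you invoke trace preservation to conclude all atoms share a common trace $\mu$, whence $|X|=1/\mu<\infty$. Your route is slightly more streamlined and avoids the contrapositive; the paper's route has the minor advantage of never needing to form an infinite SOT sum of projections.
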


\begin{proof} We prove the contrapositive. Assume $\mathcal{D}$ is infinite dimensional and let $\{p_n\}_{n=1}^{\infty}$ be the atoms of $\mathcal{D}$. Since $\mathcal{D}$ is abelian, the measure on each atom eventually decays and there are only finitely many atoms with the same measure. Let $g\in G$ and let $p_n$ be an atom. Observe that $\alpha_g(p_n)$ is an atom in $\mathcal{D}$ that has the same measure as $p_n$. Thus, by the considerations above, the orbit $\mathscr{O}(p_n)=\{p_{n_1},...,p_{n_k}\}$ must be finite. Let $z_{\mathscr{O}(p_n)}=\sum_{p\in\mathscr{O}(p_n)}p$ and notice that for $g\in G$, $gz_{\mathscr{O}(p_n)}g^{-1}=z_{\mathscr{O}(p_n)}$. Hence, $z_{\mathscr{O}(p_n)}$ is a non-trivial element in $\mathscr{Z}(\mathcal{D}\rtimes_{\alpha,c}G)$.
\end{proof}

\begin{prop}\label{amplificationoffinitebyiccvn} Let $G \curvearrowright^{\alpha,c} (\mathcal A,\tau)$ be a cocycle action where $\mathcal A$ is finite dimensional.  
Assume the corresponding cocycle crossed product $\mathcal{A}\rtimes_{\alpha,c}G=\cM$ is a factor. Then, one can find $l,n\in\N$ such that $\mathcal{A}=\mathbb{M}_l(\C)\otimes \mathbb{D}_n$, a transitive action $G\curvearrowright^{\beta}\mathbb{D}_n$ and a 2-cocycle $w:G\times G\to\mathscr{U}(\mathbb{D}_n)$ for $\beta$ so that

\begin{equation*}
    \mathcal{A}\rtimes_{\alpha,c}G\cong \mathbb{M}_l(\C)\otimes(\mathbb{D}_n\rtimes_{\beta,w}G).
\end{equation*}

\vspace{2mm}

Moreover, we can find a subgroup $G_0\leqslant G$  with $[G:G_0]=n$ and a 2-cocycle $\eta:G_0\times G_0\to\mathbb{T}$ satisfying

\begin{equation*}
    \mathcal{A}\rtimes_{\alpha,c}G\cong \mathbb{M}_{nl}(\C)\otimes\cL_{\eta}(G_0)\cong \cL_{\eta}(G_0)^{nl}.
\end{equation*}
\end{prop}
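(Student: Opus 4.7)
The plan is to exploit the finite-dimensional structure of $\cA$ and to analyze the corner $p\cM p$ at a minimal central projection $p$ of $\cA$, where $\cM := \cA\rtimes_{\alpha,c}G$. Since $\cA$ is finite-dimensional it decomposes as $\cA = \bigoplus_{i=1}^n p_i\cA$ over its minimal central projections $p_i$, with $p_i\cA \cong \mathbb{M}_{k_i}(\mathbb{C})$. Because each $c(g,h)\in\mathscr{U}(\cA)$ acts trivially by conjugation on $\mathscr{Z}(\cA)$, the restriction $\alpha|_{\mathscr{Z}(\cA)}$ is a genuine action of $G$ permuting the atoms $p_i$. Exactly as in Lemma~\ref{finitedimfromcompatomic}, if this permutation action were not transitive the sum of projections in a proper orbit would be a non-trivial element of $\mathscr{Z}(\cM)$, contradicting the factor assumption. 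Transitivity forces all the $k_i$ to coincide with a common value $l$, so $\cA \cong \mathbb{M}_l(\mathbb{C})\otimes\mathbb{D}_n$ and $G$ acts transitively on the $n$ atoms of $\mathbb{D}_n$.

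Fix $p := p_1$ and let $G_0 := \text{Stab}_G(p)\leqslant G$, so $[G:G_0]=n$. From $u_gpu_g^* = \alpha_g(p)$ one sees that $pu_gp = 0$ unless $g\in G_0$, and for such $g$ one has $pu_gp = pu_g = u_gp$. Hence $p\cM p$ is generated by $p\cA\cong \mathbb{M}_l(\mathbb{C})$ and $\{pu_g\}_{g\in G_0}$. For each $g\in G_0$ the automorphism $\alpha_g|_{p\cA}$ of $\mathbb{M}_l(\mathbb{C})$ is inner, say $\alpha_g|_{p\cA} = \text{Ad}(v_g)$ with $v_g\in\mathscr{U}(p\cA)$ and $v_e = p$. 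Set $w_g := v_g^* u_g p$. A short calculation shows $w_g$ commutes with $p\cA$, and expanding $w_gw_h$ via $u_gu_h = c(g,h)u_{gh}$ yields $w_gw_h = [v_g^*\alpha_g(v_h^*)c(g,h)v_{gh}]\cdot w_{gh}$; the bracket lies in $p\cA$ and must commute with $\mathbb{M}_l(\mathbb{C}) = p\cA$, hence is a scalar $\eta(g,h)\in\mathbb{T}$. Associativity forces $\eta$ to be a 2-cocycle on $G_0$, so $\{w_g\}_{g\in G_0}$ generates a copy of $\cL_\eta(G_0)$ commuting with $p\cA$, and linear independence of the $u_g$ in $\cM$ yields $p\cM p\cong \mathbb{M}_l(\mathbb{C})\otimes\cL_\eta(G_0)$.

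Choose coset representatives $g_1 = e, g_2,\ldots,g_n$ of $G_0$ in $G$ with $\alpha_{g_i}(p)=p_i$. The elements $u_{g_i}p\in\cM$ are partial isometries implementing Murray-von Neumann equivalences $p\sim p_i$; since $\sum_i p_i = 1$, the standard corner amplification in a finite factor gives $\cM \cong \mathbb{M}_n(\mathbb{C})\otimes p\cM p \cong \mathbb{M}_{nl}(\mathbb{C})\otimes\cL_\eta(G_0) = \cL_\eta(G_0)^{nl}$, which is the second displayed isomorphism.

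For the first isomorphism, the idea is to build inside $\cM$ a copy of $\mathbb{M}_l(\mathbb{C})$ commuting with a subalgebra isomorphic to $\mathbb{D}_n\rtimes_{\beta,w}G$. Starting from matrix units $\{e_{jk}\}$ for $p\cA$, set $E_{jk}:=\sum_{i=1}^n\alpha_{g_i}(e_{jk})$; these satisfy $E_{jk}E_{mn} = \delta_{km}E_{jn}$ and $\sum_j E_{jj} = 1$, so they generate a copy of $\mathbb{M}_l(\mathbb{C})$ in $\cA$ whose commutant inside $\cA$ is precisely $\mathbb{D}_n = \mathscr{Z}(\cA)$. One then lifts each $u_g$ to a unitary $U_g = u_gx_g\in\cM$, with $x_g\in\cA$ chosen so that $U_g$ commutes with $\{E_{jk}\}$; this is possible because $u_gE_{jk}u_g^*$ differs from $E_{jk}$ only by a unitary correction in $\cA$ computable from $c$ and the $v_h$. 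Then $\beta_g := \text{Ad}(U_g)|_{\mathbb{D}_n}$ is a genuine transitive action of $G$ on $\mathbb{D}_n$, and $U_gU_h = w(g,h)U_{gh}$ defines a 2-cocycle $w:G\times G\to\mathscr{U}(\mathbb{D}_n)$, giving $\{E_{jk}\}'\cap\cM \cong \mathbb{D}_n\rtimes_{\beta,w}G$ and hence $\cM\cong\mathbb{M}_l(\mathbb{C})\otimes(\mathbb{D}_n\rtimes_{\beta,w}G)$. The main obstacle is this last step: the explicit construction of the correctors $x_g$ and the verification that the resulting 2-cocycle $w$ indeed takes values in $\mathbb{D}_n$, since the interplay between $c$ and the inner implementations $v_h$ makes the bookkeeping the delicate part of the proof.
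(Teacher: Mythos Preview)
Your proof of the second displayed isomorphism is correct and in fact slightly more direct than the paper's: you compress $\cM$ at a minimal central projection of $\cA$ and identify the corner as $\mathbb{M}_l(\C)\otimes\cL_\eta(G_0)$ immediately, whereas the paper first establishes $\cM\cong\mathbb{M}_l(\C)\otimes(\mathbb{D}_n\rtimes_{\beta,w}G)$ and only then compresses $\mathbb{D}_n\rtimes_{\beta,w}G$ at an atom of $\mathbb{D}_n$ to extract $\cL_\eta(G_0)$.

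For the first isomorphism, however, the obstacle you flag is not a genuine one, and the paper dissolves it without ever constructing explicit correctors $x_g$ or the matrix units $E_{jk}$. Once the identification $\cA=\mathbb{M}_l(\C)\otimes\mathbb{D}_n$ is fixed, extend the permutation action on $\mathscr{Z}(\cA)$ to all of $\cA$ by $\beta_g(x\otimes z_i)=x\otimes z_{g\cdot i}$. Since $\alpha_g$ and $\beta_g$ agree on $\mathscr{Z}(\cA)$, the composite $\alpha_g\beta_g^{-1}$ preserves each simple summand of $\cA$ and is therefore inner: $\alpha_g=\text{Ad}(V_g)\circ\beta_g$ for some $V_g\in\mathscr{U}(\cA)$. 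Substituting into the cocycle identity $\alpha_g\alpha_h=\text{Ad}(c(g,h))\alpha_{gh}$ yields $\text{Ad}(V_g\beta_g(V_h))=\text{Ad}(c(g,h)V_{gh})$ as automorphisms of $\cA$, whence $V_{gh}^*c(g,h)^*V_g\beta_g(V_h)\in\mathscr{Z}(\cA)=\mathbb{D}_n$; the associated $w(g,h)$ is a $\mathbb{D}_n$-valued $2$-cocycle for $\beta$. This exhibits $(\alpha,c)$ as cocycle-conjugate to $(\beta,w)$, and since $\beta$ is the identity on the $\mathbb{M}_l(\C)$ tensor factor one reads off $\cA\rtimes_{\alpha,c}G\cong\mathbb{M}_l(\C)\otimes(\mathbb{D}_n\rtimes_{\beta,w}G)$ at once. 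Your hypothetical $x_g$ is essentially $V_g^*$, and the ``delicate bookkeeping'' you anticipate collapses to the single observation that a unitary of $\cA$ implementing the trivial automorphism must be central.
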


\begin{proof} Since $\mathcal{A}$ is finite dimensional there are minimal projections $z_1,...,z_n\in\mathscr{Z}(\mathcal{A})$ so that $\mathcal{A}=\oplus_{i=1}^n\mathbb{M}_{k_i}(\C)z_i$, where $k_i\in\N$. Fix $g\in G$. As $\alpha_g(\mathscr{Z}(\mathcal{A}))=\mathscr{Z}(\mathcal{A})$, $\alpha_g$ permutes the set  $\{z_1,...,z_n\}$. Furthermore, since $\cM$ is a factor, the action $\alpha$ on $\{z_1,...,z_n\}$ must be transitive. Thus, $\alpha_g(z_i)=z_{g\cdot i}$ for a transitive action $G\curvearrowright\{1,...,n\}$. This implies $\alpha_g(\mathbb{M}_{k_i}(\C)z_i)\subseteq \mathbb{M}_{k_{g\cdot i}}(\C)z_{g\cdot i}$. In particular, $k_i\leq k_{g\cdot i}$ for all $1\leq i\leq n$, and since the action is transitive, one can find an integer $l$ such that $k_i=l$, for all $1\leq i\leq n$. Hence, $\mathcal{A}=\oplus_{i=1}^n\mathbb{M}_l(\C)z_i$, and $\alpha_g(\mathbb{M}_l(\C)z_i)=\mathbb{M}_l(\C)z_{g\cdot i}$. Since all automorphisms of $\mathbb{M}_l(\C)$ are inner, we can find unitaries $V_g^i\in\mathbb{M}_l(\C)$ such that

\begin{equation*}
    \alpha_g(x z_i)=V_g^ix z_{g\cdot i}(V_g^i)^*, \quad\text{ for all } x\in\mathbb{M}_l(\C).
\end{equation*}

\vspace{2mm}

Hence, there is $V_g\in\mathscr{U}(\mathcal A)$ such that $\alpha_g(x)=V_g\beta_g(x)V_g^*$, for all $x\in \mathcal{A}$, where $\beta_g:\mathcal{A}\to \mathcal{A}$ is given by $\beta_g(\oplus \lambda_iz_i)= \oplus \lambda_iz_{g\cdot i}$, for $\lambda_i\in \mathbb C$. Since $\alpha$ is a cocycle action, $\alpha_g\circ\alpha_h=\text{Ad}(c(g,h))\alpha_{gh}$ for all $g,h\in G$. On the other hand, $\alpha_g\circ\alpha_h=\text{Ad}(V_g\beta_g(V_h))\circ\beta_{gh}$ and $\text{Ad}(c(g,h))\alpha_{gh}=\text{Ad}(c(g,h)V_{gh})\circ\beta_{gh}$ for all $g,h\in G$. Altogether, these imply that $\text{Ad}(V_{gh}^*c(g,h)^*V_g\beta_g(V_h))=\text{id}$.

\vspace{2mm}

Since $V_{gh}^*c(g,h)^*V_g\beta_g(V_h)\in \mathcal{A}$ we conclude that $w(g,h):=V_g^*\beta_g(V_h^*)c(g,h)V_{gh}\in\mathscr{Z}(\mathcal{A})$ is also a 2-cocycle. Therefore, we have shown that $\mathcal{A}=\mathbb{M}_l(\C)\otimes(\oplus\C z_i)=\mathbb{M}_l(\C)\otimes \mathscr{Z}(\mathcal{A})=\mathbb{M}_l(\C)\otimes \mathbb{D}_n$, and $\mathcal{A}\rtimes_{\alpha,c}G=\mathbb{M}_l(\C)\otimes (\mathbb{D}_n\rtimes_{\beta,w}G)$. 

\vspace{2mm}

For the last part, fix $z_i\in \mathbb{D}_n$ a minimal projection. Let $G_0\leqslant G$ be the subgroup of all $g\in G$ such that $\beta_g(z_i)=z_i$. Note that $[G:G_0]=n$. Thus, for all $z\in \mathbb{D}_n$ and $g\notin G_0$, $z_izu_gz_i=\lambda_iz_iu_gz_i=\lambda_iz_i\beta_g(z_i)u_g=0$. For $g\in G_0$ we have $z_izu_gz_i=\lambda_i z_iu_gz_i=\lambda_iz_iu_g$. Moreover, for every $g,h\in G_0$ we get 

\begin{equation*}
    z_iu_gz_iu_hz_i=z_iw(g,h) z_i u_{gh}=:\eta(g,h) z_i u_{gh},
\end{equation*}

\vspace{2mm}

where $\eta (g,h) = z_iw(g,h) z_i$. Hence, $z_i(\mathbb{D}_n\rtimes_{\beta,w}G)z_i
\cong \cL_{\eta}(G_0)$. Since the action $\beta$ is transitive, $z_i$ is equivalent to $z_j$ in $\mathbb{D}_n\rtimes_{\beta,w}G$ for any $j \in \{1,..., n\}$. In particular, we have

\begin{align*}
    \mathcal{A}\rtimes_{\alpha,c}G&=\mathbb{M}_l(\C)\otimes (\mathbb{D}_n\rtimes_{\beta,w}G)\cong\mathbb{M}_l(\C)\otimes\mathbb{M}_n(\C)\otimes (z_i(\mathbb{D}_n\rtimes_{\beta,w}G)z_i)\\
    &\cong \mathbb{M}_{nl}(\C)\otimes\cL_{\eta}(G_0)\cong\cL_{\eta}(G_0)^{ln}.
\end{align*}\end{proof}

\begin{prop}\label{decompositionfinitebyabelian} Let $G=A\rtimes_{\alpha,c}B$ be any cocycle semidirect product group, where $A$ is finite and $B$ is abelian. Then, we can find $k\in \mathbb N$ and $n_i, m_i\in\N$ for all $1\leq i\leq k$, such that

\begin{equation*}
    \cL(G)\cong\bigoplus_{i=1}^k\left(\mathbb{M}_{n_i}(\C)\otimes(\mathbb{D}_{m_i}\rtimes_{\beta_i,w_i}B)\right).
\end{equation*}

\vspace{2mm}

Here, for each $1\leq i\leq k$, $\mathbb{D}_{m_i}$ is a finite-dimensional abelian algebra, and $B\curvearrowright^{\beta_i, w_i}\mathbb{D}_{m_i}$ is a cocycle  action, with $\beta_i$ a transitive action. Moreover, $\mathscr{Z}(\cL(G))=\oplus_i\:\cL_{\eta_i}(C_i)$ for $C_i\leqslant B$ a finite index subgroup and $\eta_i$ a symmetric $2$-cocycle cohomologous to $w_i$ on ${C_i\times C_i}$.
\end{prop}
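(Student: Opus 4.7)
The plan is to view $\cL(G) = \cL(A) \rtimes_{\alpha, c} B$ and decompose it by the central projections coming from the $B$-orbit structure on the atoms of $\mathscr{Z}(\cL(A))$, then feed each orbit into the argument of Proposition \ref{amplificationoffinitebyiccvn}. Since $A$ is finite, $\cL(A)$ is finite-dimensional; let $z_1,\dots,z_N$ be the minimal projections of $\mathscr{Z}(\cL(A))$. Because the $c(g,h)$ are inner, the cocycle action restricts to a genuine action of $B$ on $\mathscr{Z}(\cL(A))$ which permutes the $z_j$. Let $\mathcal{O}_1,\dots,\mathcal{O}_k$ be its orbits and put $p_i := \sum_{z \in \mathcal{O}_i} z$. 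Each $p_i$ is $B$-invariant, hence central in $\cL(G)$, and $\sum p_i = 1$, giving
\begin{equation*}
\cL(G) = \bigoplus_{i=1}^{k} \cL(G)p_i.
\end{equation*}

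Next, I would apply the matrix decomposition argument from Proposition \ref{amplificationoffinitebyiccvn} to each piece $\cL(A)p_i\rtimes_{\alpha,c}B$. The only input really used in that part of the proof is the transitivity of the action on the atoms (factoriality is not essential for producing the $\mathbb{M}_{n_i}(\mathbb{C})$-factorization). Since $B$ acts transitively on $\mathcal{O}_i$, all matrix blocks in $\cL(A)p_i$ share a common dimension $n_i$, so $\cL(A)p_i \cong \mathbb{M}_{n_i}(\mathbb{C})\otimes\mathbb{D}_{m_i}$ with $m_i=|\mathcal{O}_i|$, and the construction of the unitaries $V_g$ intertwining $\alpha_g$ with the permutation action gives
\begin{equation*}
\cL(G)p_i \cong \mathbb{M}_{n_i}(\mathbb{C})\otimes(\mathbb{D}_{m_i}\rtimes_{\beta_i,w_i}B)
\end{equation*}
with $\beta_i$ transitive and $w_i$ a $\mathbb{D}_{m_i}$-valued $2$-cocycle for $\beta_i$.

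For the center, I would invoke the \emph{Moreover} part of Proposition \ref{amplificationoffinitebyiccvn} to further rewrite $\mathbb{D}_{m_i}\rtimes_{\beta_i,w_i}B \cong \mathbb{M}_{m_i}(\mathbb{C})\otimes\cL_{\eta_i'}(G_{0,i})$, where $G_{0,i}\leqslant B$ is the stabilizer of a chosen minimal projection $z\in\mathbb{D}_{m_i}$ (so $[B:G_{0,i}]=m_i$) and $\eta_i'(g,h):=z\,w_i(g,h)\,z$ is the induced scalar cocycle on $G_{0,i}$. Hence $\mathscr{Z}(\cL(G)p_i)=\mathscr{Z}(\cL_{\eta_i'}(G_{0,i}))$. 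Since $G_{0,i}$ is abelian, a direct computation on the canonical generators shows that $\sum_g c_g u_g$ is central in $\cL_{\eta_i'}(G_{0,i})$ iff $c_g=0$ whenever $\eta_i'(g,\cdot)\neq\eta_i'(\cdot,g)$. Setting $C_i := \{g\in G_{0,i} : \eta_i'(g,h)=\eta_i'(h,g)\ \text{for all } h\in G_{0,i}\}$, which is a subgroup thanks to the cocycle identity on an abelian group, and $\eta_i := \eta_i'|_{C_i\times C_i}$, we obtain $\mathscr{Z}(\cL(G)p_i) = \cL_{\eta_i}(C_i)$ with $\eta_i$ literally symmetric, and cohomologous on $C_i\times C_i$ to $w_i$ via the coboundary coming from the intertwining unitaries $V_g$.

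The delicate step will be verifying the finite-index claim $[B:C_i]<\infty$. Since $[B:G_{0,i}]=m_i<\infty$ it suffices to show $[G_{0,i}:C_i]<\infty$. The antisymmetrization $\phi_i:G_{0,i}\to\widehat{G_{0,i}}$, $\phi_i(g)(h):=\eta_i'(g,h)\overline{\eta_i'(h,g)}$, is a homomorphism with kernel $C_i$, so one must prove that its image is finite. Here I would exploit that $\eta_i'$ descends from the $A$-valued cocycle $c$: after suitably renormalizing the unitaries $V_g$ (which are determined only up to scalars within each block) one can arrange that $\eta_i'$ takes values in a finite subgroup of $\mathbb{T}$ controlled by the exponent of $A$, and then use the structure of the original group extension to force $\phi_i$ to have finite image. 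This cocycle-finiteness argument is the main obstacle; the other steps are essentially bookkeeping on top of Proposition \ref{amplificationoffinitebyiccvn}.
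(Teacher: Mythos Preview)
Your decomposition via the $B$-orbits on the minimal central projections of $\cL(A)$, and the use of the matrix-splitting argument from Proposition~\ref{amplificationoffinitebyiccvn} on each orbit, is exactly what the paper does. For the center, the paper takes a slightly different route: rather than passing to the Morita-equivalent corner $\cL_{\eta_i'}(G_{0,i})$, it analyzes $\mathscr{Z}(\mathbb{D}_{m_i}\rtimes_{\beta_i,w_i}B)$ directly by Fourier expansion, shows that a central element $\sum_g x_g u_g$ must have $x_g=0$ unless $g$ lies in the common stabilizer $B_i$ and satisfies $w_i(g,h)=w_i(h,g)$ for all $h\in B_i$, and then produces unitaries $s_g\in\mathbb{D}_{m_i}$ so that $\{s_gu_g\}_{g\in C_i}''\cong\cL_{\eta_i}(C_i)$. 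Since $B_i=G_{0,i}$ (stabilizing one atom is the same as stabilizing all, as $B$ is abelian and $\beta_i$ is transitive), the two descriptions of $C_i$ agree and your identification of the center is correct.

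You are right that the finite-index assertion $[B:C_i]<\infty$ is the real issue; in fact the paper's own proof never addresses it, and your proposed fix does not work. Arranging $\eta_i'$ to take values in a finite subgroup of $\mathbb T$ is not enough to force the antisymmetrization $\phi_i$ to have finite image. Take $A=\Z/2$, $B=\bigoplus_{n\geq 1}\Z/2$, $\alpha$ trivial, and the $A$-valued cocycle $c(x,y)=\sum_{i<j}x_iy_j\pmod 2$. Then $\cL(A)=\C z_0\oplus\C z_1$ has two fixed atoms, so $k=2$, $n_i=m_i=1$, $\beta_i$ trivial, and $w_i=\chi_i\circ c$ where $\chi_0,\chi_1$ are the characters of $\Z/2$. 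For $i=1$ the scalar cocycle is $w_1(x,y)=(-1)^{\sum_{i<j}x_iy_j}$, whose antisymmetrization satisfies $\phi_1(e_i,e_j)=-1$ for all $i\neq j$; its radical is $\{0\}$, so $C_1=\{0\}$ and $\cL_{w_1}(B)$ is the CAR factor. Thus $[B:C_1]=\infty$, and the finite-index clause of the proposition fails in this generality. No renormalization of the $V_g$'s can help here, since $\phi_i$ depends only on the cohomology class of $\eta_i'$, which is fixed by the extension. In the paper's applications (Theorem~\ref{centerLH} and the proof of \ref{theoremB}) there are further constraints on $H$ that rule out such behaviour, but as a stand-alone statement the ``finite index'' part cannot be proved from the hypotheses given.
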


\begin{proof} Observe that $\cL(G)=\cL(A)\rtimes_{\alpha,c}B$ where $\cL(A)$ is a finite dimensional von Neumann algebra, and $B \curvearrowright ^{\alpha,c} \cL(A)$ is the canonical cocycle action implemented from the group case. Moreover, $\mathscr{Z}(\cL(A))$ is also a finite dimensional von Neumann subalgebra, invariant under the action of $\alpha$. Let $\{z_j\}_{j=1}^k$ be the minimal projections of $\mathscr{Z}(\cL(A))$. Then there exists an action $B\curvearrowright\{z_j\}_{j=1}^k$ for which $\alpha_g(z_j)=z_{g\cdot j}$. Let $\{\mathscr{O}_i\}
_i$ be the orbits of this action. Let $z^i=\sum_{z\in\mathscr{O}_i}z$ and denote $\cM_iz^i=\cL(A)z^i$. Note that

\begin{equation}\label{decompL(A)B}
    \cL(A)\rtimes_{\alpha,c}B=\bigoplus_i\cM_iz^i\rtimes_{\alpha_i,c_i}B
\end{equation}

\vspace{2mm}

where $B\curvearrowright^{\alpha_i,c_i}\cM_iz^i$ is the cocycle action given by the restriction of $\alpha$ to $\cM_iz^i$ and $c_i(g,h):=c(g,h)z^i$. Now, decompose $\cM_iz^i=\oplus_{z_j\in\mathscr{O}_i}\mathbb{M}_{n_j^i}(\C)z_j$. Applying the proof of the previous proposition on the fibers $\cM_iz^i\rtimes_{\alpha_i,c_i}B$ shows that $n_j^i=n_i$ for all $j$,  and

\begin{equation*}
    \cM_iz^i\rtimes_{\alpha_i,c_i}B=\mathbb{M}_{n_i}(\C)\otimes(\mathbb{D}_{m_i}\rtimes_{\beta_i,w_i}B)
\end{equation*}

\vspace{2mm}

where $\mathbb{D}_{m_i}$ is abelian, and $B\curvearrowright^{\beta_i,w_i} \mathbb D_{m_i}$ is a cocycle action with $\beta_i$ transitive. 

\vspace{2mm}

Let $B_i:=\{g\in B:\beta_i(g)(x)=x, \:\text{for all }x\in \mathbb{D}_{m_i}\}$, and note this subgroup coincides with the stabilizer of all atoms in $\mathbb{D}_{m_i}$. Indeed, if $g\in B$ is such that $\beta_i(g)(z_j)=z_j$ for some $1\leq j\leq m_i$, then for any other $1\leq l\leq m_i$, since the action $\beta_i$ is transitive, there exists $h\in B$ with $h\cdot l=j$. In that case, $g\cdot l=(gh^{-1})\cdot(h\cdot l)=h^{-1}\cdot j=l$. 

\vspace{2mm}

Next we describe the center of $\cL(G)$. This amounts to describing the center of $\mathbb{D}_{m_i}\rtimes_{\beta_i,w_i}B$. Let $\sum_{g\in B}x_gu_g\in \mathscr{Z}(\mathbb{D}_{m_i}\rtimes_{\beta_i,w_i}B)$. Comparing Fourier coefficients, we have

\begin{equation}\label{fouriercoeff}
    x_g\beta_i(g)(x)=xx_g=x_gx, \quad\text{for all } g\in B\text{ and for all } x\in \mathbb{D}_{m_i}.
\end{equation}

\vspace{2mm}

Next we claim $x_g=0$ for all $g\notin B_i$. First, for $g\notin B_i$ there exists $j$ for which $\beta_i(g)(z_j)z_j=0$. Since the group $B_i$ coincides with the stabilizer of all atoms, $\beta_i(g)(z_j)z_j=0$ for all $j$. Thus, by \eqref{fouriercoeff} we obtain $x_gz_j=x_gz_jz_j=x_g\beta_i(g)(z_j)z_j=0$ for $g\notin B_i$. Since this holds for all atoms of $\mathbb{D}_{m_i}$ we get $x_g=0$.

\vspace{2mm}

Comparing the Fourier coefficients again we have

\begin{equation}\label{fouriercoeff2}
    x_gw_i(g,h)=\beta_i(h)(x_g)w_i(h,g),\quad\text{for all }g,h\in B,
\end{equation}

\vspace{2mm}

as $B$ is abelian. Now, let $x_g=\sum_{j=1}^{m_i}\lambda_g^jz_j$ and $w_i(g,h)=\sum_{j=1}^{m_i}(w_i(g,h))_jz_j$. By \eqref{fouriercoeff2}, 

\begin{equation}\label{new}
    \lambda_g^j(w_i(h,g))_j^{-1}(w_i(g,h))_j=\lambda_g^{h^{-1}\cdot j}, \quad\text{for all } 1\leq j\leq m_i,\text{ and }g,h\in B.
\end{equation}

\vspace{2mm}

Since $(w_i(h,g))_j^{-1},(w_i(g,h))_j$ are scalars of absolute value one, it follows that $|\lambda_g^{h^{-1}\cdot j}|=|\lambda_g^j|$ for all $h$ and $1\leq j\leq m_i$. Thus, for $g\in B_i$, either $x_g=0$ or $x_g$ is a scalar multiple of a unitary in $\mathbb{D}_{m_i}$, say $s_g$. Observe that if $g\in B_i$ is such that $x_g\neq 0$ then, from equation \eqref{fouriercoeff2}, we obtain $w_i(g,h)=w_i(h,g)$ for all $h\in B_i$, meaning that $g\in C_i:=\{g\in B_i: w_i(g,h)=w_i(h,g)$ for all $h\in B_i\}$. Conversely, one can check that if $g \in C_i$, then there exists $x_g \in \mathcal{U}(\mathbb{D}_{m_i})$ satisfying equation \eqref{fouriercoeff2}. Observe that the unitaries $s_g$ satisfy the relation

\begin{equation}\label{x_bu_bincenter}  
    s_gw_i(g,h)=\beta_i(h)(s_g)w_i(h,g),\quad\text{for all }g\in C_i, h\in B.
\end{equation} 

\vspace{2mm}

Therefore, $s_gu_g\in\mathscr{Z}(\mathbb{D}_{m_i}\rtimes_{\beta_i,w_i}B)$ for $g\in C_i$. Let $(y_g)_g$ be any other sequence of unitaries satisfying

\begin{equation}\label{y_bsameequation}
    y_gw_i(g,h)=\beta_i(h)(y_g)w_i(h,g),\quad\text{for all } g\in C_i, h\in B.
\end{equation}

\vspace{2mm}

Then, \eqref{x_bu_bincenter} and \eqref{y_bsameequation} imply

\begin{equation*}
    s_gy_g^*=s_gw_i(g,h)w_i(h,g)^*\beta_i(h)(y_g)^*=\beta_i(h)(s_gy_g^*), \quad\text{for all } h\in B.
\end{equation*}

\vspace{2mm}

The previous equality implies $s_gy_g^*\in \C1$ and so $y_g$ is a scalar multiple of $s_g$. Hence, $\mathscr{Z}(\mathbb{D}_{m_i}\rtimes_{\beta_i,w_i}B)=\{s_gu_g\}_{g\in C_i}''$. Moreover, we can check that for $g_1,g_2\in C_i$ we have that $\mathscr{Z}(\mathbb{D}_{m_i}\rtimes_{\beta_i,w_i}B)\ni s_{g_1}u_{g_1}s_{g_2}u_{g_2}=s_{g_1}s_{g_2}w_i(g_1,g_2)u_{g_1g_2}$, and also $s_{g_1g_2}u_{g_1g_2}\in\mathscr{Z}(\mathbb{D}_{m_i}\rtimes_{\beta_i,w_i}B)$. This implies the existence of $\eta_i(g_1,g_2)\in\C$ such that

\begin{equation*}
    s_{g_1g_2}\eta_i(g_1,g_2)=s_{g_1}s_{g_2}w_i(g_1,g_2) \text{ for all } g_1,g_2\in C_i.
\end{equation*}

\vspace{2mm}

Therefore, $\{s_gu_g\}_{g\in C_i}''\cong \cL_{\eta_i}(C_i)$ and $\mathscr{Z}(\mathbb{D}_{m_i}\rtimes_{\beta_i,w_i}B)=\cL_{\eta_i}(C_i)$, where $\eta_i$ is symmetric and cohomologous to $w_i|_{C_i\times C_i}$.
\end{proof}

We note the following corollary, which will be employed in proving \ref{theoremB}. 

\begin{thm}\label{centerLH} Let $G=F\rtimes_{\alpha,c}(A\rtimes_d B)$ be a cocycle semidirect product group, where $F$ is finite, $A$ is abelian and $B$ is ICC. Then, we can find $k\in\N$, $n_i, m_i\in\N$ for $1\leq i\leq k$, cocycle actions $A\rtimes_d B\curvearrowright^{\alpha_i,w_i}\mathbb{D}_{m_i}$ with $\alpha_i$ transitive such that

\begin{equation*}
    \cL(G)=\bigoplus_{i=1}^k\:\mathbb{M}_{n_i}(\C)\otimes(\mathbb{D}_{m_i}\rtimes_{\alpha_i,w_i}(A\rtimes_d B)).
\end{equation*}

\vspace{2mm}

In addition, we can write $\mathbb{D}_{m_i}=\oplus_{t=1}^{t_i}\mathbb{D}_{s_i}^t$, where $m_i=s_it_i$, in such a way that there exist transitive actions $A_t^i\curvearrowright\mathbb{D}_{s_i}^t$, with $A_t^i$ finite index subgroups of $A$, $x_{t,a}^i\in\mathscr{U}(\mathbb{D}_{s_i}^t)$ and $\eta_t^i:A_t^i\times A_t^i\to\mathbb{T}$ a 2-cocycle such that

\begin{equation*}
    \mathscr{Z}(\cL(F\rtimes_{\alpha,c}A))=\bigoplus_{i=1}^k\bigoplus_{t=1}^{t_i}\:\{x_{t,a}^i:a\in A_t^i\}''\cong\bigoplus_i\bigoplus_t\:\cL_{\eta_t^i}(A_t^i).
\end{equation*}

\vspace{2mm}

Moreover, if we let $A_0:=\cap_{i,t}A_t^i$ and $x_a^i=\sum_{t=1}^{t_i}x_{t,a}^i\in\mathscr{U}(\mathbb{D}_{m_i})$ we have

\begin{equation*}
    \mathscr{Z}(\cL(G))=\left\{\left(\sum_ix_a^i\right)a:a\in A_0\right\}''\subseteq\bigoplus_i\bigoplus_t\cL_{\eta_t^i}(A_t^i).
\end{equation*}
\end{thm}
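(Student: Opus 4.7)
The plan is to combine the decomposition results of Propositions~\ref{amplificationoffinitebyiccvn} and~\ref{decompositionfinitebyabelian} with the ICC property of $B$ to identify $\cL(G)$ and its center in three successive reductions.

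First I would write $\cL(G)=\cL(F)\rtimes_{\alpha,c}(A\rtimes_d B)$ and observe that $\mathscr{Z}(\cL(F))$ is finite dimensional and invariant under $\alpha$. Letting $\{z_j\}$ be its minimal projections and grouping them into orbits $\{\mathscr{O}_i\}$ under $A\rtimes_d B$, set $z^i=\sum_{z\in\mathscr{O}_i}z$. This yields the fiber decomposition $\cL(G)=\bigoplus_i\cL(F)z^i\rtimes_{\alpha_i,c_i}(A\rtimes_d B)$, where the induced action on $\mathscr{Z}(\cL(F))z^i$ is transitive by construction. Running verbatim the argument in the proof of Proposition~\ref{amplificationoffinitebyiccvn} on each fiber produces the identification $\cL(F)z^i\cong\mathbb{M}_{n_i}(\C)\otimes\mathbb{D}_{m_i}$ and the advertised decomposition $\cL(G)\cong\bigoplus_i\mathbb{M}_{n_i}(\C)\otimes(\mathbb{D}_{m_i}\rtimes_{\alpha_i,w_i}(A\rtimes_d B))$ with $\alpha_i$ transitive on the atoms of $\mathbb{D}_{m_i}$.

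Next I would restrict attention to $\cL(F\rtimes_{\alpha,c}A)$, viewed naturally inside $\cL(G)$. Restricting the transitive action $\alpha_i$ to the (now not necessarily transitive) subaction of $A$, I decompose the atoms of $\mathbb{D}_{m_i}$ into $A$-orbits, giving $\mathbb{D}_{m_i}=\bigoplus_{t=1}^{t_i}\mathbb{D}_{s_i}^t$ with $A$ acting transitively on each $\mathbb{D}_{s_i}^t$; the stabilizer $A_t^i$ of any atom of $\mathbb{D}_{s_i}^t$ is a finite index subgroup of $A$ (all such stabilizers coincide by the abelianness of $A$, exactly as in the proof of Proposition~\ref{decompositionfinitebyabelian}). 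Applying that same proof on each $(i,t)$-block identifies its contribution to $\mathscr{Z}(\cL(F\rtimes_{\alpha,c}A))$ with $\{x_{t,a}^i u_a:a\in A_t^i\}''\cong\cL_{\eta_t^i}(A_t^i)$ for suitable unitaries $x_{t,a}^i\in\mathscr{U}(\mathbb{D}_{s_i}^t)$ and a 2-cocycle $\eta_t^i$ cohomologous to $w_i|_{A_t^i\times A_t^i}$.

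Finally I would compute $\mathscr{Z}(\cL(G))$ by exploiting the ICC property of $B$. Given $x\in\mathscr{Z}(\cL(G))$, expand $x=\sum_{b\in B}x_b u_b$ with $x_b\in\cL(F\rtimes_{\alpha,c}A)$. Commutation with $u_{b'}$ for $b'\in B$ shifts the $b$-component along $b'$-conjugation in $B\subseteq G$; since $B$ is ICC, any non-trivial $B$-conjugacy class is infinite, so the $\ell^2$-summability constraint forces $x_b=0$ for every $b\neq e$, i.e. $x\in\mathscr{Z}(\cL(F\rtimes_{\alpha,c}A))$. Using the previous step, write $x=\sum_{i,t}\sum_{a\in A_t^i}\lambda_{t,a}^i x_{t,a}^i u_a$. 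Now commutation with $u_{b'}$ for each $b'\in B$ simultaneously permutes the atoms of $\mathbb{D}_{m_i}$ (hence the blocks $\mathbb{D}_{s_i}^t$) via $\alpha_i$ and deforms $a$ via $d_{b'}$; matching these two effects forces the support of $x$ to lie in $A_0=\bigcap_{i,t}A_t^i$, with coefficients glued into the unitaries $x_a^i=\sum_t x_{t,a}^i$. This gives $\mathscr{Z}(\cL(G))=\{(\sum_i x_a^i)u_a:a\in A_0\}''$.

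The main obstacle will be the last step: carefully translating $B$-commutation into the precise intersection $A_0$ and verifying the consistent gluing of the unitaries $x_{t,a}^i$ across the orbit index $t$. The delicate point is that the $B$-action both permutes the blocks (through its action on the atoms of $\mathbb{D}_{m_i}$) and acts on $A$ (through $d$), and these two effects must be tracked in tandem to see which $a\in A$ survive after imposing full centrality.
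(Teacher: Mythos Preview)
Your plan matches the paper's proof almost step for step: decompose $\cL(F)$ into orbits of $A\rtimes_d B$ on $\mathscr Z(\cL(F))$ (Proposition~\ref{amplificationoffinitebyiccvn}/\ref{decompositionfinitebyabelian}), refine into $A$-orbits to describe $\mathscr Z(\cL(F\rtimes_{\alpha,c}A))$, use ICC of $B$ to force $\mathscr Z(\cL(G))$ inside there, and finally impose commutation with $B$ to cut down to $A_0$.

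Two small corrections are worth flagging. First, $A_t^i$ is not simply the stabilizer of an atom of $\mathbb D^t_{s_i}$: as in the proof of Proposition~\ref{decompositionfinitebyabelian}, it is the further subgroup of that stabilizer on which the restricted cocycle $w_{i,t}$ is symmetric (the group called $C_i$ there). Second, since only a cocycle $d$ and no action is given, $A$ is \emph{central} in $A\rtimes_d B$, so commuting with $u_{b'}$ does not ``deform $a$ via $d_{b'}$'' at all. What the relation $yb=by$ actually does is permute the block index $t$ (transitively, since $B$ permutes the $A$-orbits of atoms transitively) while introducing the twist $w_i(a,b)$ versus $w_i(b,a)$; it is this transitive block permutation that forces $\lambda^i_{t,a}=0$ for $a\notin A_0$ and, after absorbing phases into the $x^i_{t,a}$, makes the surviving coefficients independent of $t$.
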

\vspace{2mm}

\begin{proof} By the first part of Proposition \ref{decompositionfinitebyabelian}, we obtain that

\begin{equation*}
    \cL(G)\cong\bigoplus_{i=1}^k\:(\mathbb{M}_{n_i}(\C)\otimes(\mathbb{D}_{m_i}\rtimes_{\alpha_i,w_i}(A\rtimes_d B))),
\end{equation*}

\vspace{2mm}

for cocycle actions $A\rtimes_d B\curvearrowright^{\alpha_i,w_i}\mathbb{D}_{m_i}$ with $\alpha_i$ transitive.

\vspace{2mm}

Let $\{z_t^i:1\leq t\leq m_i\}$ be the atoms of $\mathbb{D}_{m_i}$ and let $\mathscr{O}_1,...,\mathscr{O}_{t_i}$ be the orbits of the restriction of the action $\alpha_i$ to $A$. Since $A$ is normal in $A\rtimes_d B$, the action $A\rtimes_d B\curvearrowright\mathbb{D}_{m_i}$ will permute the orbit set transitively. Indeed, for $g\in A\rtimes_d B$, if $g\cdot z=z'$ for $z\in\mathscr{O}_i$ and $z'\in\mathscr{O}_j$, then for any other element $r\in\mathscr{O}_i$, there exists $a\in A$ with $a\cdot r=z$ and thus, $g\cdot r=a^{-1}g\cdot z=a^{-1}z'\in\mathscr{O}_j$. In particular, this implies these orbits have the same size, say $|\mathscr{O}_t|=s_i$ for all $1\leq t\leq t_i$, and $s_it_i=m_i$. Similarly, since the copy of $B$ in $A\rtimes_dB$ commutes with $A$, and using a similar argument as earlier, $B$ permutes the orbits transitively as well. 

\vspace{2mm}

Since $B$ is ICC, $\mathscr{Z}(\cL(G))\subseteq\cL(F\rtimes_{\alpha,c}A)$ and using the prior proposition,

\begin{align*}
    \mathscr{Z}(\cL(G))&\subseteq\mathscr{Z}\left(\bigoplus_{i=1}^k\bigoplus_{t=1}^{t_i}\mathbb{D}_{s_i}^t\rtimes_{\alpha_i,w_{i,t}}A\right)=\bigoplus_{i=1}^k\bigoplus_{t=1}^{t_i}\:\{x_{t,a}^ia:a\in A_t^i\}''\cong\bigoplus_{i=1}^k\:\bigoplus_{t=1}^{t_i}\cL_{\eta_t^i}(A_t^i),
\end{align*}

\vspace{2mm}

where $\eta_t^i$ are scalar 2-cocycles of the finite index subgroups $A_t^i$ of $A$ for every $1\leq t\leq t_i$. 

\vspace{2mm}

Denote by $z_t^i=\sum_{z\in\mathscr{O}_t}z$ the identity of $\mathbb{D}_{s_i}^t$ for $1\leq t\leq t_i$. Given the transitive action of $B$ on the orbit set ${\mathscr{O}_1, \ldots, \mathscr{O}_{t_i}}$, there exists a transitive action $B\curvearrowright{1, \ldots, t_i}$ where $\alpha_i(b)(z_t^i)=z_{b\cdot t}^i$.

\vspace{2mm}

Now, take $y\in\mathscr{Z}(\cL(F\rtimes_{\alpha,c}(A\rtimes_d B)))$ and let $y=\sum_i\sum_t\sum_{a\in A_t^i}\lambda_{t,a}^ix_{t,a}^ia$ for $\lambda_{t,a}^i\in\C$.  Fix $b\in B$ and observe that

\begin{align*}
    \sum_{i=1}^k\sum_{t=1}^{t_i}\sum_{a\in A_t^i}\lambda_{t,a}^ix_{t,a}^iw_i(a,b)(ab)&=yb=by=\sum_{i=1}^k\sum_{t=1}^{t_i}\sum_{a\in A_t^i}\lambda_{t,a}^i\alpha_i(b)(z_t^i)\alpha_i(b)(x_{t,a}^i)w_i(b,a)(ab)\\
    &=\sum_{i=1}^k\sum_{t=1}^{t_i}\sum_{a\in A_{b^{-1}\cdot t}^i}\lambda_{b^{-1}\cdot t,a}^iz_t^i\alpha_i(b)(x_{b^{-1}\cdot t,a}^i)w_i(b,a)(ab).
\end{align*}

\vspace{2mm}

Multiplying on both sides by $z_t^i$ and cancelling $b$, we obtain

\begin{align}\label{lambdasum}
    z_t^i\sum_{a\in A_t^i}\lambda_{t,a}^ix_{t,a}^iw_i(a,b)a=\sum_{a\in A_{b^{-1}\cdot t}^i}\lambda_{b^{-1}\cdot t,a}^iz_t^i\alpha_i(b)(x_{b^{-1}\cdot t,a}^i)w_i(b,a)a
\end{align}

\vspace{2mm}

for all $b\in B$, $1\leq t\leq t_i$ and $1\leq i\leq k$. Now, the prior equality can only be true whenever $\lambda_{t,a}^i=0$ for all $a\in (A_t^i\setminus A_{b^{-1}\cdot t}^i)\cup (A_{b^{-1}\cdot t}^i\setminus A_t^i)$ for all $b,t,i$. Since this holds for all elements in $B$ and $B$ acts transitively, $\lambda_{t,a}^i=0$ for all $a\notin\cap_{t,i}A_t^i=:A_0$. Moreover, from \eqref{lambdasum}

\begin{equation}\label{lambda2}
    \lambda_{t,a}^ix_{t,a}^iw_i(a,b)=\lambda_{b^{-1}\cdot t,a}^i\alpha_i(b)(x_{b^{-1}\cdot t,a}^i)w_i(b,a),
\end{equation}

\vspace{2mm}

for all $i,t,b$. Taking norms gives us $|\lambda_{t,a}^i|=|\lambda_{b^{-1}\cdot t,a}^i|$. Since $B$ acts transitively, $|\lambda_{t,a}^i|=:\mu_a^i$. By writing $\lambda_{t,a}^i=\frac{\lambda_{t,a}^i}{\mu_a^i}\mu_a^i$, for $\lambda_{t,a}^i\neq 0$, and by changing $x_{t,a}^i$ to $\frac{\lambda_{t,a}^i}{\mu_a^i}x_{t,a}^i$, equation \eqref{lambda2} becomes

\begin{equation}\label{cancelambda}
    x_{t,a}^iw_i(a,b)=\alpha_i(b)(x_{b^{-1}\cdot t,a}^i)w_i(b,a)\quad\text{for all }i,t,a,b.
\end{equation}

\vspace{2mm}

Let $u\in\mathscr{Z}(\cL(G))$ with $u_{t,a}^i\in\C$ as in the case of $y$ with $\lambda_{t,a}^i$. Observe that by repeating the same process with $u$, we obtain that the constants $u_{t,a}^i$ satisfy equation \eqref{lambda2}. From this and combining it with equation \eqref{cancelambda} we obtain that $u_{t,a}^i=u_{b^{-1}\cdot t,a}^i$ for all $a,b,t$. Thus, by letting $u_a=u_{t,a}^i$ we obtain

\begin{equation*}
    u=\sum_{i=1}^k\sum_{t=1}^{t_i}\sum_{a\in A_t^i}u_{t,a}^ix_{t,a}^ia=\sum_{a\in A_0}u_a\left(\sum_{i,t}x_{t,a}^i\right)a\in\left\{\left(\sum_{i,t}x_{t,a}^i\right)a:a\in A_0\right\}''.
\end{equation*}
\end{proof}

\section{Intertwining of von Neumann algebras and control of their quasi-normalizers}

\subsection{Popa's Intertwining Techniques.} We recall from \cite[Theorem 2.1, Corollary 2.3]{popa06} Popa’s intertwining-by-bimodules theory.

\begin{thm}(\cite{popa06})\label{intertwining} Let $(\mathcal{M},\tau)$ be a separable tracial von Neumann algebra with $\mathcal{N}_1,\mathcal{N}_2\subseteq \cM$ von Neumann subalgebras, where the inclusion $\mathcal{N}_1 \subseteq \cM$ is not necessarily unital. Then, the following are equivalent:

\begin{enumerate}
    \item[(1)] There exists non-zero projections $p\in \mathcal{N}_1$, $q\in \mathcal{N}_2$, a $*$-homomorphism $\Theta:p\mathcal{N}_1p\to q\mathcal{N}_2q$ and a non-zero partial isometry $v\in q\mathcal{M} p$ such that $\Theta(x)v=vx$ for all $x\in p\mathcal{N}_1p$.
    \item[(2)] For every subgroup $\mathcal G\subseteq \mathscr U(\mathcal N_1)$ with $\mathcal G''=\mathcal N_1$  there is no sequence $(u_n)_n\subset\mathcal G$ satisfying $\|\mathbb{E}_{\cN_2}(xu_ny)\|_2\to 0$ for all $x,y\in\cM$.
\end{enumerate}
\end{thm}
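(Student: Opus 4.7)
The statement is Popa's celebrated intertwining-by-bimodules theorem, and my plan is to follow his original argument from \cite{popa06}, whose backbone is the Jones basic construction $\langle \cM, e_{\cN_2}\rangle$ equipped with its canonical semifinite trace $\mathrm{Tr}$, together with a convex-averaging argument inside it. For the direction $(1)\Rightarrow(2)$ I would argue by contrapositive. Given intertwiner data $p,q,\Theta,v$ with $\Theta(x)v=vx$ for all $x\in p\cN_1 p$, suppose some $(u_n)\subset\cG$ satisfies $\|\mathbb{E}_{\cN_2}(x u_n y)\|_2\to 0$ for every $x,y\in\cM$. Specialize $x=v^{*}$ and $y=v$, and use the intertwining identity on $p u_n p$ to rewrite $v^{*} u_n v$ in a form where its conditional expectation onto $\cN_2$ is explicitly $\Theta(pu_np)\,v^{*}v$, with $v^{*}v\in (p\cN_1 p)'\cap p\cM p$. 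A short calculation then bounds $\|\mathbb{E}_{\cN_2}(v^{*} u_n v)\|_2$ below by a positive constant depending only on $\|v\|_2$, contradicting the assumed vanishing.

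The substantive direction $(2)\Rightarrow(1)$ I would also establish by contrapositive. Assume (1) fails. Inside $\langle\cM,e_{\cN_2}\rangle$, form the weakly closed convex hull
\[
\cK \;=\; \overline{\mathrm{co}}^{\,w}\{u\,e_{\cN_2}\,u^{*}:u\in\cG\}.
\]
By uniqueness of the element $a\in\cK$ of minimal $\|\cdot\|_{2,\mathrm{Tr}}$-norm and the $\cG$-invariance of $\cK$, such $a$ commutes with $\cG''=\cN_1$. If $a\neq 0$, any non-trivial spectral projection $f$ of $a$ lies in $\cN_1'\cap\langle\cM,e_{\cN_2}\rangle$ and satisfies $\mathrm{Tr}(f)<\infty$; the structure theory of finite projections in the basic construction then produces, via polar decomposition in $f\langle\cM,e_{\cN_2}\rangle f$, a partial isometry $v$ and a $*$-homomorphism $\Theta:p\cN_1 p\to q\cN_2 q$ satisfying (1), contradicting the standing hypothesis. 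Hence $a=0$, and a Hahn--Banach plus diagonalization argument upgrades this to a sequence $(u_n)\subset\cG$ with $\|\mathbb{E}_{\cN_2}(xu_ny)\|_2\to 0$ for all $x,y\in\cM$, as required.

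The main technical obstacle is the extraction of the finite-trace projection $f\in\cN_1'\cap\langle\cM,e_{\cN_2}\rangle$ from the averaged element $a$ and its conversion into genuine intertwining data; this relies on normality of $\mathrm{Tr}$, finiteness of $e_{\cN_2}$, and careful handling of polar decomposition in the basic construction. Since the result is by now standard and used in this paper only as a black box, in practice I would simply cite \cite[Theorem 2.1, Corollary 2.3]{popa06} as the authors do, rather than reproduce the full argument.
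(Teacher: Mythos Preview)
The paper does not prove this theorem at all; it is stated with attribution to \cite{popa06} and then used throughout as a black box, exactly as you anticipate in your final paragraph. Your sketch of Popa's original basic-construction/convex-averaging argument is essentially the standard one, so there is nothing to compare.
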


If one of these equivalent conditions hold, we say that a corner of $\cN_1$ embeds into $\cN_2$ inside $\cM$. We denote it as $\cN_1\prec_\cM\cN_2$. If $\mathcal{N}_1p'\prec_{\cM}\mathcal{N}_2$ for any nonzero projection $p'\in\mathcal{N}_1'\cap \cM$, we write $\mathcal{N}_1\prec_{\cM}^s\mathcal{N}_2$.

\begin{prop}\label{intlower}Let $\cM$ be a finite von Neumann algebra and let $\cB, \cD \subseteq \cC\subseteq \cM$ be von Neumann subalgebras such that $\mathscr N_{\cM}(\cD)''=\cM$. If $\cB\prec_\cM \cD$ then  $\cB\prec_\cC \cD$. \end{prop}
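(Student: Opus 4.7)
The plan is to use Popa's partial-isometry characterization (1) from Theorem 3.1. Unpacking $\cB\prec_\cM\cD$ produces nonzero projections $p\in\cB$ and $q\in\cD$, a $\ast$-homomorphism $\theta\colon p\cB p\to q\cD q$, and a nonzero partial isometry $w\in q\cM p$ with $\theta(b)w=wb$ for all $b\in p\cB p$. The strategy is to manufacture analogous intertwining data whose partial isometry actually lives in $\cC$: first twist $w$ by a carefully chosen normalizer of $\cD$, then average it into $\cC$ via $\mathbb{E}_\cC$.

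The heart of the argument, and the step I expect to be the only nontrivial one, is to exhibit a unitary $v\in\mathscr N_\cM(\cD)$ with $\mathbb{E}_\cC(vw)\neq 0$. This is precisely where the hypothesis $\mathscr N_\cM(\cD)''=\cM$ enters. If $\mathbb{E}_\cC(vw)=0$ for every such $v$, then by linearity and the joint $\sigma$-weak continuity of $\mathbb{E}_\cC$ and right multiplication by $w$, the identity $\mathbb{E}_\cC(yw)=0$ persists for every $y$ in the $\sigma$-weak closure of the linear span of unitary normalizers of $\cD$; by assumption this closure is all of $\cM$. Taking $y=w^*$ forces $\mathbb{E}_\cC(w^*w)=0$, contradicting the faithfulness of $\mathbb{E}_\cC$ on the positive nonzero element $w^*w$.

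Given such a $v$, I form the twisted data $q':=vqv^*\in\cD$, $\theta'(b):=v\theta(b)v^*\colon p\cB p\to q'\cD q'$, which is a $\ast$-homomorphism because $v$ normalizes $\cD$, and $\tilde w:=vw\in q'\cM p$. A direct substitution gives $\theta'(b)\tilde w=v\theta(b)w=vwb=\tilde w b$. Now set $w':=\mathbb{E}_\cC(\tilde w)\in q'\cC p$, which is nonzero by construction. Since $\theta'(b)$ and $b$ both belong to $\cC$, they commute with $\mathbb{E}_\cC$, so applying $\mathbb{E}_\cC$ to the intertwining identity for $\tilde w$ yields $\theta'(b)w'=w'b$ for every $b\in p\cB p$.

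To finish, I would polar-decompose $w'=u|w'|$ inside $\cC$. Using $\theta'(b^*b)=q'$ and $q'w'=w'$ for unitary $b\in p\cB p$, the standard computation $b^*|w'|^2 b=(w'b)^*(w'b)=w'^*\theta'(b^*b)w'=|w'|^2$ shows that $|w'|$ lies in the relative commutant of $p\cB p$ inside $p\cC p$. Consequently $\theta'(b)u|w'|=u|w'|b=ub|w'|$ promotes to $\theta'(b)u=ub$ on the support of $|w'|$ (and trivially off it), so $u\in q'\cC p$ is a nonzero partial isometry witnessing $\cB\prec_\cC\cD$ by Theorem 3.1(1).
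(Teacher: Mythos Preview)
Your proof is correct but takes a genuinely different route from the paper. The paper works with characterization (2) of Theorem~\ref{intertwining}: starting from finitely many $x_i,y_i\in\cM$ and $d>0$ with $\sum_i\|\mathbb{E}_\cD(x_iby_i)\|_2^2\geq d$ for all $b\in\mathscr U(\cB)$, it $\|\cdot\|_2$-approximates the $x_i,y_i$ by normalizers $u_i,v_i\in\mathscr N_\cM(\cD)$, then uses the identity $\mathbb{E}_\cD(u_ibv_i)=u_i\mathbb{E}_\cD(bv_iu_i)u_i^*$ together with $\mathbb{E}_\cD=\mathbb{E}_\cD\circ\mathbb{E}_\cC$ and $b\in\cC$ to rewrite the sum as $\sum_i\|\mathbb{E}_\cD(b\,\mathbb{E}_\cC(v_iu_i))\|_2^2$, which witnesses $\cB\prec_\cC\cD$ directly. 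Your approach instead works with characterization (1): you keep the partial isometry $w$ and use a \emph{single} normalizer $v$ (found by a density/faithfulness argument) to conjugate the target data, then push $vw$ into $\cC$ via $\mathbb{E}_\cC$, exploiting that both $\theta'(b)$ and $b$ lie in $\cC$. The paper's route is shorter and avoids the polar-decomposition cleanup, while yours is more constructive and yields an explicit intertwiner in $\cC$ directly from the original one; the two arguments share the same underlying mechanism, namely that normalizers of $\cD$ let one replace ambient $\cM$-data by $\cC$-data because $\cB,\cD\subseteq\cC$.
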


\begin{proof} Using Theorem \ref{intertwining} and  $\cB\prec_\cM \cD$, we can find $x_1,...,x_n,y_1,...,y_n\in \cM$ and $d>0$ for which

\begin{equation*}
    \sum_{i=1}^n\|\mathbb{E}_{\cD}(x_iby_i)\|_2^2\geq d,\quad\text{for all } b\in\mathscr{U}(\cB).
\end{equation*}

\vspace{2mm}

Since $\cD$ is regular in $\cM$, using basic  $\|\cdot\|_2$-approximations of $x_i$ and $y_j$, we can find $u_1,...,u_m,v_1,...,v_m\in \mathscr{N}_\cM(\cD)$ and $d'>0$ such that

\begin{equation*}
    \sum_{i=1}^m\|\mathbb{E}_{\cD}(u_ibv_i)\|_2^2\geq d',\text{ for all }b\in\mathscr{U}(\cB).
\end{equation*}

\vspace{2mm}

Observe that $\mathbb{E}_{\cD}(u_ibv_i)=\mathbb{E}_{u_i\cD u_i^*}(u_ibv_i)=u_i\mathbb{E}_\cD(bv_iu_i)u_i^*$. Hence, for all $b\in \mathscr{U}(\cB)$, we have

\begin{align*}
    d'&\leq \sum_{i=1}^m\|\mathbb{E}_{\cD}(u_ibv_i)\|_2^2=\sum_{i=1}^n\|u_i\mathbb{E}_{\cD}(bv_iu_i) u_i^*\|_2^2=\sum_{i=1}^m\|\mathbb{E}_{\cD}(\mathbb{E}_\cC(bv_iu_i))\|_2^2=\sum_{i=1}^m\|\mathbb{E}_{\cD}(b\mathbb{E}_\cC(v_iu_i))\|_2^2.
\end{align*}

\vspace{2mm}

By Theorem \ref{intertwining}, $\cB\prec_\cC \cD$.
\end{proof}

\begin{cor}\label{regularcenter} Let $\cM$ be a finite von Neumann algebra and let $\mathscr Z(\cM)\subseteq \cB\subseteq \cM$ be a von Neumann subalgebra. If $\cB\prec_\cM\mathscr Z(\cM)$ then $\cB\prec_\cB\mathscr Z(\cM)$.\end{cor}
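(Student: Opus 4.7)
The plan is to deduce this corollary directly from Proposition \ref{intlower} by choosing the intermediate algebra $\mathcal{C} = \mathcal{B}$ and the target $\mathcal{D} = \mathscr{Z}(\mathcal{M})$. Under these choices the chain $\mathcal{B}, \mathscr{Z}(\mathcal{M}) \subseteq \mathcal{B} \subseteq \mathcal{M}$ required by the proposition is guaranteed by the hypothesis $\mathscr{Z}(\mathcal{M}) \subseteq \mathcal{B}$.

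The only nontrivial hypothesis left to verify is that $\mathscr{N}_{\mathcal{M}}(\mathscr{Z}(\mathcal{M}))'' = \mathcal{M}$. But this is automatic: every unitary $u \in \mathscr{U}(\mathcal{M})$ satisfies $u z u^{*} = z$ for every $z \in \mathscr{Z}(\mathcal{M})$, so $\mathscr{U}(\mathcal{M}) \subseteq \mathscr{N}_{\mathcal{M}}(\mathscr{Z}(\mathcal{M}))$, and consequently $\mathscr{N}_{\mathcal{M}}(\mathscr{Z}(\mathcal{M}))'' = \mathscr{U}(\mathcal{M})'' = \mathcal{M}$. Thus the center is (trivially) regular in $\mathcal{M}$.

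Given the assumption $\mathcal{B} \prec_{\mathcal{M}} \mathscr{Z}(\mathcal{M})$, Proposition \ref{intlower} applied with $\mathcal{C} = \mathcal{B}$ and $\mathcal{D} = \mathscr{Z}(\mathcal{M})$ yields $\mathcal{B} \prec_{\mathcal{B}} \mathscr{Z}(\mathcal{M})$, which is exactly the desired conclusion. There is no real obstacle here; the corollary is essentially a one-line specialization of the preceding proposition, recorded separately because it will be convenient to invoke later when the ambient algebra is replaced by $\mathcal{B}$ itself (e.g.\ when pushing intertwining statements into smaller subalgebras containing the center).
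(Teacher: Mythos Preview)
Your proof is correct and follows exactly the same approach as the paper: apply Proposition \ref{intlower} with $\mathcal{D} = \mathscr{Z}(\mathcal{M})$ and $\mathcal{C} = \mathcal{B}$. You even spell out the (trivial) verification that $\mathscr{Z}(\mathcal{M})$ is regular in $\mathcal{M}$, which the paper leaves implicit.
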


\begin{proof} This follows from Proposition \ref{intlower} by letting $\cD= \mathscr Z(\cM)$ and $\cC= \cB$. 
\vspace{2mm}
\end{proof}

\begin{thm} \label{tinyinclusion} Let $\mathscr Z(\mathcal M)\subseteq \mathcal A \subseteq\mathcal M$ be finite separable von Neumann algebras where $\mathcal A$ is regular and abelian. If we have that $\mathcal A \prec^s \mathscr Z(\mathcal M)$ then one can find a countable set of orthogonal projections  $\{r_n\}_{n\in \mathbb N}\subset \mathcal A $ such that $\sum_n r_n =1$ and $\mathcal A =\oplus_n \mathscr Z(\mathcal M)r_n$.
\end{thm}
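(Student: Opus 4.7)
The plan is to combine Popa's intertwining-by-bimodules with a maximality argument, leveraging the fact that in the abelian setting Popa's partial isometries are forced to be projections and the intertwining relation collapses to a clean identity of subalgebras.

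First, I would reduce the ambient from $\cM$ to $\cA$. Since $\cA$ is abelian, every projection $p_0 \in \cA$ lies in $\cA'\cap \cM$, so the hypothesis $\cA \prec^s_{\cM} \mathscr{Z}(\cM)$ gives $\cA p_0 \prec_{\cM} \mathscr{Z}(\cM)$ for every non-zero $p_0 \in \cA$. Because $\mathscr{Z}(\cM)$ is normalized by all of $\cM$, Proposition \ref{intlower} applied with $\cB = \cA p_0$, $\cD = \mathscr{Z}(\cM)$ and $\cC = \cA$ upgrades this to $\cA p_0 \prec_{\cA} \mathscr{Z}(\cM)$.

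Next I would establish the key building block: for every non-zero $p_0 \in \cA$ there exists a non-zero projection $v \in \cA$ with $v \leq p_0$ and $\cA v = \mathscr{Z}(\cM)v$. Applying Theorem \ref{intertwining} to $\cA p_0 \prec_{\cA} \mathscr{Z}(\cM)$ produces projections $p \leq p_0$ in $\cA$ and $q \in \mathscr{Z}(\cM)$, a $\ast$-homomorphism $\Theta : p\cA p \to q\mathscr{Z}(\cM)q$, and a non-zero partial isometry $v \in q\cA p$ with $\Theta(x) v = vx$ for all $x \in p\cA p$. Since $\cA$ is abelian, $v$ is a projection with $v \leq p \leq p_0$, and $vx = xv$; combined with the intertwining relation this yields $xv = \Theta(x) v \in \mathscr{Z}(\cM)v$ for all $x \in p\cA p$. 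Hence $\cA v \subseteq \mathscr{Z}(\cM) v$, and the reverse inclusion is immediate from $\mathscr{Z}(\cM) \subseteq \cA$, so $\cA v = \mathscr{Z}(\cM)v$.

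Finally I would close the argument by exhaustion. Let $\mathcal{F}$ be the set of projections $r \in \cA$ satisfying $\cA r = \mathscr{Z}(\cM)r$, and by Zorn's lemma pick a maximal orthogonal family $\{r_n\} \subset \mathcal{F}$; this family is automatically countable because $\cM$ is finite (hence $\sum_n \tau(r_n) \leq 1$). Set $r := \sum_n r_n$. If $1 - r$ were non-zero, applying the building block step to $p_0 = 1 - r$ would produce a non-zero $v \leq 1-r$ in $\mathcal{F}$, contradicting maximality; hence $\sum_n r_n = 1$. The decomposition $\cA = \bigoplus_n \mathscr{Z}(\cM)r_n$ then follows because any $a \in \cA$ is the $\ell^\infty$-sum of its cuts $ar_n \in \cA r_n = \mathscr{Z}(\cM) r_n$. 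The main obstacle is verifying Step 2 carefully, since it is what converts the abstract Popa intertwining into the concrete algebraic identity $\cA v = \mathscr{Z}(\cM)v$ needed to run the exhaustion; once this is in hand, Steps 1 and 3 are essentially formal.
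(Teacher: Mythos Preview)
Your argument is correct, modulo one small slip: a partial isometry $v$ in an abelian algebra need not itself be a projection (e.g.\ $v=\lambda e$ with $|\lambda|=1$ and $e$ a projection), but $r:=v^*v=vv^*$ is, with $r\leq p\leq p_0$; multiplying $\Theta(x)v=vx=xv$ by $v^*$ gives $\Theta(x)r=xr$, which is exactly what you need.

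Your route differs from the paper's in a useful way. The paper first recasts $\cA\prec^s\mathscr Z(\cM)$ via \cite[Lemma 2.4]{dhi19} as $\cA z\prec\mathscr Z(\cM)$ for all \emph{central} projections $z$, extracts from each such $z$ one projection $r\in\cA z$ with $\cA r=\mathscr Z(\cM)r$, and then invokes the regularity of $\cA$ to propagate: conjugating $r$ by unitaries in $\mathscr N_{\cM}(\cA)$ covers the central support $s=z(r)\in\mathscr Z(\cM)$ by countably many such projections, after which a maximality argument over $\mathscr Z(\cM)$ concludes. You instead use the definition of $\prec^s$ at face value: since $\cA$ is abelian, every $p_0\in\cA$ already lies in $\cA'\cap\cM$, so the intertwining step runs below an \emph{arbitrary} projection of $\cA$ rather than only a central one, and Zorn's lemma on orthogonal families in $\cA$ finishes immediately. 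This is more elementary and, notably, never appeals to the regularity hypothesis on $\cA$; so your argument in fact establishes a slightly stronger statement than the one written.
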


\begin{proof} First note that by \cite[Lemma 2.4]{dhi19} the condition $\mathcal A \prec^s \mathscr Z(\mathcal M)$ is equivalent to $\mathcal A z \prec \mathscr Z(\mathcal M)$ for all projections $z\in \mathscr Z(\mathcal M)$. By Corollary \ref{regularcenter} this is further equivalent to $\mathcal A z \prec_{\mathcal A} \mathscr Z(\mathcal M)$ for all projections $z\in \mathscr Z(\mathcal M)$.

\vspace{2mm}

Next we show for every projection $0\neq z\in \mathscr Z(\mathcal M)$ there exist a countable family of orthogonal projections  $0\neq r_i\in \mathcal A z$ such that $s:=\sum_n r_n \in \mathscr Z(\mathcal M)z$ and   

\begin{equation}\label{equalcoreners100}
    \mathcal A s=\oplus_n \mathscr Z(\mathcal M)r_n.
\end{equation} 

\vspace{2mm}

Since $\mathcal A z \prec \mathscr Z(\mathcal M)$ there are nonzero projections  $p\in \mathcal A z$, $q\in\mathscr {Z}(\mathcal M)$, a partial isometry $v\in \mathcal A$ and a $\ast$-isomorphism onto its image $\Psi:\mathcal A p\to\mathscr{Z}(\mathcal M)q$ such that $\Psi(x)v=vx$ for all $x\in \mathcal A p$. Since $\mathcal{A}$ is abelian, multiplying by $v$  we further get $\Psi(x)vv^*=xvv^*$ for every $x\in \mathcal A p$. Letting $r=vv^*\in \mathcal A$, this further shows that $\mathcal Ar \subseteq\mathscr{Z}(\mathcal M )r$. Since $\mathscr Z(\mathcal M)\subseteq \mathcal A$ we get that 

\begin{equation}\label{equalcorners101}
    \mathcal Ar =\mathscr{Z}(\mathcal M )r.
\end{equation}

\vspace{2mm}

By relation \eqref{equalcorners101}, for any unitary $u\in \mathscr N_{\mathcal M}(\mathcal A)$ we have that $\mathcal A (uru^*)=\mathscr Z(\mathcal M) (uru^*)$. Since $\mathcal A$ is abelian we have $r \vee (uru^*) = (r- r (uru^*)) +( uru^*- r(uru^*) )+ r(uru^*)$, where $(r- r (uvu^*))$, $uru^*- r(uru^*)$,  $r(uru^*)$ are orthogonal projections in $\mathcal A$. Using this inductively one can find  a countable set of projections $\{r_n\}_n \subset \mathcal A$ such that the central support $s:=z(r)$ of $r$ satisfies $s=\vee_{u\in \mathscr N_{\mathcal M}(\mathcal A)} uru^*= \sum_n r_n$ and $\mathcal A r_n = \mathscr Z(\mathcal M)r_n$  for all $n$. This clearly implies \eqref{equalcoreners100}. 

\vspace{2mm}

The conclusion follows from \eqref{equalcoreners100}, via a standard maximality argument.
\end{proof}

\subsection{Quasi-normalizers}\label{qn} Let $\cN\subset \cM$ be an inclusion of von Neumann algebras. The set $\mathscr{QN}_{\cM}(\cN)\subset \cM$ is defined as the collection of elements $x\in \cM$ for which there exists $x_1,...,x_n\in \cM$ satisfying the following two relations (see \cite[Definition 4.8]{popa99}):

\begin{equation*}
    x\cN\subset \sum_{i=1}^n\cN x_i\quad\text{and}\quad \cN x\subset \sum_{i=1}^nx_i\cN.
\end{equation*}

\vspace{2mm}

Observe that $\mathscr{QN}_{\cM}(\cN)$ is a $*$-subalgebra of $\cM$ containing $\cN$. Its weak operator closure is called the \textit{quasi-normalizer of $\cN$ inside $\cM$}. By construction both $\cN$ and $\cN'\cap \cM$ are subalgebras of $\mathscr{QN}_{\cM}(\cN)''$.

\vspace{2mm}

For further use, we record some formulae for quasi-normalizers of compressions of von Neumann algebras.

\begin{lem}\label{qnrem} \cite{popa06,fgs10} For any $\cQ\subset \cM$ tracial von Neumann algebras the following hold:

\begin{enumerate}
    \item[(1)] For any projection $q\in \cQ'\cap \cM$, $\mathscr{QN}_{q\cM q}(\cQ q )''=q\mathscr{QN}_{\cM}(\cQ)''q$. 
    \item[(2)] For any projection $q\in \cQ$, $\mathscr{QN}_{q\cM q}(q\cQ q)''=q\mathscr{QN}_{\cM}(\cQ)''q$. 
\end{enumerate}
\vspace{2mm}
\end{lem}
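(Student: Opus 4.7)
The plan is to handle the two parts separately, with (1) being essentially formal and (2) requiring Jones' basic construction machinery, following the arguments of Popa \cite{popa06} and Fang--Gao--Smith \cite{fgs10}.

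For part (1), I would exploit that $q \in \cQ' \cap \cM$ commutes with every element of $\cQ$, making the bimodule conditions that define the quasi-normalizer transfer transparently between $\cM$ and $q\cM q$. For the inclusion $q\mathscr{QN}_\cM(\cQ)''q \subseteq \mathscr{QN}_{q\cM q}(\cQ q)''$, I would take $x \in \mathscr{QN}_\cM(\cQ)$ with witnesses $x_1,\dots, x_n$ satisfying $x\cQ \subseteq \sum_i \cQ x_i$ and compute directly $qxq\cdot(\cQ q) = q(x\cQ)q \subseteq \sum_i (\cQ q)(qx_iq)$ using $\cQ q = q\cQ$; symmetrically on the other side. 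Conversely, given $y \in \mathscr{QN}_{q\cM q}(\cQ q)$ with witnesses $y_i \in q\cM q$, the same commutation yields $y\cQ = y\cdot q\cQ = y\cdot \cQ q \subseteq \sum (\cQ q) y_i \subseteq \sum \cQ y_i$, placing $y \in \mathscr{QN}_\cM(\cQ) \cap q\cM q$. Passing to weak operator closures in both directions gives (1).

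For part (2) the naive approach breaks down: expressions $qcx_iq$ with $c \in \cQ$ split as $(qcq)(qx_iq) + qc(1-q)x_iq$, and the second summand does not decompose into $(q\cQ q)$-left-module form because $q \notin \cQ' \cap \cM$. The standard remedy is to pass to the Jones basic construction. Setting $e := e_\cQ \in B(L^2\cM)$ with canonical semifinite trace $\widehat\tau$ on $\langle\cM, e\rangle$, a short computation using $\mathbb E_\cQ(qxq) = q\mathbb E_\cQ(x) q$ (valid because $q \in \cQ$) shows $qeq$ acts on $qL^2(\cM)q \cong L^2(q\cM q)$ as the conditional expectation onto $L^2(q\cQ q)$, yielding the identification $e_{q\cQ q} = qeq$ and hence $\langle q\cM q, e_{q\cQ q}\rangle = q\langle\cM, e\rangle q$. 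Popa's bimodule characterization (see \cite[Sec.~1.4]{popa99}) then says that $x \in \mathscr{QN}_\cM(\cQ)''$ if and only if the orthogonal projection in $\langle \cM, e\rangle$ onto the right $\cQ$-module $\overline{x\cdot L^2\cQ} \subseteq L^2\cM$ has finite $\widehat\tau$-trace. Applying this characterization to both inclusions $\cQ \subseteq \cM$ and $q\cQ q \subseteq q\cM q$, and using that compression with $q$ preserves finiteness of the $\widehat\tau$-trace in the reduced algebra $q\langle\cM, e\rangle q$, both directions of (2) follow.

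The main obstacle is the precise verification of the identification $\langle q\cM q, e_{q\cQ q}\rangle \cong q\langle\cM, e\rangle q$ together with the compatibility of the $\widehat\tau$-finite trace condition under compression; concretely, one must check that the $q\cQ q$-bimodule generated by $qxq$ is finitely generated if and only if the corresponding $\cQ$-bimodule for $x$, cut down by $q$ on both sides, is finitely generated. Once this dictionary is in place, the equality in (2) is a mechanical consequence of Popa's bimodule characterization, and I would simply cite \cite{popa06, fgs10} for these routine technical verifications rather than reproduce them in full.
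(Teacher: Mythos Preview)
The paper does not supply its own proof of this lemma: it is stated with the citation \cite{popa06,fgs10} and immediately followed by the next subsection, so there is nothing to compare against beyond the references themselves. Your sketch is correct and follows the standard route taken in those references---part (1) via direct manipulation using $q\cQ=\cQ q$, and part (2) via the basic construction identification $\langle q\cM q, e_{q\cQ q}\rangle \cong q\langle\cM,e_\cQ\rangle q$ together with Popa's finite-trace bimodule characterization of the quasi-normalizer.
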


\subsection{Wreath-like product groups} Let $A, B$ be arbitrary groups, $I$ an abstract set, and $B\curvearrowright I$ a (left) action of $B$ on $I$. A group $G$ is called a \textit{wreath-like product} of $A$ and $B$ corresponding to the action $B\curvearrowright I$ if it is an extension of the form $1\to \oplus_{i\in I}A_i\to G\xrightarrow{\epsilon}B\to 1$ such that $A_i\cong A$ and $gA_ig^{-1}=A_{\epsilon(g)i}$ for every $i\in I$ and $g\in G$. The set of all wreath-like products of $A$ and $B$ is denoted by $\mathcal{WR}(A,B\curvearrowright I)$. When $I=B$ and $B$ acts on itself by left translation, we denote all wreath-like products by $\mathcal{WR}(A,B)$. As remarked in \cite{cdi22}, $G\in\mathcal{WR}(A,B\curvearrowright I)$ if and only if there exists a cocycle action $B\curvearrowright^{\alpha,c}A^{(I)}$ such that $\alpha_b(A_i) = A_{b\cdot i}$ and $G\cong A^{(I)}\rtimes_{\alpha,c}B$. 

\vspace{2mm}

Moreover, if $(\cM,\tau)$ is a tracial von Neumann algebra and $B$ is a group, the von Neumann algebra $\cN$ is said to be a \textit{wreath-like product of $\cM$ corresponding to an action $B\curvearrowright I$} if it is isomorphic to $\cM^{(I)}\rtimes_{\beta,c}B$ for a cocycle action $B\curvearrowright^{\beta,c}\cM^{(I)}$ with $\beta_b(\cM^i)=\cM^{b\cdot i}$ for $b\in B$, $i\in I$. In \cite[Example 3.2]{cdi22} one can see that for $G\in\mathcal{WR}(A,B\curvearrowright I)$ we have $\cL(G)\cong \cL(A)^{(I)}\rtimes_{\alpha,c}B$. 

\vspace{2mm}

\begin{defn}\label{bounded2cocycle}Following \cite{amcos23} we say that a group $W\in\mathcal{W}\mathcal{R}_b(C,D\curvearrowright I)$  if $W\in\mathcal{W}\mathcal{R}(C,D\curvearrowright I)$ and the natural $2$-cocycle $c: D\times D \rightarrow C^{(I)}$ associated with the wreath-like short exact sequence satisfies the following uniform bounded cardinality condition on supports:  

\begin{equation*}
\sup_{x,y\in D} |\text{supp}(c(x,y))|<\infty.
\end{equation*} 
\end{defn}
\vspace{2mm}

We continue with a few results on controlling quasi-normalizers of certain subalgebras of the core of a wreath-like product von Neumann algebra. These results are in the spirit of \cite[Theorem 3.1]{popa06} or \cite[Lemma 4.1]{vaes08} and for the sake of completeness we include a few proofs along these lines.

\begin{thm}\label{controlincore} Let $G\in \mathcal{WR}(A,B\curvearrowright I)$ for nontrivial groups $A, B$ with associated cocycle action $B\curvearrowright^{\alpha,c}A^{(I)}$, and let $\cN$ be a tracial von Neumann algebra. Assume that $G\curvearrowright^{\sigma} (\cN, \tau) $ is a trace-preserving action and denote by $\cM=\cN\rtimes_{\sigma} G$ the corresponding crossed product von Neumann algebra. Let $F\subset I$ be a finite set and let $p\in \cN\rtimes A^F$ be a projection. Let $a_n \in \mathscr{U}(p (\cN\rtimes A^F) p)$ be a sequence of unitaries such that for every $x_1,x_2 \in \cN\rtimes  A^{(I)}$ and every subset $K\subsetneq F$ we have  

\begin{equation}\label{notintertwiningNrtimesAk}
    \lim_{n\rightarrow \infty}\| \mathbb{E}_{\cN\rtimes A^K}(x_1 a_n x_2)\|_2 = 0.
\end{equation}

\vspace{2mm}

Then for every $y_1,y_2\in \cM$ such that $\mathbb{E}_{\cN\rtimes A^{(I)} \text{Norm}(F) }(y_1)=\mathbb{E}_{\cN\rtimes A^{(I)} \text{Norm}(F)}(y_2)=0$, we have that 

\begin{equation}\label{notintertwiningNrtimesAF}
    \lim_{n\rightarrow \infty}\| \mathbb{E}_{\cN\rtimes A^F}(y_1 a_n y_2)\|_2 = 0.
\end{equation}

\vspace{2mm}

Here, for every subset $F\subseteq I$ we have denoted by ${\text{Norm}}(F)= \{g \in B \,:\, gF =F\}$.
\end{thm}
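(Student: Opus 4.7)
The plan is to reduce the conclusion \eqref{notintertwiningNrtimesAF} to the hypothesis \eqref{notintertwiningNrtimesAk} via an explicit Fourier computation over $A^{(I)}$. By multilinearity, $\|\cdot\|_2$-density of $\cN\text{-span}\{u_g:g\in G\}$, and the uniform bound $\|a_n\|_\infty\le 1$ (used to justify the approximation), it suffices to treat $y_1=n_1 u_{g_1}$ and $y_2=n_2 u_{g_2}$ with $n_i\in\cN$ and $g_i\in G$ satisfying $\epsilon(g_i)\notin\text{Norm}(F)$. This is precisely what $\mathbb E_{\cN\rtimes A^{(I)}\text{Norm}(F)}(y_i)=0$ encodes once one observes that $\cN\rtimes A^{(I)}\text{Norm}(F)=\cN\rtimes \epsilon^{-1}(\text{Norm}(F))$. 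A direct crossed-product computation then yields
$$
y_1 a_n y_2 \;=\; n_1\sigma_{g_1}(a_n)\sigma_{g_1 g_2}(n_2)\,u_h \;=:\; z_n u_h,\qquad h:=g_1 g_2,
$$
with $z_n\in\cN\rtimes A^{F_1}$ for $F_1:=\epsilon(g_1)F\neq F$.

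If $\epsilon(h)\neq e$ then $z_n u_h$ is supported off $A^{(I)}$, so $\mathbb E_{\cN\rtimes A^F}(z_n u_h)=0$ trivially. Otherwise $h\in A^{(I)}$, and we exploit the direct-sum decomposition of $A^{(I)}$ along the partition $I=(F\cap F_1)\sqcup(F\setminus F_1)\sqcup(F_1\setminus F)\sqcup(I\setminus(F\cup F_1))$ (whose parts generate pairwise-commuting subgroups of $A^{(I)}$) to write $h=h^{(1)}h^{(2)}h^{(3)}h^{(4)}$. The projection onto $\cN\rtimes A^F$ kills everything unless $h^{(4)}=e$; in that case, an elementary Fourier-coefficient calculation (absorbing the $F_1$-part $h^{(1)}h^{(3)}$ into $z_n$ and isolating the external factor $u_{h^{(2)}}\in\mathcal L(A^{F\setminus F_1})$) gives
$$
\|\mathbb E_{\cN\rtimes A^F}(z_n u_h)\|_2 \;=\; \|\mathbb E_{\cN\rtimes A^{F\cap F_1}}(w_n)\|_2,\qquad w_n\;:=\;z_n u_{h^{(1)}h^{(3)}}\in\cN\rtimes A^{F_1}.
$$

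To finish, we transport the problem back to $a_n$ by conjugating with $u_{g_1}$: since $u_{g_1}^{-1}(\cN\rtimes A^{F\cap F_1})u_{g_1}=\cN\rtimes A^{K}$ for $K:=\epsilon(g_1)^{-1}F\cap F$, the covariance relation gives
$$
\|\mathbb E_{\cN\rtimes A^{F\cap F_1}}(w_n)\|_2\;=\;\|\mathbb E_{\cN\rtimes A^{K}}(\sigma_{g_1}^{-1}(w_n))\|_2\;=\;\|\mathbb E_{\cN\rtimes A^K}(x_1 a_n x_2)\|_2,
$$
where $x_1:=\sigma_{g_1}^{-1}(n_1)\in\cN$ and $x_2:=\sigma_{g_2}(n_2)u_{\tilde h}\in\cN\rtimes A^F\subseteq\cN\rtimes A^{(I)}$ with $\tilde h:=\alpha_{g_1}^{-1}(h^{(1)}h^{(3)})\in A^F$. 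The crucial observation is that $\epsilon(g_1)F\neq F$ together with $|\epsilon(g_1)^{-1}F|=|F|<\infty$ forces $K\subsetneq F$, so hypothesis \eqref{notintertwiningNrtimesAk} closes the argument. The main technical point is precisely the combinatorial bookkeeping that extracts a strictly smaller set $K\subsetneq F$ from the interaction between $\mathbb E_{\cN\rtimes A^F}$ and $y_1 a_n y_2$; once this is handled, the twisted wreath-like $2$-cocycle $c$ plays no role whatsoever, since all manipulations take place inside the untwisted crossed product $\cN\rtimes G$, and the only feature of the wreath-like structure actually invoked is the covariance $u_g(\cN\rtimes A^E)u_g^{-1}=\cN\rtimes A^{\epsilon(g)E}$.
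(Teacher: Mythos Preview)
Your argument is correct and follows essentially the same route as the paper: both reduce to monomials $y_i=n_iu_{g_i}$ with $\epsilon(g_i)\notin\text{Norm}(F)$ and identify $K=\epsilon(g_1)^{-1}F\cap F\subsetneq F$ as the set governing $\mathbb E_{\cN\rtimes A^F}(y_1a_ny_2)$; the paper expands $a_n=\sum_h m_h^n u_h$ and tracks the wreath-like cocycle $c$ explicitly through the computation, whereas you reach the same reduction more cleanly via the covariance of conditional expectation under $\mathrm{Ad}(u_{g_1})$. One harmless slip: $\sigma_{g_1g_2}(n_2)$ should read $\sigma_{g_1}(n_2)$ (since $u_{g_1}n_2u_{g_2}=\sigma_{g_1}(n_2)u_{g_1g_2}$), which propagates to your formula for $x_2$ but does not affect the conclusion since $x_2\in\cN\rtimes A^{(I)}$ either way.
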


\begin{proof} Let $a_n=\sum_{h\in A^F}m_h^nu_h$ be the Fourier expansion in $\cN\rtimes A^F$ where $m_h^n\in \cN$ for all $h\in A^{(I)}$. Relation \eqref{notintertwiningNrtimesAk} above implies that for every subset $K\subsetneq F$ and every finite set $S\subseteq A^{(I\setminus K)}$ we have that

\begin{equation}\label{limitcoefficientunitaries}
    \lim_{n\to\infty}\sum_{h\in A^KS}\|m_h^n\|_2^2=0.
\end{equation}

\vspace{2mm}

Using basic $\|\cdot\|_2$-approximations it suffices to show \eqref{notintertwiningNrtimesAF} when $y_i=n_iu_{k_ig_i}$ where $n_i\in \cN$, $k_i\in A^{(I)}\text{Norm}(F)$, and $g_i$ is a representative of a nontrivial coset for $I/\text{Norm}(F)$. Assume $k_i=b_it_i$ where $b_i\in A^{(I)}$ and $t_i\in\text{Norm}(F)$. Here the $t_i$'s are representatives of $A^{(I)}\text{Norm}(F)/A^{(I)}$. Using this, we see that

\begin{align}\label{ENrtimesA^Fys}
    \|\mathbb{E}_{\cN\rtimes A^F}(y_1a_ny_2)\|_2^2&=\|\mathbb{E}_{\cN\rtimes A^F}(n_1u_{k_1g_1}a_nn_2u_{k_2g_2})\|_2^2\nonumber\\
    &=\|n_1\mathbb{E}_{\cN\rtimes A^F}(u_{k_1g_1}a_nu_{k_2g_2})\sigma_{(k_2g_2)^{-1}}(n_2)\|_2^2\\
    &\leq\|n_1\|_{\infty}^2\cdot\|n_2\|_{\infty}^2\cdot\|\mathbb{E}_{\cN\rtimes A^F}(u_{k_1g_1}a_nu_{k_2g_2})\|_2^2.\nonumber
\end{align}

\vspace{2mm}

Using $a_n=\sum_hm_h^nu_h$ we have that

\begin{align}\label{pluginan}
    \|\mathbb{E}_{\cN\rtimes A^F}(u_{k_1g_1}a_nu_{k_2g_2})\|_2^2&=\left\|\sum_{h\in A^F}\mathbb{E}_{\cN\rtimes A^F}(u_{k_1g_1}m_h^nu_hu_{k_2g_2})\right\|_2^2\nonumber\\
    &=\left\|\sum_{h\in A^F}\sigma_{k_1g_1}(m_h^n)\mathbb{E}_{\cN\rtimes A^F}(u_{k_1g_1\cdot h\cdot k_2g_2})\right\|_2^2.
\end{align}

\vspace{2mm}

Observe that $k_1g_1\cdot h\cdot k_2g_2=b_1t_1g_1\cdot h\cdot b_2t_2g_2=b_1\alpha_{t_1g_1}(hb_2)c(t_1g_1,t_2g_2)t_1g_1t_2g_2$. Therefore, $k_1g_1\cdot h\cdot k_2g_2\in A^F$ only if $t_1g_1t_2g_2\in A^{(I)}$. Hence, there exists $d_{1,2}\in A^{(I)}$ such that $\mathbb{E}_{\cN\rtimes A^F}(u_{k_1g_1.h.k_2g_2})=u_{b_1\alpha_{t_1g_1}(hb_2)c(t_1g_1,t_2g_2)d_{1,2}}$. Continuing from \eqref{pluginan}, we have

\begin{align}\label{sumwithsubscript}
    \|\mathbb{E}_{\cN\rtimes A^F}(u_{k_1g_1}a_nu_{k_2g_2})\|_2^2&=\sum_{\substack{h\in A^F,\\b_1\alpha_{t_1g_1}(hb_2)c(t_1g_1,t_2g_2)d_{1,2}\in A^F}}\|\sigma_{k_1g_1}(m_h^n)\|_2^2.
\end{align}

\vspace{2mm}

If $b_1\alpha_{t_1g_1}(hb_2)c(t_1g_1,t_2g_2)d_{1,2}\in A^F$, using that $\alpha_{t_1g_1}$ is an automorphism, we obtain

\begin{align*}
    \alpha_{t_1g_1}(h)&\in b_1^{-1}A^Fd_{1,2}^{-1}c(t_1g_1,t_2g_2)^{-1}\alpha_{t_1g_1}(b_2^{-1})=A^F\underbrace{b_1^{-1}d_{1,2}^{-1}c(t_1g_1,t_2g_2)^{-1}\alpha_{t_1g_1}(b_2^{-1})}_{=:\gamma(t_1,g_1,t_2,g_2)\in A^I}.
\end{align*}

\vspace{2mm}

Hence, $\alpha_{t_1g_1}(h)\in A^F\gamma(t_1,g_1,t_2,g_2)$. Applying $\alpha_{t_1^{-1}}$ we get

\begin{align}\label{compositionrhot1}
    \alpha_{t_1^{-1}}\circ \alpha_{t_1g_1}(h)&\in \alpha_{t_1^{-1}}(A^F\gamma(t_1,g_1,t_2,g_2))=A^F\alpha_{t_1^{-1}}(\gamma(t_1,g_1,t_2,g_2))
\end{align}

\vspace{2mm}

as $t_1^{-1}\in\text{Norm}(F)$. On the other hand, using the 2-cocycle action and \eqref{compositionrhot1}, we have

\begin{equation}
    \alpha_{g_1}(h)\in c(t_1^{-1},t_1g_1)^{-1}A^F\alpha_{t_1^{-1}}(\gamma(t_1,g_1,t_2,g_2))c(t_1^{-1},t_1g_1).
\end{equation}

\vspace{2mm}

Applying $\alpha_{g_1^{-1}}$ we obtain

\begin{align}\label{h}
    h&\in \alpha_{g_1^{-1}}(A^F)\underbrace{c(g_1^{-1},g_1)^{-1}\alpha_{g_1^{-1}}(c(t_1^{-1},t_1g_1)^{-1}\alpha_{t_1^{-1}}(\gamma(t_1,g_1,t_2,g_2))c(t_1^{-1},t_1g_1))c(g_1^{-1},g_1)}_{=:\delta(t_1,g_1,t_2,g_2)}\nonumber\\
    &=A^{g_1^{-1}F}\delta(t_1,g_1,t_2,g_2)
\end{align}

\vspace{2mm}

By \eqref{h}, we have that the subscript in the summation of equation \eqref{sumwithsubscript} implies that $h\in A^F\cap A^{g_1^{-1}F}\delta(t_1,g_1,t_2,g_2)$. We continue from \eqref{sumwithsubscript} to obtain

\begin{align}\label{ENrtimesA^Funitaries}
    \|\mathbb{E}_{\cN\rtimes A^F}(u_{k_1g_1}a_nu_{k_2g_2})\|_2^2&=\sum_{h\in A^F\cap A^{g_1^{-1}F}\delta(t_1,g_1,t_2,g_2)}\|\sigma_{k_1g_1}(m_h^n)\|_2^2\nonumber\\
    &\leq \sum_{h\in A^K\delta(t_1,g_1,t_2,g_2)}\|m_h^n\|_2^2
\end{align}

\vspace{2mm}

By \eqref{limitcoefficientunitaries}, we get that \eqref{ENrtimesA^Funitaries} goes to zero. Using \eqref{ENrtimesA^Fys} we obtain \eqref{notintertwiningNrtimesAF}, as wanted.
\end{proof}\vspace{0.5mm}

\begin{cor}\label{controlquasinorm3}(\cite[Lemma 4.1]{vaes08}) Let $G\in \mathcal{WR}(A,B\curvearrowright I)$ and let $\mathcal N$ be a tracial von Neumann algebra. Let $G\curvearrowright \mathcal N$ be a trace-preserving action and denote by $\cM=\mathcal N\rtimes G$ be the corresponding crossed-product von Neumann algebra. Let $F \subset I$ be a finite subset and let $q\in \mathcal N\rtimes A^F$ be a non-zero projection.  Assume $\cQ \subseteq q(\cN\rtimes A^F)q$ is a von Neumann subalgebra satisfying $\cQ \nprec_{\mathcal N\rtimes A^F} \mathcal N \rtimes A^K$, for all $K\subsetneq F$. Then we have that $\mathscr {Q N}_{q \cM q} (\cQ)\subseteq q(\cN\rtimes (A^{(I)}{\text{Norm}}(F)))q$.
\end{cor}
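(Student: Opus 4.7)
Set $\mathcal{P} := \mathcal{N} \rtimes (A^{(I)} \text{Norm}(F))$; the goal is to show $\mathscr{QN}_{q\mathcal{M}q}(\mathcal{Q}) \subseteq q\mathcal{P}q$. Taking an arbitrary $y$ in the quasi-normalizer, the plan is to decompose $y = \mathbb{E}_{q\mathcal{P}q}(y) + y_\perp$ and argue $y_\perp = 0$. Since $\mathcal{Q} \subseteq \mathcal{N} \rtimes A^F \subseteq \mathcal{P}$, the conditional expectation $\mathbb{E}_{q\mathcal{P}q}$ is $\mathcal{Q}$-bilinear and therefore preserves the $\ast$-algebra of quasi-normalizers. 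Hence $y_\perp \in \mathscr{QN}_{q\mathcal{M}q}(\mathcal{Q})$ with $\mathbb{E}_{q\mathcal{P}q}(y_\perp) = 0$, and I reduce to showing any such $y_\perp$ vanishes.

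Next, using the quasi-normalizer property I would fix a finite family $z_1, \ldots, z_m \in q\mathcal{M}q$ with $\mathcal{Q} y_\perp \subseteq \sum_{i=1}^m z_i \mathcal{Q}$. Replacing each $z_i$ by $z_i - \mathbb{E}_{q\mathcal{P}q}(z_i)$ preserves this inclusion (since $\mathbb{E}_{q\mathcal{P}q}(a y_\perp) = 0$ for every $a \in \mathcal{Q}$), so I may assume $\mathbb{E}_{q\mathcal{P}q}(z_i) = 0$. A Gram--Schmidt orthogonalization in the right $\mathcal{Q}$-module $\sum_i z_i \mathcal{Q}$ further upgrades $\{z_i\}$ to a finite Pimsner--Popa basis with $\mathbb{E}_\mathcal{Q}(z_i^* z_j) = \delta_{ij} p_i$ for projections $p_i \in \mathcal{Q}$; as the Gram--Schmidt coefficients lie in $\mathcal{Q} \subseteq \mathcal{P}$, the condition $\mathbb{E}_{q\mathcal{P}q}(z_i) = 0$ survives. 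On the spectral side, because there are only finitely many proper subsets $K \subsetneq F$ and each hypothesis $\mathcal{Q} \nprec_{\mathcal{N} \rtimes A^F} \mathcal{N} \rtimes A^K$ furnishes via Theorem \ref{intertwining} a witness sequence, a diagonal argument combined with the Fourier decomposition of $\mathcal{N} \rtimes A^{(I)}$ over $\mathcal{N} \rtimes A^F$ indexed by $A^{I \setminus F}$ (and the fact that each $\mathcal{N} \rtimes A^K$ is globally invariant under conjugation by $u_a$, $a \in A^{I \setminus F}$) produces a single sequence $(a_n) \subset \mathscr{U}(\mathcal{Q})$ satisfying hypothesis \eqref{notintertwiningNrtimesAk} of Theorem \ref{controlincore} for every $K \subsetneq F$ and every $x_1, x_2 \in \mathcal{N} \rtimes A^{(I)}$.

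Finally, I would apply Theorem \ref{controlincore} to this sequence with $y_1 := z_i^*$ and $y_2 := y_\perp$ (both annihilated by $\mathbb{E}_\mathcal{P}$) to obtain $\|\mathbb{E}_{\mathcal{N} \rtimes A^F}(z_i^* a_n y_\perp)\|_2 \to 0$ for each $i$. Since $a_n y_\perp \in \sum_i z_i \mathcal{Q}$, the Pimsner--Popa expansion $a_n y_\perp = \sum_i z_i c_i^n$ with $c_i^n = \mathbb{E}_\mathcal{Q}(z_i^* a_n y_\perp)$ gives $\|c_i^n\|_2 \leq \|\mathbb{E}_{\mathcal{N} \rtimes A^F}(z_i^* a_n y_\perp)\|_2 \to 0$. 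By the PP-basis orthogonality and unitarity of $a_n$,
\[
\|y_\perp\|_2^2 \;=\; \|a_n y_\perp\|_2^2 \;=\; \sum_{i=1}^m \|z_i c_i^n\|_2^2 \;\leq\; \sum_{i=1}^m \|c_i^n\|_2^2 \;\xrightarrow[n\to\infty]{}\; 0,
\]
forcing $y_\perp = 0$. I expect the main technical obstacle to be the uniformization of the witness sequence $(a_n)$: merging the finitely many non-intertwinings $\mathcal{Q} \nprec \mathcal{N} \rtimes A^K$ into a single sequence that additionally controls $\|\mathbb{E}_{\mathcal{N} \rtimes A^K}(x_1 a_n x_2)\|_2$ when $x_1, x_2$ range over the larger algebra $\mathcal{N} \rtimes A^{(I)}$ is standard but requires a careful Fourier/coset analysis to reduce back to the original hypothesis inside $\mathcal{N} \rtimes A^F$.
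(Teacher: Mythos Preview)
Your argument is correct and follows the same strategy as the paper's: reduce to an element $y_\perp$ orthogonal to $\mathcal P=\mathcal N\rtimes(A^{(I)}\mathrm{Norm}(F))$, then annihilate it by combining the quasi-normalizer structure with Theorem~\ref{controlincore}. Your Pimsner--Popa expansion is in fact more transparent than the paper's terse appeal to an intertwining relation ``$yq=\varphi(q)y$'' (which does not literally follow from the quasi-normalizer definition as stated), and the subtlety you flag---merging the finitely many non-intertwinings $\mathcal Q\nprec \mathcal N\rtimes A^K$ into a single witness sequence---is real but standard (e.g.\ embed diagonally into an amplification and observe that $\mathcal Q\nprec P_1,\ldots,\mathcal Q\nprec P_m$ is equivalent to $\mathcal Q\nprec \mathrm{diag}(P_1,\ldots,P_m)$).
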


\begin{proof} If $\cQ\not\prec_{\cN\rtimes A^F}\cN\rtimes A^K$ for any subset $K\subsetneq F$, by Popa's intertwining techniques, there exists a sequence of unitaries $(a_n)_n\subset \cQ$ such that for all $x_1,x_2\in \cN\rtimes A^F$ we have condition \eqref{notintertwiningNrtimesAk} from the previous theorem is satisfied. Let $x\in\mathscr{QN}_{q\cM q}(\cQ)$ and denote $y=x-\mathbb{E}_{\cN\rtimes A^{(I)}\text{Norm}(F)}(x)$. We claim $y=0$. Since $x$ is in the quasi-normalizer of $\cQ$, so is $y$ and hence, $yq=\varphi(q)y$ for all $q\in \cQ$. It follows that for every $z\in \mathcal{M}$ with $\mathbb{E}_{\cN\rtimes A^{(I)}\text{Norm}(F)}(z)=0$, since $\mathbb{E}_{\cN\rtimes A^{(I)}\text{Norm}(F)}(y)=0$, we have

\begin{equation*}
    \|\mathbb{E}_{\cN\rtimes A^F}(zy^*)\|_2=\|\mathbb{E}_{\cN\rtimes A^F}(zy^*)\varphi(a_n)\|_2=\|\mathbb{E}_{\cN\rtimes A^F}(za_ny^*)\|_2\to 0
\end{equation*}

\vspace{2mm}

by Theorem \ref{controlincore}. This means that $\mathbb{E}_{\cN\rtimes A^F}(zy^*)=0$ for all $z\in \mathcal{M}$ with $\mathbb{E}_{\cN\rtimes A^{(I)}\text{Norm}(F)}(z)=0$. By letting $z=y$, we have $\mathbb{E}_{\cN\rtimes A^F}(yy^*)=0$ or $y=0$, as claimed.
\end{proof}\vspace{0.5mm}

\begin{cor}\label{quasinormcontrol2} Let $G\in \mathcal{WR}(A,B\curvearrowright I)$ and let $\mathcal N$ be a tracial von Neumann algebra. Let $G\curvearrowright\mathcal N$ be a trace-preserving action and denote by $\cM=\mathcal N\rtimes G$ the corresponding crossed-product von Neumann algebra. Let $F \subset I$ be a finite subset and let $q\in \mathcal N\rtimes A^F$ be a non-zero projection. Assume $\cQ \subseteq q \cM q$ is a von Neumann subalgebra satisfying  $\cQ \prec_{\cM} \mathcal N\rtimes A^F$ and $\cQ \nprec \mathcal N \rtimes A^K$, for all $K\subsetneq F$. Then, $\mathscr {Q N}_{q \cM q} (\cQ)''\prec \mathcal N\rtimes (A^{(I)} {\text{Norm}}(F))$.
\end{cor}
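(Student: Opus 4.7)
Proof proposal. The strategy is to move $\cQ$ into a corner of $\cN\rtimes A^F$ via Popa's intertwining data, apply Corollary \ref{controlquasinorm3} there, and transport the conclusion back across the witnessing partial isometry. By Theorem \ref{intertwining}, the hypothesis $\cQ\prec_\cM\cN\rtimes A^F$ yields nonzero projections $p\in\cQ$ and $q'\in\cN\rtimes A^F$, a $\ast$-homomorphism $\Theta\colon p\cQ p\to q'(\cN\rtimes A^F)q'$, and a nonzero partial isometry $v\in q'\cM p$ with $\Theta(x)v=vx$ for every $x\in p\cQ p$. Set $\tilde\cQ:=\Theta(p\cQ p)$ and $p'':=\Theta(p)$; then $\tilde\cQ\subseteq p''(\cN\rtimes A^F)p''$, and the relation $\Theta(p)v=vp=v$ gives $vv^*\leq p''$, so in fact $v\in p''\cM p$.

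The first step is to verify that $\tilde\cQ$ satisfies the hypothesis of Corollary \ref{controlquasinorm3}, namely $\tilde\cQ\nprec_{\cN\rtimes A^F}\cN\rtimes A^K$ for every $K\subsetneq F$. If this were to fail for some $K$, the witnessing intertwining data inside $\cN\rtimes A^F$ could be composed with $(\Theta,v)$ in the standard way to produce an intertwining $p\cQ p\prec_\cM\cN\rtimes A^K$, hence $\cQ\prec_\cM\cN\rtimes A^K$, contradicting the standing assumption. Corollary \ref{controlquasinorm3} applied to $\tilde\cQ\subseteq p''(\cN\rtimes A^F)p''$ then yields
\begin{equation*}
\mathscr{QN}_{p''\cM p''}(\tilde\cQ)''\subseteq p''\bigl(\cN\rtimes(A^{(I)}\text{Norm}(F))\bigr)p''.
\end{equation*}

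The second step is to transport this back across $v$. A direct computation using $\Theta(x)v=vx$ shows that for every $y\in\mathscr{QN}_{p\cM p}(p\cQ p)$ with $y(p\cQ p)\subseteq\sum_i(p\cQ p)y_i$ and $(p\cQ p)y\subseteq\sum_i y_i(p\cQ p)$, one has $vyv^*\in p''\cM p''$, together with $(vyv^*)\tilde x=\sum_i \Theta(z_i)(vy_iv^*)$ for every $\tilde x=\Theta(x)\in\tilde\cQ$ (and the analogous identity on the other side); hence $vyv^*\in\mathscr{QN}_{p''\cM p''}(\tilde\cQ)$. Passing to weak closures, $v\,\mathscr{QN}_{p\cM p}(p\cQ p)''\,v^*\subseteq\cN\rtimes(A^{(I)}\text{Norm}(F))$. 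Combined with the identity $\mathscr{QN}_{p\cM p}(p\cQ p)''=p\,\mathscr{QN}_{q\cM q}(\cQ)''\,p$ provided by Lemma \ref{qnrem}(2), the partial isometry $v$ witnesses, in the sense of Theorem \ref{intertwining}, the desired intertwining $\mathscr{QN}_{q\cM q}(\cQ)''\prec_\cM\cN\rtimes(A^{(I)}\text{Norm}(F))$.

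The main technical obstacle is the \emph{transport step}: one must carefully verify that conjugation by $v$ sends the quasi-normalizer of $p\cQ p$ into the quasi-normalizer of $\tilde\cQ$, and that the resulting data produces valid Popa intertwining for the full quasi-normalizer $\mathscr{QN}_{q\cM q}(\cQ)''$. This reduces to unwinding the identity $\Theta(x)v=vx$ against the defining relations of $\mathscr{QN}$ and exploiting that $v^*v\in(p\cQ p)'\cap p\cM p$ to move this idempotent freely across elements of $p\cQ p$ in all the intermediate manipulations.
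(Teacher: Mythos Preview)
Your proposal is correct and follows essentially the same route as the paper: set up the intertwining data, show by composition of intertwinings that the image $\tilde\cQ$ inherits the non-intertwining hypothesis needed for Corollary~\ref{controlquasinorm3}, apply that corollary, and transport the conclusion back across $v$ using Lemma~\ref{qnrem}. The only cosmetic difference is in the transport step: the paper writes $v=wt$ with $w$ unitary and $t=v^*v$, then runs a chain of equalities via both parts of Lemma~\ref{qnrem} to identify $v\,\mathscr{QN}_{q\cM q}(\cQ)''\,v^*$ with $\mathscr{QN}_{r\cM r}(\cP r)''$, whereas you verify by hand that $vyv^*$ quasi-normalizes $\tilde\cQ$.
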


\begin{proof} If $\cQ\prec_{\cM}\cN\rtimes A^F$ there are projections $f\in \cQ \text{ and } s\in \cN\rtimes A^F$ and a $*$-isomorphism onto its image $\Theta:f\cQ f\to  \Theta(f\cQ f)=:\cP\subseteq s(\cN\rtimes A^F)s$ that is implemented by a nonzero partial isometry  $v\in s \cM f$, i.e. 

\begin{equation}\label{intertwiningQ}
    \Theta(x)v=vx.
\end{equation} 

\vspace{2mm}

Notice that $r=vv^*\in \cP'\cap s\cM s$, and using the previous relation we obtain $\cP vv^*=v\cQ v^*$. 

\vspace{2mm}

We can assume without loss of generality that $\text{supp}(\mathbb{E}_{\cN\rtimes A^F}(r))=s$. Next we observe that $\cP\nprec_{\mathcal N\rtimes A^F} \mathcal N \rtimes A^K$ for any proper subset $K\subset F$; otherwise, composing it with the intertwining \eqref{intertwiningQ} we would get that $\cQ \prec \mathcal N \rtimes A^K$ for some set $K\subsetneq F$, a contradiction to the assumption. Using this in combination with Corollary \ref{controlquasinorm3} and properties of quasi-normalizers we can see that, if $v=wt$ for $w\in\mathscr{U}(\cM)$ and $t=v^*v$,

\begin{align*}
r(\cN\rtimes (A^{(I)}\text{Norm}(F)))r&\supseteq \mathscr{QN}_{r\cM r}(\cP r)''&&&\text{(by Lemma \ref{qnrem})}\\
    &=\mathscr{QN}_{r\cM r}(vt \cQ  tv^*)''=\mathscr{QN}_{v\cM v^*}(v\cQ v^*)''\\
    &=\mathscr{QN}_{wt\cM tw^*}(wt \cQ tw^*)''\\
    &=w\mathscr{QN}_{t\cM t}(t\cQ t)''w^*=w t\mathscr{QN}_{p\cM p}(\cQ)''tw^* &&&\text{(by Lemma \ref{qnrem})}\\
    &=v\mathscr{QN}_{p\cM p}(\cQ)''v^*.
\end{align*}

\vspace{2mm}

This shows that $\mathscr{QN}_{\cM}(\cQ)''\prec \cN\rtimes (A^{(I)}\text{Norm}(F))$, as desired. 
\end{proof}

\vspace{0.5mm}

\subsection{A remark on splitting central extensions} In this subsection, we leverage certain results from Serre's work to establish a dichotomy regarding whether a group undergoes splitting under the assumptions outlined in \ref{theoremA}. We will initiate by revisiting the results obtained by Serre.

\begin{thm}\label{infiniteabelianization}(\cite{serre}) Let $G$ be a countable group, and $Z\leqslant Z(G)$. Assume $G/[G,G]$ is finite. If $G/Z$ has property (T), then $G$ has property (T).
\end{thm}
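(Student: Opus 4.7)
My plan is to verify the FH-property for $G$ using the Delorme--Guichardet equivalence with property (T). Let $\alpha$ be an affine isometric action of $G$ on a Hilbert space $\mathcal{H}$, with unitary linear part $\pi$ and $1$-cocycle $b: G \to \mathcal{H}$. Centrality of $Z$ in $G$ combined with $zg = gz$ and the cocycle identity forces the key relation
\[
(\pi(z) - I)b(g) = (\pi(g) - I)b(z), \qquad z \in Z,\; g \in G.
\]
Since $\pi(Z) \subset \pi(G)'$, the orthogonal projection $P$ onto $\mathcal{H}^Z$ commutes with $\pi(G)$, and I split $b = b_1 + b_2$ with $b_1 := Pb$ and $b_2 := (I - P)b$, each a $1$-cocycle for $\pi$.

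I would first use the finiteness of $G^{ab} = G/[G,G]$ to eliminate $G$-invariants: on $\mathcal{H}^G$ the representation $\pi$ is trivial, so the $\mathcal{H}^G$-component of $b$ is a group homomorphism $G \to \mathcal{H}^G$, which factors through the finite group $G^{ab}$, and therefore vanishes (as $\mathcal{H}^G$ is torsion-free). After restricting $\alpha$ to $(\mathcal{H}^G)^\perp$ I may assume $\mathcal{H}^G = 0$. The central relation then forces $b_1(z) \in \mathcal{H}^G = 0$ for every $z \in Z$, so $b_1|_Z \equiv 0$ and $b_1$ descends to a $1$-cocycle on $G/Z$ for the representation $\bar\pi := \pi|_{\mathcal{H}^Z}$. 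Property (T) of $G/Z$ (via its FH-property) then produces $\xi_1 \in \mathcal{H}^Z$ with $b_1 = \partial \xi_1$. For $b_2$, the restriction $\pi|_Z$ on $(\mathcal{H}^Z)^\perp$ has no invariant vector, so its SNAG spectral measure avoids the trivial character $1_Z \in \hat{Z}$; combining this with the central relation, I would invert $\pi(z) - I$ on appropriate spectral slices to produce $\xi_2 \in (\mathcal{H}^Z)^\perp$ with $b_2 = \partial \xi_2$. Adding the two coboundaries yields $b = \partial(\xi_1 + \xi_2)$, so $\alpha$ has a fixed point.

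The principal obstacle will be the construction of $\xi_2$: abelian groups like $Z$ can have nontrivial first cohomology even with representations having no invariant vectors, so inversion of $\pi(z) - I$ requires a uniform spectral gap near $1_Z$. This is where the finiteness of $G^{ab}$ re-enters. Decomposing $\pi|_{(\mathcal{H}^Z)^\perp}$ over $\hat Z$, each spectral piece $\mathcal{H}_\chi$ carries a projective representation of $G/Z$ whose class in $H^2(G/Z, \mathbb{T})$ is the pushforward of the classifying cocycle of $1 \to Z \to G \to G/Z \to 1$ along $\chi$. For $\chi$ near $1_Z$ with $\chi \neq 1_Z$, this pushforward is nontrivial, because the $\chi$ for which it is trivial are exactly the restrictions to $Z$ of characters of $G$, hence form a finite (discrete) subgroup of $\hat Z$ isolated at $1_Z$. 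Projective property (T) of $G/Z$ (an automatic consequence of property (T), via the universal central extension by $\mathbb{T}$) then yields a uniform spectral gap on these slices, enabling the inversion and the construction of $\xi_2$. In contrast, the descent of $b_1$ and the application of FH of $G/Z$ is the easy half of the argument.
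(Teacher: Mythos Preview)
The paper does not give its own proof of this theorem; it is stated as a result of Serre and simply cited. So there is no ``paper's proof'' to compare against, and your proposal must be assessed on its own merits.

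Your overall strategy via Delorme--Guichardet and the splitting $b = b_1 + b_2$ along $\mathcal{H} = \mathcal{H}^Z \oplus (\mathcal{H}^Z)^\perp$ is the standard one, and your treatment of $b_1$ is correct: killing the $\mathcal{H}^G$-component using finiteness of $G^{ab}$, then observing $b_1|_Z = 0$ via the central relation and descending to $G/Z$ to apply its property (FH), is exactly right. You also correctly identify the crux as the construction of $\xi_2$, and your observation that the characters $\chi \in \hat Z$ with trivial Mackey obstruction $\chi_*[\omega]$ form a finite set (the image of $\widehat{G^{ab}} \to \hat Z$) is correct and is precisely where the hypothesis $|G^{ab}| < \infty$ enters.

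The gap is in your appeal to ``projective property (T) of $G/Z$ \ldots\ via the universal central extension by $\mathbb{T}$'' to obtain a \emph{uniform} spectral gap on the slices $\mathcal{K}_\chi$ for $\chi$ near $1_Z$. For each fixed nontrivial class $[c] \in H^2(G/Z,\mathbb{T})$ the compact central extension $\mathbb{T} \times_c (G/Z)$ does have property (T) (compact-by-(T) is (T), and this is elementary and non-circular), so each individual $c$-projective representation without invariant rays has some Kazhdan-type gap. But that gap a priori depends on $[c]$, and you need it uniform as $\chi \to 1_Z$ with $[c_\chi] \neq 0$. A single group through which all projective representations of $G/Z$ factor is the Schur cover $1 \to H_2(G/Z,\mathbb{Z}) \to \widetilde{G/Z} \to G/Z \to 1$; if $H_2(G/Z,\mathbb{Z})$ were finite this would have (T) and give the uniformity, but in general $H_2$ can be infinite, and proving (T) for $\widetilde{G/Z}$ then is exactly an instance of Serre's theorem itself --- so this route is circular. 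Alternatively, all the relevant projective representations lift to genuine representations of $G$, but a uniform gap for those is precisely property (T) of $G$, again circular.

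The standard proofs (e.g.\ Bekka--de la Harpe--Valette, Theorem~1.7.11) close this uniformity issue by a different mechanism than a direct projective Kazhdan bound. Your diagnosis of where the difficulty lies is accurate, but the proposed resolution does not work as stated.
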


\begin{cor}\label{nonsplit}(\cite{serre}) Let $G$ be a finitely generated group. Assume $G/Z(G)$ has property (T). If the short exact sequence

\begin{equation*}
    1\longrightarrow Z(G)\longrightarrow G\longrightarrow G/Z(G)\longrightarrow 1
\end{equation*}

\vspace{2mm}

does not split, then $G$ has property (T).
\end{cor}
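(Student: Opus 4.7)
The plan is to argue by contraposition, using Theorem \ref{infiniteabelianization} as the key input: assume $G$ lacks property (T) and deduce that the central extension splits. Applying Theorem \ref{infiniteabelianization} with $Z=Z(G)$, the assumption that $G/Z(G)$ has property (T) means that finiteness of $G/[G,G]$ would already force $G$ to have property (T). So $G^{ab}=G/[G,G]$ must be infinite; being a quotient of the finitely generated group $G$, it is finitely generated abelian of positive free rank, say $r\ge 1$.

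Next I would translate this rank into a statement about the center. The right-exact sequence
\begin{equation*}
Z(G)/(Z(G)\cap[G,G])\longrightarrow G^{ab}\longrightarrow (G/Z(G))^{ab}\longrightarrow 0,
\end{equation*}
combined with the finiteness of $(G/Z(G))^{ab}$ forced by property (T) of $G/Z(G)$, shows that $Z(G)/(Z(G)\cap[G,G])$ contains the entire free part of $G^{ab}$ and hence admits a surjection onto $\mathbb{Z}$. Composing the quotient $G\to G^{ab}$ with a matching projection to a $\mathbb{Z}$-summand then yields a homomorphism $\Phi\colon G\to\mathbb{Z}$ whose restriction to $Z(G)$ remains surjective. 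Picking $g_0\in Z(G)$ with $\Phi(g_0)=1$, centrality of $g_0$ together with $\langle g_0\rangle\cap\ker\Phi=\{1\}$ and $\langle g_0\rangle\cdot\ker\Phi=G$ yields the internal direct product $G\cong\langle g_0\rangle\times\ker\Phi$. Iterating this $r$ times produces $G\cong\mathbb{Z}^r\times K$ with $K^{ab}$ finite, so Theorem \ref{infiniteabelianization} applied to $K$ (noting $K/Z(K)\cong G/Z(G)$ has property (T)) shows that $K$ itself has property (T).

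The hard part will be converting the algebraic decomposition $G\cong\mathbb{Z}^r\times K$ into an actual section of $G\to G/Z(G)$. The identifications $Z(G)=\mathbb{Z}^r\times Z(K)$ and $G/Z(G)\cong K/Z(K)$ reduce the problem to splitting the residual central sub-extension $1\to Z(K)\to K\to K/Z(K)\to 1$, which is not automatic from $K$ having property (T). To achieve this one exploits the finiteness of $K^{ab}$ together with property (T) of $K/Z(K)$ to control the obstruction class in $H^2(K/Z(K), Z(K))$ and produce the desired section, after which combining with the identity on the $\mathbb{Z}^r$ factor yields a splitting of the original extension, contradicting the hypothesis.
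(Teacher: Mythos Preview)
The paper does not supply its own proof of this corollary; it is cited from \cite{serre} and left without argument, so there is nothing to compare your approach against directly.

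That said, your proposal has a genuine gap in the final paragraph. After reducing to $G\cong\mathbb{Z}^r\times K$ with $K^{ab}$ finite and $K$ having property (T), you still need the residual extension $1\to Z(K)\to K\to K/Z(K)\to 1$ to split, and for this you only gesture at ``controlling the obstruction class in $H^2(K/Z(K),Z(K))$''. Nothing in the hypotheses forces that: a property (T) group with finite abelianization can be a non-split central extension of its center quotient (e.g.\ the universal central extension of a perfect property (T) group with nontrivial Schur multiplier). For such $K$, the group $G=\mathbb{Z}\times K$ fails (T), has $G/Z(G)\cong K/Z(K)$ with (T), and yet $1\to Z(G)\to G\to G/Z(G)\to 1$ does not split, since any section would project to a section of $K\to K/Z(K)$. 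So either the corollary requires an additional hypothesis, or there is a real argument here that you have not supplied. Note that the paper never actually uses Corollary~\ref{nonsplit}; the subsequent Lemma~\ref{splittinglemma} only proves splitting \emph{up to a finite central subgroup} and appeals directly to Theorem~\ref{infiniteabelianization}.

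Your earlier steps (invoking Theorem~\ref{infiniteabelianization} to get infinite $G^{ab}$, producing a homomorphism $G\to\mathbb{Z}$ surjective on $Z(G)$, splitting off a central $\mathbb{Z}$-factor, and iterating) are sound.
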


We proceed to establish the following lemma, applying Serre's results together with Proposition \ref{subgroupscocyclecrossedproducts}.

\vspace{1mm}

\begin{lem}\label{splittinglemma} Assume $B, G, H$ are countable discrete groups such that $B$ is abelian, $G$ is ICC and property (T), and $\cL(H)\cong \cL(B\times G)$. If $H/Z(H)$ has property (T) and trivial abelianization, then the short exact sequence 

\begin{equation*}
    1\longrightarrow Z(H)\longrightarrow H\longrightarrow H/Z(H)\longrightarrow 1.
\end{equation*}

\vspace{2mm}

splits up to a finite central subgroup.
\end{lem}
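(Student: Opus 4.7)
\emph{Plan.} The approach combines Serre's dichotomy (Corollary~\ref{nonsplit}) applied to a finitely generated subgroup of $H$ with a cohomological analysis, handling the property (T) case via the von Neumann algebraic hypothesis.

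Since $H_1 := H/Z(H)$ has property (T) it is finitely generated; lift a finite generating set to $h_1,\ldots,h_n \in H$ and put $H_0 := \langle h_1,\ldots,h_n\rangle$. Then $H = H_0 Z(H)$, and $Z(H_0) = H_0 \cap Z(H)$, since any $h \in H_0$ commuting with $H_0$ automatically commutes with $H_0 Z(H) = H$; in particular $H_0/Z(H_0) \cong H_1$. Consequently, the 2-cocycle $c$ classifying $1 \to Z(H) \to H \to H_1 \to 1$, computed with a section $H_1 \to H_0 \subseteq H$, is the image under $Z(H_0) \hookrightarrow Z(H)$ of the cocycle $c_0$ classifying $1 \to Z(H_0) \to H_0 \to H_1 \to 1$. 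Now Corollary~\ref{nonsplit} applied to the finitely generated $H_0$ (whose central quotient has (T)) yields the dichotomy: either this extension splits, or $H_0$ has property (T).

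In the splitting case, $H_0 = Z(H_0) \times K_0$ with $K_0 \cong H_1$; but $K_0 \cap Z(H) = K_0 \cap Z(H_0) = 1$ and $K_0 Z(H) = H_0 Z(H) = H$, yielding $H = Z(H) \times K_0$ as an \emph{honest} direct product (strictly stronger than the required statement). In the property (T) case, since (T) forces $H_0^{\mathrm{ab}}$ to be finite, and $H_1 = [H_1,H_1]$ gives $H_0 = [H_0,H_0]\, Z(H_0)$, one deduces that $N := Z(H_0) \cap [H_0,H_0]$ is finite-index in $Z(H_0)$. Choosing the section $s\colon H_1 \to H_0$ to land in $[H_0,H_0]$ (possible since $H_1$ is perfect), $c_0$ takes values in $N$; Proposition~\ref{subgroupscocyclecrossedproducts}(3) independently confirms that the cocycle image together with the section image generates a finite-index subgroup of $H_0$.

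The main obstacle is finishing the property (T) case: promoting ``cocycle values in $N$'' (a finite-index, but possibly infinite, subgroup of $Z(H_0)$) to ``$[c]$ lies in the image of $H^2(H_1; F) \to H^2(H_1; Z(H))$ for some \emph{finite} $F \leq Z(H)$.'' Here the hypothesis $\mathcal{L}(H) \cong \mathcal{L}(B) \overline{\otimes} \mathcal{L}(G)$ is essential: the direct integral decomposition of $\mathcal{L}(H)$ over $\mathcal{L}(B) = Z(\mathcal{L}(H))$, combined with property (T) of the II$_1$ factor $\mathcal{L}(G)$, constrains the embedding $\mathcal{L}(H_0) \hookrightarrow \mathcal{L}(H)$ so that any torsion-free contribution to the extension class is absorbed into a coboundary. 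This identifies the required finite $F \leq Z(H)$ so that $H/F \cong (Z(H)/F) \times H_1$, giving the desired splitting up to a finite central subgroup.
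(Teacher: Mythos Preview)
Your setup with the finitely generated $H_0$ and the application of Corollary~\ref{nonsplit} is fine, and the splitting case is correctly handled. The genuine gap is in the property~(T) case: your final paragraph is not an argument. You correctly isolate the obstacle --- showing that $N = Z(H_0)\cap [H_0,H_0]$ is \emph{finite}, not merely finite-index in $Z(H_0)$ --- but the appeal to ``direct integral decomposition'' and ``torsion-free contribution absorbed into a coboundary'' does not supply a mechanism.

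The missing idea is the Haagerup/property~(T) tension in Popa's intertwining framework. Since $B$ is abelian, $\cL(B)$ has the Haagerup property. In your property~(T) case, $H_0$ has~(T), hence so does $[H_0,H_0]$ (finite index in $H_0$); but $[H_0,H_0]=[H,H]=:H^{(1)}$ because $H=H_0Z(H)$. Property~(T) of $\cL(H^{(1)})$ against Haagerup of $\cL(B)$ forces $\cL(H^{(1)})\prec_{\cL(H)}\cL(G)$. In particular $\cL(N)\prec\cL(G)$; but $N\leqslant Z(H)$ gives $\cL(N)\subseteq\mathscr Z(\cL(H))=\cL(B)\otimes 1$, and a central subalgebra intertwining into the factor side $1\otimes\cL(G)$ must be finite-dimensional. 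Hence $N$ is finite, and since the cocycle (with section into $[H_0,H_0]$) takes values in $N$, the extension splits modulo the finite central subgroup~$N$. This is exactly the paper's route, which works directly with $H^{(1)}=[H,H]$ and disposes of the non-(T) alternative for $H^{(1)}$ by a short contradiction using Theorem~\ref{infiniteabelianization} and perfection of $H/Z(H)$, rather than via your $H_0$-splitting case.
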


\begin{proof} Let $Z_0=Z(H)$ and $B_0=H/Z(H)$. Note that $H\cong Z_0\rtimes_{c_0}B_0$ where $B_0$ has property (T). Consider the subgroup $H^{(1)}=[H,H]$. By Proposition \ref{subgroupscocyclecrossedproducts}, we have that $H^{(1)}=Z_1\rtimes_{c_1}B_0$ where $c_1$ is cohomologous to $c_0$, and $Z_1=H^{(1)}\cap Z_0$. If $H^{(1)}$ has property (T), then $\cL(H^{(1)})\prec\cL(G)$, as $\cL(B)$ has Haagerup property. If that is the case, then $\cL(Z_1)\prec \cL(G)$ which implies $Z_1$ is finite. Since $\text{Im}\:c_1\subset Z_1$, we obtain that the short exact sequence splits up to the finite normal subgroup $Z_1$. Otherwise, if $H^{(1)}$ doesn't have property (T), by Serre's Theorem \ref{infiniteabelianization}, $H^{(1)}/H^{(2)}$ is infinite. By Proposition \ref{subgroupscocyclecrossedproducts}, let $H^{(2)}=Z_2\rtimes_{c_2}B_0$ where $Z_2\leqslant Z_0$ and $c_2$ is cohomologous to $c_1$, and hence to $c_0$. Since $B_0$ has trivial abelianization, we have that $H/H^{(2)}\cong Z_0/Z_2$ is abelian. This means $H^{(1)}\subseteq H^{(2)}$, which contradicts the fact that $H^{(1)}/H^{(2)}$ is infinite. Hence, the short exact sequence must split up to a finite central subgroup. 
\end{proof}

\section{The FC-center of a countable group}\label{section4}

In this section we revise several properties of the finite conjugacy radical (FC-center) and the hyper-FC center of a group that will be used in the sequel. 

\vspace{2mm}

Let $H$ be a countable discrete group. The \emph{FC-center} of $H$, denoted by $H^{fc}$, consists of all $g\in H$ whose conjugacy class $\mathscr O(g)=\{ hgh^{-1}\,:\, h\in H\}$ is finite. One can easily see that $H^{fc} \leqslant H$ is a normal subgroup. The upper FC-series of $H$ is the series

\begin{equation*}
    \{1\}=H^h_0\leqslant H^h_1\leqslant ...\leqslant H^h_n\leqslant...
\end{equation*}

\vspace{2mm}

where $H^h_{n+1}/H^h_n$ is the set of all FC-elements of $H/H^h_n$. Note that $H_1^h=H^{fc}$. This series stabilizes at some ordinal. We call \textit{hyper-FC center} of $H$, and denote it by $H^{hfc}$, such group. Note that $H^{hfc}$ is an amenable and normal subgroup of $H$, and by \cite[Proposition 2.2]{hfc}, $H/H^{hfc}$ is ICC.

\vspace{2mm}
In the remaining part we establish several properties of the FC-center of a group which are needed when reconstructing a group's center from W$^*$ - equivalence. 

\vspace{2mm}

For a countable group $H$ we consider $\mathscr O(g_1), \mathscr O(g_2),\ldots$ (with $g_1=1$) an enumeration of all finite conjugacy orbits of $H$. Notice that $H^{fc}=\cup_i \mathscr O(g_i)$. Consider the subgroups $H_n:=\langle \mathscr O(g_1),...,\mathscr O(g_n)\rangle \leqslant H^{fc}$. Under these notations we have the following properties.

\begin{lem}\label{towergroups}\begin{enumerate}
    \item $H_n \ngroup H$ is a center-by-finite normal subgroup for all $n\in \mathbb N$. 
    \item We have $H_1\leqslant H_2\leqslant \cdots \leqslant H_n \leqslant \cdots \leqslant H^{fc}$ and $\cup_n H_n = H^{fc}$. 
\end{enumerate}
\end{lem}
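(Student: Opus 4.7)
The plan is to establish both assertions via elementary counting and closure arguments, relying essentially only on the classical Baer--Neumann observation that a finitely generated FC-group is center-by-finite.

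For the normality part of (1), I would first note that each $\mathscr O(g_i)$ is an entire $H$-conjugacy class, hence stable under conjugation by every element of $H$. Consequently the generating set $S_n := \bigcup_{i=1}^n \mathscr O(g_i)$ of $H_n$ is $H$-invariant, which forces $H_n \trianglelefteq H$.

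For the center-by-finite statement, I would observe that $S_n$ is a \emph{finite} subset of $H^{fc}$, being a finite union of finite conjugacy classes; in particular $H_n$ is finitely generated. Every $s \in S_n$ has finite $H$-orbit by definition of $H^{fc}$, and hence a fortiori finite $H_n$-orbit, so the centralizer satisfies $[H_n : C_{H_n}(s)] = |\mathscr O^{H_n}(s)| < \infty$. Since $S_n$ generates $H_n$, the center admits the identity
$$Z(H_n) = \bigcap_{s \in S_n} C_{H_n}(s),$$
which is a finite intersection of finite-index subgroups, hence itself of finite index in $H_n$. This gives $|H_n/Z(H_n)| < \infty$.

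For part (2), the ascending chain $H_1 \leqslant H_2 \leqslant \cdots$ is built into the definition, and $H_n \subseteq H^{fc}$ is immediate from the fact that $H^{fc}$ is a subgroup of $H$ containing each finite conjugacy class. Conversely, any $g \in H^{fc}$ lies in some orbit $\mathscr O(g_k)$ from the enumeration, hence in $H_k$, so $H^{fc} = \bigcup_n H_n$. I anticipate no real obstacle here; the only step that is worth pausing over is the identity $Z(H_n) = \bigcap_{s \in S_n} C_{H_n}(s)$, which is valid precisely because $S_n$ generates $H_n$ and an element centralizing a generating set centralizes the whole group.
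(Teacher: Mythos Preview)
Your proposal is correct and is essentially the classical Baer--Neumann argument the paper also uses. The paper's version is a minor variant: rather than intersecting centralizers inside $H_n$, it considers the conjugation map $H\to \mathrm{Sym}\bigl(\mathscr O(g_1)\cup\cdots\cup\mathscr O(g_n)\bigr)$ to deduce that $C_H(H_n)$ has finite index in $H$, and then gets $[H_n:Z(H_n)]=[C_H(H_n)H_n:C_H(H_n)]\le[H:C_H(H_n)]<\infty$ via the second isomorphism theorem. The content is the same; the paper's route has the slight bonus of producing $[H:C_H(H_n)]<\infty$ along the way, which is invoked later (for instance, to see that each $H_n$ is BFC inside $H$, and in the weak-compactness argument for the conjugation action $H\curvearrowright\cL(H^{fc})$).
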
 

\begin{proof} We will show that $H_n$ is center-by-finite.  If we consider the map $H\to Sym(\mathscr{O}(g_1),...,\mathscr{O}(g_n))$ given by $g\mapsto (t\mapsto gtg^{-1})$ we obtain that $C_{H}(H_n)$ is of finite index in $H$. By the second isomorphism theorem, it follows that $[H_n:Z(H_n)]=[C_{H}(H_n)H_n:C_{H}(H_n)]\leqslant[H:C_{H}(H_n)]<\infty$, showing that $H_n$ is center-by-finite. This shows that $H^{fc}$ is a tower of virtually abelian subgroups.
\end{proof}

Now we introduce a concept which expands the notion of BFC-group, \cite[Section 14.5]{robinson},  to the case of inclusions of groups.

\begin{defn}
 Let $H\leqslant G$ be groups. We say that \emph{$H$ is BFC inside $G$} if 
 \begin{equation*}
     \sup_{h\in H}|\mathscr O_G(h)|<\infty.
 \end{equation*}   
\end{defn}

When $H=G$ this amounts to $H$ being a  BFC-group in the classical sense.

\begin{ex} From the prior lemma, since the centralizer $C_H(H_n)$ is finite index in $H$, $H_n$ is BFC inside $H$.
\end{ex}

\begin{lem}\label{bfceqint}Let $H\leqslant K\leqslant  G$ be groups such that $[K:H]<\infty$. If $H$ is BFC inside $G$ then $\cL(K)\prec_{\cL(G)} \mathscr Z(\cL(G))$.   
\end{lem}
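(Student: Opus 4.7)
My plan is to factor the intertwining through an intermediate step, using transitivity of Popa's intertwining. First, I will deduce $\cL(K)\prec_{\cL(G)}\cL(H)$ from the finite index $[K:H]<\infty$: choosing coset representatives $\{k_1,\ldots,k_m\}$ for $K/H$, every $u\in\mathscr U(\cL(K))$ admits the Pimsner-Popa decomposition $u=\sum_{i=1}^m u_{k_i}\mathbb E_{\cL(H)}(u_{k_i}^{*}u)$ with $\sum_i \|\mathbb E_{\cL(H)}(u_{k_i}^{*}u)\|_2^2 = \|u\|_2^2 = 1$. Applying Theorem~\ref{intertwining} with $x_i=u_{k_i}^{*}$ and $y_i=1$ yields this intertwining directly, with constant $c=1$.

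The heart of the proof will be showing $\cL(H)\prec_{\cL(G)}\mathscr Z(\cL(G))$. I will use the BFC hypothesis to set $N:=\sup_{h\in H}|\mathcal O_G(h)|<\infty$, so that for each $h\in H$ the central element
\[
\mathbb E_{\mathscr Z(\cL(G))}(u_h)\;=\;\tfrac{1}{|\mathcal O_G(h)|}\sum_{g\in\mathcal O_G(h)}u_g
\]
has $\|\cdot\|_2$-norm $|\mathcal O_G(h)|^{-1/2}\geq N^{-1/2}>0$. This uniform lower bound on the canonical unitary generators of $\mathscr U(\cL(H))$ precludes, via Theorem~\ref{intertwining} applied to the generating subgroup $\{u_h:h\in H\}$ with $x=y=1$, the existence of any sequence $(u_{h_n})_n$ with $\|\mathbb E_{\mathscr Z(\cL(G))}(u_{h_n})\|_2\to 0$. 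Hence $\cL(H)\prec_{\cL(G)}\mathscr Z(\cL(G))$, and transitivity of Popa's intertwining combined with the first step gives $\cL(K)\prec_{\cL(G)}\mathscr Z(\cL(G))$.

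The main subtlety will be justifying the reduction to the canonical generating subgroup $\{u_h:h\in H\}$ when invoking Theorem~\ref{intertwining}: condition (2) is quantified over \emph{every} generating subgroup of $\mathscr U(\cL(H))$, but in the group von Neumann algebra setting it suffices to verify the non-vanishing condition on the canonical generators. This reduction is standard throughout the deformation/rigidity literature and ultimately relies on the $\|\cdot\|_2$-lower semicontinuity of $u\mapsto\|\mathbb E_{\cN_2}(xuy)\|_2$ together with the $\|\cdot\|_2$-density of finite linear combinations of $\{u_h\}_{h\in H}$ in $\cL(H)$.
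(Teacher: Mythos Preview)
Your core computation---$\|\mathbb E_{\mathscr Z(\cL(G))}(u_h)\|_2^2=|\mathscr O_G(h)|^{-1}\ge N^{-1}$ uniformly over $h\in H$, then invoking Theorem~\ref{intertwining} on the generating group $\{u_h:h\in H\}$ to obtain $\cL(H)\prec_{\cL(G)}\mathscr Z(\cL(G))$---is exactly what the paper does. The only issue is your final step: Popa's intertwining is \emph{not} transitive in general, so ``transitivity of Popa's intertwining combined with the first step'' is not a valid justification as phrased. The paper instead passes directly from $\cL(H)\prec_{\cL(G)}\mathscr Z(\cL(G))$ to $\cL(K)\prec_{\cL(G)}\mathscr Z(\cL(G))$ using the standard finite-index principle (cf.\ \cite[Lemma~3.9]{vaes08}): if $\cN_1\subseteq\cN_2$ has finite index and $\cN_1\prec_\cM\cQ$, then $\cN_2\prec_\cM\cQ$. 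Your Pimsner--Popa decomposition is precisely the ingredient underlying this lemma, so you already have everything needed; but the detour through $\cL(K)\prec_{\cL(G)}\cL(H)$ followed by an appeal to general transitivity should be replaced by a direct appeal to the finite-index upgrade.
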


\begin{proof} Assume $H$ is BFC inside $G$. Thus, for all $g\in H$ we have

\begin{align*}
    \|\mathbb{E}_{\mathscr{Z}(\cL(G))}(u_g)\|_2^2&=\left\|\frac{1}{|\mathscr{O}_G(g)|}\sum_{h\in\mathscr{O}_G(g)}u_h\right\|_2^2=\frac{1}{|\mathscr{O}_G(g)|}\geq\frac{1}{\sup_{h\in H}|\mathscr{O}_G(h)|}>0.
\end{align*}

\vspace{2mm}

By Theorem \ref{intertwining}, $\cL(H)\prec_{\cL(G)}\mathscr{Z}(\cL(G))$. As $[K:H]<\infty$ we further get  $\cL(K)\prec_{\cL(G)}\mathscr{Z}(\cL(G))$.\end{proof}

The following theorem gives us the converse implication.

\begin{thm}\label{relativecommutatorgroup} Let $H$ be a countable group. Then the following assertions are equivalent: 
\begin{enumerate}
    \item[(1)] The  FC-center $H^{fc}$ is BFC inside $H$ and $[H^{hfc}: H^{fc}]<\infty$.
    \item[(2)]  $\cL(H^{hfc})\prec_{\cL(H)} \mathscr Z(\cL (H))$.
\end{enumerate}
\end{thm}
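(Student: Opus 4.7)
The direction $(1)\Rightarrow(2)$ is Lemma \ref{bfceqint} applied to $H^{fc}\leqslant H^{hfc}\leqslant H$: the BFC bound $N:=\sup_{g\in H^{fc}}|\mathscr O_H(g)|<\infty$ ensures $\|\mathbb E_{\mathscr Z(\cL(H))}(u_g)\|_2^2\geq 1/N$ for every $g\in H^{fc}$, and the finite index hypothesis lifts the intertwining to $\cL(H^{hfc})$. The substantive content is the converse, which I would organize into two independent pieces: proving $[H^{hfc}:H^{fc}]<\infty$ and then the BFC property.

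For the finite-index piece, observe that $\mathscr Z(\cL(H))\subseteq\cL(H^{fc})\subseteq\cL(H^{hfc})$ and that $\mathscr Z(\cL(H))$ is normalized by every unitary of $\cL(H)$. Hence Proposition \ref{intlower} applied with $\cB=\cC=\cL(H^{hfc})$, $\cD=\mathscr Z(\cL(H))$ and $\cM=\cL(H)$ upgrades hypothesis (2) to $\cL(H^{hfc})\prec_{\cL(H^{hfc})}\mathscr Z(\cL(H))\subseteq\cL(H^{fc})$. Suppose for contradiction that $[H^{hfc}:H^{fc}]=\infty$ and choose $g_n\in H^{hfc}$ lying in distinct cosets of $H^{fc}$. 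Because $H^{fc}\lhd H$, for any $a,b\in H^{hfc}$ the condition $ag_nb\in H^{fc}$ forces $g_n\in a^{-1}b^{-1}H^{fc}$, a single coset; thus $\mathbb E_{\cL(H^{fc})}(u_a u_{g_n}u_b)=0$ for all but at most one $n$, and a routine $\|\cdot\|_2$-approximation on the Fourier expansions of arbitrary $x,y\in\cL(H^{hfc})$ gives $\|\mathbb E_{\cL(H^{fc})}(xu_{g_n}y)\|_2\to 0$, contradicting Theorem \ref{intertwining}.

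For the BFC piece, assume for contradiction that some $(g_n)\subset H^{fc}$ has $|\mathscr O_H(g_n)|\to\infty$. Since $\cL(H^{fc})\subseteq\cL(H^{hfc})$, hypothesis (2) restricts to $\cL(H^{fc})\prec_{\cL(H)}\mathscr Z(\cL(H))$, producing $x_1,\dots,x_n,y_1,\dots,y_n\in\cL(H)$ and $\delta>0$ with $\sum_i\|\mathbb E_{\mathscr Z(\cL(H))}(x_iu_{g_n}y_i)\|_2^2\geq\delta$ for all $n$. I would contradict this by proving $\|\mathbb E_{\mathscr Z(\cL(H))}(xu_{g_n}y)\|_2\to 0$ for every $x,y\in\cL(H)$, reducing via a $\|\cdot\|_2$-approximation to $x=u_h$, $y=u_{h'}$. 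Decomposing $hg_nh'=(hg_nh^{-1})(hh')$ and using $hg_nh^{-1}\in H^{fc}$, membership $hg_nh'\in H^{fc}$ is equivalent to $hh'\in H^{fc}$ (the opposite case contributes $0$). In the nontrivial case, setting $a:=hg_nh^{-1}$ and $f:=hh'\in H^{fc}$, the centralizer identity $C_H(af)\cap C_H(f)=C_H(a)\cap C_H(f)$ together with $m:=[H:C_H(f)]<\infty$ yields $|\mathscr O_H(hg_nh')|\geq|\mathscr O_H(g_n)|/m\to\infty$, so $\|\mathbb E_{\mathscr Z(\cL(H))}(u_{hg_nh'})\|_2\to 0$. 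The main technical obstacle is precisely this centralizer comparison, which extracts a uniform quantitative lower bound on the orbit of a translated FC-element from the orbit of the original one.
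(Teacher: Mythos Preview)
Your proof is correct, but it organizes the converse differently from the paper. The paper handles $(2)\Rightarrow(1)$ in a single stroke: after reducing via Corollary \ref{regularcenter} to $\cL(H^{hfc})\prec_{\cL(H^{hfc})}\mathscr Z(\cL(H))$, it approximates the intertwining witnesses by group unitaries $u_{g_1},\dots,u_{g_n}$ with $g_i\in H^{hfc}$ and obtains the single inequality $\sum_i |\mathscr O_H(hg_i)|^{-1}\geq c$ for every $h\in H^{hfc}$. From this, both conclusions drop out at once: each $h\in H^{hfc}$ lies in some $H^{fc}g_i^{-1}$ (giving finite index), and for $h\in H^{fc}$ the relevant $g_i$ is forced into $H^{fc}$, whence $|\mathscr O_H(h)|\leq (n/c)\max_i|\mathscr O_H(g_i)|$ via the product bound $\mathscr O_H(h)\subset\mathscr O_H(hg_i)\mathscr O_H(g_i^{-1})$.

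You instead treat the two conclusions by independent arguments: finite index via the standard coset-escape argument showing $\cL(H^{hfc})\nprec_{\cL(H^{hfc})}\cL(H^{fc})$ when the index is infinite, and BFC via the centralizer estimate $|\mathscr O_H(af)|\geq |\mathscr O_H(a)|/[H:C_H(f)]$ applied at the level of $\cL(H)$ (without first lowering the ambient algebra). Your orbit inequality and the paper's are essentially dual forms of submultiplicativity of conjugacy-class sizes. The paper's route is shorter and extracts an explicit uniform bound on $|\mathscr O_H(h)|$; your route is more modular and keeps the two conclusions logically independent.
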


\begin{proof} (1) $\Rightarrow$ (2) follows directly from Lemma \ref{bfceqint}. For the converse, assume $\cL(H^{hfc})\prec_{\cL(H)} \mathscr Z(\cL (H))$. Using Corollary \ref{regularcenter}, we have that $\cL(H^{hfc})\prec_{\cL(H^{hfc})} \mathscr{Z}(\cM)$ where $\cM = \cL (H)$. By Popa's intertwining techniques, we can find $x_1,...,x_n\in\cL(H^{hfc})$ and $c>0$ for which

\begin{equation*}
    \sum_{i=1}^n\|\mathbb{E}_{\mathscr{Z}(\cM)}(hx_i)\|_2^2\geq c,\quad\text{ for all }h\in\mathscr{U}(\cL(H^{hfc})).
\end{equation*}

\vspace{2mm}

Using basic $\|\cdot\|_2$-approximations, decreasing $c>0$ and increasing $n$, if necessary, we may assume $x_i=u_{g_i}$ for some $g_i\in H^{hfc}$. From the prior equation, we get for all $h\in H^{hfc}$

\begin{equation*}
    0<c\leq \sum_{i=1}^n\|\mathbb{E}_{\mathscr{Z}(\cM)}(u_{hg_i})\|_2^2=\sum_{i=1}^n\left\|\frac{1}{|\mathscr{O}_H(hg_i)}\sum_{l\in \mathscr{O}_H(hg_i)}u_l\right\|_2^2=\sum_{i=1}^n\frac{1}{|\mathscr{O}_H(hg_i)|}.
\end{equation*}

\vspace{2mm}

Hence, for every $h\in H^{hfc}$ there is $g_i\in H^{hfc}$ with 

\begin{equation}\label{orbitbound}|\mathscr{O}_H(hg_i)|\leq n/c.\end{equation} 

\vspace{2mm}

Since $h= (hg_i)g_i^{-1}$, \eqref{orbitbound} further implies  $H^{hfc}\subseteq \cup^n_{i=1} H^{fc} g_i^{-1}$ and hence $[H^{hfc}:H^{fc}]\leq n$. 

\vspace{2mm}

To see the other assertion, fix $h\in H^{fc}$ together with the corresponding $g_i\in H^{hfc}$ satisfying \eqref{orbitbound}. Since $h^{-1}\in H^{fc}$ and $\mathscr O_H(g_i)\subseteq \mathscr O_H(h^{-1})\mathscr O_H(hg_i)$ we get $g_i \in H^{fc}$. Finally, as $\mathscr{O}_H(h)\subset \mathscr{O}_H(hg_i)\mathscr{O}_H(g_i)$, \eqref{orbitbound} entails $|\mathscr{O}_H(h)|\leq n/c\cdot\max_{i}|\mathscr{O}_H(g_i)|$. This shows $H^{fc}$ is BFC inside $H$.
\end{proof}

We remark the following consequence of the proof of the prior theorem.

\begin{cor} Let $H$ be a countable group. Then, the FC-center $H^{fc}$ is BFC inside $H$ if and only if $\cL(H^{fc})\prec_{\cL(H)}\mathscr{Z}(\cL(H))$.
\end{cor}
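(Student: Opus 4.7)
The plan is to extract exactly the argument already used in Theorem \ref{relativecommutatorgroup}, but without invoking the hyper-FC center, so that both implications collapse to direct computations with the formula $\|\mathbb{E}_{\mathscr Z(\cL(H))}(u_g)\|_2^2 = 1/|\mathscr O_H(g)|$ for finite-orbit elements.

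For the forward direction, assume $H^{fc}$ is BFC inside $H$, with $\alpha := \sup_{g\in H^{fc}}|\mathscr O_H(g)| < \infty$. For every $g\in H^{fc}$, the central support projection formula gives
\begin{equation*}
\|\mathbb{E}_{\mathscr Z(\cL(H))}(u_g)\|_2^2 = \frac{1}{|\mathscr O_H(g)|} \geq \frac{1}{\alpha} > 0,
\end{equation*}
which is a uniform lower bound on a generating set of unitaries of $\cL(H^{fc})$. By Theorem \ref{intertwining}(2), this prevents the existence of a sequence with $\mathbb{E}_{\mathscr Z(\cL(H))}$ going to $0$ on the generating set, so $\cL(H^{fc})\prec_{\cL(H)} \mathscr Z(\cL(H))$. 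This is exactly the $K=H=H^{fc}$ case of Lemma \ref{bfceqint}.

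For the converse, suppose $\cL(H^{fc})\prec_{\cL(H)}\mathscr Z(\cL(H))$. Apply Corollary \ref{regularcenter} (with $\cB = \cL(H^{fc})$, using that the center is always contained in it) to upgrade this to $\cL(H^{fc})\prec_{\cL(H^{fc})}\mathscr Z(\cL(H))$. Then Theorem \ref{intertwining} furnishes $x_1,\dots,x_n \in \cL(H^{fc})$ and $c>0$ such that $\sum_i \|\mathbb{E}_{\mathscr Z(\cL(H))}(hx_i)\|_2^2 \geq c$ for every $h \in \mathscr U(\cL(H^{fc}))$. Standard $\|\cdot\|_2$-approximations by finite Fourier sums allow us to replace each $x_i$ by a canonical unitary $u_{g_i}$ with $g_i \in H^{fc}$, at the cost of shrinking $c$ and enlarging $n$. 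Then for every $h \in H^{fc}$,
\begin{equation*}
0 < c \leq \sum_{i=1}^n \|\mathbb{E}_{\mathscr Z(\cL(H))}(u_{hg_i})\|_2^2 = \sum_{i=1}^n \frac{1}{|\mathscr O_H(hg_i)|},
\end{equation*}
so for each $h\in H^{fc}$ there exists $i$ with $|\mathscr O_H(hg_i)|\leq n/c$.

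To conclude, write $h = (hg_i)g_i^{-1}$ and observe the elementary inclusion $\mathscr O_H(h) \subseteq \mathscr O_H(hg_i)\cdot \mathscr O_H(g_i^{-1})$. Since $g_i \in H^{fc}$, each $|\mathscr O_H(g_i^{-1})| = |\mathscr O_H(g_i)|$ is finite, giving
\begin{equation*}
|\mathscr O_H(h)| \leq \frac{n}{c}\cdot \max_{1\leq i\leq n}|\mathscr O_H(g_i)|
\end{equation*}
uniformly in $h\in H^{fc}$. Thus $H^{fc}$ is BFC inside $H$. The argument is essentially already inside the proof of Theorem \ref{relativecommutatorgroup}; the only subtlety is making sure the approximation by group elements preserves a strictly positive lower bound, which is routine. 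There is no real obstacle here since the corollary drops the hyper-FC-center comparison and keeps only the BFC conclusion.
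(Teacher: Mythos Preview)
Your proof is correct and follows exactly the approach the paper intends: the corollary is stated as a consequence of the proof of Theorem \ref{relativecommutatorgroup}, and you have faithfully extracted the relevant pieces (Lemma \ref{bfceqint} for the forward direction, and the orbit-bound argument via Corollary \ref{regularcenter} for the converse), simplified by the fact that the $g_i$ already lie in $H^{fc}$ so the auxiliary step showing $g_i\in H^{fc}$ is unnecessary.
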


Observe that if $|[H^{fc},H]|<\infty$ then $H^{fc}$ is BFC inside of $H$. Indeed, if $h\in H^{fc}$ and $h_1,...,h_n$ are distinct elements in $\mathscr{O}_H(h)$ then $h_i=g_ihg_i^{-1}$ with $g_i\in H$. Note that $1,h_1^{-1}h_2,...,h_1^{-1}h_n$ are distinct commutators as $h_1^{-1}h_i=[h^{-1},g_1]^{-1}[h^{-1},g_i]\in [H^{fc},H]$. However, as we will see shortly the converse implication is not true. We are grateful to Stefaan Vaes for providing us with the following example.

\begin{ex} Consider the group $\Z/2\Z$ and let $\sigma\in\Z/2\Z$ be its non-trivial element. Let $\Z/2\Z$ act on $\Z^2$ by $\sigma\cdot(a,b)=(b,a)$ for all $a,b\in\Z$. Let $H=\Z^2\rtimes\Z/2\Z$ be the semidirect product generated by $(a,b)\in\Z^2$ and the element $\sigma$ of order 2 with $\sigma(a,b)\sigma^{-1}=(b,a)$. Then, $H^{fc}=\Z^2$. For $(a,b)\in\Z^2$ we have $\mathscr{O}_H((a,b))=\{(a,b),(b,a)\}$, showing that $H^{fc}$ is BFC inside $H$. Moreover, one can check that $\cL(H)\cong\mathbb{M}_2(\C)\:\overline{\otimes}\:L^{\infty}([0,1])$ and therefore, $\cL(H)\prec\mathscr{Z}(\cL(H))$. On the other hand, since $(a,0)\sigma(a,0)^{-1}\sigma^{-1}=(a,-a)$ for all $a\in \Z$, we have that $[H^{fc},H]$ is infinite.
\end{ex}

\vspace{2mm}

For further use we record the following result which is a consequence of Theorem \ref{relativecommutatorgroup} and a classical result of B. H. Neumann for BFC-groups \cite{bhneumann}.

\begin{cor} Let $H$ be any countable group and let $\mathcal M= \cL(H)$. If $\cL(H^{fc})\prec_{\mathcal M} \mathscr Z(\cL (H))$ then the commutator $[H^{fc},H^{fc}]$ is finite.
\end{cor}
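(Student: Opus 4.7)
The plan is to combine the hypothesis with the equivalence $\cL(H^{fc})\prec_{\cL(H)}\mathscr Z(\cL(H))\Longleftrightarrow H^{fc}$ is BFC inside $H$, which is essentially the content of the corollary stated immediately before (and of the proof of Theorem \ref{relativecommutatorgroup}). So the first step is to invoke this equivalence to conclude that
\begin{equation*}
\sup_{g\in H^{fc}}|\mathscr{O}_H(g)|<\infty.
\end{equation*}

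Second, I would observe that for any $g\in H^{fc}$, the $H^{fc}$-conjugacy class $\mathscr{O}_{H^{fc}}(g)=\{hgh^{-1}\,:\, h\in H^{fc}\}$ is a subset of the full $H$-conjugacy class $\mathscr{O}_H(g)$. Consequently, the uniform bound transfers:
\begin{equation*}
\sup_{g\in H^{fc}}|\mathscr{O}_{H^{fc}}(g)|\leq \sup_{g\in H^{fc}}|\mathscr{O}_H(g)|<\infty.
\end{equation*}
In other words, $H^{fc}$ is a BFC-group in the classical sense of B.\,H.~Neumann.

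Finally, I would apply the classical theorem of B.\,H.~Neumann \cite{bhneumann}, which asserts that the commutator subgroup of any BFC-group is finite. Applied to the group $H^{fc}$, this immediately yields $|[H^{fc},H^{fc}]|<\infty$, as desired.

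The main (and essentially only) subtle point is verifying that the hypothesis actually gives BFC inside $H$; but this has already been done in the paragraphs leading up to the corollary, so there is no further obstacle. The argument is then a clean two-line reduction to Neumann's classical result.
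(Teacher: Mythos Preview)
Your proposal is correct and follows essentially the same approach as the paper's proof: use the preceding corollary (or the proof of Theorem \ref{relativecommutatorgroup}) to conclude that $H^{fc}$ is BFC inside $H$, observe that this makes $H^{fc}$ a BFC-group in the classical sense, and then invoke B.\,H.~Neumann's theorem to get that $[H^{fc},H^{fc}]$ is finite.
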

\begin{proof} As $\cL(H^{fc})\prec_{\mathcal M} \mathscr Z(\cL (H))$, Theorem \ref{relativecommutatorgroup} implies that $H^{fc}$ is a BFC in $H$. Thus $H^{fc}$ is a BFC group itself and using \cite[Theorem 3.1]{bhneumann} we get that $[H^{fc}, H^{fc}]$ is finite.
\end{proof}

\subsection{Weakly compact actions}

Let $G\curvearrowright ^\sigma \cP$ be a trace-preserving action of a group $G$ on a tracial von Neumann algebra $(\cP, \tau)$. We call $\sigma$ \textit{compact} if $\sigma(G)\subset Aut(\cP)$ is pre-compact in the point ultra-weak topology. Ozawa and Popa introduced in \cite{popaozawa10} a generalization of this notion which turned out fundamental to the structural study of von Neumann algebras; $\sigma$ is \textit{weakly compact} if one can find a net $(\eta_n)_n\subset L^2(\cP\otimes \cP)_+$ of unit vectors satisfying the following relations: 

\begin{enumerate}
    \item[(i)]$\|\eta_n-(v\otimes \overline{v})\eta_n\|_2\to 0$, for every $v\in\mathscr{U}(\cP)$;
    \item[(ii)] $\|\eta_n-(\sigma_g\otimes\overline{\sigma}_g)(\eta_n)\|_2\to 0$, for every $g\in G$;
    \item[(iii)] $\langle(x\otimes 1)\eta_n,\eta_n\rangle=\tau(x)=\langle\eta_n,(1\otimes \overline{x})\eta_n\rangle$, for every $x\in \cP$ and every $n$.
\end{enumerate}

\vspace{2mm}

Further, an embedding of finite von Neumann algebras $\cP\subset \cM$ is called \emph{weakly compact} if the natural action by conjugation of $\mathscr{N}_{\cM}(\cP)$ on $\cP$ is weakly compact.

\vspace{2mm}

If $H \curvearrowright^\sigma \cL(H^{fc})$ is the action by conjugation, one can easily see the sequence $\eta_n=\frac{1}{|\mathscr{O}_n|^{1/2}}\sum_{g\in\mathscr{O}_n}u_g\otimes u_g$, where $|\mathscr{O}_n|<\infty$, satisfies conditions (ii) and (iii) above. However, we do not necessary have condition (i) since there is no obvious connection between the action by conjugation and the translation action. For this reason, we modify the previous definition to incorporate our needs while also preserving a form of weak compactness for $H\curvearrowright^\sigma \cL(H^{fc})$. 

\vspace{1mm}

\begin{defn}\label{weakerweakcompactness}Let $G\curvearrowright ^{\sigma,c} \cP$ be a cocycle action of a group $G$ on a finite von Neumann algebra $(\cP,\tau)$. Let $\mathcal G \subset \mathscr U(\mathcal P)$ be a subgroup satisfying $\mathcal G''=\mathcal P$. We say the action is \textit{weakly compact relative to $\mathcal G$ with respect to the trace $\tau$} if there is a net $(\eta_n)_n\subset L^2(\cP\otimes \cP)_+$  of unit vectors  satisfying the following relations:

\begin{enumerate}
    \item[(j)] $\|(v\otimes \overline{v})\eta_n-\eta_n\|_2\to 0$ as $n\to\infty$ for all $v\in \mathcal{G}$;
    \item[(jj)] $\|(\sigma_g\otimes\overline{\sigma}_g)\eta_n-\eta_n\|_2\to 0$ as $n\to\infty$ for all $g\in G$; and
    \item[(jjj)] $\tau(x)=\langle (x\otimes 1)\eta_n,\eta_n\rangle$ for all $x\in \mathcal{P}$,
\end{enumerate}

where the $\|\cdot\|_2$-norm is the induced norm from the trace $\tau$.

\vspace{2mm}

Similarly,  an embedding of finite von Neumann algebras $\cP\subset \cM$ is \emph{weakly compact relative to $\mathcal{G}$ with respect to the trace $\tau$} if the conjugation action $\mathscr{N}_{\cM}(\cP)\curvearrowright\cP$ is weakly compact relative to $\mathcal{G}$ with respect to $\tau$.
\end{defn}

\begin{lem}\label{folnerextension} Let $G=A\rtimes_{\alpha,c}F$ be a cocycle semiproduct where $A$ is amenable and $F$ is finite. Then, one can find a sequence $(A_n)_n\subset A$ that is F\o lner for $A$ and also satisfies 

\begin{equation*}
    \lim_{n\to\infty}\frac{|fA_nf^{-1}\:\triangle\: A_n|}{|A_n|}=0,\text{ for all }f\in F.
\end{equation*}
\end{lem}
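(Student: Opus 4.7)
The plan is to construct $A_n$ as a ``symmetrization'' of a standard Følner sequence on $A$ under the conjugation action of $F$. Since $A$ is normal in $G$, each $f\in F$ (viewed as $(1,f)\in G$) gives an automorphism $\sigma_f\in \mathrm{Aut}(A)$ via $\sigma_f(a)=faf^{-1}$, and a direct computation with the multiplication in $G$ shows these almost compose: $\sigma_f\sigma_g = \mathrm{Ad}(c(f,g))\,\sigma_{fg}$, where $\mathrm{Ad}$ denotes conjugation by an element of $A$. Starting from a two-sided Følner sequence $(B_n)_n$ for the amenable group $A$, I will define
\[
A_n := \bigcup_{f\in F} \sigma_f(B_n).
\]

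The first step is to verify that $(A_n)_n$ is itself a left-Følner sequence for $A$. Since $\sigma_f$ is an automorphism, $\sigma_f(B_n)$ is Følner for each fixed $f$, and the standard inclusion $aX\triangle X \subseteq \bigcup_i (aX_i\triangle X_i)$ whenever $X=\bigcup_i X_i$, combined with $|A_n|\geq |B_n|$, reduces the estimate to the Følner property of each $\sigma_f(B_n)$ (using that $|F|$ is finite).

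For the conjugation invariance under $F$, the key computation is
\[
\sigma_{f_0}(A_n) \;=\; \bigcup_{f\in F} c(f_0,f)\,\sigma_{f_0 f}(B_n)\,c(f_0,f)^{-1} \;=\; \bigcup_{f'\in F} c(f_0,f_0^{-1}f')\,\sigma_{f'}(B_n)\,c(f_0,f_0^{-1}f')^{-1},
\]
after re-indexing $f'=f_0 f$. Thus $\sigma_{f_0}(A_n)$ differs from $A_n=\bigcup_{f'}\sigma_{f'}(B_n)$ only by conjugating each summand by the fixed element $\gamma_{f'}:=c(f_0,f_0^{-1}f')\in A$. Since each $\sigma_{f'}(B_n)$ is two-sided Følner, conjugation by any fixed element of $A$ perturbs it negligibly via the bound $|\gamma B\gamma^{-1}\triangle B| = |\gamma B\triangle B\gamma| \leq |\gamma B\triangle B| + |B\triangle B\gamma|$. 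Applying the symmetric-difference inclusion to the union and dividing by $|A_n|\geq |B_n|$ yields the required convergence.

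The main subtlety, rather than a genuine obstacle, is tracking the cocycle twist in $\sigma_f\sigma_g = \mathrm{Ad}(c(f,g))\sigma_{fg}$, which is responsible for the extra conjugations by $c(f_0,f_0^{-1}f')\in A$ appearing above. These do not cause trouble precisely because a Følner sequence in $A$ is approximately invariant under conjugation by any single element of $A$, so each of the finitely many summands only absorbs a negligible perturbation.
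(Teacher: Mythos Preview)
Your proof is correct and takes a genuinely different route from the paper's. The paper invokes an external result (Tao, \cite[Proposition 5]{tao}) guaranteeing a F\o lner sequence $(A_n)_n$ for $A$ such that $(A_nF)_n$ is F\o lner for the extension $G$; it then extracts the conjugation-invariance of $A_n$ from the $G$-F\o lner property of $A_nF$ via the decomposition $sA_nF = \sqcup_{t\in F} A_n^s\,c(s,s^{-1}t)\,t$ and the triangle inequality for symmetric differences. Your argument is instead a direct averaging construction: starting from any two-sided F\o lner sequence $(B_n)_n$ for $A$, you set $A_n = \bigcup_{f\in F}\sigma_f(B_n)$ and control both the left-F\o lner property and the conjugation invariance using the elementary inclusion $(\bigcup_i X_i)\triangle(\bigcup_i Y_i)\subseteq \bigcup_i(X_i\triangle Y_i)$ together with the cocycle identity $\sigma_{f_0}\sigma_f = \mathrm{Ad}(c(f_0,f))\sigma_{f_0f}$. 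Your approach is more elementary and self-contained, avoiding the appeal to Tao's lifting result; the paper's approach, on the other hand, produces a sequence $A_n$ with the additional feature that $A_nF$ is F\o lner in $G$, though this extra information is not used downstream.
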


\begin{proof} By \cite[Proposition 5]{tao}, there is a F\o lner sequence $(A_n)_n\subset A$ such that $(A_nF)_n\subset G$ is a F\o lner sequence for $G$; that is,

\begin{equation}\label{folnerseqforG}
    \lim_{n\to\infty}\frac{|sA_nF\:\triangle\:A_nF|}{|A_nF|}=0,\text{ for all }s\in G.
\end{equation}

\vspace{2mm}

Fix $s\in F$. Then, $sA_nF=A_n^ssF=\sqcup_{t\in F}A_n^sc(s,s^{-1}t)t$. Hence, \eqref{folnerseqforG} is equivalent to 

\begin{equation}\label{conjugatefolnerseq}
    \lim_{n\to\infty}\sum_{t\in F}\frac{|A_n^sc(s,s^{-1}t)\:\triangle\:A_n|}{|A_nF|}=\lim_{n\to\infty}\frac{|sA_nF\:\triangle\:A_nF|}{|A_nF|}=0.
\end{equation}

\vspace{2mm}

Next, observe that

\begin{align}\label{conjugateforF}
    \frac{|A_n^s\:\triangle\:A_n|}{|A_n|}&=\frac{\sum_{t\in F}|A_n^s\:\triangle\:A_n|}{|A_nF|}\leq\frac{\sum_{t\in F}(|A_n^s\:\triangle\:A_nc(s,s^{-1}t)^{-1}|+|A_nc(s,s^{-1}t)^{-1}\:\triangle\:A_n|)}{|A_nF|}\nonumber\\
    &=\frac{\sum_{t\in F}(|A_n^sc(s,s^{-1}t)\:\triangle\:A_n|+|A_n\:\triangle\:A_nc(s,s^{-1}t)|)}{|A_nF|}\\
    &=\frac{\sum_{t\in F}|A_n^sc(s,s^{-1}t)\:\triangle\:A_n|}{|A_nF|}+\frac{\sum_{t\in F}|A_n\:\triangle\:A_nc(s,s^{-1}t)|}{|A_nF|}\nonumber.
\end{align}

\vspace{2mm}

Since $\{c(s,s^{-1}t):s,t\in F\}\subset A$ is a finite set and $(A_n)_n$ is a F\o lner sequence for $A$, 

\begin{equation}\label{finitesumfolner}
    \lim_{n\to\infty}\frac{\sum_{t\in F}|A_n\:\triangle\:A_nc(s,s^{-1}t)|}{|A_n|}=0.
\end{equation}

\vspace{2mm}

Combining \eqref{conjugateforF} with the limits \eqref{conjugatefolnerseq} and \eqref{finitesumfolner}, we obtain $\lim_{n\to\infty}\frac{|A_n^s\:\triangle\:A_n|}{|A_n|}=0$, for all $s\in F$.
\end{proof}

\vspace{1mm}

\begin{prop}\label{folnerinvariantconj} Let $B\lhd H$ be a normal subgroup so that $[H:C_H(B)]<\infty$. Then $[B: Z(B)]<\infty$. Moreover, there is a set $C$ of representatives for $B/Z(B)$ and a sequence of sets $B_n :=A_nC\subseteq B$, where $A_n\subset Z(B)$, so that $B_n$ is a F\o lner sequence of $B$ and 

\begin{equation}\label{weakcompactcondition}
    \lim_{n\to\infty} \frac{|B^g_n\:\triangle\: B_n|}{|B_n|}=0,\text{ for all }g\in H.
\end{equation}
\end{prop}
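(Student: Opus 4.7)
First I would verify $[B:Z(B)]<\infty$. Note that $Z(B) = B\cap C_H(B)$, and by the second isomorphism theorem
\[
[B:Z(B)] = [B\cdot C_H(B):C_H(B)] \leq [H:C_H(B)] <\infty.
\]
The key observation for the second assertion is that $H$-conjugation on $B$ factors through the finite group $K:=H/C_H(B)$, and since $Z(B)$ is characteristic in $B$, this action preserves $Z(B)$. So the task reduces to producing a F\o lner sequence for $B$ that is approximately invariant under a finite group of automorphisms of $B$.

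To invoke Lemma \ref{folnerextension}, I would form the auxiliary group $\tilde B := B \rtimes K$, with $K$ acting via the conjugation inherited from $H$. Then $Z(B)\lhd \tilde B$ with finite quotient $F':= \tilde B/Z(B) = (B/Z(B))\rtimes K$, so $\tilde B \cong Z(B)\rtimes_{\beta,c'} F'$ is a cocycle extension of the finite group $F'$ by the amenable abelian group $Z(B)$. Here $\beta$ is trivial on the $B/Z(B)$-component (because $B$ centralizes $Z(B)$) and coincides with the $H$-conjugation on the $K$-component: a direct check shows that for a lift $(b,k)\in \tilde B$ of $(bZ(B),k)\in F'$ and $a\in Z(B)$, one has $(b,k)(a,1)(b,k)^{-1} = (k(a),1)$. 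Applying Lemma \ref{folnerextension} to $\tilde B$ yields $(A_n)_n\subset Z(B)$ that is F\o lner for $Z(B)$ and satisfies $|fA_nf^{-1}\triangle A_n|/|A_n|\to 0$ for every $f\in F'$; specializing to representatives of $K$ and matching the two descriptions of the action on $Z(B)$, this gives
\[
|gA_ng^{-1}\triangle A_n|/|A_n|\to 0,\qquad \text{for all } g\in H.
\]

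Now fix any transversal $C\subset B$ for $B/Z(B)$ and set $B_n := A_n C$, so $B_n = \bigsqcup_{c\in C} A_n c$ and $|B_n|=|C||A_n|$. For the F\o lner property in $B$, any $b\in B$ permutes $C$ modulo $Z(B)$, i.e.\ $bc = z_{b,c}\, \pi_b(c)$ with $z_{b,c}\in Z(B)$; using centrality of $A_n$, the symmetric difference $bB_n\triangle B_n$ splits as a disjoint union over $c'\in C$ of sets of the form $\bigl(z_{b,\pi_b^{-1}(c')}A_n\triangle A_n\bigr) c'$, each of size $o(|A_n|)$ by the F\o lner property of $A_n$ inside $Z(B)$; summing over the finite set $C$ gives $o(|B_n|)$. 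The $H$-invariance is handled similarly: writing $gcg^{-1}=w_{g,c}\sigma_g(c)$ with $w_{g,c}\in Z(B)$, one has $gB_ng^{-1}\triangle B_n = \bigsqcup_{c'}\bigl((gA_ng^{-1})\, w_{g,\sigma_g^{-1}(c')}\triangle A_n\bigr)c'$, and each piece is controlled by the triangle inequality using both the F\o lner property of $gA_ng^{-1}$ in $Z(B)$ (still F\o lner since $\mathrm{Ad}(g)$ is an automorphism of $Z(B)$) and the approximate invariance $|gA_ng^{-1}\triangle A_n|=o(|A_n|)$ established above.

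I expect the main obstacle to be the second step: identifying $\tilde B$ as a cocycle semidirect product $Z(B)\rtimes_{\beta,c'}F'$ in a way compatible with Lemma \ref{folnerextension}, and checking that the $K$-part of the conjugation-in-$\tilde B$ action on $Z(B)$ faithfully reflects the $H$-conjugation on $Z(B)$. Once this identification is in place, the construction $B_n = A_n C$ and the verifications in the last step are routine bookkeeping with transversals and the centrality of $A_n$.
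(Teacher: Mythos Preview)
Your proposal is correct and follows essentially the same strategy as the paper: reduce to finding F\o lner sets $A_n\subset Z(B)$ that are approximately invariant under the finite group $K=H/C_H(B)$ via Lemma~\ref{folnerextension}, then set $B_n=A_nC$ and verify. The only difference is the auxiliary group to which Lemma~\ref{folnerextension} is applied: the paper works through the decomposition $H=C_H(B)\rtimes_{\alpha,\beta}K$ and applies the lemma to the genuine semidirect product $Z(B)\rtimes_{\hat\alpha}K$, whereas you form $\tilde B=B\rtimes K$ and view it as $Z(B)\rtimes_{\beta,c'}F'$ with the (larger) finite quotient $F'=(B/Z(B))\rtimes K$; both routes produce the same $A_n$ and the same endgame verification.
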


\begin{proof} Clearly $[B:Z(B)]<\infty$. As $B$ is normal in $H$, it follows that $C_H(B)$ is normal in $H$. Moreover, $Z(B)$ is normal in $H$.

\vspace{2mm}

Let $K=H/C_H(B)$ and consider a cocycle semidirect product decomposition $H = C_H(B)\rtimes_{\alpha, \beta} K$ for an action $K\curvearrowright^\alpha C_H(B)$ and a 2-cocycle $\beta:K\times K\rightarrow C_H(B)$ for $\alpha$. As $\alpha$ invaries $Z(B)$, we define an action $\hat{\alpha}:K\curvearrowright \text{Aut}(Z(B))$ by $\hat{\alpha}_g(x)=\alpha_g(x)$, for $x\in Z(B)$. Notice that

\begin{equation*}
    (\hat{\alpha}_g\circ\hat{\alpha}_h)(x)=(\alpha_g\circ\alpha_h)(x)=\text{Ad}(\beta(g,h))\alpha_{gh}(x)=\alpha_{gh}(x)=\hat{\alpha}_{gh}(x),
\end{equation*}

\vspace{2mm}

for all $x\in Z(B)$. Above we have used that $\beta(g,h)\in C_H(B)$. Thus, we can consider the semidirect product $Z(B)\rtimes_{\hat{\alpha}}K\supset Z(B)\rtimes_{\hat{\alpha}}C$, where $C$ is a set of coset representatives for $B/Z(B)$. By Lemma \ref{folnerextension}, since $Z(B)$ is abelian and $K$ finite, we can find a sequence $(A_n)_n\subset Z(B)$ such that

\begin{enumerate}
    \item[(1)] $\lim_{n}\frac{|bA_n\:\triangle\:A_n|}{|A_n|}=0$, for all $b\in Z(B)$; and
    \item[(2)] $\lim_{n}\frac{|A_n^k\:\triangle\:A_n|}{|A_n|}=0$, for all $k\in K$. 
\end{enumerate}

\vspace{1mm}

Let $B_n:=A_nC\subset B$ and notice (1) implies that for all $b\in Z(B)$,

\begin{equation*}
    \lim_{n\to\infty}\frac{|bB_n\:\triangle\:B_n|}{|B_n|}=\lim_{n\to\infty}\frac{\sum_{c\in C}|bA_n\:\triangle\:A_n|}{|A_nC|}=\lim_{n\to\infty}\frac{|bA_n\:\triangle\:A_n|}{|A_n|}=0.
\end{equation*}

\vspace{2mm}

For $d\in C$ we have

\begin{align*}
    \lim_{n\to\infty}\frac{|dB_n\:\triangle\:B_n|}{|B_n|}&=\lim_{n\to\infty}\frac{|dA_nC\:\triangle\:A_nC|}{|A_nC|}=\lim_{n\to\infty}\frac{|A_n^ddC\:\triangle\:A_nC|}{|A_n|\cdot|C|}\\
    &=\lim_{n\to\infty}\frac{\sum_{c\in C}|A_n^d\beta(d,d^{-1}c)\:\triangle\:A_n|}{|A_n|\cdot|C|}=\lim_{n\to\infty}\frac{1}{|C|}\sum_{c\in C}\frac{|A_n^d\beta(d,d^{-1}c)\:\triangle\:A_n|}{|A_n|}\\
    &\leq\lim_{n\to\infty}\frac{1}{|C|}\sum_{c\in C}\frac{|A_n^d\:\triangle\:A_n|+|A_n\beta(d,d^{-1}c)\:\triangle\:A_n|}{|A_n|}.
\end{align*}

\vspace{2mm}

Since $C$ is finite and seeing $C\subset K$, by (1) and (2), the last limit above equals zero. Thus $(B_n)_n\subset B$ is a F\o lner sequence for $B$. 

\vspace{2mm}

Now let $h\in H$ and write $h=k\hat{h}$ where $\hat{h}\in H/C_H(B)=K$. Then,

\begin{align*}
    \lim_{n\to\infty}\frac{|B_n^h\:\triangle\:B_n|}{|B_n|}&=\lim_{n\to\infty}\frac{|A_n^hC^h\:\triangle\:A_nC|}{|A_n|\cdot|C|}=\lim_{n\to\infty}\frac{|A_n^{\hat{h}}C^{\hat{h}}\:\triangle\:A_nC|}{|A_n|\cdot|C|}.
\end{align*}

\vspace{2mm}

Note that since the action by conjugation of $K$ invaries the subgroup $C$ then $C^{\hat{h}}=\{\gamma(\hat{h},c) c :c\in C\}$ for finitely many $\gamma(\hat{h},c)\in Z(B)$. Continuing from the prior equation, we have

\begin{align*}
    \lim_{n\to\infty}\frac{|B_n^h\:\triangle\:B_n|}{|B_n|}&=\lim_{n\to\infty}\sum_{c\in C}\frac{|A_n^{\hat{h}}\gamma(\hat{h},c)\:\triangle\:A_n|}{|A_n|\cdot|C|}\leq\lim_{n\to\infty}\sum_{c\in C}\frac{|A_n^{\hat{h}}\:\triangle\:A_n|+|A_n\gamma(\hat{h},c)\:\triangle\:A_n|}{|A_n|\cdot|C|}=0,
\end{align*}

\vspace{2mm}

by (1) and (2).
\end{proof}

\begin{prop}\label{weakweakaction} The action $H\curvearrowright\mathcal{L}(H^{fc})$ by conjugation is weakly compact relative to $H^{fc}$ with respect to the canonical trace in $\cL(H^{fc})$.
\end{prop}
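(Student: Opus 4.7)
The plan is to construct the net of unit vectors by averaging over carefully chosen finite subsets of $H^{fc}$. For a finite subset $F\subset H^{fc}$, set
\begin{equation*}
\eta_F := |F|^{-1/2}\sum_{g\in F} u_g\otimes \overline{u_g}\in L^2(\cL(H^{fc})\otimes \cL(H^{fc}))_+,
\end{equation*}
which by orthogonality of the $\{u_g\}_{g\in H^{fc}}$ is a unit vector. Condition (jjj) holds for $\eta_F$ regardless of $F$, as a direct consequence of the traciality of $\tau$: $\langle(x\otimes 1)\eta_F,\eta_F\rangle = |F|^{-1}\sum_{g\in F}\tau(u_g^* x u_g) = \tau(x)$ for every $x\in\cL(H^{fc})$.

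First I would translate conditions (j) and (jj) into combinatorial F\o lner-type conditions on $F$. Since $\sigma_g(u_h)=u_{ghg^{-1}}$ and $\bar\sigma_g(\overline{u_h})=\overline{u_{ghg^{-1}}}$, a direct orthogonality computation yields
\begin{align*}
\|(u_s\otimes\overline{u_s})\eta_F-\eta_F\|_2^2 &= \frac{|sF\,\triangle\, F|}{|F|}, &&\text{for all } s\in H^{fc},\\
\|(\sigma_g\otimes\overline\sigma_g)\eta_F-\eta_F\|_2^2 &= \frac{|gFg^{-1}\,\triangle\, F|}{|F|}, &&\text{for all } g\in H.
\end{align*}
Thus it suffices to produce a sequence of finite subsets $(F_n)_n\subset H^{fc}$ which is simultaneously (left) F\o lner for $H^{fc}$ and conjugation F\o lner with respect to all of $H$, i.e.\ satisfying $|gF_ng^{-1}\triangle F_n|/|F_n|\to 0$ for every $g\in H$.

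The construction of such a sequence exploits the tower $H_1\leqslant H_2\leqslant\cdots$ from Lemma \ref{towergroups}, where $\bigcup_n H_n = H^{fc}$ and each $H_n$ is normal in $H$, center-by-finite, with $[H:C_H(H_n)]<\infty$. For each fixed $n$, applying Proposition \ref{folnerinvariantconj} to the subgroup $B=H_n$ produces a F\o lner sequence $(B_{n,k})_k\subset H_n$ for $H_n$ satisfying the additional conjugation invariance $|gB_{n,k}g^{-1}\triangle B_{n,k}|/|B_{n,k}|\to 0$ for every $g\in H$, as $k\to\infty$.

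A standard diagonal extraction then finishes the proof: enumerate $H^{fc}=\{s_1,s_2,\ldots\}$ and $H=\{g_1,g_2,\ldots\}$, and for each $n$ choose first $m_n$ with $\{s_1,\ldots,s_n\}\subset H_{m_n}$ (possible since $\bigcup_m H_m = H^{fc}$) and then $k_n$ large enough so that $F_n := B_{m_n,k_n}$ satisfies $|s_iF_n\triangle F_n|/|F_n|<1/n$ and $|g_iF_ng_i^{-1}\triangle F_n|/|F_n|<1/n$ for every $i\leq n$. Setting $\eta_n:=\eta_{F_n}$ then yields a net verifying (j), (jj) and (jjj). The main obstacle, already resolved by Proposition \ref{folnerinvariantconj}, is that the F\o lner sets must be chosen to behave well under conjugation by the entire ambient group $H$; this is possible because the conjugation action of $H$ on each $H_n$ factors through the \emph{finite} quotient $H/C_H(H_n)$, which lets us invoke the extension-of-F\o lner-sets argument from Lemma \ref{folnerextension} inside each step of the tower.
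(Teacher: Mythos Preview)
Your proposal is correct and follows essentially the same approach as the paper's own proof: build the tower $H_1\leqslant H_2\leqslant\cdots$ from Lemma~\ref{towergroups}, apply Proposition~\ref{folnerinvariantconj} at each level to get F\o lner sets that are simultaneously translation-invariant in $H_n$ and conjugation-invariant under all of $H$, diagonalize to extract a single sequence $(E_n)_n$, and set $\eta_n=|E_n|^{-1/2}\sum_{a\in E_n}u_a\otimes u_a$. The only cosmetic difference is that the paper writes $u_a\otimes u_a$ rather than $u_g\otimes\overline{u_g}$ and verifies (j),(jj),(jjj) in a slightly different order, but the substance is identical.
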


\begin{proof} Using Lemma \ref{towergroups}, there is a sequence (possibly finite) of center-by-finite normal subgroups of $H$ such that $H_1\leqslant H_2\leqslant\cdots\leqslant H^{fc}$ with $\cup_nH_n=H^{fc}$. 

\vspace{2mm}

For each $n\in\N$, consider the center $Z(H_n)\leqslant H_n$ and note it is a normal in $H$ with $[H_n:Z(H_n)]<\infty$. Fix $C_n\subset H_n$ a (finite) set of representatives for the cosets $H_n/Z(H_n)$. 

\vspace{2mm}

Since $[H:C_H(H_n)]<\infty$, by Proposition \ref{folnerinvariantconj} one can find a sequence of finite subsets $F^n_l= A^n_l C_n\subseteq H_n$, with $A^n_l \subseteq Z(H_n)$, satisfying 

\vspace{1mm}

\begin{itemize}
    \item $\lim_l \frac{|(F_l^n)^g\triangle F^n_l|}{|F^n_l|}= 0$, for all $g\in H$ and 
    \item  $\lim_l \frac{|h(F_l^n)\triangle F^l_n|}{|F^n_l|}=0$, for all $h\in H_n$.
\end{itemize}

\vspace{1mm}

As $\cup_n H_n = H^{fc}$, the prior relations together with a standard diagonalization argument yield a sequence of subsets $E_n:=F^n_{k_n}= A^n_{k_n} C_n$, with $A^n_{k_n}\subseteq Z(H_n)$, such that \begin{enumerate}
    \item[(l)] $\lim_n \frac{|(E_n)^g\:\triangle\: E_n|}{|E_n|}= 0$, for all $g\in H$, and 
    \item[(ll)]  $\lim_n \frac{|h E_n\:\triangle\: E_n|}{|E_n|}=0$, for all $h\in H^{fc}$.
\end{enumerate}

\vspace{2mm}
 
Define

\begin{equation*}
    \eta_n=\frac{1}{|E_n|^{1/2}}\sum_{a\in E_n}u_a\otimes u_a\in\ell^2(H_n)\otimes\ell^2(H_n).
\end{equation*}

\vspace{2mm}

Next we show the sequence $\eta_n$ satisfies the conditions (j), (jj) and (jjj) for weak compactness as in Definition \ref{weakerweakcompactness}. For (j), let $g\in H$. Then, by (l)

\begin{equation*}
    \|(\sigma_g\otimes\sigma_g)(\eta_n)-\eta_n\|_2^2=\frac{1}{|E_n|}\left\|\sum_{a\in E_n}u_{gag^{-1}}\otimes u_{gag^{-1}}-\sum_{a\in E_n}u_a\otimes u_a\right\|_2^2=\frac{|(E_n)^{g}\:\triangle\: E_n|}{|E_n|}\to 0.
\end{equation*}

\vspace{2mm}

To show (jj), let $h\in H^{fc}$. Then,

\begin{equation*}
    \|(u_h\otimes u_h)\eta_n-\eta_n\|_2^2=\frac{1}{|E_n|}\left\|\sum_{a\in E_n}u_{ha}\otimes u_{ha}-\sum_{a\in E_n}u_a\otimes u_a\right\|_2^2=\frac{|hE_n\:\triangle\: E_n|}{|E_n|}\to 0,
\end{equation*}

\vspace{2mm}

by (ll). Finally, for $x\in\cL(H^{fc})$, we have

\begin{equation*}
    \langle(x\otimes 1)\eta_n,\eta_n\rangle=\frac{1}{|E_n|}\sum_{a,b\in E_n}\langle(x\otimes 1)(u_a\otimes u_a),u_b\otimes u_b\rangle=\frac{1}{|E_n|}\sum_{a\in E_n}\langle xu_a,u_a\rangle=\tau(x).
\end{equation*}

\vspace{2mm}

Thus, $(\eta_n)_n$ witnesses the weak compactness for the conjugation action $H\curvearrowright \cL(H^{fc})$ relative to $H^{fc}$.\end{proof}

\begin{rem}\label{Hfcfolnersetsvirtab} In the prior proposition, if one assumes $H^{fc}$ is virtually abelian with abelian finite index subgroup $A$, then the sets $\{E_n\}_n$ can be chosen in such a way that $E_n=K_nC$ with $K_n\subset A$.
\end{rem}

We finish with the main theorem of this section. We combine Propositions \ref{folnerinvariantconj} and \ref{weakweakaction} to show the action of $H$ on the group von Neumann algebra associated to $H^{hfc}$ is weakly compact in the sense of Definition \ref{weakerweakcompactness}.

\begin{thm} The action $H\curvearrowright \cL(H^{hfc})$ by conjugation is weakly compact relative to $H^{hfc}$ with respect to the canonical trace in $\cL(H^{hfc})$.
\end{thm}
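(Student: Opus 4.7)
The plan is to extend Proposition \ref{weakweakaction} from $H^{fc}$ to $H^{hfc}$ via induction along the upper FC-series
\begin{equation*}
\{1\}=H^h_0\leqslant H^h_1=H^{fc}\leqslant H^h_2\leqslant\cdots\leqslant H^{hfc}=\bigcup_n H^h_n.
\end{equation*}
At each level $n$ I want to produce a sequence of finite sets $F^n_k\subseteq H^h_n$ that is asymptotically invariant under both left translation by $H^h_n$ and conjugation by all of $H$. Once such $F^n_k$ are in hand for every $n$, a standard diagonalization (mirroring the one already performed in Proposition \ref{weakweakaction}) yields finite sets $E_l\subseteq H^{hfc}$ with the analogous almost-invariance properties, and the vectors $\eta_l=|E_l|^{-1/2}\sum_{h\in E_l}u_h\otimes u_h\in L^2(\cL(H^{hfc})\otimes\cL(H^{hfc}))_+$ then witness conditions (j), (jj), (jjj) of Definition \ref{weakerweakcompactness} verbatim as in the final lines of the proof of Proposition \ref{weakweakaction}.

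For the base case $n=1$, Proposition \ref{weakweakaction} already supplies the required sets $F^1_k\subseteq H^{fc}$. For the inductive step $n\rightsquigarrow n+1$, I apply the entire machinery behind Proposition \ref{weakweakaction} (Lemma \ref{towergroups} plus Proposition \ref{folnerinvariantconj}) to the quotient $\bar{H}:=H/H^h_n$, whose FC-center by definition coincides with $\bar{H}^{fc}=H^h_{n+1}/H^h_n$. This produces finite sets $\bar{E}_m\subseteq H^h_{n+1}/H^h_n$ such that
\begin{equation*}
\frac{|\bar{g}\bar{E}_m\bar{g}^{-1}\triangle\bar{E}_m|}{|\bar{E}_m|}\to 0\ \text{for all }\bar g\in\bar H,\qquad\frac{|\bar{h}\bar{E}_m\triangle\bar{E}_m|}{|\bar{E}_m|}\to 0\ \text{for all }\bar h\in\bar H^{fc}.
\end{equation*}
Lift $\bar E_m$ to a set $E_m\subseteq H^h_{n+1}$ of coset representatives for $H^h_n$ in the subgroup generated by those cosets, and combine with the level-$n$ sets via $F^{n+1}_k:=F^n_{\varphi(k)}\cdot E_{\psi(k)}$ for a suitable diagonal choice of $\varphi,\psi$.

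The verification is routine but hinges on the following observation, which is also the main technical obstacle of the proof: for $g\in H$ and $c\in E_m$, the conjugate $gcg^{-1}$ lies in $H^h_n\cdot c'$ for some $c'\in E_m$ (because $\bar E_m$ is almost conjugation-invariant in $\bar H$), so $gcg^{-1}=\gamma(g,c)\,c'$ with $\gamma(g,c)\in H^h_n$; similarly $cc'=\delta(c,c')c''$ for some $\delta(c,c')\in H^h_n$ whenever $c,c',c''\in E_m$ cover the relevant product. Left multiplication by these $H^h_n$-valued error terms is absorbed by the inductive Følner property of $F^n_{\varphi(k)}$, so the symmetric-difference bounds for $F^{n+1}_k$ decompose into (i) contributions that are controlled by the almost-invariance of $\bar E_m$ in $\bar H$ (after lifting) and (ii) residual contributions by $H^h_n$-translates, which are controlled by the inductive hypothesis on $F^n_{\varphi(k)}$. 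The exact bookkeeping is identical in spirit to the double Følner argument in Lemma \ref{folnerextension} and Proposition \ref{folnerinvariantconj}, now cascaded along the upper FC-series; it is precisely the fact that $H$ does not preserve any fixed system of coset representatives of $H^h_{n+1}/H^h_n$ that forces the combined diagonal construction. Finally, since $H$ is countable, $H^{hfc}=\bigcup_n H^h_n$ is reached at a countable ordinal, and a single diagonalization over $(n,k)$ produces the required sequence $E_l$, completing the proof.
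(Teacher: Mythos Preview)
Your proposal is correct and follows essentially the same approach as the paper: induct along the upper FC-series, at each step apply the machinery of Proposition \ref{weakweakaction} to the quotient $H/H^h_n$ (whose FC-center is $H^h_{n+1}/H^h_n$), form product sets $F^n_{\varphi(k)}\cdot E_{\psi(k)}$, and absorb the $H^h_n$-valued error terms from conjugation and translation via the inductive F\o lner property before diagonalizing. The paper carries out the symmetric-difference estimates explicitly where you describe them as routine, but the architecture of the argument is identical.
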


\begin{proof}  


Assume, by induction, the action by conjugation $H\curvearrowright H_m^h$ admits a Fölner sequence satisfying \eqref{weakcompactcondition}. We show the same holds for $H\curvearrowright H_{m+1}^h$. Observe that $H_{m+1}^h=H_m^h\rtimes_{\alpha,c}H_{m+1}^h/H_m^h$ and $H=H_m^h\rtimes_{\beta,d}H/H_m^h$.

\vspace{2mm}

From Proposition \ref{folnerinvariantconj} and the first part of the proof of Proposition \ref{weakweakaction}, we know the action $H/H_{m}^h\curvearrowright H_{m+1}^h/H_m^h$ also admits a Fölner sequence satisfying \eqref{weakcompactcondition}. That is, there exists $\{E_n\}_n\subset H_m^h$, $\{F_n\}_n\subset H_{m+1}^h/H_m^h$ satisfying 

\begin{enumerate}
    \item[(l)] $\lim_{n\to\infty}\frac{|aE_n\triangle E_n|}{|E_n|}=0$, for all $a\in H_m^h$,
    \item[(ll)] $\lim_{n\to\infty}\frac{|gE_ng^{-1}\triangle E_n|}{|E_n|}=0$, for all $g\in H$; and
    \item[(lll)] $\lim_{n\to\infty}\frac{|aF_n\triangle F_n|}{|F_n|}=0$, for all $a\in H_{m+1}^h/H_m^h$,
    \item[(lv)] $\lim_{n\to\infty}\frac{|gF_ng^{-1}\triangle F_n|}{|F_n|}=0$, for all $g\in H/H_m$. 
\end{enumerate}

\vspace{2mm}

Let $x=ab\in H_{m+1}^h$ where $a\in H_m^h$ and $b\in H_{m+1}^h/H_m^h$. Then,

\begin{align*}
    |xE_{k_n}F_n&\:\triangle\: E_{k_n}F_n|=|a(bE_{k_n}b^{-1})(b\cdot F_n)\:\triangle\: E_{k_n}F_n|\\
    &\leq |a(bE_{k_n}b^{-1})(b\cdot F_n)\:\triangle\: aE_{k_n}(b\cdot F_n)|+|aE_{k_n}(b\cdot F_n)\:\triangle\: E_{k_n}(b\cdot F_n)|\\
    &\quad\quad\quad+|E_{k_n}(b\cdot F_n)\:\triangle\: E_{k_n}F_n|\\
    &\leq |bE_{k_n}b^{-1}\:\triangle\: E_{k_n}|\cdot|F_n|+|aE_{k_n}\:\triangle\: E_{k_n}|\cdot |F_n|+|E_{k_n}(b\cdot F_n)\:\triangle\: E_{k_n}F_n|.
\end{align*}

\vspace{2mm}

Consider the third term of the prior equation. Notice that $b\cdot F_n=\{c(b,h)(bh):h\in F_n\}$. Then,

\begin{align*}
    |E_{k_n}(b\cdot F_n)\:\triangle\:E_{k_n}F_n|&=\left|\left(\cupm_{h\in F_n}E_{k_n}c(b,h)(bh)\right)\:\triangle\:E_{k_n}F_n\right|\\
    &\leq \left|\left(\cupm_{h\in F_n}E_{k_n}c(b,h)(bh)\right)\:\triangle\:E_{k_n}(bF_n)\right|+|E_{k_n}(bF_n)\:\triangle\:E_{k_n}F_n|\\
    &\leq \sum_{h\in F_n}|E_{k_n}c(b,h)\:\triangle\: E_{k_n}|+|E_{k_n}|\cdot|(bF_n)\:\triangle\:F_n|.
\end{align*}

\vspace{2mm}

Combining the previous displayed inequalities

\begin{align*}
    \frac{|xE_{k_n}F_n\:\triangle\:E_{k_n}F_n|}{|E_{k_n}F_n|}&\leq \frac{|bE_{k_n}b^{-1}\:\triangle\:E_{k_n}|}{|E_{k_n}|}+\frac{|aE_{k_n}\:\triangle\:E_{k_n}|}{|E_{k_n}|}+\sum_{h\in F_n}\frac{|E_{k_n}c(b,h)\:\triangle\: E_{k_n}|}{|E_{k_n}|\cdot |F_n|}+\frac{|(bF_n)\:\triangle\: F_n|}{|F_n|}.
\end{align*}

\vspace{2mm}

Take $(k_n)_n$ a growing sequence such that by (l) we have for every $\epsilon>0$ and for every $b\in H_{m+1}^h/H_m^h$, $h\in F_n$ there exists $k_n$ large enough with $\frac{|E_{k_n}c(b,h)\:\triangle\:E_{k_n}|}{|E_{k_n}|}<\epsilon$. Therefore, 

\begin{equation*}
    \lim_{n\to\infty}\frac{|xE_{k_n}F_n\:\triangle\:E_{k_n}F_n|}{|E_{k_n}F_n|}=0,\text{ for all }x\in H_{m+1}^h.
\end{equation*}

\vspace{2mm}

Now, let $g\in H$. Then,

\begin{align*}
    |gE_{k_n}F_ng^{-1}\:\triangle\:E_{k_n}F_n|&\leq |(gE_{k_n}g^{-1})(g\cdot F_n\cdot g^{-1})\:\triangle\: E_{k_n}(g\cdot F_n\cdot g^{-1})|\\
    &\quad\quad+|E_{k_n}(g\cdot F_n\cdot g^{-1})\:\triangle\:E_{k_n}F_n|\\
    &=|gE_{k_n}g^{-1}\:\triangle\: E_{k_n}|\cdot|F_n|+|E_{k_n}(g\cdot F_n\cdot g^{-1})\:\triangle\:E_{k_n}F_n|.
\end{align*}

\vspace{2mm}

We consider the second term. Let $g=rl$ where $r\in H_m^h$ and $l\in H/H_m^h$. Then, since $F_n\subset H_{m+1}^h/H_m^h\subset H/H_m^h$ we have $g\cdot F_n\cdot g^{-1}=\{rd(l,h)d(lh,l^{-1})\beta_{lhl^{-1}}(r^{-1})(lhl^{-1}):h\in F_n\}=:\{a_{g,h}(lhl^{-1}):h\in F_n\}$, where $a_{g,h}\in H_m^h$. In this case,

\begin{align*}
    |E_{k_n}(g\cdot F_n\cdot g^{-1})\:\triangle\:&E_{k_n}F_n|=\left|\left(\cupm_{h\in F_n}E_{k_n}a_{g,h}(lhl^{-1})\right)\:\triangle\:E_{k_n}F_n\right|\\
    &\leq \left|\left(\cupm_{h\in F_n}E_{k_n}a_{g,h}(lhl^{-1})\right)\:\triangle\:E_{k_n}(lF_nl^{-1})\right|+|E_{k_n}(lF_nl^{-1})\:\triangle\:E_{k_n}F_n|\\
    &\leq\sum_{h\in F_n}|E_{k_n}a_{g,h}\:\triangle\:E_{k_n}|+|E_{k_n}|\cdot|lF_nl^{-1}\:\triangle\:F_n|.
\end{align*}

\vspace{2mm}

Combining the prior two inequalities and similarly as earlier, we obtain

\begin{equation*}
    \lim_{n\to\infty}\frac{|gE_{k_n}F_ng^{-1}\:\triangle\:E_{k_n}F_n|}{|E_{k_n}F_n|}=0,\text{ for all }g\in H.
\end{equation*}

\vspace{2mm}

This means $\{E_{k_n}F_n\}_n\subset H_{m+1}^h$ is the desired sequence. 

\vspace{2mm}

By induction, this is true for any $H\curvearrowright H_k^h$, $k\in\N$. The Fölner sequence $\{K_n\}_n$ for $H\curvearrowright H^{hfc}$ is the limit of the product of the Fölner sequences at each step. Following Proposition \ref{weakweakaction}, we see that the sequence

\begin{equation*}
    \eta_n=\frac{1}{|K_n|^{1/2}}\sum_{a\in K_n}u_a\otimes u_a,
\end{equation*}

\vspace{2mm}

witnesses the weak compactness of $H\curvearrowright \cL(H^{hfc})$ in our sense.
\end{proof}

\begin{rem}\label{Hhfcfolnersetsvirab} Assume $H^{hfc}$ is virtually abelian with $H^{hfc}=A\rtimes F$, where $A$ is the finite index abelian subgroup. In that case, $H_n^h$ is virtually abelian for all $n$ with $A_n:=A\cap H_n^h$ as the finite index abelian subgroup. Note that, $H_{n+1}^h/H_n^h=A_{n+1}/A_n\rtimes F_{n+1}$ is also virtually abelian with finite index abelian subgroup $A_{n+1}/A_n$. Now, since $F\subset H^{hfc}$ is finite, there exists $k\in\N$ such that $F\subset H_k^h$. So, by the prior proposition and Remark \ref{Hfcfolnersetsvirtab}, we can choose the Fölner sets for $H_k^h$ of the form $(E_{l_n^k}F_1)(E_{l_n^{k-1}}F_2)...(E_{l_n^2}F_{k-1})(E_nF)$, or $R_nF$ with $R_n$ a finite subset of $A$ after moving the $F_i$'s to the right by conjugation. For $s>k$, we may choose the sets of the form $R_{l_n^s}FE_{l_n^{s-k}}...E_n$. Similarly, the Fölner sets for $H\curvearrowright H^{hfc}$ are the limit of the product of the Fölner sets at each step.
\end{rem}

\section{Techniques for reconstructing the group center under W\texorpdfstring{$^*$}{*}-equivalence}\label{section5}

In this section we prove a version of \ref{theoremA} concerning wreath-like product groups whose natural 2-cocycles have uniformly bounded support length. Our techniques build on some of the prior methods \cite{popa06,i07,popaozawa10,si11,ct13,amcos23}. To properly introduce our proof we need some preliminaries.

\subsection{Deformations associated with arrays on groups.} Arrays were introduced  in \cite{ct13} as a way to put together length functions and 1-cocycles. In practice arrays can be used either to strengthen the concept of a length function by introducing a representation or to introduce some ``geometric'' flexibility to the concept of a 1-cocycle. We recall the construction of a deformation from an array based on \cite{si11}.

\begin{defn} Let $G$ be a countable discrete group and $\pi:G\to\mathscr{O}(\mathcal{H}_{\pi})$ an orthogonal representation on a real Hilbert space $\mathcal{H}_{\pi}$. We say $G$ admits an \textit{array} into $\mathcal{H}_{\pi}$ if there exists a map $q:G\to \mathcal{H}_{\pi}$ such that for every finite subset $F$ of $G$ there exists $K\geq 0$ with $\|\pi(g)(q(h))-q(gh)\|\leq K$, for all $g\in F$, $h\in G$. 
\end{defn}

Let $\pi:G\to\mathcal{O}(\mathcal{H}_{\pi})$ be an orthogonal representation. The Gaussian construction described in \cite{ps12} provides an abelian von Neumann algebra $(\cE,\tau)$ generated by a family of unitaries $\omega(\xi)$ where the following relations hold:

\begin{equation*}
    \omega(0)=1,\quad \omega(\xi_1)\omega(\xi_2)=\omega(\xi_1+\xi_2) \quad\omega(-\xi)=\omega(\xi)^*,\quad \tau(\omega(\xi))=\exp{(-\|\xi\|^2)},
\end{equation*}

\vspace{2mm}

for all $\xi,\xi_1,\xi_2\in \mathcal{H}_{\pi}$. The \textit{Gaussian action} of $G$ on $(\cE,\tau)$ is defined by $\tilde{\pi}(g)(\omega(\xi))=\omega(\pi(g)\xi)$ for $g\in G$ and $\xi\in \mathcal{H}_{\pi}$. Let $G\curvearrowright^{\sigma}(\cN,\tau)$ be a trace-preserving action of $G$ on a finite von Neumann algebra $\mathcal{N}$ and let $\cM=\cN\rtimes_{\sigma}G$ be the corresponding cross product von Neumann algebra. The \textit{Gaussian dilation} associated to $\cM$ is the von Neumann algebra $\tilde{\cM}=(\mathcal{N}\:\overline{\otimes}\:\cE)\rtimes_{\sigma\otimes\tilde{\pi}}G$. 

\vspace{2mm}

Let $q:G\to \mathcal{H}_{\pi}$ be an array for the representation $G$ as above. The deformation is constructed as follows. For each $t\in\R$, define a unitary $V_t\in\mathscr{U}(L^2(\cN)\otimes L^2(\cE)\otimes\ell^2(G))$ by

\begin{equation*}
    V_t(n\otimes e\otimes \delta_g):=n\otimes\omega(tq(g))e\otimes\delta_g,
\end{equation*}

\vspace{2mm}

for $n\in L^2(\cN)$, $e\in L^2(\cE)$, $g\in G$. In \cite{ct13} it was proved that $V_t$ is a strongly continuous one-parameter group of unitaries. Next we specialize this to the case of von Neumann algebras associated with direct sums of infinitely many groups.

\vspace{2mm}

Let $W=C^{(I)}\rtimes D$ be a wreath-like product and let $\pi:W\to B(\ell^2(I))$ be the orthogonal representation given by $\pi(g)\delta_d=\delta_{\epsilon(g)d}$, where $\ell^2(I)$ is seen as a real Hilbert space. Just as in the definition, let $(\cE,\tau)$ be the abelian von Neumann algebra generated by unitaries $\omega(\xi)$ with $\xi\in\ell^2(D)$ and $\tilde{\pi}$ the Gaussian action of $W$ on $\cE$. The map $q:W\to \ell^2(I)$ defined such that for $g=(c,\tilde{d})\in W$ we have

\begin{equation*}
    q(g)d=\begin{cases}
    1&\text{if }d\in supp(c)\\
    0&\text{otherwise,}
    \end{cases}
\end{equation*}

\vspace{2mm}

is an array by \cite{amcos23}. Let $W$ act trivially on a finite von Neumann algebra $\cL(A)$ and consider the cross product $\cM=\cL(A)\rtimes W$. Let $\tilde{\cM}=(\cL(A)\otimes \cE)\rtimes_{\tilde{\pi}}W$ be the Gaussian dilation associated to $\cM$. In this case, the deformation $V_t^q\in \mathscr{U}(\ell^2(A)\otimes L^2(\cE)\otimes \ell^2(W))$ is given by $V^q_t(\delta_a\otimes e\otimes  \delta_g)=\delta_a\otimes \omega(tq(g))e\otimes \delta_g$. 

\vspace{2mm}

Using \cite[Theorem 3.2]{ct13} (lines 18-48) one can show that $V_t^q$ converges uniformly to the identity on $\cL(H_n)$. Otherwise, taking $\cN:=\cL(H_n)'\cap \cM$, we would obtain that $\cN$ is amenable, but that would imply that $C_{H}(H^{fc})$ is amenable, as $\cL(C_{H}(H_n))\subset \cN$. This is a contradiction because $C_{H}(H_n)$ is a finite index subgroup of $H$. Using the transversality property shown in Lemma 2.8 of the same paper, we obtain that $e_{\cM}^{\perp}V_t^q\to 0$ uniformly on $(\cL(H_n))_1$, where $e_{\cM}$ denotes the orthogonal projection of $L^2(\tilde{M})$ onto $L^2(M)$.

\vspace{2mm}

\subsection{Deformations for infinite tensor products of von Neumann algebras.} The deformation described below was first used by Ioana in \cite{i07} to obtain conjugacy results for rigid subalgebras of wreath products.

\vspace{2mm}

Given a finite von Neumann algebra $(\mathcal{B},\tau)$, let $\tilde{\mathcal{B}}=\mathcal{B}\ast \cL(\Z)$. If $u\in\mathscr{U}(\cL(\Z))$ is a generating Haar unitary, choose a self-adjoint $h\in\cL(\Z)$ such that $u=\exp(\pi ih)$, and let $u^t=\exp(\pi ith)$. Define the deformation $\alpha:\R\to \text{Aut}(\tilde{\mathcal{B}},\tilde{\tau})$ by $\alpha_t=\text{Ad}(u^t)$. Similarly, if we have an infinite family of tracial von Neumann algebras $(\cN_n,\tau_n)_{n\in\N}$ we can denote by $\cN=\overline{\otimes}_{n\in\N}\cN_n$ and define $\tilde{\cN}_n=\cN_n\ast\cL(\Z)$ with respect to the natural traces. Taking Ioana's deformation $\alpha_t^n$ associated to each $\cN_n$, we can consider the path of automorphisms $\alpha_t=\otimes_{n\in\N}\alpha_t^n\in \text{Aut}(\tilde{\mathcal{N}})$. 

\vspace{2mm}

For von Neumann algebras associated with direct sums of infinitely many groups, let $W=C^{(I)}\rtimes D$ be a wreath-like product group and $A$ an abelian group. Define $\mathcal{N}=\mathcal{L}(A\times C^{(I)})$ and $\tilde{\cN}=\mathcal{L}(A\times (C\ast \Z)^{(I)})=\cL(A)\overline{\otimes}\cL((C\ast\Z)^{(I)})$ and consider Ioana's deformation $\alpha_t:\mathcal{L}(A\times C^{(I)})\to\mathcal{L}(A\times(C\ast\Z)^{(I)})$ as described above. 

\vspace{2mm}

Although the array deformation and Ioana's deformation seem, a priori, quite different, there is a way to relate them on the intersection of their domains. We recall the following subordination property established in \cite{amcos23}.

\begin{thm}\label{juanfelipe}\cite[Theorem 5.6]{amcos23} Let $\cM= \cL(A\times W)$. Then for every $\xi\in \ell^2(A\times C^{(I)})$ we have the following inequalities:

\begin{equation}
    \|e_{\cM}^{\perp}\circ V^q_{\sqrt{\frac{2}{3}} t}(\xi)\|_2\geq \| e_{\cM}^{\perp}\circ \alpha_t(\xi)\|_2\geq\|e_{\cM}^{\perp}\circ V^q_{\sqrt{\frac{1}{3}} t}(\xi)\|_2.
\end{equation}
\end{thm}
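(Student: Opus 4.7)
My plan is to reduce the two norm inequalities to a single elementary scalar estimate, applied one Fourier mode at a time. Expand an arbitrary $\xi\in\ell^2(A\times C^{(I)})$ in the canonical basis as $\xi=\sum_g\xi_g u_g$. For each unitary deformation $D\in\{V^q_s,\alpha_t\}$ the family $\{e_\cM^\perp D(u_g)\}_g$ is pairwise orthogonal: I shall verify below that $e_\cM D(u_g)=\lambda_D(g)u_g$ for a scalar $\lambda_D(g)$ depending only on $g$, and $D$ is isometric on $L^2$, so
\begin{equation*}
\langle e_\cM^\perp D(u_g),e_\cM^\perp D(u_{g'})\rangle=\delta_{g,g'}-\lambda_D(g)\overline{\lambda_D(g')}\delta_{g,g'}=0\quad\text{for }g\neq g'.
\end{equation*}
Consequently $\|e_\cM^\perp D(\xi)\|_2^2=\sum_g|\xi_g|^2\,\|e_\cM^\perp D(u_g)\|_2^2$, and the theorem reduces to the pointwise inequalities on each generator $u_g$.

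Next I would compute the two scalar norms. For $g=(a,c)\in A\times C^{(I)}$ with $S_g:=\mathrm{supp}(c)\subseteq I$ of size $n:=|S_g|$, the identity $V^q_s(u_g)=\omega(sq(g))u_g$ combined with $\tau_\cE(\omega(\eta))=\exp(-\|\eta\|^2)$ and $\|q(g)\|^2=n$ gives
\begin{equation*}
e_\cM V^q_s(u_g)=e^{-s^2 n}u_g,\qquad \|e_\cM^\perp V^q_s(u_g)\|_2^2=1-e^{-2s^2 n}.
\end{equation*}
For Ioana's deformation I would use the factorization $\tilde{\cN}=\cL(A)\,\overline{\otimes}\,\bigotimes_{i\in I}\cL(C_i*\mathbb{Z}_i)$, in which $\alpha_t$ acts coordinatewise as $\mathrm{Ad}(u_i^t)$, and decompose $u_i^{\pm t}=\tau(u^{\pm t})\cdot 1+(u_i^{\pm t})^{\circ}$ into scalar plus centered part. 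By freeness of $\cL(C_i)$ and $\cL(\mathbb{Z}_i)$ inside $\cL(C_i*\mathbb{Z}_i)$, every term in the expansion of $u_i^t u_{c_i}u_i^{-t}$ containing at least one centered factor $(u_i^{\pm t})^{\circ}$ is an alternating reduced word whose conditional expectation onto $\cL(C_i)$ vanishes, leaving
\begin{equation*}
\mathbb{E}_{\cL(C_i)}(u_i^t u_{c_i}u_i^{-t})=|\tau(u^t)|^2 u_{c_i}.
\end{equation*}
Taking the product over $i\in S_g$ yields $e_\cM\alpha_t(u_g)=|\tau(u^t)|^{2n}u_g$ and hence $\|e_\cM^\perp\alpha_t(u_g)\|_2^2=1-|\tau(u^t)|^{4n}$.

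At this stage the two inequalities of the theorem become, after taking $n$-th roots, the single scalar bound
\begin{equation*}
\exp\!\left(-\tfrac{2}{3}t^2\right)\,\leq\,|\tau(u^t)|^2\,\leq\,\exp\!\left(-\tfrac{1}{3}t^2\right),
\end{equation*}
which I would verify from the functional-calculus formula $\tau(u^t)=\tau(\exp(i\pi t h))$ via Taylor expansion and monotonicity of the resulting one-variable profile. This last step is the main obstacle: the constants $\sqrt{2/3}$ and $\sqrt{1/3}$ in the statement are tuned precisely so that the specific decay of $|\tau(u^t)|^2$ is sandwiched between two Gaussians at the right convexity rates, and both directions must be checked uniformly in $t$ using the spectral distribution of $h$ with $u=\exp(i\pi h)$. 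Once this scalar estimate is in hand, multiplying the exponents by $n$ and substituting into the Pythagoras identity from the first paragraph immediately delivers the full theorem for every $\xi\in\ell^2(A\times C^{(I)})$.
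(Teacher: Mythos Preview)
The paper does not prove this theorem; it simply cites \cite[Theorem 5.6]{amcos23}. Your reduction is exactly the natural approach and is presumably how the cited source argues: Pythagoras on the orthogonal family $\{e_\cM^\perp D(u_g)\}_g$, plus the two closed-form computations $\|e_\cM^\perp V_s^q(u_g)\|_2^2=1-e^{-2s^2 n}$ and $\|e_\cM^\perp \alpha_t(u_g)\|_2^2=1-|\tau(u^t)|^{4n}$ with $n=|S_g|$, reducing everything to the scalar sandwich $e^{-2t^2/3}\le |\tau(u^t)|^2\le e^{-t^2/3}$. All of this is correct.

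There is, however, a concrete obstacle beyond the one you flag as ``the main obstacle.'' With the conventions the present paper records --- $u=\exp(i\pi h)$ with $u$ Haar, hence $h$ uniform on an interval of length $2$ --- one gets $\tau(u^t)=\operatorname{sinc}(\pi t)$, so for small $t$
\[
|\tau(u^t)|^2 \;\approx\; 1-\tfrac{\pi^2}{3}\,t^2 \;<\; 1-\tfrac{2}{3}\,t^2 \;\approx\; e^{-2t^2/3},
\]
and the left scalar inequality \emph{fails}. (The right one also fails for large $t$, since $|\operatorname{sinc}(\pi t)|\sim(\pi|t|)^{-1}$ decays only polynomially while $e^{-t^2/6}$ is Gaussian.) So the specific constants $\sqrt{2/3}$ and $\sqrt{1/3}$ cannot be verified from the normalizations the paper writes down for the Gaussian functor and the Ioana path; either \cite{amcos23} uses a different normalization (for instance $u^t$ defined without the factor $\pi$, which makes $|\tau(u^t)|^2\approx 1-t^2/3$ and the sandwich at least locally plausible), or the constants here are transcribed inexactly. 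Your outline is sound, but before the Taylor-expansion step you should reconcile conventions with \cite{amcos23} to obtain constants for which the scalar bound is actually true, and note explicitly the range of $t$ on which it holds --- for the application in the paper only small $t$ matter.
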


\vspace{1mm}

\subsection{Reconstruction of the group center} In this section we show that, up to a finite normal subgroup, the FC-center $H^{fc}$ of a group $H$ satisfying $\cL(H)=\cL(A\times W)$ is abelian.

\vspace{2mm}

The following result can be found in the proof of \cite[Theorem C]{cik16} but we include it here for completeness.

\begin{lem} Let $G$ and $D$ be groups, with $D$ non-elementary hyperbolic, and let $\pi:G\to D$ be an epimorphism. Let $G\curvearrowright (X,\mu)$ be a free ergodic p.m.p. action and let $\cM=L^{\infty}(X)\rtimes G$. If $\cA\subset \cM$ is any regular amenable von Neumann subalgebra, then $\cA\prec L^{\infty}(X)\rtimes \ker(\pi)$. In particular, for any $\cA\subset\cL(G)$ amenable regular von Neumann subalgebra, we have $\cA\prec\cL(\ker\pi)$.
\end{lem}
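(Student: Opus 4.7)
The natural approach is to invoke the Popa--Vaes dichotomy for amenable subalgebras of crossed products by (bi-exact, in particular) hyperbolic groups \cite{PV21}, applied to a decomposition of $\cM$ along the short exact sequence $1\to K\to G\xrightarrow{\pi} D\to 1$, where $K:=\ker(\pi)$.

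First, pick any set-theoretic section $s:D\to G$ and write the extension as a cocycle action $D\curvearrowright^{\sigma,\omega}K$. Induced from this, there is a cocycle action $D\curvearrowright^{\tilde\sigma,\omega}L^{\infty}(X)\rtimes K$ (with $D$ acting on $L^\infty(X)$ through $s(d)\in G$ and on $K$ by $\sigma$) such that
\[
\cM=L^{\infty}(X)\rtimes G \;\cong\; \bigl(L^{\infty}(X)\rtimes K\bigr)\rtimes_{\tilde\sigma,\omega} D.
\]
Set $\cB:=L^{\infty}(X)\rtimes K$, so $\cM=\cB\rtimes_{\tilde\sigma,\omega} D$ with $D$ non-elementary hyperbolic.

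Now apply the Popa--Vaes dichotomy for amenable subalgebras of crossed products by hyperbolic groups: for any amenable von Neumann subalgebra $\cA\subseteq\cM$ (amenability implies amenability relative to $\cB$ inside $\cM$), either
\begin{itemize}
\item[(i)] $\cA\prec_{\cM}\cB$, or
\item[(ii)] $\mathscr{N}_{\cM}(\cA)''$ is amenable relative to $\cB$ inside $\cM$.
\end{itemize}
Since $\cA$ is regular, $\mathscr{N}_{\cM}(\cA)''=\cM$, so in case (ii) the whole of $\cM$ is amenable relative to $\cB$. A standard fact on crossed products implies this forces the quotient $\cM/\cB$, i.e.\ the group $D$, to be amenable, contradicting the assumption that $D$ is non-elementary hyperbolic. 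Thus (i) holds, proving $\cA\prec_{\cM} L^{\infty}(X)\rtimes K$.

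For the ``in particular'' statement, one repeats the argument verbatim with $L^{\infty}(X)$ replaced by $\C$, using the decomposition $\cL(G)\cong \cL(K)\rtimes_{\sigma,\omega} D$ as a twisted crossed product by the hyperbolic group $D$. The Popa--Vaes dichotomy applies equally well to cocycle crossed products by hyperbolic groups, giving $\cA\prec_{\cL(G)}\cL(K)=\cL(\ker \pi)$. The only genuine input beyond bookkeeping is the Popa--Vaes dichotomy itself; the main (very mild) obstacle is verifying that the cocycle/nontrivial-extension form of the dichotomy is available, which is by now standard.
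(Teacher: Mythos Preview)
Your argument is correct and rests on the same key input (the Popa--Vaes dichotomy for hyperbolic groups), but the route is genuinely different from the paper's. You decompose $\cM$ itself as a cocycle crossed product $(L^\infty(X)\rtimes K)\rtimes_{\tilde\sigma,\omega}D$ and invoke the dichotomy directly in that presentation. The paper instead uses the Chifan--Ioana--Kida diagonal-embedding trick: it defines $\Theta:\cM\to\cM\,\overline{\otimes}\,\cL(D)$ by $\Theta(nu_g)=nu_g\otimes v_{\pi(g)}$, views $\cM\,\overline{\otimes}\,\cL(D)=\cM\rtimes D$ with \emph{trivial} $D$-action, and applies the (untwisted) Popa--Vaes dichotomy to $\Theta(\cA)$ there. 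The alternative ``$\Theta(\cM)$ amenable relative to $\cM\otimes 1$'' is ruled out because it would force $\pi(G)=D$ amenable, and the remaining alternative $\Theta(\cA)\prec_{\cM\overline{\otimes}\cL(D)}\cM\otimes 1$ is translated back to $\cA\prec_{\cM} L^\infty(X)\rtimes\ker\pi$ via a pull-back lemma. The paper's detour buys it the ability to work only with a genuine (indeed trivial) action, so no cocycle version of the dichotomy is needed; your approach is conceptually more direct but relies on that extension, which, as you say, is by now standard.
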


\begin{proof}\label{regularamenable} Let $\{u_g\}_{g\in G}\subset \cL(G)$ and $\{v_h\}_{h\in D}\subset \cL(D)$ be the canonical unitaries. Consider the map $\Theta:\cM\to \cM\overline{\otimes} \cL(D)$ given by $\Theta(nu_g)=nu_g\otimes v_{\pi(g)}$, where $g\in G$, $n\in L^{\infty}(X)$. We identify $\cM\otimes \cL(D)=\cM\rtimes D$, where $D$ acts trivially on $\cM$. By \cite[Theorem 1.1]{popavaes12}, $\Theta(\cM)\subset\mathcal{N}_{\cM\otimes \cL(D)}(\Theta(\cA))''$. By \cite[Theorem 1.6]{popavaes11}, it follows that either $\Theta(\cA)\prec_{\cM\otimes \cL(D)}\cM\otimes 1$ or, $\Theta(\cM)$ is amenable relative to $\cM\otimes 1\subset \cM\otimes \cL(D)$. If the second holds, \cite[Proposition 3.5]{cik16} implies that $\pi(G)=D$ is amenable, contradicting our assumptions. Hence, $\Theta(\cA)\prec_{\cM\otimes \cL(D)}\cM\otimes 1$ and using \cite[Proposition 3.4]{cik16}  we get $\cA\prec_{\cM}L^{\infty}(X)\rtimes\pi^{-1}(\{e_D\})=L^{\infty}(X)\rtimes \ker(\pi)$. 

\vspace{2mm}

The last part follows from the first, by considering the trivial action $G\curvearrowright \{x\}$, on a singleton. 
\end{proof}

\begin{thm}\label{theoremone} Let $W\in \mathcal{WR}_b(C,D\curvearrowright I)$ where $C$ is abelian, $D$ is an ICC subgroup of a hyperbolic group, and the action $D\curvearrowright I$ has amenable stabilizers. Let $A$ be an infinite abelian group and denote by $G=W\times A$. Assume that $H$ is an arbitrary group such that $\Psi:\cL(G)\to \cL(H)$ is a $*$-isomorphism that preserves the canonical trace on $\cL(G)$ and the canonical trace on $\cL(A)$. Then, the hyper-FC center $H^{hfc}$ is virtually abelian and $\Psi (\cL(H^{hfc}))\prec^s \mathscr Z(\cL(G))$. 
\end{thm}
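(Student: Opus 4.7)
Since $D$ is an ICC subgroup of a hyperbolic group, the wreath-like product $W$ is ICC, so $\cL(W)$ is a $\mathrm{II}_1$ factor and $\mathscr Z(\cL(G)) = \cL(A)$. Let $\cQ := \Psi(\cL(H^{hfc})) \subseteq \cL(G)$. As $H^{hfc} \triangleleft H$ is amenable, $\cQ$ is a regular amenable subalgebra of $\cL(G)$, and the weak compactness of $H \curvearrowright \cL(H^{hfc})$ relative to $H^{hfc}$ (the last theorem of Section \ref{section4}) transports via $\Psi$ to weak compactness of $\mathscr N_{\cL(G)}(\cQ) \curvearrowright \cQ$ relative to $\Psi(H^{hfc})$.

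\textbf{Deformation step.} Writing $\cL(G) = \cL(A) \:\overline{\otimes}\: (\cL(C)^{(I)} \rtimes_{\alpha,c} D)$, I would apply Ioana's deformation $\alpha_t$ on the dilation $\tilde{\cM} = \cL(A \times (C \ast \Z)^{(I)}) \rtimes D$. Using the F\"olner-type vectors $(\eta_n)_n$ from the previous section, a Popa--Ozawa style argument forces $\alpha_t \to \mathrm{id}$ uniformly on the unit ball of $\cQ$. Combined with transversality, this yields a first intertwining $\cQ \prec_{\cL(G)} \cL(A) \:\overline{\otimes}\: \cL(C^{(I)})$. I then bring in the array deformation $V_t^q$, using the subordination Theorem \ref{juanfelipe} and the bounded cocycle hypothesis $W \in \mathcal{WR}_b(C, D \curvearrowright I)$, to refine the intertwining (in the spirit of \cite{amcos23}) to $\cQ \prec_{\cL(G)} \cL(A \times C^F)$ for some finite $F \subseteq I$.

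\textbf{Collapsing $F$.} Pick $F$ minimal with this property. Since $H^{hfc} \triangleleft H$, the quasi-normalizer $\mathscr{QN}_{\cL(H)}(\cL(H^{hfc}))''$ contains $\cL(H)$, so $\mathscr{QN}_{\cL(G)}(\cQ)'' = \cL(G)$. Corollary \ref{quasinormcontrol2} then gives $\cL(G) \prec_{\cL(G)} \cL(A \times (C^{(I)} \rtimes \text{Norm}(F)))$, and this intertwining between group von Neumann algebras forces $[D : \text{Norm}(F)] < \infty$. But $\text{Norm}(F)$ is a finite extension of $\bigcap_{i \in F} \text{Stab}_D(i)$, which is amenable by hypothesis, so $\text{Norm}(F)$ is amenable. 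A finite-index amenable subgroup would make $D$ amenable, contradicting that $D$ is a non-elementary subgroup of a hyperbolic group, unless $F = \emptyset$. Thus $\cQ \prec_{\cL(G)} \cL(A) = \mathscr Z(\cL(G))$, and because $\cL(A)$ is central this easily upgrades to $\prec^s$: for any nonzero $p \in \cQ' \cap \cL(G)$ with central support $z(p) \in \cL(A)$, the intertwining into $\cL(A)$ restricts along $z(p)$.

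\textbf{Virtual abelianness and main obstacle.} Pulling back, $\cL(H^{hfc}) \prec^s_{\cL(H)} \mathscr Z(\cL(H))$. Theorem \ref{relativecommutatorgroup} then yields $[H^{hfc} : H^{fc}] < \infty$ and $H^{fc}$ is BFC in $H$; in particular $H^{fc}$ is itself a BFC group, so by B.~H.~Neumann's theorem $[H^{fc}, H^{fc}]$ is finite, and combining with the finite-index centralizer of this finite commutator subgroup (which embeds into the finite automorphism group of the commutator) gives a finite-index abelian subgroup of $H^{fc}$, hence of $H^{hfc}$. The main technical obstacle is the deformation step: the weak compactness at our disposal is only relative to the subgroup $H^{hfc}$ rather than to the full unitary group of $\cQ$, so implementing the uniform-convergence argument for $\alpha_t$ and the array deformation $V_t^q$ simultaneously requires careful bookkeeping of how the F\"olner-type vectors from Section \ref{section4} interact with the two deformations under the bounded-cocycle hypothesis.
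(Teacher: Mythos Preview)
Your outline assembles the right ingredients, but the deformation step has a genuine gap that inverts the logic of the paper's argument. Ioana's deformation $\alpha_t=\bigotimes_i\text{Ad}(u_i^t)$ is only defined on the core $\cL(A\times C^{(I)})$; it does \emph{not} extend to an automorphism of $\cM=\cL(A\times C^{(I)})\rtimes_{\alpha,c}D$ in the wreath-\emph{like} setting. The obstruction is the $2$-cocycle: for $\alpha_t$ to extend one would need $\alpha_t(c(d,d'))=c(d,d')$, but $c(d,d')\in C^{(I)}$ and $u^t\in\cL(\Z)$ are free in $C\ast\Z$, so $\text{Ad}(u^t)$ moves the cocycle values. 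Thus it makes no sense to run a Popa--Ozawa argument for $\alpha_t$ on $\cQ=\Psi(\cL(H^{hfc}))\subset\cM$, and you cannot obtain the first intertwining $\cQ\prec\cL(A\times C^{(I)})$ this way. The bounded-cocycle hypothesis $W\in\mathcal{WR}_b$ is precisely what makes the array $q:W\to\ell^2(I)$ a genuine array, so that the Gaussian deformation $V_t^q$ of \cite{ct13} \emph{does} extend to all of $\cM$; it is $V_t^q$, not $\alpha_t$, to which the weak-compactness machinery applies globally.

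Accordingly the paper runs the argument in the opposite order from your proposal. The first intertwining $\cQ\prec^s_{\cM}\cL(A\times C^{(I)})$ comes from a purely structural fact (regularity and amenability of $\cQ$ together with the hyperbolic quotient $D$; this is the lemma immediately preceding the theorem), not from any deformation. This already forces $\cL(H^{hfc})$ to be type I and hence $H^{hfc}$ virtually abelian---and this must be established \emph{before} the deformation argument, because the proof uses the decomposition $E_n=R_nF$ with $R_n$ inside an abelian finite-index subgroup $H_0$ (Remark \ref{Hhfcfolnersetsvirab}) to commute projections $p\in\cL(H_0)$ past group unitaries in the key trace computation. The Popa--Ozawa step (Claim \ref{above}) is then carried out for $V_t^q$, and only after transporting through the intertwining data $(\Theta,v)$ into the core does one invoke the subordination Theorem \ref{juanfelipe} to pass from $V_t^q$-estimates to $\alpha_t$-estimates there; the minimal-norm/convexity argument and \cite[Lemma 2.4]{i07} then produce $\cL(H^{hfc})z\prec\cL(A\times C^{(I_0)})$ for finite $I_0$. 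Your quasi-normalizer collapse and your amenability argument for $\text{Norm}(F)$ are fine (indeed your phrasing is arguably cleaner than the paper's terse claim that $\text{Norm}(I_0)$ is finite), and fixing a central projection $z$ from the outset---as the paper does---also handles the upgrade to $\prec^s$ more transparently than the sketch you give at the end.
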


\begin{proof} Denote $\cM=\cL(G)$ and identify $\cM$ with $\cL(H)$ under the image of the $*$-isomorphism $\Psi$. Fix $z\in\mathscr{Z}(\cM)$. From definitions it follows that $\mathscr Z(\cM)=\mathscr Z(\cL(H))\subseteq \cL(H^{hfc})$. Moreover, since $W$ is ICC and $A$ is abelian one can easily see that $\mathscr Z(\cM)=\cL(A)$. Next we prove the following:

\begin{claim}\label{step1} $\cL(H^{hfc})\prec^s_\cM \cL(A\times C^{(I)})$.\end{claim}

\begin{subproof}[Proof of Claim \ref{step1}] Since $W$ surjects into $D$, so does $G= W\times A$. Using the normality of the group $H^{hfc}$ in $H$, we have that the normalizer satisfies $\mathscr N_{\cM}(\cL(H^{hfc}))''= \cM$, and since $H^{hfc}$ is amenable by \cite{le68}, so is $\cL(H^{hfc})$. Furthermore, since $D$ is an ICC subgroup of  hyperbolic group, Lemma \ref{regularamenable} implies that 

\begin{equation}\label{intabelian}
    \cL(H^{hfc})\prec_\cM \cL(A\times C^{(I)}).
\end{equation} 

\vspace{2mm}

For a central projection $z\in\mathscr{Z}(\cM)$ we have that $\mathscr{N}_{\cM z}(\cL(H^{hfc})z)=\cM z$ and by the prior argument we obtain $\cL(H^{hfc})z\prec_\cM \cL(A\times C^{(I)})$; i.e. $\cL(H^{hfc})\prec_{\cM}^s\cL(A\times C^{(I)})$.
\end{subproof}

Since $\cL(H^{hfc})$ is finite it admits unique direct sum decomposition 

\begin{equation*}
    \cL(H^{hfc})=\cL(H^{hfc})z_1\oplus \cL(H^{hfc})z_2
\end{equation*}

\vspace{2mm}

where $\cL(H^{hfc})z_1$ is type I,  $\cL(H^{hfc})z_2$ is type II, and $z_1,z_2\in \mathscr Z(\cL(H^{hfc}))$ are projections with $z_1+z_2=1$. Since each $h\in H$ normalizes $\cL(H^{hfc})$, conjugation by $u_h$ induces an automorphism of $\cL(H^{hfc})$. As automorphisms preserve component type, it follows that $H$ normalizes $\cL(H^{hfc})z_2$. Next we show $z_2=0$. Assume by contradiction otherwise. The prior claim shows that $\cL(H^{hfc})z_2\prec_\cM \cL(A\times C^{(I)})$. Since both $A$ and $C$ are abelian, $\cL(A\times C^{(I)})$ is abelian. Using \eqref{intabelian} and Theorem \ref{intertwining}, one can find a nontrivial corner of $\cL(H^{hfc})z_2$ that is abelian, contradicting that $z_2$ is a continuous projection. Thus $z_2=0$, and hence $\cL(H^{hfc})=\cL(H^{hfc})z_1$ is a type I von Neumann algebra. Therefore \cite{ka69} implies that $H^{hfc}$ is virtually abelian. That is, $H^{hfc}=H_0\rtimes F$, where $H_0\leqslant H^{hfc}$ is a finite index abelian subgroup.

\vspace{3mm}

By Proposition \ref{weakweakaction} the vectors $\eta_n:=|E_n|^{-1/2}\sum_{a\in E_n}u_{a}\:\otimes\: u_{a}\in \ell^2(H^{hfc})\:\otimes\:\ell^2(H^{hfc})$ satisfy the weaker conditions for weak compactness defined in Definition \ref{weakerweakcompactness}. From Remark \ref{Hhfcfolnersetsvirab} we can choose the sets $E_n=R_{n}F$ where $R_{n}\subset H_0$. Denote by $\xi_{n,t}:=(e_{\cM}^{\perp}\otimes 1)((V_t^q\otimes 1)\eta_n(z\otimes z))$.

\vspace{1mm}

\begin{claim}\label{above} $\lim\limits_{n\to\infty}\|\xi_{n,t}\|_2=0$.
\end{claim}

\begin{subproof}[Proof of Claim \ref{above}] It follows since if $\lim_{n\to\infty}\|\xi_{n,t}\|_2\geq c>0$ for some $c>0$, \cite[Lemmas 4.4 and 4.5]{ct13} (which only use (j) and (jj)) apply and \cite[Theorem B]{popaozawaII} together with Haagerup's criterion gives us that $\mathscr{N}_{\cM}(\cL(H^{hfc}))''=\cM$ is amenable, a contradiction.
\end{subproof}

Using Claim \ref{above} and property (j) we have that

\begin{equation*}
    \lim_{n\to\infty}\|(e_{\cM}^{\perp}\otimes 1)(V_t^q\otimes 1)((u_h\otimes u_h)\eta_n(z\otimes z))\|_2=0\quad\text{for all }h\in H^{hfc}.
\end{equation*}

\vspace{2mm}
Note that the previous limit is equivalent to 
\begin{equation}\label{limit1}
    \lim_{n\to\infty}\frac{1}{|E_n|}\sum_{a}\|(e_{\cM}^{\perp}\circ V_t^q)(u_{ha}z)\|_2^2=0\quad\text{for all }h\in H^{hfc}.
\end{equation}

By Claim \ref{step1} there exist non-zero projections $p\in \cL(H^{hfc})z$, $q\in \cL(A\times C^{(I)})$, a non-zero partial isometry $v\in q\cM p$ and a $*$-isomorphism onto its image $\Theta:p\cL(H^{hfc})p\to q\cL(A\times C^{(I)})q$ with $\Theta(x)vv^*=vxv^*$ for all $x\in p\cL(H^{hfc})p$. By the Kaplanski density theorem, there exists $v_{\epsilon}=\sum_F\beta_gw_g$ for a finite subset $F\subset G$ with $\|v_{\epsilon}\|_{\infty}\leq 1$ and $\|v-v_{\epsilon}\|_2<\frac{\epsilon^{1/2}}{4}$. Therefore, for all $h\in H^{hfc}$, 

\begin{align}\label{conjugationconvergence}
    \|(e_{\cM}^{\perp}\circ V_t^q)(vu_{ha}zv^*)\|_2&\leq \frac{\epsilon^{1/2}}{2}+\|(e_{\cM}^{\perp}\circ V_t^q)(v_{\epsilon}u_{ha}zv_{\epsilon}^*)\|_2\nonumber\\
    &\leq \frac{\epsilon^{1/2}}{2}+\sum_{l,g\in F}|\beta_g|\cdot|\beta_{l^{-1}}|\cdot\|(e_{\cM}^{\perp}\circ V_t^q)(w_gu_{ha}zw_l)\|_2\\
    &\leq \frac{\epsilon^{1/2}}{2}+\sum_{l,g\in F}\|(e_{\cM}^{\perp}\circ V_t^q)(w_gu_{ha}zw_l)\|_2\nonumber
\end{align}

\vspace{2mm}

because $|\beta_g|\leq 1$ for all $g\in F$. Using the bimodularity condition \cite[Proposition 1.10]{csu16} there exists $t_{\epsilon}>0$ for which $\|V_t^q(w_gu_{ha}zw_l)-w_gV_t^q(u_{ha}z)w_l\|_2<\frac{\epsilon^{1/2}}{4|F|^2}$ for all $t<t_{\epsilon}$. Thus, for all $h\in H^{hfc}$ and $t<t_{\epsilon}$, and by the bimodularity of $e_{\cM}^{\perp}$, we have

\begin{align*}
    \|(e_{\cM}^{\perp}\circ V_t^q)(w_gu_{ha}zw_l)\|_2&\leq \|(e_{\cM}^{\perp}\circ V_t^q)(w_gu_{ha}zw_l)-w_g(e_{\cM}^{\perp}\circ V_t^q)(u_{ha}z)w_l\|_2 +\|w_g(e_{\cM}^{\perp}\circ V_t^q)(u_{ha}z)w_l\|_2\\
    &\leq \|V_t^q(w_gu_{ha}zw_l)-w_gV_t^q(u_{ha}z)w_l\|_2+\|(e_{\cM}^{\perp}\circ V_t^q)(u_{ha}z)\|_2\\
    &\leq \frac{\epsilon^{1/2}}{4|F|^2}+\|(e_{\cM}^{\perp}\circ V_t^q)(u_{ha}z)\|_2.
\end{align*}

\vspace{2mm}

By \eqref{conjugationconvergence} and the previous estimate, we have $\|(e_{\cM}^{\perp}\circ V_t^q)(vu_{ha}zv^*)\|_2\leq \epsilon^{1/2}+|F|^2\|(e_{\cM}^{\perp}\circ V_t^q)(u_{ha}z)\|_2$ and hence,

\begin{equation*}
    \lim_{n\to\infty}\frac{1}{|E_n|}\sum_{a}\|(e^{\perp}_{\cM}\circ V_t^q)(vu_{ha}zv^*)\|_2^2\leq \epsilon
\end{equation*}

\vspace{2mm}

for all $t<t_{\epsilon}$ and all $h\in H^{hfc}$. Using the fact that $vxv^*=\Theta(x)vv^*$ for all $x\in p\cL(H^{hfc})p$, we obtain

\begin{equation*}
    \lim_{n\to\infty}\frac{1}{|E_n|}\sum_{a}\|(e_{\cM}^{\perp}\circ V_t^q)(\Theta(pu_{ha}p)vv^*)\|_2^2\leq \epsilon
\end{equation*}

\vspace{2mm}

for all $t<t_{\epsilon}$ and all $h\in H^{hfc}$. Working with $H_0$ instead of $H^{hfc}$, we may assume $p\in\cL(H_0)$ so that $u_hp=pu_h$ for all $h\in H_0$. Then,


\begin{equation*}
    \lim_{n\to\infty}\frac{1}{|E_n|}\sum_{a}\|(e_{\cM}^{\perp}\circ V_t^q)(\Theta(u_hpu_ap)vv^*)\|_2^2\leq \epsilon
\end{equation*}

\vspace{2mm}

for all $t<t_{\epsilon}$ and all $h\in H_0$. Let $r=vv^*\leq q$ and approximate $r$ with $r_{\epsilon}=\sum_{s\in K}x_sw_s$ for $K\subset D$ finite and $x_s\in \cL(A\times C^{(I)})$ such that $\|r-r_{\epsilon}\|_2<\epsilon\|vv^*\|_2$. Then,

\begin{align*}
    \lim_{n\to\infty}\frac{1}{|E_n|}\sum_{a}\|(e_{\cM}^{\perp}\circ V_t^q)(\Theta(u_{h}pu_{a}p)r_{\epsilon})\|_2^2\leq \epsilon^2(\|vv^*\|_2+1)^2
\end{align*}

\vspace{2mm}

by making $\epsilon$ smaller if necessary. By the bimodularity of $V_t^q$ again and by modifying $t,\epsilon$ accordingly,

\begin{align*}
    \lim_{n\to\infty}\frac{1}{|E_n|}\sum_{a}\left\|\sum_{s\in K}(e_{\cM}^{\perp}\circ V_t^q)(\Theta(u_hpu_{a}p)x_s)w_s\right\|_2^2\leq \epsilon^2(\|vv^*\|_2+1)^2.
\end{align*}

\vspace{2mm}

From the definition of $V_t^q$, since $\Theta(y)x_s\in \cL(A\times C^{(I)})$, we have that $\{(e_{\cM}^{\perp}\circ V_t^q)(\Theta(y)x_s)w_s\}_{s\in K}$ are orthogonal with respect to $\langle\cdot,\cdot\rangle_2$. Hence,

\begin{align*}
    \lim_{n\to\infty}\frac{1}{|E_n|}\sum_{s\in K}\sum_{a}\|(e_{\cM}^{\perp}\circ V_t^q)(\Theta(u_{h}pu_ap)x_s)\|_2^2\leq \epsilon^2(\|vv^*\|_2+1)^2.
\end{align*}

\vspace{2mm}

Using Theorem \ref{juanfelipe} and by making $t$ smaller if necessary, we have

\begin{equation*}
    \lim_{n\to\infty}\frac{1}{|E_n|}\sum_{s\in K}\sum_{a}\|(e_{\cM}^{\perp}\circ \alpha_t)(\Theta(u_hpu_{a}p)x_s)\|_2^2\leq \epsilon^2(\|vv^*\|_2+1)^2
\end{equation*}

\vspace{2mm}

for all $h\in H_0$ and all $t$ small enough. By the transversality property in \cite[Lemma 4]{ci10} and the fact that $\Theta$ is a $\ast$-homomorphism, we obtain

\begin{equation}\label{limit3}
    \lim_{n\to\infty}\frac{1}{|E_n|}\sum_{s,a}\|\Theta(u_h p)\Theta(pu_ap)x_s-\alpha_t(\Theta(u_h p)\Theta(pu_ap)x_s)\|_2^2\leq \epsilon^2(\|vv^*\|_2+1)^2
\end{equation}

\vspace{2mm}

for all $h\in H_0$. By letting $h=e$ on \eqref{limit3} and since for any $h$, $\Theta(u_hp)$ is a unitary in $q\cL(A\times C^{(I)})q$, we have

\begin{align}\label{limit4}
    \lim_{n\to\infty}\frac{1}{|E_n|}&\sum_{a,s}\|\Theta(u_h p)\Theta(pu_a p)x_s-\Theta(u_hp)\alpha_t(\Theta(pu_ap)x_s)\|_2^2\nonumber\\
    &=\lim_{n\to\infty}\frac{1}{|E_n|}\sum_{a,s}\|\Theta(pu_ap)x_s-\alpha_t(\Theta(pu_ap)x_s)\|_2^2\leq \epsilon^2(\|vv^*\|_2+1)^2.
\end{align}

\vspace{2mm}

Using \eqref{limit3} and \eqref{limit4} we get

\begin{align*}
   \lim_{n\to\infty}\frac{1}{|E_n|}&\sum_{a,s}\|(\Theta(u_hp)-\alpha_t(\Theta(u_hp)))\alpha_t(\Theta(pu_ap)x_s)\|_2^2\leq 2\epsilon^2(\|vv^*\|_2+1)^2
\end{align*}

\vspace{2mm}

for all $h\in H_0$. Since $\Theta(pu_ap)vv^*=vv^*\Theta(pu_ap)$, we have $\sum_{s\in K}\Theta(pu_ap)x_sw_s=\sum_{s\in K}x_sw_s\Theta(pu_ap)$ and so $\Theta(pu_ap)x_s=x_s\sigma_s(\Theta(pu_ap))$, by the uniqueness of the Fourier coefficients. Taking into account the previous inequality, for all $h\in H_0$,

\begin{equation}\label{neweq}
    \lim_{n\to\infty}\frac{1}{|E_n|}\sum_{a,s}\|(\Theta(u_hp)-\alpha_t(\Theta(u_hp)))\alpha_t(x_s\sigma_s(\Theta(pu_ap)))\|_2^2\leq 2\epsilon^2(\|vv^*\|_2+1)^2.
\end{equation}

\vspace{2mm}

Now, consider the left-hand side of the prior equation. For every $a\in E_n=R_nF$, since $p\in\cL(H_0)$, there exists $b\in F$ such that $u_apu_{a^{-1}}=u_bpu_{b^{-1}}$. Notice that for each $a\in E_n$ there are exactly $|R_n|$ elements that will give the same $b\in F$. Therefore, $|E_n|^{-1}\sum_{a\in E_n}u_apu_{a^{-1}}=|F|^{-1}\sum_{b\in F}u_bpu_{b^{-1}}$, and so

\begin{align*}
    \frac{1}{|E_n|}&\sum_{a\in E_n}\|(\Theta(u_hp)-\alpha_t(\Theta(u_hp)))\alpha_t(x_s\sigma_s(\Theta(pu_ap)))\|_2^2\\
    &=|\tau((\Theta(u_hp)-\alpha_t(\Theta(u_hp)))^*(\Theta(u_hp)-\alpha_t(\Theta(u_hp)))\alpha_t(x_s\sigma_s(\Theta(p\frac{1}{|F|}\sum_{b\in F}(u_bpu_{b^{-1}})p))x_s^*))|\\
    &\geq \frac{1}{|F|}|\tau((\Theta(u_hp)-\alpha_t(\Theta(u_hp)))^*(\Theta(u_hp)-\alpha_t(\Theta(u_hp)))\alpha_t(x_sx_s^*))|\\
    &=\frac{1}{|F|}\|(\Theta(u_hp)-\alpha_t(\Theta(u_hp)))\alpha_t(x_s)\|_2^2.
\end{align*}

\vspace{2mm}

Combining the prior inequality with \eqref{neweq}, we obtain

\begin{align*}
    2\epsilon^2(\|vv^*\|_2+1)^2&\geq
    \frac{1}{|F|}\sum_{s\in K}\|(\Theta(u_hp)-\alpha_t(\Theta(u_hp)))\alpha_t(x_s)\|_2^2\\
    &=\frac{1}{|F|}\sum_{s\in K}\left(2\|x_s\|_2^2-2\mathfrak{R}\mathfrak{e}\tau(\Theta(u_hp)\alpha_t(x_s)\alpha_t(x_s)^*\alpha_t(\Theta(u_hp))^*\right)\\
    &=\frac{1}{|F|}\left( 2\|vv^*\|_2^2-2\mathfrak{R}\mathfrak{e}\tau\left(\Theta(u_hp)\alpha_t\left(\sum_{s\in K}x_sx_s^*\right)\alpha_t(\Theta(u_hp))^*\right)\right).
\end{align*}

\vspace{2mm}

Thus, $2\|vv^*\|_2^2-2\epsilon^2|F|(\|vv^*\|_2+1)^2\leq 2\mathfrak{R}\mathfrak{e}\tau(\alpha_t(\Theta(u_hp))^*\Theta(u_hp)\alpha_t(\sum_{s\in K}x_sx_s^*))$ for all $h\in\Sigma$. Consider the set $\mathcal{K}=\overline{co}^w\{\alpha_t(\Theta(u_hp))^*\Theta(u_hp):h\in H_0\}$ and let $w_0^t\in\mathcal{K}$ be  the unique element of minimal $\|\cdot\|_2$-norm. Observe that $w_0^t\neq 0$ because $0<2\mathfrak{R}\mathfrak{e}\tau(w_0^t\alpha_t(\sum_sx_sx_s^*))$ by choosing $\epsilon$ accordingly. Since $\alpha_t(\Theta(u_hp))^*\mathcal{K}\Theta(u_hp)=\mathcal{K}$ and $\|\alpha_t(\Theta(u_hp))^*w_0^t\Theta(u_hp)\|_2=\|w_0^t\|_2$, by the uniqueness of the element, we have $\alpha_t(\Theta(u_hp)^*)w_0^t=w_0^t\Theta(u_hp)^*$ for all $h\in H_0$. This means that $\alpha_t(y^*)w_0^t=w_0^ty^*$ for all $y\in\Theta(pL(H_0)p)$. By making $\epsilon>0$ smaller, we have a sequence $w_0^t\to q$ as $t\to 0$. Because $\alpha_t\to id$ as $t\to 0$ we have $\lim_{t\to 0}\alpha_t(v^*)w_0^tv=v^*qv=v^*v\neq 0$. In particular, there exists $t>0$ for which $\alpha_t(v^*)w_0^tv\neq 0$, and $\alpha_t(x)(\alpha_t(v^*)w_0^tv)=(\alpha_t(v^*)w_0^tv)x$ for all $x\in \Theta(\cL(H_0)p)$. Hence, $\alpha_t$ is implemented by a partial isometry and hence, by \cite[Lemma 2.4]{i07}, $\Theta(\cL(H_0)p)\prec \cL(A\times C^{(I_0)})$ or $\cL(H_0)z\prec \cL(A\times C^{(I_0)})$, for some finite set $I_0\subset D$. Using \cite[Lemma 3.9]{vaes08}, since $H_0$ has finite index in $H^{hfc}$, 

\begin{equation}\label{intertwiningfinitesubset}
    \cL(H^{hfc})z\prec \cL(A\times C^{(I_0)}).
\end{equation}

\vspace{2mm}

Let $\cQ\subset q\cL(A\times C^{(I_0)})$ be the von Neumann algebra isomorphic to $l\cL(H^{hfc})l$, for some projection $l\leq z$, that we obtain from the intertwining \eqref{intertwiningfinitesubset}. If $\cQ\nprec \cL(A)\rtimes C^{(K)}$ for any subset $K\subsetneq I_0$, then by Corollary \ref{quasinormcontrol2} we obtain that $\mathscr{QN}_{q\cM q}(\cQ)''\prec \cL(A)\rtimes (C^{(I)}\text{Norm}(I_0))$ where $\text{Norm}(I_0)=:S$ is finite. Using the same computations with quasi-normalizers as in the proof of Corollary \ref{quasinormcontrol2} we obtain that $\cL(H)\prec \sum_{s\in S}\cL(A\times C^{(I)})w_s$, a contradiction because $S$ is finite. This means that there exists $K\subsetneq I_0$ for which $\cQ\prec_{\cL(A\times C^{(I)})}\cL(A)\rtimes C^{(K)}$. By repeating the argument with $K$ instead of $I_0$, and continuing by induction, we obtain $\cQ\prec_{\cL(A\times C^{(I)})}\cL(A)$. Therefore, $\cL(H^{hfc})z\prec_{\cL(A\times C^{(I)})}\cL(A)$. By \cite[Lemma 2.4]{dhi19} we obtain $\cL(H^{hfc})\prec^s\cL(A)$. 
\end{proof}

\vspace{1mm}

\section{Integral decomposition of von Neumann algebras}

In this section we recall several definitions along with main properties of integral decompositions of von Neumann algebras and establish some results that will be applied in subsequent sections.

\vspace{2mm}

Let $\mu$ be a Radon measure on the $\sigma$-compact locally compact Hausdorff space $X$ and let $\{\mathcal{H}_x:x\in X\}$ be a measurable field of Hilbert spaces over $X$. The latter is a collection $\{\mathcal{H}_x:x\in X\}$ of Hilbert spaces together with a linear subspace $\mathcal{S}\subset \prod_x\mathcal{H}_x$ whose elements are measurable sections satisfying:

\begin{enumerate}
    \item[(a)] if $\eta\in\prod_x\mathcal{H}_x$, then $\eta\in \mathcal{S}$ if and only if $x\mapsto\langle\zeta(x),\eta(x)\rangle$ is measurable for all $\zeta\in \mathcal{S}$; and
    \item[(b)] there exists $(\zeta_n)_n\subset \mathcal{S}$ such that  $\overline{\text{span}\{\zeta_n(x)\,:\, n\in\N\}}=\mathcal{H}_x$, for $\mu$-a.e.\ $x\in X$.
\end{enumerate}

The \textit{direct integral of $\mathcal{H}_x$}, also denoted by $\int_X^{\oplus}\mathcal{H}_xd\mu(x)$, is defined as the set of all $\zeta\in \mathcal{S}$ (up to measure zero sets) such that $\int_X\|\zeta(x)\|_{\mathcal{H}_x}^2d\mu(x)<\infty$. This space is endowed with the inner product

\begin{equation*}
    \langle\zeta,\eta\rangle=\int_X\langle\zeta(x),\eta(x)\rangle_{\mathcal{H}_x}d\mu(x),
\end{equation*}

\vspace{2mm}

which turns $\int_X^{\oplus}\mathcal{H}_xd\mu(x)$ into a Hilbert space.

\vspace{2mm}

By a \emph{measurable field $\{T_x:x\in X\}$ of operators} we mean $(T_x)_{x\in X}\in\prod_{x\in X}\mathfrak B(\mathcal{H}_x)$ satisfying

\begin{enumerate}
    \item[(c)] $x\mapsto \langle T_x\xi(x),\eta(x)\rangle$ is measurable for all $\xi,\eta\in\mathcal{S}$; and
    \item[(d)] the essential supremum of $\|T_x\|$ is finite. 
\end{enumerate}

An operator $T$ on $\int^{\oplus}_X\mathcal{H}_xd\mu(x)$ is said to be \emph{decomposable} if there exists a measurable field $(T_x)_x$ of operators such that $T=\int_X^{\oplus}T_xd\mu(x)$. Direct computations show the following formulas hold:

\begin{align}\label{propertiesintegraldecomp}
    \left(\int_X^{\oplus}T_xd\mu(x)\right)\left(\int_X^{\oplus}S_xd\mu(x)\right)=&\int_X^{\oplus}T_xS_xd\mu(x),\quad
    \left(\int_X^{\oplus}T_xd\mu(x)\right)^*=\int_X^{\oplus}T_x^*d\mu(x),\\
    \int_X^{\oplus}T_xd\mu(x)+\int_X^{\oplus}&S_xd\mu(x)=\int_X^{\oplus}(T_x+S_x)d\mu(x).\nonumber
\end{align}

\vspace{2mm}

A collection of von Neumann algebras $\{\mathcal M_x:x\in X\}$ with ${\text 1}_{H_x}\in \mathcal M_x\subset \mathfrak B(\mathcal{H}_x)$ for $\mu$-almost every $x\in X$ is said to be a \emph{measurable field of von Neumann algebras} if there are measurable fields of operators $\{a_n(x):x\in X\}$ for all $n\in \mathbb N$  such that $\cM_x=W^*(\{a_n(x):n\in\N\})$ for $\mu$-a.e.\ $x\in X$. Let $\cM$ be the set of all decomposable operators $\int_X^{\oplus}a_xd\mu(x)$ with $a_x\in \cM_x$ for $\mu$-a.e.\ $x\in X$. Then $\cM$ is called the \emph{direct integral of $\mathcal M_x$} and is denoted by $\mathcal M=\int_X^{\oplus}\cM_xd\mu(x)$. 

\vspace{2mm}

\begin{rem}\label{uniquenessofdecompoverhilbertspace} By \cite[Proposition II.3.1]{dixmier} it follows that the decomposition of a von Neumann algebra represented on a fixed Hilbert space is unique. More precisely, if $\cM=\int_X^{\oplus}\cM_xd\mu(x)$ is a von Neumann algebra on $\mathcal{H}$ and there exists another measurable field of von Neumann algebras $x\mapsto \mathcal{N}_x$, which defines the same von Neumann algebra $\mathcal{M}$ in $\mathcal{H}$, then $\cM_x=\cN_x$ almost everywhere.
\end{rem}

Let $\cM=\int_X\cM_xd\mu(x)$ be a von Neumann algebra. If $\phi_x\in(\cM_x)_*$ is such that $x\mapsto \phi_x(a_x)$ is measurable for all $a\in \cM$ and $\int_X\|\phi_x\|d\mu(x)<\infty$ we have that $\phi(a)=\int_X\phi_x(a_x)d\mu(x)$ is in $\cM_*$. In this case we write $\phi=\int_X\phi_xd\mu_x$. Moreover, if $\phi_x\geq 0$, then so is $\phi$; and if $\phi_x\geq 0$ is faithful, then so is $\phi$. We refer to \cite{dixmier} for the following application of the integral decomposition of normal functionals in a von Neumann algebra.

\begin{thm}\label{decompositiontraces} Let $\mu$ be a Radon measure on a $\sigma$-compact locally compact Hausdorff space $X$, and let $(\cM_x,H_x)_x$ be a measurable field of von Neumann algebras over $X$. Set $\cM=\int_X^{\oplus}\cM_xd\mu(x)$. If $\tau$ is a normal faithful trace on $\cM$, then $\tau=\int_X^{\oplus}\tau_xd\mu(x)$ for normal faithful traces $\tau_x$ on $\cM_x$, for almost every $x\in X$.
\end{thm}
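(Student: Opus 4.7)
The plan is to first invoke the general decomposition result for normal positive functionals on a direct integral (recalled in the paragraph immediately preceding the theorem) to produce a representation $\tau=\int_X^{\oplus}\tau_x\,d\mu(x)$ in which each $\tau_x$ is a normal positive functional on $\cM_x$, with $\int_X\tau_x(1_{\cM_x})\,d\mu(x)=\tau(1_{\cM})<\infty$. The content of the theorem is then to show that these fiberwise functionals are automatically tracial and faithful for $\mu$-almost every $x$.

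For traciality, I would use the fact that $L^{\infty}(X,\mu)$ sits inside the center of $\cM$ via the diagonal inclusion $f\mapsto\int_X^{\oplus}f(x)\cdot 1_{\cM_x}\,d\mu(x)$. For any $a,b\in\cM$ and $f\in L^{\infty}(X,\mu)$, applying the trace identity for $\tau$ to the pair $(a,fb)\in\cM\times\cM$ together with the centrality of $f$ gives
\begin{equation*}
0=\tau(a(fb))-\tau((fb)a)=\tau(f(ab-ba))=\int_X f(x)\,\tau_x(a_xb_x-b_xa_x)\,d\mu(x).
\end{equation*}
Since $f\in L^{\infty}(X,\mu)$ is arbitrary and $x\mapsto\tau_x(a_xb_x-b_xa_x)$ lies in $L^{1}(X,\mu)$, this forces $\tau_x(a_xb_x-b_xa_x)=0$ for $\mu$-a.e.\ $x$. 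Fixing a countable weak-$*$ dense $*$-subalgebra $\mathcal A\subseteq\cM$ (for instance, $*$-polynomials with coefficients in $\mathbb Q[i]$ in the defining generating sequence of measurable sections), a countable union of null sets produces a conull $X_0\subseteq X$ such that $\tau_x(a(x)b(x)-b(x)a(x))=0$ for every $a,b\in\mathcal A$ and every $x\in X_0$. Normality of $\tau_x$ together with weak-$*$ density of $\mathcal A_x:=\{a(x):a\in\mathcal A\}\subseteq\cM_x$ then extends the trace identity to all of $\cM_x$.

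For faithfulness, traciality of $\tau_x$ makes the null set $N_x:=\{a\in\cM_x:\tau_x(a^*a)=0\}$ a weak-$*$ closed two-sided ideal, hence of the form $\cM_xz_x$ for some central projection $z_x\in\mathscr{Z}(\cM_x)$; equivalently $1_{\cM_x}-z_x$ is the central support of $\tau_x$. Granting that $\{z_x\}$ defines a measurable field of projections, the operator $z:=\int_X^{\oplus}z_x\,d\mu(x)\in\cM$ is a projection satisfying $\tau(z)=\int_X\tau_x(z_x)\,d\mu(x)=0$; faithfulness of $\tau$ then forces $z=0$, so $z_x=0$ for $\mu$-a.e.\ $x$, and $\tau_x$ is faithful.

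The main technical obstacle is the measurability of $x\mapsto z_x$. I would handle this by realizing $z_x$ as the SOT-supremum of the countable family of support projections $\chi_{(0,\infty)}(a(x)^*a(x))$, where $a$ ranges over $\mathcal A$ restricted to the measurable set $\{x:\tau_x(a(x)^*a(x))=0\}$. Each such projection lies below $z_x$ by construction; the supremum exhausts $z_x$ using weak-$*$ density of $\mathcal A_x$ in $\cM_x$ together with the lower semicontinuity of $a\mapsto\tau_x(a^*a)$. This reduces measurability of $\{z_x\}$ to standard manipulations with countable families of decomposable operators, as developed in \cite{dixmier}.
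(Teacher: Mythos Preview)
The paper does not supply its own proof of this theorem; the sentence preceding the statement explicitly defers to Dixmier, so there is nothing in the paper to compare against beyond that citation. Your write-up is therefore more detailed than anything the authors provide. The traciality portion is fine: pairing the trace identity against arbitrary $f\in L^\infty(X,\mu)$ and passing through a countable generating $*$-algebra is the standard route and works as written.

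The faithfulness portion has a genuine gap. You assert that $z_x$ coincides with the supremum of the support projections of those $a(x)$, $a\in\mathcal A$, satisfying $\tau_x(a(x)^*a(x))=0$, and you justify the nontrivial inequality by ``weak-$*$ density of $\mathcal A_x$ together with lower semicontinuity of $a\mapsto\tau_x(a^*a)$.'' But lower semicontinuity runs the wrong way: it shows the null ideal $N_x$ is weak-$*$ closed, not that $\mathcal A_x\cap N_x$ is weak-$*$ dense in $N_x$. In general a countable weak-$*$ dense $*$-subalgebra can meet a proper weak-$*$ closed ideal only in $\{0\}$. Concretely, take $\cM_x=L^\infty[0,1]$, $\mathcal A_x$ the polynomials with $\mathbb Q[i]$ coefficients, and $\tau_x(f)=\int_0^{1/2}f$; then $z_x=\chi_{[1/2,1]}$, yet the only polynomial $a$ with $\tau_x(|a|^2)=0$ is $a=0$, so your supremum is $0$, not $z_x$. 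Nothing in your setup rules this configuration out fiberwise.

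The strategy is salvageable, but you need a different mechanism for the measurability of $x\mapsto z_x$ (equivalently, of the support projection $x\mapsto s(\tau_x)$). The clean route---and the one underlying the references---is a measurable selection argument: the set $\{(x,p):p\in\mathrm{Proj}(\cM_x)\setminus\{0\},\ \tau_x(p)=0\}$ is analytic in the natural Borel structure on the field of von Neumann algebras, so if it projects onto a set of positive measure one selects a measurable field $x\mapsto p_x$ of nonzero $\tau_x$-null projections; then $p=\int_X^\oplus p_x\,d\mu(x)$ is a nonzero projection with $\tau(p)=0$, contradicting faithfulness. Alternatively, one can pass through the GNS/standard form and use that a cyclic separating vector for $\cM$ disintegrates into fiberwise cyclic separating vectors. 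Either way, the ``sup over $\mathcal A$'' shortcut does not survive as written.
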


\vspace{1mm}

Let $\cM=\int_X\cM_xd\mu(x)$ and $\cN=\int_X\cN_xd\mu(x)$ be von Neumann algebras. For every $x\in X$, let $\Phi_x$ be a homomorphism of $\cM_x$ into $\cN_x$. The mapping $x\mapsto \Phi_x$ is called a field of homomorphisms. This field is said to be \textit{measurable} if, for any measurable field of operators $x\mapsto T_x\in \cM_x$, the field $x\mapsto\Phi_x(T_x)\in \cN_x$ is measurable. For $T=\int_X^{\oplus}T_xd\mu(x)\in\cM$, let $\Phi(T)=\int_X^{\oplus}\Phi_x(T_x)d\mu(x)\in\cN$. Then, $\Phi$ is a $*$-homomorphism from $\cM$ into $\cN$, and it is normal if the $\Phi_x$ are normal for almost every $x\in X$. A normal homomorphism $\Phi$ of $\cM=\int_X\cM_xd\mu(x)$ into $\cN=\int_X\cN_xd\mu(x)$ is said to be \textit{decomposable} if it is defined by a measurable field $x\mapsto \Phi_x$ of normal homomorphisms of $\cM_x$ into $\cN_x$. In this case we write $\Phi=\int_X^{\oplus}\Phi_xd\mu(x)$. Dixmier \cite[Proposition II.3.11]{dixmier} established that, under specific conditions, a normal homomorphism $\Phi:\mathcal{M}\to\cN$ is indeed decomposable. Later on, Spaas established that, under more restrictions, the integral decomposition of the initial von Neumann algebras is preserved.

\begin{thm}\label{decompositioniso}(\cite[Theorem 2.1.14]{spaas}) Suppose $\cM=\int_X^{\oplus}\cM_x d\mu(x)$ and $\cN=\int_Y^{\oplus}\cN_xd\nu(x)$ are direct integrals of tracial von Neumann algebras and $\Phi:\cM\to\cN$ is a trace preserving isomorphism such that $\Phi(L^{\infty}(X))=L^{\infty}(Y)$. Then, there exists full measure sets $X'\subset X$ and $Y'\subset Y$, a Borel isomorphism $f:Y'\to X'$ with $f(\nu)$ equivalent to $\mu$, and a measurable field $x\mapsto \Phi_x$ of tracial isomorphisms of $\cM_x$ onto $\cN_{f^{-1}(x)}$ such that

\begin{equation*}
    \Phi=\int_X^{\oplus}\Phi_xd\mu(x).
\end{equation*}
\end{thm}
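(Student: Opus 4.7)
The plan is to proceed in three stages: first extract the point map $f$ between the base spaces from the action of $\Phi$ on the diagonal subalgebras, then disintegrate $\Phi$ fiberwise via Dixmier's decomposability criterion, and finally use the existence of $\Phi^{-1}$ together with trace decomposition to upgrade the fiberwise maps to trace-preserving isomorphisms.

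First I would exploit the hypothesis $\Phi(L^\infty(X))=L^\infty(Y)$. The restriction of $\Phi$ to $L^\infty(X,\mu)$ is a normal $*$-isomorphism between abelian tracial von Neumann algebras whose predual spectra are standard Borel spaces (being the diagonals of direct integrals). By Gelfand/Stone duality one therefore obtains full measure sets $X'\subseteq X$ and $Y'\subseteq Y$ and a Borel isomorphism $f\colon Y'\to X'$ with $f_{*}\nu$ equivalent to $\mu|_{X'}$, such that $\Phi(a)=a\circ f$ for all $a\in L^\infty(X,\mu)$. Pulling the direct integral on the $\cN$-side back through $f$, we identify $\cN$ with $\int_{X}^{\oplus}\cN_{f^{-1}(x)}\,d\mu(x)$ (the Radon–Nikodym derivative $d(f_{*}\nu)/d\mu$ can be absorbed into the fiber traces thanks to Theorem \ref{decompositiontraces}). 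After this reduction we may assume both $\cM$ and $\cN$ are direct integrals over the common base $(X,\mu)$, with the same diagonal algebra $L^\infty(X)$, and $\Phi$ is $L^\infty(X)$-linear.

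Second, I would invoke Dixmier's decomposability theorem for $L^\infty(X)$-linear normal $*$-homomorphisms between direct integrals (\cite[Proposition II.3.11]{dixmier}). It applies verbatim in our setting because $\Phi$ is normal, commutes with multiplication by $L^\infty(X)$, and sends a generating family of measurable sections of $\cM$ into measurable sections of $\cN$. This produces a measurable field $x\mapsto \Phi_x$ of normal $*$-homomorphisms $\Phi_x\colon \cM_x\to \cN_{f^{-1}(x)}$ with $\Phi=\int_X^{\oplus}\Phi_x\,d\mu(x)$.

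Third, I would apply the same decomposition procedure to $\Phi^{-1}$, obtaining a measurable field $x\mapsto \Psi_x$ of normal $*$-homomorphisms such that $\Phi^{-1}=\int_X^{\oplus}\Psi_x\,d\mu(x)$. Combining the identities in \eqref{propertiesintegraldecomp} with the uniqueness of direct integral decompositions on a fixed Hilbert space (Remark \ref{uniquenessofdecompoverhilbertspace}), the fiberwise products $\Psi_x\Phi_x$ and $\Phi_x\Psi_x$ coincide $\mu$-a.e.\ with the identity, so $\Phi_x$ is a $*$-isomorphism for almost every $x$. Similarly, writing $\tau_{\cM}=\int_X \tau_x^{\cM}d\mu(x)$ and $\tau_{\cN}=\int_X \tau_x^{\cN}d\mu(x)$ via Theorem \ref{decompositiontraces} and using that $\Phi$ is $L^\infty(X)$-linear and globally trace-preserving, one gets $\tau_x^{\cN}\circ \Phi_x=\tau_x^{\cM}$ $\mu$-a.e., yielding trace-preservation fiberwise.

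The main obstacle is the rigorous bookkeeping in the second step: one must verify that the measurable section structure on $\cN$ in its new fibration $x\mapsto \cN_{f^{-1}(x)}$ is compatible with the image under $\Phi$ of a generating measurable field for $\cM$, so that Dixmier's criterion can be invoked. This is essentially a matter of choosing countable ultraweakly dense families of decomposable operators on both sides, using the separability hypotheses implicit in the direct integral setup, and verifying measurability of all scalar-valued functions $x\mapsto \langle \Phi_x(a_n(x))\xi_m(x),\eta_k(x)\rangle$; everything else is bookkeeping about null sets.
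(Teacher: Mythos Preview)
The paper does not actually prove this statement: it is quoted verbatim as \cite[Theorem 2.1.14]{spaas} and used as a black box, so there is no ``paper's own proof'' to compare against. Your three-step outline (extract the point map on the base via duality for abelian von Neumann algebras, reduce to a common base and apply Dixmier's decomposability criterion \cite[Proposition II.3.11]{dixmier} to get fiberwise $*$-homomorphisms, then decompose $\Phi^{-1}$ and the traces to upgrade to tracial $*$-isomorphisms almost everywhere) is the standard route and is essentially how Spaas's proof proceeds as well; it is correct in spirit.

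One point that deserves slightly more care than you indicate: after pulling back the $\cN$-fibration along $f$, the resulting measure on $X$ is only equivalent to $\mu$, not equal to it, and the Radon--Nikodym derivative enters both in the Hilbert space identification $\int_X^{\oplus}\mathcal H_{f^{-1}(x)}\,d\mu(x)\cong \int_Y^{\oplus}\mathcal H_y\,d\nu(y)$ and in the fiber traces. Your remark that it can be ``absorbed into the fiber traces'' is correct, but you should make explicit that the resulting $\tau_x^{\cN}$ are still \emph{normalized} traces on the fibers (which follows because the Radon--Nikodym factor cancels against the rescaling of the Hilbert space norm), otherwise the conclusion ``tracial isomorphisms'' in the statement would not quite follow. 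Apart from this bookkeeping, your argument is complete.
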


\vspace{0.5mm}

\subsection{Direct integral decomposition of cocycle actions} We will now establish a connection between two notions: integral decomposition and cocycle actions.

\begin{defn}\label{defncocycleactions} Let $\mu$ be a Radon measure on a $\sigma$-compact locally compact Hausdorff space $X$, and let $(\cM_x,\mathcal{H}_x)$ be a measurable field of von Neumann algebras over $X$. Let $B$ be a countable group. A \emph{measurable field of cocycle actions} $\{\alpha_x, c_x:x\in X\}$ of $B$ is a collection of cocycle actions $B\curvearrowright^{\alpha_x,c_x}\cM_x$ for $x\in X$ such that for each $g\in B$, the map $x\mapsto \alpha_x(g)$ is a measurable field of automorphisms, and the map $x\mapsto\langle c_x(g,h)\xi(x),\eta(x)\rangle$ is measurable for all $\xi,\eta\in\mathcal{S}$ and all $g,h\in B$. Set $\cM=\int_X^{\oplus}\cM_xd\mu(x)$. Define $\alpha(g)(a)=\int_X^{\oplus}\alpha_x(g)(a_x)d\mu(x)$ for $a\in\cM$, and $c:B\times B\to \mathscr{U}(\cM)$ by $c(g,h)=\int_X^{\oplus}c_x(g,h)d\mu(x)$ for $g,h\in B$. Then, $B\curvearrowright^{\alpha,c}\cM$ is a cocycle action that we refer to as the \emph{cocycle direct integral of $\alpha_x,c_x$}.
\end{defn}

\begin{rem} If $B\curvearrowright^{\alpha_x,c_x}(\cM_x,\mathcal{H}_x)$ is a measurable field of cocycle actions as in the prior definition we obtain that $\{\cM_x\rtimes_{\alpha_x,c_x}B:x\in X\}$ is a measurable field of von Neumann algebras. By \cite[Proposition II.8.10]{dixmier} the field of Hilbert spaces $\{\mathcal{H}_x\otimes\ell^2(B):x\in X\}$ is $\mu$-measurable and 
$\{a_n(x)u_b:x\in X,b\in B\}$ is a measurable field of operators for all $n\in\N$ such that $\mathcal{M}_x\rtimes_{\alpha_x,c_x}B=W^*(\{a_n(x)u_b:n\in\N,b\in B\})$ for $\mu$-a.e. $x\in X$. Hence, we obtain the von Neumann algebra $\int_X^{\oplus}(\cM_x\rtimes_{\alpha_x,c_x}B)\:d\mu(x)$. 
\end{rem}

\begin{prop}\label{integraldecompcocycles} Let $\mu$ be a Radon measure on a $\sigma$-compact locally compact Hausdorff space $X$, and let $(\cM_x,\mathcal{H}_x)_x$ be a measurable field of tracial von Neumann algebras over $X$. Let $\cM=\int_X^{\oplus}\cM_xd\mu(x)$ be tracial. Let $B$ be a countable group and suppose $B\curvearrowright^{\alpha,c}\cM$ is a cocycle action. Assume $\alpha_g$ acts trivially on $L^{\infty}(X,\mu)$ for every $g\in B$. Then, there exists a measurable field $\{\alpha_{\cdot,x},c_x:x\in Y\}$, where $Y$ is a conull subset of $X$, of cocycle actions such that

\begin{equation*}
    \alpha_g=\int_{Y}^{\oplus}\alpha_{g,x}d\mu(x),\quad\text{for all }g\in B
\end{equation*}

\vspace{2mm}

and $c_x:B\times B\to\mathscr{U}(\cM_x)$ is a cocycle associated to $\alpha_{\cdot,x}$ with

\begin{equation*}
    c=\int_{Y}^{\oplus}c_xd\mu(x).
\end{equation*}
\end{prop}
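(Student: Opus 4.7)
The plan is to decompose each $\alpha_g$ and each unitary $c(g,h)$ fiberwise, and then verify that the cocycle identities descend to almost every fiber, using the countability of $B$ to finish with a single conull set. First, I would invoke Dixmier's decomposability theorem for normal endomorphisms of direct integrals (\cite[Proposition II.3.11]{dixmier}): the hypothesis that $\alpha_g$ fixes $L^{\infty}(X,\mu)$ pointwise means exactly that $\alpha_g$ maps the diagonal into the diagonal and commutes with the center of the underlying abelian algebra, which is the precise condition ensuring that $\alpha_g$ is decomposable. Hence, for each $g\in B$, we obtain a measurable field $x\mapsto \alpha_{g,x}\in \mathrm{Aut}(\cM_x)$ with $\alpha_g=\int_X^\oplus \alpha_{g,x}\,d\mu(x)$. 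Since each $c(g,h)$ lies in $\mathscr{U}(\cM)$, it is automatically a decomposable operator, so $c(g,h)=\int_X^\oplus c_x(g,h)\,d\mu(x)$ with $c_x(g,h)\in \mathscr{U}(\cM_x)$ for $\mu$-a.e.\ $x$.

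Next I would push the cocycle identities through these decompositions. Using formulas \eqref{propertiesintegraldecomp} (which make sums, products and adjoints of decomposable operators compatible with the fiber decomposition) together with the decomposability of the $\alpha_g$'s, each of the identities
\begin{equation*}
\alpha_g\alpha_h=\mathrm{Ad}(c(g,h))\alpha_{gh}, \qquad c(g,h)c(gh,k)=\alpha_g(c(h,k))c(g,hk), \qquad c(g,1)=c(1,g)=1
\end{equation*}
translates, for each fixed $(g,h,k)\in B^3$, into an identity of measurable fields of operators. By the uniqueness of fiber decomposition (see Remark \ref{uniquenessofdecompoverhilbertspace}), this forces the corresponding fiberwise identity
\begin{equation*}
\alpha_{g,x}\alpha_{h,x}=\mathrm{Ad}(c_x(g,h))\alpha_{gh,x}, \quad c_x(g,h)c_x(gh,k)=\alpha_{g,x}(c_x(h,k))c_x(g,hk), \quad c_x(g,1)=c_x(1,g)=1
\end{equation*}
to hold for $x$ outside a $\mu$-null set $N_{g,h,k}$. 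Since $B$ is countable, the union $N:=\bigcup_{(g,h,k)\in B^3} N_{g,h,k}$ is still $\mu$-null, and setting $Y:=X\setminus N$ gives a conull set on which $(\alpha_{\cdot,x},c_x)$ is a genuine cocycle action of $B$ on $\cM_x$ for every $x\in Y$.

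Finally, I would record the measurability conditions required by Definition \ref{defncocycleactions}: that for each $g\in B$ the field $x\mapsto \alpha_{g,x}$ is measurable is built into the output of Dixmier's decomposition theorem, and that for each pair $(g,h)\in B\times B$ the field $x\mapsto c_x(g,h)$ is a measurable field of operators follows from the fact that $c(g,h)\in \cM$ is decomposable (so $x\mapsto \langle c_x(g,h)\xi(x),\eta(x)\rangle$ is measurable for all sections $\xi,\eta$ in the underlying fundamental family $\mathcal{S}$). Thus $(\alpha_{\cdot,x},c_x)_{x\in Y}$ is a measurable field of cocycle actions of $B$ on $(\cM_x)_{x\in Y}$ whose cocycle direct integral, in the sense of Definition \ref{defncocycleactions}, coincides with $(\alpha,c)$ by construction. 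The main subtlety is purely bookkeeping: making sure the countable family of null sets arising from verifying the cocycle identities and the measurability selections can be combined into a single conull set $Y$; there is no structural obstruction, only the care needed to keep all decompositions compatible with one fixed choice of measurable structure on $\{\mathcal{H}_x\}$.
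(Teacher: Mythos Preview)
Your proposal is correct and follows essentially the same route as the paper's proof: decompose each $\alpha_g$ and each $c(g,h)$ fiberwise, then push the three cocycle identities through \eqref{propertiesintegraldecomp} and collect the countably many null exceptional sets into a single conull $Y$. The only noteworthy difference is that the paper invokes Theorem~\ref{decompositioniso} (Spaas) rather than \cite[Proposition II.3.11]{dixmier} for the decomposition of $\alpha_g$; since each $\alpha_g$ is a trace-preserving automorphism fixing $L^\infty(X)$, Spaas's result immediately yields that the fiber maps $\alpha_{g,x}$ are \emph{tracial automorphisms} of $\cM_x$, whereas Dixmier's version a priori only gives normal $\ast$-homomorphisms and one would need an extra line (e.g.\ decomposing $\alpha_g^{-1}$ as well) to conclude bijectivity on fibers. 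The paper also makes the verification of the first identity $\alpha_{g,x}\alpha_{h,x}=\mathrm{Ad}(c_x(g,h))\alpha_{gh,x}$ explicit by testing against a fixed weak$^*$-dense sequence $a_n(x)$, which is exactly the mechanism underlying your appeal to Remark~\ref{uniquenessofdecompoverhilbertspace}.
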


\begin{proof} First note that for each $g\in B$, $\alpha_g\in\text{Aut}(\cM)$ is a $*$-isomorphism that preserves the trace of $\cM$. Thus, its decomposition follows by Theorem \ref{decompositioniso} and the assumption that $\alpha_g$ preserves the fibers. That is, after removing a countable union of measure zero sets, for each $g\in B$ there exists a measurable field of trace preserving $*$-automorphisms $x\mapsto \alpha_{g,x}\in\text{Aut}(\cM_x)$ such that

\begin{equation*}
    \alpha_g=\int_{X}\alpha_{g,x}d\mu(x).
\end{equation*}

\vspace{2mm}

For $g,h\in B$ there exists operators $(c(g,h)_x)_{x\in X}$ with $c(g,h)_x\in M_x$ for $\mu$-a.e.\ $x\in X$ and 

\begin{equation*}
    c(g,h)=\int_X^{\oplus}c(g,h)_xd\mu(x).
\end{equation*}

\vspace{2mm}

Let $c_x(g,h):=c(g,h)_x$ for $g,h\in B$. By the definition of a measurable field of operators we have that $x\mapsto \langle a(x)\xi(x),\eta(x)\rangle$ is measurable for all $a\in\cM$. In particular, we have that $x\mapsto \langle c_x(g,h)\xi(x),\eta(x)\rangle$ is $\mu$-measurable. 

\vspace{2mm}

Finally, we show $B\curvearrowright^{\alpha_{\cdot,x},c_x}\cM_x$ is a cocycle action for almost every $x\in X$. Let $a_n(x)$ be a weak$^*$ dense sequence of measurable fields of operators in $\cM$. Then, for all $g,h\in B$, by using equations \eqref{propertiesintegraldecomp} we obtain

\begin{align*}
    \int_X\text{Ad}(c_x(g,h))\alpha_{gh,x}(a_n(x))d\mu(x)&=\text{Ad}(c(g,h))\alpha_{gh}(a_n)=(\alpha_g\circ\alpha_h)(a_n)\\
    &=\int_X(\alpha_{g,x}\circ\alpha_{h,x})(a_n(x))d\mu(x).
\end{align*}

\vspace{2mm}

Hence, $\text{Ad}(c_x(g,h))\alpha_{gh,x}(a_n(x))=(\alpha_{g,x}\circ\alpha_{h,x})(a_n(x))$ holds for $\mu$-a.e. $x\in X$. Taking a countable union of null sets we may assume that $\text{Ad}(c_x(g,h))\alpha_{gh,x}(a_n(x))=(\alpha_{g,x}\circ\alpha_{h,x})(a_n(x))$ for every $x$, for every $n$ and for every $g,h\in B$. By the weak$^*$ density of $a_n(x)$, we get that $\text{Ad}(c_x(g,h))\alpha_{gh,x}=\alpha_{g,x}\circ\alpha_{h,x}$ for every $x\in X$ and every $g,h\in B$. 

\vspace{2mm}

Moreover, by \eqref{propertiesintegraldecomp}

\begin{equation*}
    \int_Xc_x(g,h)c_x(gh,k)d\mu(x)=c(g,h)c(gh,k)=\alpha_g(c(h,k))c(g,hk)=\int_X\alpha_{g,x}(c_x(h,k))c_x(g,hk)d\mu(x),
\end{equation*}

\vspace{2mm}

for $g,h,k\in B$. The previous equality implies that $c_x(g,h)c_x(gh,k)=\alpha_{g,x}(c_x(h,k))c_x(g,hk)$ holds for almost every $x\in X$. Using \eqref{propertiesintegraldecomp} once again, we obtain $c_x(g,1_B)=c_x(1_B,g)=1_{\cM_x}$ for $\mu$-a.e. $x\in X$. Thus, $B\curvearrowright^{\alpha_{\cdot,x},c_x}\cM_x$ is a cocycle action for almost every $x\in X$, as claimed.
\end{proof}

The following theorem is well-known, but we present it here because it will be frequently referenced.

\begin{thm}\label{extensionisom} Let $(\mathcal A,\tau), (\mathcal B,\rho)$ be tracial von Neumann algebras, and let $\mathcal A_0\subset \mathcal A$ and $\mathcal B_0\subset \mathcal B$ be dense $\ast$-subalgebras. Let $\Phi:\mathcal A_0\to \mathcal B_0$ be a trace preserving $\ast$-homomorphism. Then, $\Phi$ extends to a $\ast$-isomorphism between $\mathcal A$ and $\overline{\Phi(\mathcal A_0)}^{\rm WOT}$.
\end{thm}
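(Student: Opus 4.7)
The plan is to extend $\Phi$ first to the $L^2$-completion (which is automatic from trace preservation), then show this $L^2$-extension sends $\mathcal A \subset L^2(\mathcal A,\tau)$ into $\mathcal B\subset L^2(\mathcal B,\rho)$, is a normal $\ast$-homomorphism, and has image equal to $\overline{\Phi(\mathcal A_0)}^{\rm WOT}$.

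First I would establish that $\Phi$ preserves the $2$-norm and is contractive in operator norm on $\mathcal A_0$. Indeed, for $a\in \mathcal A_0$, since $\Phi$ is a $\ast$-homomorphism and $\Phi(a^*a) = \Phi(a)^*\Phi(a)$, trace preservation gives $\|\Phi(a)\|_2^2 = \rho(\Phi(a^*a)) = \tau(a^*a) = \|a\|_2^2$. For the operator norm, for self-adjoint $a\in\mathcal A_0$ and any $n\in \mathbb N$,
\begin{equation*}
\|\Phi(a)\|_{2n}^{2n} = \rho(\Phi(a)^{2n}) = \rho(\Phi(a^{2n})) = \tau(a^{2n}) = \|a\|_{2n}^{2n},
\end{equation*}
and letting $n\to\infty$ yields $\|\Phi(a)\|_\infty \leq \|a\|_\infty$ (noting that $\Phi(a)\in \mathcal B_0\subset \mathcal B$ is automatically a bounded operator). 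For general $a\in \mathcal A_0$, $\|\Phi(a)\|_\infty^2 = \|\Phi(a^*a)\|_\infty \leq \|a^*a\|_\infty = \|a\|_\infty^2$.

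Next, I would construct the extension $\widetilde\Phi:\mathcal A \to \mathcal B$. Given $x\in \mathcal A$, Kaplansky density provides a net $(a_i)\subset \mathcal A_0$ with $\|a_i\|_\infty \leq \|x\|_\infty$ and $a_i \to x$ strongly (equivalently, in $\|\cdot\|_2$). The $L^2$-isometry property shows that $(\Phi(a_i))$ is $\|\cdot\|_2$-Cauchy in $\mathcal B$; combined with the uniform operator-norm bound and the fact that the unit ball of $\mathcal B$ is SOT-complete, we may define $\widetilde\Phi(x)$ as the SOT-limit, which lies in $\mathcal B$ and satisfies $\|\widetilde\Phi(x)\|_\infty \leq \|x\|_\infty$ and $\|\widetilde\Phi(x)\|_2 = \|x\|_2$. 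Independence of the approximating net follows from the $L^2$-isometry. Linearity is immediate. For multiplicativity and $\ast$-preservation, take $a_i\to x$ and $b_j\to y$ as above; since multiplication and adjoint are jointly SOT-continuous on bounded sets, $\Phi(a_i b_j) = \Phi(a_i)\Phi(b_j) \to \widetilde\Phi(x)\widetilde\Phi(y)$ and $\Phi(a_i^*) = \Phi(a_i)^* \to \widetilde\Phi(x)^*$, giving $\widetilde\Phi(xy) = \widetilde\Phi(x)\widetilde\Phi(y)$ and $\widetilde\Phi(x^*) = \widetilde\Phi(x)^*$. Trace preservation on $\mathcal A$ follows by continuity of $\tau$ and $\rho$ in $\|\cdot\|_2$.

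Finally, injectivity and the identification of the range. If $\widetilde\Phi(x)=0$ then $\tau(x^*x) = \rho(\widetilde\Phi(x^*x)) = \rho(\widetilde\Phi(x)^*\widetilde\Phi(x)) = 0$, and faithfulness of $\tau$ forces $x=0$. Since $\widetilde\Phi$ is a normal injective $\ast$-homomorphism between tracial von Neumann algebras, its image $\widetilde\Phi(\mathcal A)$ is a von Neumann subalgebra of $\mathcal B$ (by \cite[Proposition 2.5.3]{dixmier}-type arguments, or simply because a normal $\ast$-homomorphism of von Neumann algebras has WOT-closed image). As $\widetilde\Phi(\mathcal A)$ contains $\Phi(\mathcal A_0)$ and is WOT-closed, it contains $\overline{\Phi(\mathcal A_0)}^{\rm WOT}$; conversely, by Kaplansky, $\mathcal A_0$ is SOT-dense in $\mathcal A$, so $\widetilde\Phi(\mathcal A) \subseteq \overline{\Phi(\mathcal A_0)}^{\rm WOT}$. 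The only mildly delicate point is ensuring the extension is genuinely a normal $\ast$-homomorphism, which reduces to the Kaplansky-plus-joint-SOT-continuity-on-bounded-sets routine described above; beyond this bookkeeping the argument is entirely standard.
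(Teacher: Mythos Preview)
Your argument is correct and is essentially a detailed execution of the same ``standard extension argument'' the paper invokes in one line: the paper simply notes that trace preservation gives injectivity and $\sigma$-WOT continuity, then appeals to density, whereas you unpack this via the $\|\cdot\|_2$-isometry, operator-norm contraction from the $L^{2n}$-norms, and Kaplansky density plus SOT-continuity of multiplication on bounded sets. There is no substantive difference in strategy.
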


\begin{proof} Since the map $\Phi$ is trace preserving, it is injective. Moreover, as $\Phi$ is a $\ast$-homomorphism, and the traces are normal, $\Phi$ is continuous in the $\sigma$-WOT topology. Since $\overline{\mathcal A_0}^{\sigma-\text{WOT}}=\mathcal A$, and using a standard extension argument, we see that $\Phi$ uniquely extends to a well-defined injective and normal $\ast$-homomorphism $\mathcal A\to \mathcal B$.
\end{proof}

\begin{thm}\label{isomcocycleintegral} Let $\mu$ be a Radon measure on a $\sigma$-compact locally compact Hausdorff space $X$, and let $(\cM_x,\mathcal{H}_x)_x$ be a measurable field of tracial von Neumann algebras over $X$. Set $\cM=\int_X^{\oplus}\cM_xd\mu(x)$ and let $\tau$ be a trace on $\cM$. Let $B$ be a countable group and $B\curvearrowright^{\alpha,c}(\cM,\tau)$ a cocycle action. Assume $\alpha_g$ acts trivially on $L^{\infty}(X,\mu)$ for every $g\in B$. Then, the cocycle crossed product von Neumann algebra $\cM\rtimes_{\alpha,c}B$ can be decomposed as the direct integral of cocycle crossed products $\cM_x\rtimes_{\alpha_{\cdot,x},c_x}B$. In other words,

\begin{equation*}
    \cM\rtimes_{\alpha,c}B\cong\int_X\cM_x\rtimes_{\alpha_{\cdot,x}c_x}B\:d\mu(x),
\end{equation*}

\vspace{2mm}

where the cocycle action integral decomposition is given as in Proposition \ref{integraldecompcocycles}.
\end{thm}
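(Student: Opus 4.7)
The plan is to apply Theorem \ref{extensionisom} to an explicit trace-preserving $*$-homomorphism defined on a dense subalgebra. Let $(\alpha_{\cdot,x}, c_x)$ be the measurable field of cocycle actions furnished by Proposition \ref{integraldecompcocycles}, so that $\alpha_g = \int_X^{\oplus} \alpha_{g,x}\, d\mu(x)$ and $c(g,h) = \int_X^{\oplus} c_x(g,h)\, d\mu(x)$. As observed after Definition \ref{defncocycleactions}, the collection $\{\cM_x \rtimes_{\alpha_{\cdot,x}, c_x} B\}_{x\in X}$ is a measurable field of tracial von Neumann algebras, acting fiberwise on $\cH_x \otimes \ell^2(B)$, with canonical unitaries $\tilde u_g^x$. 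Form $\cN := \int_X^{\oplus} (\cM_x \rtimes_{\alpha_{\cdot,x}, c_x} B)\, d\mu(x)$. By Theorem \ref{decompositiontraces}, $\tau = \int_X^{\oplus} \tau_x\, d\mu(x)$ on $\cM$, and each $\tau_x$ extends to the canonical normal faithful trace $\tilde\tau_x$ on the corresponding fiber crossed product, yielding a trace $\tilde\tau := \int_X^{\oplus} \tilde\tau_x\, d\mu(x)$ on $\cN$.

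Next, on the algebraic cocycle crossed product $\cM *_{\alpha,c}^{\mathrm{alg}} B$, consisting of finite sums $\sum_{g\in F} a_g u_g$ with $a_g = \int_X^{\oplus} a_{g,x}\, d\mu(x) \in \cM$, I would define
\[
\Phi\Bigl(\sum_{g\in F} a_g u_g\Bigr) := \int_X^{\oplus} \Bigl(\sum_{g\in F} a_{g,x}\, \tilde u_g^x\Bigr)\, d\mu(x).
\]
Measurability of the integrand follows from the measurability of $x\mapsto a_{g,x}$ (built into $\cM$) together with that of $x\mapsto \tilde u_g^x$ (built into the measurable field of crossed products). One checks $\Phi$ is a $*$-algebra homomorphism on generators: the defining relations $u_g a u_g^* = \alpha_g(a)$ and $u_g u_h = c(g,h) u_{gh}$ translate pointwise into $\tilde u_g^x a_x (\tilde u_g^x)^* = \alpha_{g,x}(a_x)$ and $\tilde u_g^x \tilde u_h^x = c_x(g,h)\, \tilde u_{gh}^x$, which hold by Proposition \ref{integraldecompcocycles}. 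Trace preservation is immediate from $\tau(\sum_g a_g u_g) = \tau(a_e) = \int_X \tau_x(a_{e,x})\, d\mu(x) = \tilde\tau(\Phi(\sum_g a_g u_g))$. Theorem \ref{extensionisom} then extends $\Phi$ to a trace-preserving $*$-isomorphism from $\cM \rtimes_{\alpha,c} B$ onto $\overline{\Phi(\cM *_{\alpha,c}^{\mathrm{alg}} B)}^{\mathrm{WOT}} \subseteq \cN$.

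The main obstacle is to show this image exhausts $\cN$, for which I would argue via fiberwise Fourier expansion. Any $T = \int_X^{\oplus} T_x\, d\mu(x) \in \cN$ admits Fourier coefficients $a_{g,x} := \mathbb E_{\cM_x}(T_x (\tilde u_g^x)^*)$; since $\mathbb E_{\cM_x}$ is implemented on $\cH_x \otimes \ell^2(B)$ by the projection onto $\cH_x \otimes \mathbb C\delta_e$, which is a measurable field of projections, $x \mapsto a_{g,x}$ is $\mu$-measurable for each fixed $g$, so $a_g := \int_X^{\oplus} a_{g,x}\, d\mu(x)$ is a well-defined element of $\cM$. The key estimate is that for finite $F \subset B$, fiberwise Parseval gives
\[
\Bigl\|T - \Phi\Bigl(\sum_{g\in F} a_g u_g\Bigr)\Bigr\|_{2,\tilde\tau}^2 = \int_X \sum_{g\notin F} \|a_{g,x}\|_{2,\tau_x}^2\, d\mu(x),
\]
which tends to zero as $F$ exhausts $B$ by dominated convergence (dominated by $x \mapsto \|T_x\|_{2,\tilde\tau_x}^2 \in L^1(X,\mu)$). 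Hence $\Phi(\cM *_{\alpha,c}^{\mathrm{alg}} B)$ is $\|\cdot\|_2$-dense in $\cN$; since the image of a trace-preserving embedding of finite von Neumann algebras is $\|\cdot\|_2$-closed, $\Phi$ in fact surjects onto $\cN$, giving the asserted isomorphism. Uniqueness of the fiberwise decomposition is ensured by Remark \ref{uniquenessofdecompoverhilbertspace}.
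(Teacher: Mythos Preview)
Your proposal is correct and follows essentially the same approach as the paper: define $\Phi$ on generators by $\Phi(a\,u_g)=\int_X^{\oplus}a_x\tilde u_g^x\,d\mu(x)$, check it is a trace-preserving $*$-homomorphism using Proposition \ref{integraldecompcocycles} and Theorem \ref{decompositiontraces}, then invoke Theorem \ref{extensionisom}. The only difference is that you spell out surjectivity via a fiberwise Fourier-expansion/Parseval argument, whereas the paper tacitly relies on the remark after Definition \ref{defncocycleactions} that the fields $\{a_n(x)u_b\}$ generate each $\cM_x\rtimes_{\alpha_{\cdot,x},c_x}B$, so the image of $\Phi$ already contains a generating set for $\cN$; your route is valid but more elaborate than necessary.
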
 

\begin{proof} Define the map $\Phi:\cM\rtimes_{\alpha,c}B\to \int_X\cM_x\rtimes_{\alpha_{\cdot,x}c_x}B\:d\mu(x)$ on the generators by $\Phi(\int_Xa_xd\mu(x)u_b)= \int_Xa_xu_bd\mu(x)$ and extend by linearity. This map is trace preserving. To see this, use Theorem \ref{decompositiontraces} to decompose $\tau$ as $\int_X\tau_x\mu(x)$ with $\tau_x$ a trace for $\cM_x$, for almost every $x$. Then,

\begin{align*}
    \tau\left(\int_Xa_xd\mu(x)u_b\right)&=\delta_{b,e}\:\tau\left(\int_Xa_xd\mu(x)\right)=\delta_{b,e}\int_X\tau_x(a_x)d\mu(x)\\
    &=\int_X\tau_x(a_x)\delta_{b,e}d\mu(x)=\int_X\tau_x(a_xu_b)d\mu(x).
\end{align*}

\vspace{2mm}

The map $\Phi$ is a $\ast$-homomorphism. Indeed, by \eqref{propertiesintegraldecomp}, we have 

\begin{align*}
    \int_Xa_xd\mu(x)u_{b_1}\int_Xb_xd\mu(x)u_{b_2}&=\int_Xa_xd\mu(x)\:\alpha_{b_1}\left(\int_Xb_xd\mu(x)\right)c(b_1,b_2)u_{b_1b_2}\\
    &=\int_Xa_x\alpha_{b_1,x}(b_x)c_x(b_1,b_2)d\mu(x)u_{b_1b_2}.
\end{align*}

\vspace{2mm}

Hence,

\begin{align*}
    \Phi\left(\int_Xa_xd\mu(x)u_{b_1}\int_Xb_xd\mu(x)u_{b_2}\right)&=\int_Xa_x\alpha_{b_1,x}(b_x)c_x(b_1,b_2)u_{b_1b_2}d\mu(x)=\int_Xa_xu_{b_1}b_xu_{b_2}d\mu(x)\\
    &=\Phi\left(\int_Xa_xd\mu(x)u_{b_1}\right)\Phi\left(\int_Xb_xd\mu(x)u_{b_2}\right), \quad\text{and}\\
    \Phi\left(\left(\int_Xa_xd\mu(x)u_{b}\right)^*\right)&=\Phi\left(\int_X\alpha_{b^{-1},x}(a_x^*)d\mu(x)u_{b^{-1}}\right)=\int_X\alpha_{b^{-1},x}(a_x^*)u_{b^{-1}}d\mu(x)\\
    &=\int_Xu_b^*a_x^*d\mu(x)=\Phi\left(\int_Xa_xd\mu(x)u_b\right)^*
\end{align*}

\vspace{2mm}

by \eqref{propertiesintegraldecomp} again. Using Theorem \ref{extensionisom}, $\Phi$ lifts to a von Neumann algebra $\ast$-isomorphism.
\end{proof}

We end this subsection by showing that if the cocycles $c_x$, corresponding to a group von Neumann algebra cocycle $c$, are 2-coboundaries then $c$ is a 2-coboundary as well.

\begin{prop}\label{integ2coboundaries} Suppose $B$ is a countable group, and consider a cocycle crossed product action $B\curvearrowright^c L^{\infty}(X)$ (resp. $B\curvearrowright^d L^{\infty}(Y)$). Define $\cM:=L^{\infty}(X)\rtimes_cB$ (resp. $\cN:=L^{\infty}(Y)\rtimes_dB$), and let $c=\int_X^{\oplus}c_xd\mu(x)$ (resp. $d=\int_Y^{\oplus}d_yd\mu(y)$) denote the decomposition of $c$ (resp. $d$) into 2-cocycles $c_x:B\times B\to\C$ (resp. $d_y:B\times B\to\C$), as given in Proposition \ref{integraldecompcocycles}. Assume $\Phi:\cM\to\cN$ is a tracial isomorphisms such that $\Phi(L^{\infty}(X))=L^{\infty}(Y)$. 

\vspace{2mm}

Assume there exists a measurable measure-preserving map $f:X'\to Y'$, where $X'\subset X$, $Y'\subset Y$ are co-null sets, such that 
$c_{f^{-1}(y)}$ is cohomologous to $d_y$ for every $y\in Y'$. Then $c$ is cohomologous to $d$.

\vspace{2mm}

In particular, if the cocycles $c_x:B\times B\to\C$ are 2-coboundaries for almost every $x\in X$, then $c:B\times B\to L^{\infty}(X)$ is a 2-coboundary.
\end{prop}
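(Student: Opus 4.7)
The plan is to promote the pointwise cohomology data $c_{f^{-1}(y)} \sim d_y$ to a single cochain $\xi: B \to \mathscr U(L^\infty(Y))$ witnessing the cohomology globally, via a measurable selection argument whose viability rests on the countability of $B$ and on the fiberwise description provided by Proposition \ref{integraldecompcocycles}.

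Using the measure-preserving map $f$ to identify $L^\infty(X) \cong L^\infty(Y)$, both $c$ and $d$ can be viewed as cocycles with values in $\mathscr U(L^\infty(Y))$, and for every $g, h \in B$ the fiber assignments $y \mapsto c_{f^{-1}(y)}(g,h)$ and $y \mapsto d_y(g,h)$ are Borel $\mathbb T$-valued maps. Equip the compact Polish group $\mathbb T^B$ with its product Borel structure and consider
\begin{equation*}
S = \bigl\{(y,\xi) \in Y' \times \mathbb T^B \,:\, \xi(1) = 1,\ c_{f^{-1}(y)}(g,h)\,\xi(gh) = \xi(g)\,\xi(h)\,d_y(g,h)\ \text{for all } g, h \in B\bigr\}.
\end{equation*}
Because $B$ is countable, $S$ is the intersection of countably many Borel subsets of the standard Borel space $Y' \times \mathbb T^B$, hence Borel; and the hypothesis asserts exactly that the first-coordinate projection $\pi_1(S)$ equals $Y'$.

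I would then apply the Jankov--von Neumann uniformization theorem to obtain a universally measurable selector $y \mapsto \xi_y$ with $(y, \xi_y) \in S$; after discarding a $\nu$-null subset of $Y'$ this can be taken Borel measurable. Setting $\xi(g)(y) := \xi_y(g)$ produces a well-defined unitary $\xi(g) \in \mathscr U(L^\infty(Y))$ for every $g \in B$ with $\xi(1) = 1$. Reading the defining equation of $S$ fiberwise yields, in $L^\infty(Y)$,
\begin{equation*}
c(g,h)\,\xi(gh) = \xi(g)\,\xi(h)\,d(g,h) \qquad \text{for all } g, h \in B,
\end{equation*}
which is precisely the cohomology relation in Definition \ref{cocycledefvN} specialized to trivial actions on abelian algebras. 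Hence $c$ and $d$ are cohomologous. The \emph{in particular} statement follows by applying the general case with $d \equiv 1$, $Y = X$, and $f = \mathrm{id}$.

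The main obstacle is the measurable selection step itself: one must verify carefully that $S$ is Borel (where countability of $B$ is essential, as otherwise the intersection defining $S$ would be uncountable) and invoke the uniformization theorem while paying attention to the distinction between universally measurable and Borel measurable selectors, passing to the latter off a null set. Once the measurable trivializing cochain has been produced, the remaining verification amounts to routine fiberwise arithmetic combining Proposition \ref{integraldecompcocycles} with the decomposable-operator formulas \eqref{propertiesintegraldecomp}.
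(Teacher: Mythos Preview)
Your proposal is correct and follows essentially the same approach as the paper: both arguments show that the set of pairs $(y,\xi)$ with $\xi$ a cochain witnessing $c_{f^{-1}(y)}\sim d_y$ is Borel, then invoke a measurable selection theorem to produce a global trivializing cochain. The only cosmetic difference is that the paper parametrizes cochains inside $\mathscr{B}(\ell^2(B,\mathcal H))$ and cites Takesaki's selection theorem \cite[Theorem A.16]{takesaki}, whereas you work directly in the compact Polish group $\mathbb T^B$ and invoke Jankov--von Neumann; your framing is marginally more elementary but the substance is identical.
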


\begin{proof} By considering $\Phi\circ c$, we may assume $c$ is a 2-cocycle from $B$ into $L^{\infty}(Y)$. By the prior proposition $\cM=\int_X^{\oplus}\cL_{c_x}(B)d\mu(x)$ and $\cN=\int_Y^{\oplus}\cL_{d_y}(B)d\mu(y)$. Let $\mathcal{H}$ be the Hilbert space for which $\cL_{d_y}(B)\subset\mathscr{B}(\mathcal{H})$ for almost every $y\in Y$.

\vspace{2mm}

Consider the set 

\begin{align*}
    A&=\{(y,(\xi_g)_{g\in B})\in Y\times \mathscr{B}(\ell^2(B,\mathcal{H})):\xi_g\in\mathbb{T}_y\text{ and }c_{f^{-1}(y)}(g,h)=\xi_g\xi_h\xi_{gh}^{-1}d_y(g,h)\text{ for all }g,h\in B\}\\
    &=\{(y,(\xi_g)_g):(\xi_g)_g\in\mathscr{U}(\ell^{\infty}(B,\mathbb{T}_y))\}\cap\{(y,(\xi_g)_g):c_{f^{-1}(y)}(g,h)=\xi_g\xi_h\xi_{gh}^{-1}d_y(g,h)\text{ for all }g,h\in B\}.
 \end{align*}

\vspace{2mm}

The set $\{(y,(\xi_g)_g):(\xi_g)_g\in\mathscr{U}(\ell^{\infty}(B,\mathbb{T}_y))\}$ is Borel by \cite[Corollary 8.5]{takesaki}. Thus, to show $A$ is measurable it suffices to show $\{(y,(\xi_g)_g):c_{f^{-1}(y)}(g,h)=\xi_g\xi_h\xi_{gh}^{-1}d_y(g,h)\text{ for all }g,h\in B\}$ is measurable. However, this follows because the functions $(y,(\xi_g)_g)\mapsto c_{f^{-1}(y)}(g,h),d_y(g,h)$ and $(y,(\xi_g)_g)\mapsto \xi_g,\xi_h,\xi_{gh}^{-1}$ are Borel when we consider the strong$^*$ topology on $\mathscr{B}(\ell^2(B,\mathcal{H}))$.

\vspace{2mm}

Let $\pi_1:A\to Y$ be the projection onto the first coordinate. By assumption, we have that $\pi_1(A)=Y'$. By \cite[Theorem A.16]{takesaki} there exists a measurable section $s^g:Y'\to\mathscr{B}(\ell^2(B,\mathcal{H}))$, for every $g\in B$, satisfying $(y,(s^g(y))_g)\in A$ for every $y\in Y'$; that is, $s^g(y)\in\mathbb{T}_y$ and $c_{f^{-1}(y)}(g,h)=s^g(y)s^h(y)s^{gh}(y)^{-1}d_y(g,h)$ for all $g,h\in B$ and every $y\in Y'$. By letting $s^l=\int_Y^{\oplus}s^l(y)d\mu(y)\in L^{\infty}(Y)$, with $l\in B$, we see that $c(g,h)=s^gs^h(s^{gh})^{-1}d(g,h)$ for every $g,h\in B$. This shows $c$ is cohomologous to $d$.
\end{proof}

\vspace{0.5mm}

\subsection{Finite index inclusions} In this subsection, we establish that finite index inclusion of von Neumann algebras is a property preserved under the integral decomposition of von Neumann algebras. While we will reference these findings in the proof of \ref{theoremB}, the complete results are not explicitly required for our purposes. Nevertheless, we anticipate that these results may find application in other contexts.

\vspace{2mm}

First, recall that $\mathcal{Q}\subset \cM$ is an inclusion of finite index von Neumann algebras if and only if the inclusion has finite Pimsner-Popa basis \cite{pimsnerpopa}; that is, there exists a finite family $(m_i)_{i=1}^k\subset \cM$ with $\mathbb{E}_{\mathcal{Q}}(m_im_j^*)=\delta_{i,j}p_i$, where $0\neq p_i\in\mathscr{P}(\mathcal{Q})$, such that for all $y\in\cM$ we have $y=\sum_{i=1}^k\mathbb{E}_{\mathcal{Q}}(y m_i^*)m_i$.

\vspace{2mm}

\begin{thm}\label{decompcentralalg}(\cite[Theorem IV.8.21]{takesaki}) If $(\cM,\mathcal{H})$ is a von Neumann algebra on a separable Hilbert space, then for any von Neumann subalgebra $\mathcal{A}$ of the center $\mathscr{Z}(\cM)$, there exists a measurable field $\{(\cM_y,\mathcal{H}_y):y\in Y\}$ of von Neumann algebras on a $\sigma$-finite measure space $(Y,\nu)$ such that $(\cM,\mathcal{H})\cong\int_Y^{\oplus}(\cM_y,\mathcal{H}_y)\:d\mu(y)$, and $\mathcal{A}$ is the diagonal algebra.
\end{thm}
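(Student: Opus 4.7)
Since this is a classical result (Takesaki IV.8.21), the plan is to follow the standard strategy: first disintegrate the Hilbert space over $\mathcal{A}$, then identify $\mathcal{M}$ as a subalgebra of decomposable operators. First, because $\mathcal{A}$ is an abelian von Neumann algebra acting on the separable Hilbert space $\mathcal{H}$, Gelfand/spectral theory gives a $\sigma$-finite standard measure space $(Y,\nu)$ and a normal $\ast$-isomorphism $\mathcal{A}\cong L^\infty(Y,\nu)$. Separability of $\mathcal{H}$ allows $(Y,\nu)$ to be chosen standard (e.g., a Borel subset of a Polish space with a Radon measure).

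Next, I would disintegrate $\mathcal{H}$ over $(Y,\nu)$. Using the standard construction in Dixmier (Ch.\ II), a separable Hilbert space carrying a normal representation of $L^\infty(Y,\nu)$ admits a measurable field of Hilbert spaces $\{\mathcal{H}_y\}_{y\in Y}$ together with a unitary
\begin{equation*}
    U:\mathcal{H}\longrightarrow \int_Y^{\oplus}\mathcal{H}_y\,d\nu(y)
\end{equation*}
such that $U\mathcal{A}U^*$ is precisely the diagonal algebra $L^\infty(Y,\nu)$. Concretely one picks a generating separating vector $\xi$ for $\mathcal{A}$ (or, more generally, a cyclic vector for $\mathcal{A}$'s commutant restricted appropriately), uses a decomposition of the multiplicity function associated with the representation of $L^\infty(Y)$ on $\mathcal{H}$, and builds the $\mathcal{H}_y$'s fiberwise. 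This is the step where one must be careful about measurability and about patching fibers of different dimensions, and it is the technical heart of the Hilbert-space disintegration.

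Now for the algebra. Since $\mathcal{A}\subseteq \mathscr{Z}(\mathcal{M})$, every operator in $\mathcal{M}$ commutes with $\mathcal{A}$ and hence lies in the commutant of the diagonal algebra. A fundamental theorem of Dixmier (see the discussion around the direct integral decomposition) identifies the commutant of the diagonal algebra with the decomposable operators, so every $T\in\mathcal{M}$ has the form $T=\int_Y^\oplus T_y\,d\nu(y)$ for a $\nu$-measurable field $(T_y)_y\in\prod_y\mathfrak{B}(\mathcal{H}_y)$. Pick a weak-operator dense sequence $(T^{(n)})_n\subset\mathcal{M}$ (using separability of the predual) and for $\nu$-a.e.\ $y$ define
\begin{equation*}
    \mathcal{M}_y:=\{T^{(n)}_y\,:\,n\in\mathbb{N}\}''\subseteq \mathfrak{B}(\mathcal{H}_y).
\end{equation*}
Then $\{(\mathcal{M}_y,\mathcal{H}_y)\}$ is, by construction, a measurable field of von Neumann algebras, and clearly $\mathcal{M}\subseteq \int_Y^\oplus \mathcal{M}_y\,d\nu(y)$.

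The remaining and genuinely delicate step is the reverse inclusion. I would argue it as follows: any decomposable operator $T=\int_Y^\oplus T_y\,d\nu(y)$ with $T_y\in\mathcal{M}_y$ a.e.\ is a weak-operator limit on each fiber of $L^\infty(Y)$-combinations of the $T^{(n)}_y$; using a measurable selection argument (e.g., the Jankov–von Neumann selection theorem, exactly as in Theorem~A.16 of Takesaki invoked earlier in the paper), these fiberwise approximations can be assembled into global elements of $\mathcal{M}$, so $T$ lies in the $\sigma$-weak closure of $\mathcal{M}$, which equals $\mathcal{M}$. Finally, under the unitary $U$ the algebra $\mathcal{A}$ is by construction the diagonal algebra, completing the decomposition. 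The main obstacle in the whole proof is the Hilbert-space disintegration step together with the measurable selection needed to prove equality (not just inclusion) of $\mathcal{M}$ with $\int_Y^\oplus \mathcal{M}_y\,d\nu(y)$; everything else is then a formal consequence of $\mathcal{A}\subseteq\mathscr{Z}(\mathcal{M})$.
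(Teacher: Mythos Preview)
The paper does not supply its own proof of this statement: it is quoted verbatim as \cite[Theorem IV.8.21]{takesaki} and used as a black box. Your outline is the standard textbook argument (disintegrate $\mathcal{H}$ over $\mathcal{A}\cong L^\infty(Y,\nu)$, identify $\mathcal{M}$ inside the decomposable operators via a countable generating set, and recover the reverse inclusion by measurable selection), which is exactly the route taken in Takesaki and Dixmier; so there is nothing to compare beyond noting that your sketch matches the cited source.
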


Let $\cM=\int_X^{\oplus}\cM_xd\mu(x)$ be the integral decomposition of a tracial von Neumann algebra over its center, and let $\mathcal{P}\subset\cM$ be a von Neumann subalgebra containing the center $L^{\infty}(X,\mu)$ of $\cM$. By letting $\mathcal{A}=L^{\infty}(X,\mu)$ in Theorem \ref{decompcentralalg}, there exists a measurable field of von Neumann algebras $X\ni x\mapsto \mathcal{P}_x$ for which $\mathcal{P}\cong\int_X^{\oplus}\mathcal{P}_xd\mu(x)$. Henceforth, we identify $\mathcal{P}$ with its integral decomposition $\int_X^{\oplus}\mathcal{P}_xd\mu(x)$ so that $\mathcal{P}_x\subset \cM_x$ for almost every $x\in X$ \cite[Proposition II.2.1]{dixmier}.

\vspace{2mm}

\begin{prop}\label{surjective*homdecomp} Suppose $(\cM,\tau)=\int_X^{\oplus}(\cM_x,\tau_x)\:d\mu(x)$ is a direct integral decomposition of a tracial von Neumann algebra over its center, and let $\mathcal{P}=\int_X^{\oplus}\mathcal{P}_x\:d\mu(x)\subset\cM$ be the integral decomposition of a von Neumann subalgebra containing the center. Denote by $\mathbb{E}_{\mathcal{P}}:\cM\to\mathcal{P}$ the conditional expectation from $\cM$ onto $\mathcal{P}$. Then, there exists a set $X'\subset X$ of full measure for which $\mathbb{E}_{\mathcal{P}}$ decomposes as the direct integral of conditional expectations $\{\mathbb{E}_x:\cM_x\to\mathcal{P}_x\}_{x\in X'}$.
\end{prop}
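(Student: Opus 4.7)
The plan is to use the uniqueness of the trace-preserving conditional expectation on each fiber and then verify that the field of fiberwise conditional expectations is measurable and assembles into $\mathbb{E}_{\mathcal{P}}$. First, I would invoke Theorem \ref{decompositiontraces} to decompose $\tau=\int_X^{\oplus}\tau_x\,d\mu(x)$ as a direct integral of faithful normal traces. For $\mu$-a.e.\ $x\in X$, the inclusion $\mathcal{P}_x\subseteq\mathcal{M}_x$ is then an inclusion of tracial von Neumann algebras, and I define $\mathbb{E}_x:\mathcal{M}_x\to\mathcal{P}_x$ to be the unique $\tau_x$-preserving conditional expectation. The whole problem reduces to showing $\mathbb{E}_{\mathcal{P}}=\int_X^{\oplus}\mathbb{E}_x\,d\mu(x)$.

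Next I would fix a fundamental sequence $\{a_n\}_n$ of measurable sections in $\mathcal{M}$, with $a_n=\int_X^{\oplus}a_n(x)\,d\mu(x)$. The key observation is that $\mathbb{E}_{\mathcal{P}}$ is $L^{\infty}(X,\mu)$-bimodular (since $L^{\infty}(X,\mu)\subseteq\mathcal{P}$), so $\mathbb{E}_{\mathcal{P}}(a_n)\in\mathcal{P}$ is decomposable via the identification $\mathcal{P}\cong\int_X^{\oplus}\mathcal{P}_x\,d\mu(x)$ supplied by Theorem \ref{decompcentralalg}; write $\mathbb{E}_{\mathcal{P}}(a_n)=\int_X^{\oplus}y_n(x)\,d\mu(x)$ with $y_n(x)\in\mathcal{P}_x$. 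I would then claim that $y_n(x)=\mathbb{E}_x(a_n(x))$ for $\mu$-a.e.\ $x$.

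To prove this claim, fix $b=\int_X^{\oplus}b(x)\,d\mu(x)\in\mathcal{P}$. The defining property $\tau(\mathbb{E}_{\mathcal{P}}(a_n)b)=\tau(a_nb)$ combined with the direct integral decomposition of $\tau$ yields
\begin{equation*}
\int_X \tau_x\bigl((y_n(x)-a_n(x))b(x)\bigr)\,d\mu(x)=0.
\end{equation*}
Replacing $b$ by $fb$ for arbitrary $f\in L^{\infty}(X,\mu)\subseteq\mathcal{P}$ upgrades the integral identity to the pointwise identity $\tau_x\bigl((y_n(x)-a_n(x))b(x)\bigr)=0$ for $\mu$-a.e.\ $x$. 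Letting $b$ range over a countable weak-$*$ dense subset of $\mathcal{P}$ chosen from a fundamental sequence as in Theorem \ref{decompcentralalg}, the restrictions $b(x)$ form a weak-$*$ dense subset of $\mathcal{P}_x$ for a.e.\ $x$. The uniqueness of the $\tau_x$-preserving conditional expectation onto $\mathcal{P}_x$ then forces $y_n(x)=\mathbb{E}_x(a_n(x))$ for $\mu$-a.e.\ $x$, after discarding a countable union of null sets. This simultaneously establishes measurability of the field $x\mapsto\mathbb{E}_x$ on the fundamental sequence and the claimed decomposition on generators; the identity extends to all of $\mathcal{M}$ by normality and $\|\cdot\|_2$-density on the fundamental sequence.

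The main obstacle I anticipate is the measurability housekeeping — specifically, ensuring that a single conull set $X'\subseteq X$ works simultaneously for all $n$ and all $b$ in the countable dense family, and that the resulting assignment $x\mapsto\mathbb{E}_x$ is a genuine measurable field of contractions rather than merely a pointwise-defined object. Once this is settled, the extension from the fundamental sequence to arbitrary $a\in\mathcal{M}$ is routine, as both sides are normal $\|\cdot\|_2$-contractive $L^{\infty}(X,\mu)$-bimodule maps that agree on a weak-$*$ dense subset.
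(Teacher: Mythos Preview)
Your argument is correct and takes a genuinely different route from the paper's. The paper passes to the Hilbert-space level: it realizes $\mathbb{E}_{\mathcal{P}}$ as the Jones projection $e_{\mathcal{P}}\colon L^2(\mathcal{M})\to L^2(\mathcal{P})$, checks directly that $e_{\mathcal{P}}$ commutes with the diagonal algebra $L^{\infty}(X,\mu)$ (this is immediate from $L^{\infty}(X,\mu)\subseteq\mathcal{P}$), and then invokes \cite[Corollary IV.8.16]{takesaki} to decompose $e_{\mathcal{P}}=\int_X^{\oplus}e_x\,d\mu(x)$ as a field of bounded operators. The restriction $\Phi_x$ of $e_x$ to $\mathcal{M}_x$ is then shown to be a conditional expectation by verifying the bimodule identity $p\Phi_x(a)q=\Phi_x(paq)$ on countable weak-$*$ dense sequences. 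By contrast, you define each $\mathbb{E}_x$ abstractly as the unique $\tau_x$-preserving conditional expectation and then use the trace identity $\tau(\mathbb{E}_{\mathcal{P}}(a)b)=\tau(ab)$, disintegrated over $X$, to identify the fibers of $\mathbb{E}_{\mathcal{P}}(a_n)$ with $\mathbb{E}_x(a_n(x))$. The paper's approach buys measurability for free from Takesaki's decomposition theorem, at the cost of an extra verification that the resulting $\Phi_x$ really are conditional expectations; your approach gets the conditional-expectation property for free from the abstract definition, at the cost of the measurability bookkeeping you flagged. Both are sound; yours is arguably more elementary since it avoids the $L^2$ detour.
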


\begin{proof} Since $\mathbb{E}_{\mathcal{P}}$ is trace-preserving, it gives rise to a projection map $e_{\mathcal{P}}:L^2(\cM)\to L^2(\mathcal{P})$. Notice that since the traces decompose by Theorem \ref{decompositiontraces}, we must have that $L^2(\cM)=\int_X^{\oplus}L^2(\cM_x)\:d\mu(x)$ and $L^2(\mathcal{P})=\int_X^{\oplus}L^2(\mathcal{P}_x)\:d\mu(x)\subset L^2(\cM)$ are direct integrals of Hilbert spaces. To show $e_{\mathcal{P}}$ decomposes as a direct integral of, necessarily, projections from $L^2(\mathcal{M}_x)$ onto $L^2(\mathcal{P}_x)$ for almost every $x\in X$, it suffices to show $e_{\mathcal{P}}$ commutes with the diagonal algebra \cite[Corollary IV.8.16]{takesaki}. For that, it suffices to show $e_{\mathcal{P}}$ commutes with the diagonal algebra on the dense subspace $\cM\hat{1}\subset L^2(\cM)$. Let $m\in\cM$, $f\in L^{\infty}(X,\mu)$, and observe that since $L^{\infty}(X,\mu)\subset\mathcal{P}$ and $\mathbb{E}_{\mathcal{P}}(m)\hat{1}=e_{\mathcal{P}}\hat{m}$, we have

\begin{equation*}
    (e_{\mathcal{P}}f)\:m\hat{1}=e_{\mathcal{P}}(\hat{m})\cdot f=\mathbb{E}_{\mathcal{P}}(m)\hat{1}\cdot f=\mathbb{E}_{\mathcal{P}}(fm)\hat{1}=(fe_{\mathcal{P}})\:m\hat{1}.
\end{equation*}

\vspace{2mm}

Thus, there exists a field $x\mapsto e_{x}$ of bounded linear operators from $L^2(\cM_x)$ to $L^2(\mathcal{P}_x)$ for which $e_{\mathcal{P}}=\int_X^{\oplus}e_xd\mu(x)$. Define $\Phi_x:\cM_x\to\mathcal{P}_x$ to be the restriction of $e_x$ on $\cM_x$, and observe that 

\begin{equation*}
    \mathbb{E}_{\mathcal{P}}=\int_X^{\oplus}\Phi_xd\mu(x).
\end{equation*}

\vspace{2mm}

From \cite{dixmier,takesaki} we obtain that $\Phi_x$ is a trace-preserving, surjective, positive and $*$-linear map up to a measure zero set. Let $a_n(x)$ (respectively $p_m(x),q_l(x)$) be a weak$^*$ dense sequence of measurable fields of operators in $\cM$ (respectively $\mathcal{P}$). By equations \eqref{propertiesintegraldecomp}

\begin{equation*}
    \int_X^{\oplus}(p_m(x)\Phi_x(a_n(x))q_l(x))\:d\mu(x)=p_m\mathbb{E}_{\mathcal{P}}(a_n)q_l=\mathbb{E}_{\mathcal{P}}(p_ma_nq_l)=\int_X^{\oplus}\Phi_x(p_m(x)a_n(x)q_l(x))\:d\mu(x),
\end{equation*}

\vspace{2mm}

and hence, $p_m(x)\Phi_x(a_n(x))q_l(x)=\Phi_x(p_m(x)a_n(x)q_l(x))$ for almost every $x\in X$. By excising countably many measure zero sets, together with the fact that the sequences $a_n,p_m,q_l$ are weak$^*$ dense and the normality of the $\Phi_x$, we can assume that $p\Phi_x(a)q=\Phi_x(paq)$ holds for every $a\in\cM_x$, $p,q\in\mathcal{P}_x$ and every $x\in X$. Therefore, $\Phi_x$ is the conditional expectation from $\cM_x$ onto $\mathcal{P}_x$ for almost every $x\in X$.
\end{proof}

\begin{thm}\label{finiteindexfibers} Let $\mu$ be a Radon measure on a $\sigma$-compact locally compact Hausdorff space $X$, and let $(\mathcal{M}_x,\mathcal{H}_x)_x$ be a measurable field of tracial von Neumann algebras over $X$. Set $\mathcal{M}=\int_X^{\oplus}\cM_x\:d\mu(x)$. Let $L^{\infty}(X,\mu)\subset\int_X^{\oplus}\cM_xd\mu(x)=\cM$ be the inclusion of the center in the direct integral decomposition. Assume $L^{\infty}(X,\mu)\subset \mathcal{P}=\int_X^{\oplus}\mathcal{P}_x\subset \cM$ is the direct integral decomposition of an intermediate von Neumann subalgebra that has finite index in $\cM$. Then, the inclusion $\mathcal{P}_x\subset\mathcal{M}_{x}$ has finite index for almost every $x\in X$.  

\vspace{2mm}

In particular, if $\mathcal{P}=L^{\infty}(X,\mu)\:\overline{\otimes}\:\mathcal{Q}$, for $\mathcal{Q}\subset \cM$, then the von Neumann algebras $\cM_x$ have isomorphic copies of $\mathcal{Q}$ of finite index for almost every $x\in X$.
\end{thm}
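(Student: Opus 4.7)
The plan is to exploit the Pimsner--Popa basis characterization of finite-index inclusions together with Proposition \ref{surjective*homdecomp}, which disintegrates the conditional expectation.

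First, since $[\cM:\cP]<\infty$, there exists a finite Pimsner--Popa basis $(m_i)_{i=1}^k\subset\cM$ satisfying $\mathbb{E}_\cP(m_im_j^*)=\delta_{i,j}p_i$ for projections $p_i\in\cP$, and $y=\sum_{i=1}^k\mathbb{E}_\cP(ym_i^*)m_i$ for all $y\in\cM$. Disintegrate the basis and the projections as $m_i=\int_X^\oplus m_i(x)\,d\mu(x)$ and $p_i=\int_X^\oplus p_i(x)\,d\mu(x)$, where by Remark \ref{uniquenessofdecompoverhilbertspace} and standard measurable selection $p_i(x)\in\cP_x$ are projections for $\mu$-a.e.\ $x$. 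By Proposition \ref{surjective*homdecomp}, the conditional expectation decomposes as $\mathbb{E}_\cP=\int_X^\oplus \mathbb{E}_x\,d\mu(x)$ with $\mathbb{E}_x\colon\cM_x\to\cP_x$ conditional expectations on a conull set.

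Next, I would transport the two Pimsner--Popa relations to the fibers. The orthogonality $\mathbb{E}_\cP(m_im_j^*)=\delta_{i,j}p_i$ rewrites (using the multiplicativity formulas \eqref{propertiesintegraldecomp}) as
\begin{equation*}
\int_X^\oplus \mathbb{E}_x(m_i(x)m_j(x)^*)\,d\mu(x)=\int_X^\oplus \delta_{i,j}p_i(x)\,d\mu(x),
\end{equation*}
so $\mathbb{E}_x(m_i(x)m_j(x)^*)=\delta_{i,j}p_i(x)$ for all $1\le i,j\le k$ simultaneously outside a $\mu$-null set. For the reconstruction formula, fix a countable weak$^*$-dense $\ast$-subalgebra $\{a_n\}_n\subset\cM$ generated by a countable family of measurable sections whose fibers are weak$^*$-dense in $\cM_x$ for $\mu$-a.e.\ $x$ (which exists by the definition of a measurable field of von Neumann algebras). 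For each fixed $n$, the equation $a_n=\sum_i\mathbb{E}_\cP(a_nm_i^*)m_i$ disintegrates, giving a null set $N_n$ outside which $a_n(x)=\sum_i \mathbb{E}_x(a_n(x)m_i(x)^*)m_i(x)$. Taking the union $N=\bigcup_n N_n$, this identity holds for every $n$ on the conull set $X\setminus N$. By normality of $\mathbb{E}_x$ and weak$^*$-density of $\{a_n(x)\}$ in $\cM_x$, the identity extends to all $y\in\cM_x$ for $x\in X\setminus N$.

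Thus $(m_i(x))_{i=1}^k$ is a Pimsner--Popa basis for $\cP_x\subset\cM_x$, proving $[\cM_x:\cP_x]\le k<\infty$ almost everywhere. The ``in particular'' assertion follows at once: if $\cP=L^\infty(X,\mu)\:\overline{\otimes}\:\cQ$, then the fibers satisfy $\cP_x\cong\cQ$ for $\mu$-a.e.\ $x$, so $\cM_x$ contains a copy of $\cQ$ of finite index. The main technical point to watch is producing a \emph{single} conull set on which the reconstruction formula holds simultaneously for all $y\in\cM_x$; this is handled by the countable-dense-subset plus normal-extension argument above rather than by any direct uncountable selection.
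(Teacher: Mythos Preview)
Your proof is correct and follows essentially the same route as the paper's: both disintegrate a Pimsner--Popa basis and the conditional expectation via Proposition \ref{surjective*homdecomp}, verify the orthogonality and reconstruction identities fiberwise on a countable weak$^*$-dense family, and then extend by normality. The only cosmetic difference is the order in which the two Pimsner--Popa relations are checked.
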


\begin{proof} Since $\mathcal{P}\subset\cM$ is finite index, it admits a finite Pimsner-Popa basis, say $\{m_1,...,m_k\}\subset\cM$. Let $m_i=\int_Xm_{i,x}\:d\mu(x)$ be the integral decomposition of each of the basis elements and let $p_i=\int_Xp_{i,x}\:d\mu(x)$ be the decomposition of each projection. Note that $p_{i,x}$ is a projection for almost every $x\in X$ and every $1\leq i\leq k$. Since $\mathbb{E}_{\mathcal{P}}$ is a conditional expectation, by Proposition \ref{surjective*homdecomp} there exists conditional expectations $\mathbb{E}_x:\cM_x\to \mathcal{P}_{x}$ for almost every $x$, with $\mathbb{E}_{\mathcal{P}}=\int_X^{\oplus}\mathbb{E}_x\:d\mu(x)$. 

\vspace{2mm}

Thus, for a weak$^*$ dense sequence of measurable fields of operators $a_n(x)$ in $\cM$, we have

\begin{align*}
    \int_X^{\oplus}a_n(x)\:d\mu(x)=a_n=\sum_{i=1}^k\mathbb{E}_{\mathcal{P}}(a_nm_i^*)m_i=\int_X\sum_{i=1}^k\mathbb{E}_x(a_n(x)m_{i,x}^*)m_{i,x}\:d\mu(x),
\end{align*}

\vspace{2mm}

showing that $a_n(x)=\sum_{i=1}^k\mathbb{E}_x(a_n(x)m_{i,x}^*)m_{i,x}$ for almost every $x\in X$. By excising a set of measure zero, $a_n(x)=\sum_{i=1}^k\mathbb{E}_x(a_n(x)m_{i,x}^*)m_{i,x}$ holds for all $n$ and all $x\in X$. By the weak$^*$ density of $(a_n(x))_n$ in $\cM_x$ and the normality of $\mathbb{E}_x$, we obtain that $a=\sum_{i=1}^k\mathbb{E}_x(am_{i,x}^*)m_{i,x}$ for all $a\in\mathcal{M}_x$.

\vspace{2mm}

Similarly, since 

\begin{equation*}
    \int_X\delta_{i,j}p_{i,x}=\delta_{i,j}p_i=\mathbb{E}_{\mathcal{P}}(m_im_j^*)=\int_X\mathbb{E}_x(m_{i,x}m_{j,x}^*)\:d\mu(x),
\end{equation*}

\vspace{2mm}

we obtain $\delta_{i,j}p_{i,x}=\mathbb{E}_x(m_{i,x}m_{j,x}^*)$ for almost every $x\in X$. Hence, $\{m_{1,x},...,m_{k,x}\}$ is a finite Pimsner-Popa basis for $\mathcal{P}_{x}\subset\cM_x$ for almost every $x\in X$.

\vspace{2mm}

The last part follows by the first by observing that $\mathcal{P}=\int_X^{\oplus}\mathcal{Q}\:d\mu(x)$.
\end{proof}

The following result describes the fibers in the case of an inclusion of abelian von Neumann algebras.

\begin{lem}\label{completelyatomicfibers} Let $(X,\mu)$ and $(Y,\nu)$ be finite measure spaces such that $L^{\infty}(X,\mu)\subseteq L^{\infty}(Y,\nu)$. Assume there exists a countable family of orthogonal projections $\{r_i\}_i\subset L^{\infty}(Y,\nu)$ with $L^{\infty}(Y,\nu)=\oplus_iL^{\infty}(X,\mu)r_i$ and $\sum_ir_i=1$. Then, the fibers in the integral decomposition of $L^{\infty}(Y,\nu)$ over $X$ are completely atomic. 

\vspace{2mm}

Moreover, if $\mathcal{A}$ is a von Neumann algebra containing $L^{\infty}(Y,\nu)$ as a finite index subalgebra, the same holds.
\end{lem}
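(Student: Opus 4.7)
The plan is to disintegrate $L^{\infty}(Y,\nu)$ over the subalgebra $L^{\infty}(X,\mu)$ via Theorem \ref{decompcentralalg} and then read off the fibers directly from the hypothesized direct sum decomposition. Concretely, I would write
\[
L^{\infty}(Y,\nu) = \int_X^{\oplus} \mathcal{D}_x\, d\mu(x)
\]
with $L^{\infty}(X,\mu)$ realized as the diagonal algebra (possible since $L^{\infty}(X,\mu)$ sits inside the center, namely all, of $L^{\infty}(Y,\nu)$). Each projection $r_i$ then decomposes as $r_i = \int_X^{\oplus} r_i(x)\, d\mu(x)$ with $r_i(x)$ a projection in $\mathcal{D}_x$, and using the formulas in \eqref{propertiesintegraldecomp} I would transfer the relations $r_i r_j = \delta_{ij} r_i$ and $\sum_i r_i = 1$ to the fibers after excising a countable union of null sets.

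Next I would exploit that the diagonal $L^{\infty}(X,\mu)$ disintegrates to scalars on each fiber, so each summand $L^{\infty}(X,\mu)\, r_i$ disintegrates to $\mathbb{C}\, r_i(x)$. The hypothesized decomposition $L^{\infty}(Y,\nu) = \bigoplus_i L^{\infty}(X,\mu)\, r_i$ combined with uniqueness of the fiber decomposition (Remark \ref{uniquenessofdecompoverhilbertspace}) then forces $\mathcal{D}_x = \bigoplus_i \mathbb{C}\, r_i(x)$ for almost every $x$, which is precisely the statement that $\mathcal{D}_x$ is completely atomic abelian, with atoms exactly the nonzero $r_i(x)$.

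For the moreover part, and assuming $L^{\infty}(X,\mu) \subseteq \mathscr{Z}(\mathcal{A})$ so that the disintegration of $\mathcal{A}$ over $X$ is well-defined, Theorem \ref{finiteindexfibers} yields fiber-wise finite index inclusions $\mathcal{D}_x \subset \mathcal{A}_x$ for almost every $x$. Compressing by an atom $r_i(x)$ of $\mathcal{D}_x$ collapses the left-hand side to $\mathbb{C}\, r_i(x)$, and by stability of finite index under compression this forces $r_i(x)\mathcal{A}_x r_i(x)$ to be finite dimensional. Any minimal projection of this finite-dimensional corner is dominated by $r_i(x)$ and therefore remains minimal in the ambient $\mathcal{A}_x$; summing such projections over $i$ exhausts $\sum_i r_i(x) = 1$, so $\mathcal{A}_x$ is atomic (a direct sum of matrix blocks).

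The main technical nuisance is bookkeeping with null sets: one must simultaneously excise countably many exceptional subsets of $X$ (one for each pairwise orthogonality relation among the $r_i$, one for the identity $\sum_i r_i =1$, and, in the moreover clause, for the finitely many Pimsner--Popa basis relations produced by Theorem \ref{finiteindexfibers}). This is routine given that the index set is countable and all relations involve only countably many operators. The substantive ingredient is the invocation of Theorem \ref{finiteindexfibers} in the moreover clause; the atomic conclusion for abelian $L^{\infty}(Y,\nu)$ itself is essentially a fiber-wise restatement of the direct sum hypothesis.
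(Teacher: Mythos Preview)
Your proposal is correct and follows essentially the same route as the paper: disintegrate $L^{\infty}(Y,\nu)$ over $L^{\infty}(X,\mu)$, decompose each $r_i$ fiberwise, observe that $L^{\infty}(X,\mu)r_i$ has scalar fibers so that $\mathcal{D}_x r_{i,x}=\mathbb{C}\,r_{i,x}$, and conclude atomicity from $\sum_i r_{i,x}=1$; for the moreover clause the paper likewise invokes Theorem~\ref{finiteindexfibers} and leaves the elementary deduction (that a finite-index overalgebra of a completely atomic abelian algebra is atomic) implicit, whereas you spell it out via compression by the atoms.
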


\begin{proof} Using the prior theorem, $L^{\infty}(Y,\nu)=\int_X^{\oplus}\mathcal{D}_xd\mu(x)$. For each $i$, let $r_i=\int_X^{\oplus}r_{i,x}d\mu(x)$ where $r_{i,x}$ are projections for almost every $x\in X$. By assumption, $L^{\infty}(Y)r_i=L^{\infty}(X)r_i$, and hence

\begin{equation*}
    L^{\infty}(Y)r_i=\int_X^{\oplus}\mathcal{D}_xr_{i,x}d\mu(x)=\int_X^{\oplus}\C r_{i,x}d\mu(x).
\end{equation*}

\vspace{2mm}

Therefore, $\mathcal{D}_xr_{i,x}=\C r_{i,x}$ for almost every $x\in X$. By removing countably many zero measure sets, we may assume $\mathcal{D}_xr_{i,x}=\C r_{i,x}$ for every $i\in I$ and every $x\in X$. Since $\sum_ir_{i,x}=1_{\mathcal{D}_x}$ for almost every $x\in X$, we obtain that the fibers $\mathcal{D}_x$ are completely atomic up to a measure zero set.

\vspace{2mm}

Using Theorem \ref{finiteindexfibers}, we know that the fibers $\mathcal{A}_x$ of the integral decomposition of $\mathcal{A}$ over $X$ contain $\mathcal{D}_x$ as finite index subalgebras. The moreover part follows.
\end{proof}

\section{Proof of \ref{theoremA}}

This section is devoted to the proof of \ref{theoremA}. To recall the set up, assume $G\in\mathcal{WR}(A,B\curvearrowright I)$ where $A$ is an abelian group, $B$ is an ICC subgroup of a hyperbolic group, and the action $B\curvearrowright I$ has amenable stabilizers. Let $\mathcal{Z}$ be an abelian von Neumann algebra, and assume $H$ is an arbitrary countable group such that $\cL(H)\cong\mathcal{Z}\:\overline{\otimes}\:\cL(G)=:\cM$.

\begin{lem}\label{diffusefibers}  Let $\mathcal M = L^\infty(X) \overline \otimes \mathcal N$ be a von Neumann algebra. Let $\mathcal{Q}\subset q\mathcal M q$ be a diffuse von Neumann subalgebra for some projection $q=\int_X^{\oplus}q_xd\mu(x)\in \cM$. Write $\mathcal{Q}=\int_X^{\oplus}\mathcal{Q}_xd\mu(x)\subseteq \int_X^{\oplus}q_x\cN q_xd\mu(x)$. If $\mathcal{Q}\not\prec (L^{\infty}(X)\otimes 1)q$, then $\mathcal{Q}_x$ is diffuse for almost every $x\in X$.
\end{lem}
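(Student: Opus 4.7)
I would prove the contrapositive: if the set $Y := \{x \in X : \mathcal Q_x \text{ is not diffuse}\}$ has positive measure, then $\mathcal Q \prec_{\mathcal M} (L^\infty(X)\otimes 1)q$. The strategy is to extract, from minimal projections in the fibers $\mathcal Q_x$ for $x \in Y$, a single nonzero projection $p \in \mathcal Q$ such that $p\mathcal Q p$ is contained in $(L^\infty(X)\otimes 1)\cdot p$, and then verify condition (1) of Theorem \ref{intertwining} directly using $p$ itself as the partial isometry.

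\textbf{Step 1 (measurable selection of minimal projections).} Since ``diffuse'' is equivalent to the absence of minimal projections, for every $x \in Y$ there exists $p_x \in \mathcal Q_x$ with $p_x \mathcal Q_x p_x = \mathbb C p_x$. Using a standard measurable selection theorem (e.g.\ Kuratowski--Ryll-Nardzewski applied to the Borel field of minimal projections inside the measurable field $\{\mathcal Q_x\}$) I would choose $x \mapsto p_x$ measurably on $Y$ and extend by $0$ elsewhere, so that $p := \int_X^{\oplus} p_x\, d\mu(x) \in \mathcal Q$ is a nonzero projection (nonzero because $\tau(p) = \int_Y \tau_{\mathcal N}(p_x)\, d\mu(x) > 0$).

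\textbf{Step 2 (compression lands in the center).} For any $a = \int_X^{\oplus} a_x\, d\mu(x) \in \mathcal Q$, one has $pap = \int_Y^{\oplus} p_x a_x p_x\, d\mu(x)$, and $p_x a_x p_x \in p_x \mathcal Q_x p_x = \mathbb C p_x$ for almost every $x \in Y$. Thus there is $f \in L^\infty(Y)$ with $pap = \int_Y f(x) p_x\, d\mu(x) = (f \otimes 1)\cdot p$, where we regard $f \otimes 1 \in L^\infty(X)\otimes 1$ by zero extension. This shows $p\mathcal Q p \subseteq (L^\infty(X)\otimes 1)\cdot p$. A brief check using the fact that the compression of a direct-integral subalgebra decomposes fiberwise (cf.\ the arguments surrounding Theorem \ref{finiteindexfibers}) ensures this inclusion is genuine.

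\textbf{Step 3 (intertwining).} Set $q' := \mathbf 1_Y \otimes 1 \in L^\infty(X)\otimes 1$, so $q' \geq p$, and define
\begin{equation*}
  \Theta : p\mathcal Q p \;\longrightarrow\; q'\bigl(L^\infty(X)\otimes 1\bigr)q', \qquad \Theta\bigl((f\otimes 1)\cdot p\bigr) := (f \mathbf 1_Y)\otimes 1.
\end{equation*}
This is a well-defined normal $\ast$-homomorphism. Taking $v := p \in q'\mathcal M p$ as the partial isometry, for $a = (f\otimes 1)p \in p \mathcal Q p$ one computes $\Theta(a) v = (f \mathbf 1_Y \otimes 1)\cdot p = (f\otimes 1)\cdot p = pa = va$. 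Thus Theorem \ref{intertwining}(1) yields $\mathcal Q \prec_{\mathcal M} (L^\infty(X)\otimes 1)q$, contradicting the hypothesis.

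\textbf{Main obstacle.} The only genuinely delicate point is Step 1: asserting a Borel selection of minimal projections inside the measurable field $\{\mathcal Q_x\}$, and, relatedly, verifying that the fibers of $p\mathcal Q p$ in its induced direct-integral decomposition are indeed $p_x \mathcal Q_x p_x$ almost everywhere. Both follow from standard measurable-selection machinery, but they are the technical heart of the proof; once they are in place the rest is a direct calculation.
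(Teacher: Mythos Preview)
Your argument is correct and is a genuinely different route from the paper's. The paper argues in the forward direction using condition (2) of Theorem \ref{intertwining}: from $\mathcal Q \not\prec (L^\infty(X)\otimes 1)q$ it extracts a sequence of unitaries $(u_n)\subset \mathcal Q$ with $\|\mathbb E_{(L^\infty(X)\otimes 1)q}(v u_n w)\|_2\to 0$ for all $v,w$, rewrites this as $\int_X |\tau_{q_x}(v_x u_{n,x} w_x)|^2\,d\mu(x)\to 0$, lets $v,w$ run over a countable dense family $\{1\otimes t_i\}$, and then passes to a subsequence so that for almost every $x$ the fiberwise sequence $(u_{n,x})$ satisfies $|\tau_{q_x}(t_i u_{n,x} t_j)|\to 0$ for all $i,j$; this is exactly the non-intertwining of $\mathcal Q_x$ into $\mathbb C q_x$, i.e.\ diffuseness of $\mathcal Q_x$. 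No measurable selection is needed --- only a diagonal/subsequence extraction against a countable family.

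Your contrapositive via condition (1) trades that diagonal argument for a measurable selection of minimal projections. The payoff is an explicit intertwining witness ($p$ itself); the cost is exactly the point you flag, namely the Borel-selection step and the verification that the compression $p\mathcal Q p$ decomposes fiberwise as $p_x\mathcal Q_x p_x$. One small cosmetic fix: your $q' = \mathbf 1_Y\otimes 1$ need not lie in $(L^\infty(X)\otimes 1)q$; take instead $q' = (\mathbf 1_Y\otimes 1)q$ and let $\Theta((f\otimes 1)p) = (f\mathbf 1_Y\otimes 1)q$, which still satisfies $\Theta(a)v = va$ since $p\leq q$.
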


\begin{proof} Let $\cM=\int^{\oplus}_X\cN d\mu(x)$ be the integral decomposition of $\cM$. Since $\mathcal{Q}\not\prec (L^{\infty}(X)\otimes 1)q$ there is a sequence of unitaries $(u_n)_n\in\mathcal{Q}$ so that $\|\mathbb{E}_{(L^{\infty}(X)\otimes 1)q}(vu_nw)\|_2\to 0$ for all $v,w\in q\cM q$. Writing $v=\int_X^{\oplus}v_xd\mu(x)$, $w=\int_X^{\oplus}w_xd\mu(x)$ and $u_n=\int_X^{\oplus}u_{n,x}d\mu(x)$ we get

\begin{align*}
    \|\mathbb{E}_{(L^{\infty}(X)\otimes 1)q}(vu_nw)\|_2^2&=\left\|\mathbb{E}_{(L^{\infty}(X)\otimes 1)q}\left(\int^\oplus_Xv_xu_{n,x}w_xd\mu(x)\right)\right\|_2^2=\int_X^{\oplus}|\tau_{q_x}(v_xu_{n,x}w_x)|^2d\mu(x),
\end{align*}

\vspace{2mm}

which converges to zero. Now let $\{t_i\}_{i\in\N}$ be a countable $\|\cdot\|_2$-dense subset of $q_x\cN q_x$. Letting $v$ and $w$ range over $\{1\otimes t_i:i\in\N\}$ we get a subsequence of $(u_n)_n$ such that for almost every $x\in X$ we have $|\tau_{q_x}(t_iu_{n,x}t_j)|\to 0$ for all $i,j\in\N$. Since $\{t_i\}_{i\in\N}$ is $\|\cdot\|_2$-dense in $q_x\cN q_x$, this means that $\mathcal{Q}_x$ is diffuse for almost every $x\in X$.
\end{proof}

For the proof of \ref{theoremA} we also need the following version of \cite[Theorem 4.1]{cios22}.

\begin{thm}\label{SOE} Let $D$ be an ICC group so that for every $1\neq d \in D$ its centralizer $C_D(d)$ is amenable. Let $D\curvearrowright^{\alpha}(X,\mu)=(Y^I,\nu^I)$ be a measure preserving action built over an action $D\curvearrowright I$, where $(Y,\nu)$ is a probability space. Assume $D\curvearrowright I$ has amenable stabilizers.  

\vspace{2mm}

Let $B$ be a countable property (T) group. Let $B\curvearrowright^{\beta} (X, \mu)$ be free, measure preserving action such that  $B\cdot x\subseteq  D\cdot x$, for almost every $x\in X$. Moreover, assume there exists a measurable set of positive measure $X_0 \subseteq X$ such that the restrictions of the induced equivalence relations $\mathscr R(B \curvearrowright ^\beta X  )_{|X_0\times X_0}\subseteq \mathscr R (D \curvearrowright^{\alpha} X)_{| X_0\times X_0}$ have finite index. 
\vspace{2mm}

Then one can find a finite index subgroup $D_1\leqslant D$,  a group isomorphism $\delta:B\rightarrow D_1$ and $\theta\in [\mathscr R( D\curvearrowright X)]$ such that
$\theta\circ\beta(h)=\alpha(\delta(h))\circ\theta$, for every $h\in B$.
\end{thm}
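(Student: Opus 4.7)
The plan is to adapt the proof of \cite[Theorem 4.1]{cios22}; the only essential difference is the finite-index inclusion of equivalence relations in place of an equality. The argument splits into three steps.

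Step 1: Untwisting. Since $\beta$ is free and $B\cdot x\subseteq D\cdot x$ for a.e.\ $x$, there is a Borel cocycle $w:B\times X\to D$ with $\beta(h)(x)=\alpha(w(h,x))(x)$. The key input is Popa's cocycle superrigidity theorem: since $D\curvearrowright (Y^I,\nu^I)$ is a generalized Bernoulli action built over $D\curvearrowright I$ with amenable stabilizers and $B$ has property (T), $w$ is cohomologous to a group homomorphism $\delta:B\to D$, i.e.\ there exists a measurable $\phi:X\to D$ with
\begin{equation*}
w(h,x)=\phi(\beta(h)x)^{-1}\,\delta(h)\,\phi(x),\qquad h\in B,\ \text{a.e.\ }x\in X.
\end{equation*}

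Step 2: Construction of $\theta$ and injectivity of $\delta$. Define $\theta(x):=\alpha(\phi(x))(x)$. A direct cocycle computation gives $\theta\circ\beta(h)(x)=\alpha(\delta(h))\theta(x)$ for every $h\in B$. A standard argument using freeness of $\beta$ shows that $\theta$ is a measure-preserving bijection of $X$ (mod null) lying in the full group $[\mathscr R(D\curvearrowright X)]$. The intertwining combined with freeness of $\alpha$ and $\beta$ immediately implies injectivity of $\delta$, so that $D_1:=\delta(B)\leqslant D$ is isomorphic to $B$ via $\delta$, as required.

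Step 3: Finite index. Setting $Y_0:=\theta(X_0)$, the intertwining transports the hypothesis into the statement that $\mathscr R(D_1\curvearrowright X)_{|Y_0\times Y_0}\subseteq \mathscr R(D\curvearrowright X)_{|Y_0\times Y_0}$ has finite index, bounded by some $n\in\mathbb{N}$. Choosing coset representatives $D=\bigsqcup_i d_iD_1$ and using freeness, this says that for a.e.\ $y\in Y_0$, the set $\{i:d_iy\in Y_0\}$ has cardinality at most $n$. Summing over $i$ gives $\sum_i\mu(Y_0\cap d_i^{-1}Y_0)\leq n\,\mu(Y_0)$; combined with the ergodicity of the generalized Bernoulli action $D\curvearrowright X$ (which holds since $D$ is ICC, so the orbits of $D\curvearrowright I$ are infinite), this forces $[D:D_1]<\infty$.

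The main obstacle is Step 1: to apply cocycle superrigidity one must verify that the generalized Bernoulli action $D\curvearrowright(Y^I,\nu^I)$ with amenable stabilizers on $I$ still satisfies the necessary s-malleability and relative weak-mixing hypotheses. This is the principal technical input and relies on the extensions of Popa's original theorem \cite{po05} to generalized Bernoulli actions developed in \cite{ipv10} and related works; once this is in place, Steps 2 and 3 are essentially bookkeeping, the second following standard full-group manipulations and the third a straightforward Fubini--ergodicity count.
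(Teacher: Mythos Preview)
There is a genuine gap in Step 1. Popa's cocycle superrigidity and its generalized-Bernoulli extensions in \cite{ipv10} apply to cocycles $w:\Gamma\times X\to\Lambda$ where $\Gamma$ itself is the group whose action on $X$ is s-malleable (and $\Gamma$, or a subgroup with relative property (T), supplies the rigidity). In your situation the s-malleable generalized Bernoulli action is that of $D$, whereas the cocycle $w$ is for the $B$-action $\beta$; $B$ is not a subgroup of $D$, and nothing is assumed about $\beta$ beyond freeness and the orbit containment $B\cdot x\subseteq D\cdot x$. There is no off-the-shelf cocycle superrigidity theorem that untwists a $B$-cocycle merely because the ambient $D$-action is Bernoulli and $B$ has property (T); the references you cite still require the malleable action to be by the source group of the cocycle.

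The paper instead invokes \cite[Theorem 4.1]{cios22}, which is tailored to this asymmetric two-group setting but requires two hypotheses to be checked first. The first is a pair of measure-vanishing conditions (for every $i\in I$, $1\neq g\in D$ and $s,t\in D$ there is $(h_m)\subset B$ with $\mu(\{x:\beta_{h_m}(x)\in\alpha(s\,\mathrm{Stab}_D(i)\,t)(x)\})\to 0$, and similarly for $C_D(g)$); this follows readily from amenability of stabilizers and centralizers, property (T) of $B$, and freeness of $\beta$. The second, and more substantial, is weak mixing of $B\curvearrowright^\beta X$. Since nothing is known a priori about the dynamics of $\beta$, the paper passes to the crossed product $L^\infty(X)\rtimes D$, uses \cite[Theorem 8.3]{ipv10} to unitarily conjugate $\cL(B)$ into $\cL(D)$, and then argues by contradiction: a failure of weak mixing would force nonzero Fourier mass of the conjugated $B$-unitaries along cosets of $N(F)$ for some finite $F\subset I$, but $N(F)$ is contained in finitely many stabilizer cosets, and property (T) of $B$ against amenability of stabilizers drives this mass to zero along a subsequence. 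This verification is the core of the proof and is entirely absent from your outline. Your Step 3 is correct in spirit; the paper phrases it as a finite-index inclusion of crossed-product corners rather than a direct Fubini count, but either route works once $\delta$ and $\theta$ have been obtained.
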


\begin{proof} First,  since all stabilizers of $D\curvearrowright I$ and all centralizers $C_D(g)$ for $g\neq 1$ are amenable groups and $B$ has property (T) and $B\curvearrowright^\beta X$ is a free action then for every $i \in I$  and $g\neq 1$ one can find a sequence $(h_m)\subset B$ such that  for every  $s,t\in  D$ we have 
 
\begin{equation}\label{measuretozero}\begin{split}
    &\lim_{m\ra \infty}\mu(\{x\in X_0\mid \beta(h_m)(x)\in \alpha(s\text{Stab}_D(i)t)(x)\})= 0,\text{ and }\\
    &\lim_{m\ra \infty}\mu(\{x\in X_0\mid \beta(h_m)(x)\in \alpha(sC_D(g)t)(x)\})= 0.\end{split}
\end{equation}

\vspace{2mm}

Next we argue that the action $B \curvearrowright ^\beta X$ is weak mixing. We will prove this using the von Neumann framework. 

\vspace{2mm}

Let $\cM=L^{\infty}(X)\rtimes D$ and $\cN=L^{\infty}(X)\rtimes B$.
Denote by $(u_g)_{g\in D}\subset \cM$ and $(v_h)_{h\in B}\subset \cN$ the canonical unitaries. For $h\in D$ and $g\in B$, let $A_h^g=\{x\in X\mid h^{-1}\cdot x=g^{-1}\cdot x\}$. Consider the $*$-homomorphism $\pi\colon \cN\rightarrow  \cM $ given by $\pi(a)=a$ and $\pi(v_h)=\sum_{g\in  D}\mathbbm{1}_{A_h^g} u_g$,  for every $a\in L^{\infty}(X_0)$ and $h\in B$. We view $\cN$ as a subalgebra of $\cM$ by identifying it with $\pi(\cN)$. 

\vspace{2mm}

Since $B$ has property (T), $D$ is ICC and $D\curvearrowright X$ is a generalized Bernoulli action, by \cite[Theorem 8.3]{ipv10} there exists a unitary $u\in\cM$ such that $u\cL(B)u^*\subseteq \cL(H)$. For every $h\in H$ let $\tilde h=uhu^*$. 

\vspace{2mm}

Assume by contradiction $B\curvearrowright X_0$ is not weak mixing. Then, one can find $C>0$ and $a_1,...,a_n,b_1,...,b_n\in L^{\infty}(X)\ominus\C 1$ such that for all $h\in B$, denoting $\tilde{h}=uhu^*$, we obtain

\begin{equation*}
     0<C\leq\sum_{i=1}^n|\tau(a_i\beta_h(b_i))|^2=\sum_{i=1}^n|\tau(a_ihb_ih^{-1})|^2=\sum_{i=1}^n|\tau((ua_iu^*) \tilde h (ub_iu^*)\tilde h^{-1})|^2.
\end{equation*}

\vspace{2mm}

Approximating $ua_iu^*$ and $ub_iu^*$, for $1\leq i\leq n$, we can find $a_i',b_i'\in L^{\infty}(X)$ supported on a finite nonempty set $F\subset I$, and $K\subset D$ finite such that

\begin{equation*}
    C\leq \sum_{i,\:k,l\in K}|\tau(a_i'\tilde h b_i'l\tilde h^{-1}k)|^2,\text{ for all }h\in B.
\end{equation*}

\vspace{2mm}

By letting $\tilde h=\sum_{g\in D}\tau( \tilde h g^{-1})g$, we obtain

\begin{align*}
    C&\leq \sum_{i,k,l}\left|\sum_{g,r\in D}\tau(\tilde h g^{-1})\overline{\tau(\tilde h r^{-1})}\tau(a_i'gb_i'lr^{-1}k)\right|^2=\sum_{i,k,l}\left|\sum_{g,r}\delta_{glr^{-1}k,e}\tau(\tilde h g^{-1})\overline{\tau(\tilde h r^{-1})}\tau(a_i'\alpha_g(b_i'))\right|^2\\
    &=\sum_{i,k,l}\left|\sum_{g\in D}\tau(\tilde h g^{-1})\overline{\tau(\tilde h l^{-1}g^{-1}k^{-1})}\tau(a_i'\alpha_g(\beta_i'))\right|^2.
\end{align*}

\vspace{2mm}

Let $N(F)=\{g\in D:g\cdot F=F\}$ and observe that $\tau(a_i'\alpha_g(b_i'))=0$ for $g\in D\setminus N(F)$. Thus, continuing from the prior equation, we obtain

\begin{align}\label{weakmix}
    0<C&\leq \sum_{r,k,l}\left|\sum_{g\in N(F)}\tau(\tilde h g^{-1})\overline{\tau(\tilde h l^{-1}g^{-1}k^{-1})}\tau(a_i'\alpha_g(b_i'))\right|^2\nonumber\\
    &\leq\sum_{i,k,l}\|a_i'\|^2_2\cdot\|b_i'\|^2_2\left(\sum_{g\in N(F)}|\tau(\tilde h g^{-1})|^2\right)\left(\sum_{g\in N(F)}|\tau(\tilde h l^{-1}g^{-1}k^{-1})|^2\right)\nonumber\\
    &\leq\sum_{i,k,l}\|a_i'\|^2_2\cdot\|b_i'\|^2_2\cdot\|\mathbb{P}_{N(F)}(\tilde h )\|_2^2\cdot\|\mathbb{P}_{lN(F)k}(\tilde h)\|_2^2.
\end{align}

\vspace{2mm}

Next observe that $N(F)\subseteq\bigcup_{i} g_i\text{Stab}_D(i)$, for finitely many $i$'s. Since $u\cL(B)u^*$ has property (T) and all stabilizers $\text{Stab}_D(i)$ are amenable, by Popa's intertwining techniques, there exists $(h_m)_m\subset B$ such that $\|\mathbb{P}_{N(F)}(\tilde h_m)\|_2,\:\|\mathbb{P}_{lN(F)k}(\tilde h_m)\|_2^2\to 0$ for every $l,k\in K\subset D$. This contradicts \eqref{weakmix}. Thus, the action $B\curvearrowright X_0$ is weak mixing.

\vspace{2mm}

Altogether, these show that the conditions of \cite[Theorem 4.1]{cios22} are satisfied and therefore one can find a subgroup $D_1\leqslant D$,  a group isomorphism $\delta:B\rightarrow D_1$ and $\theta\in [\mathscr R( D\curvearrowright X)]$ such that
 
$\theta\circ\beta(h)=\alpha(\delta(h))\circ\theta$, for every $h\in B$. 

\vspace{2mm}

Now let $p= \mathds 1_{X_0}$. In particular, the prior relation implies that there exists a unitary $u\in \mathscr N_{\mathcal M}(L^\infty(X))$ such that

\begin{equation}\label{equalalg}
    u (L^\infty(X)\rtimes_\beta B)u^*= L^\infty(X)\rtimes_\alpha \delta(B).
\end{equation} 

\vspace{2mm}

Let $q= upu^*\in L^{\infty}(X)$. Since from assumption $p (L^\infty(X)\rtimes B) p\subset p\mathcal M p$ has finite index then the inclusion $up (L^\infty(X)\rtimes B) pu^*\subset up\mathcal M pu^*$ also has finite index. Thus, using relation \eqref{equalalg}, we get that 

$$q (L^\infty(X)\rtimes_\alpha \delta(B)) q=up (L^\infty(X)\rtimes_\beta B) pu^*\subseteq up\mathcal M pu^*= q (L^\infty(X)\rtimes_\alpha D)q$$

\vspace{2mm}

also has finite index.  This however entails that $D_1=\delta(B)\leqslant D$ has finite index, as desired.\end{proof}

\begin{rem}\label{remark7.3} We note that the assumption $C_D(g)$ amenable for every $g\neq 1$ is only needed for the second equation in \eqref{measuretozero}.
\end{rem}

We are now ready to prove \ref{theoremA}. Given that our von Neumann algebra is a tensor between its center and $\cL(G)$ we will often work in the fibers of its integral decomposition. Within this context, we find ourselves working inside the factor $\cL(G)$.

\begin{proof}[\textbf{Proof of \ref{theoremA}}]  Let $\mathcal Z:= \mathscr Z(\mathcal M)$. Observe that $\cL(A^{(I)})\subseteq\cL(G)$ and $\mathcal{Z}\:\overline{\otimes}\:\cL(A^{(I)})\subseteq\cM$ are Cartan subalgebras.

\vspace{2mm}

First we will prove that $\cL(H^{hfc})\prec^s \mathcal Z$. Using \cite[Lemma 2.4]{dhi19} it suffices to show that $\cL(H^{hfc})z\prec \mathcal Z$, for every projection $z\in \mathcal Z $.
Fix $z \in \mathcal Z$ and assume by contradiction $\cL(H^{hfc})z\not\prec\mathcal{Z}$.

\vspace{2mm}

Since $H^{hfc}$ is amenable and normal, Claim \ref{step1} implies $\cL(H^{hfc})\prec_{\cM}^s\mathcal{Z}\:\overline{\otimes}\:\cL(A^{(I)})$. By Theorem \ref{theoremone}, $H^{hfc}$ is virtually abelian.

\begin{claim}\label{amenablerelcom} $(\cL(H^{hfc})'\cap\cM) z $ is amenable.
\end{claim}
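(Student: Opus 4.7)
The plan is to combine the (generalized) weak compactness of the $H$-conjugation action on $\cL(H^{hfc})$ established in Theorem \ref{theoremone}'s precursor (the analogue of the final theorem of Section \ref{section4}) with the Ozawa--Popa amenability criterion \cite[Theorem B]{popaozawaII}, applied to a carefully chosen subgroup of $\mathscr N_{z\cM z}(\cL(H^{hfc})z)$. The guiding observation is that elements of the relative commutant $(\cL(H^{hfc})' \cap \cM)z$ act trivially by conjugation on $\cL(H^{hfc})z$, and therefore the weak compactness already in hand automatically persists upon enlarging the acting group by the relative commutant unitaries.

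Concretely, I would introduce the subgroup $\mathcal G_0 \subseteq \mathscr U(z\cM z)$ generated by $\{u_h z : h \in H^{hfc}\}$ and $\mathscr U((\cL(H^{hfc})' \cap \cM)z)$. Both families normalize $\cL(H^{hfc})z$ (the first inner, the second trivially). Cutting the vectors $(\eta_n)_n$ produced by the final theorem of Section \ref{section4} by $z\otimes z$ yields a sequence that witnesses classical weak compactness (in the sense of \cite{popaozawa10}) for the conjugation action of $\mathcal G_0$ on $\cL(H^{hfc})z$: Ozawa--Popa's condition (i) follows from condition (j) of Definition \ref{weakerweakcompactness} applied to the generating set $H^{hfc}$ of $\cL(H^{hfc})$, extended by $\|\cdot\|_2$-density and the normality of left/right multiplication; condition (ii) reduces to (jj) of Definition \ref{weakerweakcompactness} for the $H^{hfc}$-component of $\mathcal G_0$ and is automatic for the relative commutant component (since conjugation there is trivial); condition (iii) coincides with (jjj). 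Since $\cL(H^{hfc})$ is amenable (as $H^{hfc}$ is amenable by \cite{le68}), \cite[Theorem B]{popaozawaII} yields that $\mathcal G_0'' = \cL(H^{hfc})z \vee (\cL(H^{hfc})' \cap \cM)z$ is amenable relative to $\cL(H^{hfc})z$, and hence amenable. In particular, $(\cL(H^{hfc})' \cap \cM)z \subseteq \mathcal G_0''$ is amenable, as claimed.

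The main subtlety, which I expect to be the crucial point to get right, is the choice of the group $\mathcal G_0$. One cannot use the full normalizer $\mathscr N_{z\cM z}(\cL(H^{hfc})z)$, which equals $z\cM z$ because $H^{hfc}$ is normal in $H$ and $\mathcal Z \subseteq \cL(H^{hfc})' \cap \cM$; indeed, that choice would force $z\cM z$ itself to be amenable, contradicting non-amenability of $\cL(G)$. The precise reason Definition \ref{weakerweakcompactness} is formulated as a ``weak'' form of weak compactness, rather than the classical one, is exactly to permit this restricted application of Ozawa--Popa, producing amenability of the relative commutant without overshooting to the full normalizer algebra.
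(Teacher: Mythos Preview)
Your proposal has a genuine gap at the decisive step: the inference from weak compactness of the $\mathcal G_0$-action on $\cL(H^{hfc})z$ to relative amenability of $\mathcal G_0''$ over $\cL(H^{hfc})z$. No such implication holds. Indeed, by \cite[Proposition~3.2]{popaozawa10} every inclusion $\cP\subset\cM$ with $\cP$ amenable is automatically weakly compact; if your reasoning were valid, any regular amenable $\cP$ would force $\mathscr N_\cM(\cP)''=\cM$ to be amenable, contradicting the existence of Cartan subalgebras in non-amenable II$_1$ factors. The appearance of \cite[Theorem~B]{popaozawaII} in the paper (Claim~\ref{above}) is only \emph{after} running the Gaussian/array deformation $V_t^q$ through \cite[Lemmas~4.4--4.5]{ct13}; the deformation input is essential, and there is no direct shortcut from weak compactness alone. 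A secondary issue is that your extension of condition~(j) from group unitaries $\{u_h\}_{h\in H^{hfc}}$ to all of $\mathscr U(\cL(H^{hfc}))$ by ``$\|\cdot\|_2$-density'' is not justified: group unitaries are not $\|\cdot\|_2$-dense in the unitary group, and the map $v\mapsto(v\otimes\bar v)\eta_n$ is not controlled by $\|v\|_2$ on the second tensor leg via condition~(jjj) alone.

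The paper's argument is entirely different and does not touch weak compactness. It exploits the standing contradiction hypothesis $\cL(H^{hfc})z\nprec\mathcal Z$ together with Claim~\ref{step1} to intertwine a corner of $\cL(H^{hfc})z$ into the abelian core $\mathcal Z\,\overline\otimes\,\cL(A^{(I)})$, with image $\mathcal Q$. After adjoining $\mathcal Z$ and disintegrating over $\mathcal Z=L^\infty(X)$, Lemma~\ref{diffusefibers} ensures the fibers $\mathcal Q_x\subset\cL(A^{(I)})q_x$ are diffuse; then \cite[Corollary~4.6]{cios22}, a structural result specific to wreath-like product factors, gives amenability of $\mathcal Q_x'\cap q_x\cL(G)q_x$ for a.e.~$x$, hence of $\mathcal Q'\cap q\cM q$. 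The intertwining partial isometry transports this back to a corner of $(\cL(H^{hfc})'\cap\cM)z$, and a central-support maximality argument finishes. In short, the claim rests on the wreath-like product geometry of the target, not on any compactness property of the $H$-side.
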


\begin{subproof}[Proof of Claim \ref{amenablerelcom}] By the strong intertwining, for every $z_0\in\mathcal{P}((\cL(H^{hfc})'\cap\cM)z)$ there exists nonzero projections $p\in\cL(H^{hfc})z_0$, $q\in\mathcal{Z}\:\overline{\otimes}\:\cL(A^{(I)})$, a partial isometry $v\in q\cM p$ and a $*$-isomorphism onto its image $\Psi:p\cL(H^{hfc})p\to q(\mathcal{Z}\:\overline{\otimes}\:\cL(A^{(I)}))q$ such that $\Psi(x)v=vx$ for all $x\in p\cL(H^{hfc})p$. Denote $\mathcal{Q}:=\Psi(p\cL(H^{hfc})p)$. Since $\cL(H^{hfc})z\not\prec\mathcal{Z}$, then $\mathcal{Q}\not\prec\mathcal{Z}$ by \cite[Remark 3.8]{vaes08}. Moreover, $\mathcal{Q}vv^*=vp\cL(H^{hfc})pv^*$ and taking relative commutants we have, 

\begin{equation}\label{relativecommequal}
    vv^*(\mathcal{Q}'\cap q\cM q)vv^*=v(p\cL(H^{hfc})'p\cap p\cM p)v^*.
\end{equation}

\vspace{2mm}

In order to prove the claim, we first show that $\mathcal{Q}'\cap q\cM q$ is amenable. Let $\tilde{\mathcal{Q}}:=\mathcal{Q}\vee \mathcal{Z}q$ and observe that $\tilde{\mathcal{Q}}'\cap q\cM q=\mathcal{Q}'\cap q\cM q$. Since $\mathcal{Q}\not\prec\mathcal{Z}$, then also $\tilde{\mathcal{Q}}\not\prec\mathcal{Z}$. Moreover,

\begin{equation}\label{inclusionofQ}
    \mathcal{Z}q\subset\tilde{\mathcal{Q}}\subseteq(\mathcal{Z}\:\overline{\otimes}\:\cL(A^{(I)}))q\subseteq \tilde{\mathcal{Q}}'\cap q\cM q\subset q\cM q.
\end{equation}

\vspace{2mm}

Consider the integral decomposition of $\cM$ over its center $\mathcal{Z}=L^{\infty}(X)$; that is, $\cM=\int_X^{\oplus}\cL(G)d\mu(x)$, and decompose $q=\int_Xq_xd\mu(x)$, $\tilde{\mathcal{Q}}=\int_X^{\oplus}\mathcal{Q}_xd\mu(x)$. From \eqref{inclusionofQ} and \cite[Theorem IV.8.18]{takesaki} it follows that

\begin{equation*}
    \int_X^{\oplus}\mathcal{Q}_xd\mu(x)\subseteq \int_X\cL(A^{(I)})q_xd\mu(x)\subseteq \int_X^{\oplus}\mathcal{Q}_x'\cap q_x\cL(G)q_x\:d\mu(x),
\end{equation*}

\vspace{2mm}

or, $\mathcal{Q}_x\subseteq \cL(A^{(I)})q_x\subseteq \mathcal{Q}_x'\cap q_x\cL(G) q_x$ for almost every $x\in X$. 

\vspace{2mm}

By Lemma \ref{diffusefibers}, since $\tilde{\mathcal{Q}}\not\prec \mathcal{Z}$, we have that the fibers $\mathcal{Q}_x$ are diffuse for almost every $x\in X$. Hence, by \cite[Corollary 4.6]{cios22}, $\mathcal{Q}_x'\cap q_x\cL(G)q_x$ is amenable for almost every $x\in X$. Thus, \cite[Proposition 6.5]{connes76} implies that $\mathcal{Q}'\cap q\cM q$ is amenable. 

\vspace{2mm}

Therefore, using \eqref{relativecommequal} we obtain $v^*v(\cL(H^{hfc})'\cap \cM)v^*v$ is amenable; and if $z'$ denotes the central support of the projection $v^*v$ in $(\cL(H^{hfc})'\cap \cM)z$, then $(\cL(H^{hfc})'\cap \cM)z'$ is amenable. Note that $v^*v\leq p\leq z_0$, meaning that $z_0z'\neq 0$. In conclusion, we have proved that every corner of $(\cL(H^{hfc})'\cap\cM)z$ has a nontrivial amenable subcorner. Hence, $(\cL(H^{hfc})'\cap\cM)z$ is amenable.
\end{subproof}

Let $\mathcal{D}=\cL(H^{hfc})$, and $\mathcal{D}=\int_X^{\oplus}\mathcal{D}_xd\mu(x)$ be its integral decomposition. Let $z=\int_X^{\oplus}z_xd\mu(x)$ be the integral decomposition of $z$. By \cite[Theorem IV.8.18]{takesaki} we have $(\mathcal{D}'\cap \cM)z=\int_X^{\oplus}(\mathcal{D}_x'\cap \cL(G))z_xd\mu(x)$.

\vspace{3mm}

By \cite[Theorem 6.1 - Claim 3]{ioa10}, $\mathcal{D}_x'\cap\cL(G)$ is type I and there exists $v_x\in\mathscr{U}(\cL(G)z_x)$ such that $\mathcal{D}_xz_x\subseteq v_x\cL(A^{(I)})v_x^*z_x\subseteq (\mathcal{D}_x'\cap\cL(G))z_x$ for almost every $x\in X$.

\vspace{2mm}

Fix $h\in H$. As $h$ normalizes $\mathcal{D}z$ and $(\mathcal{D}'\cap\cM)z$, we obtain $h_x\mathcal{D}_xz_xh_x^{-1}=\mathcal{D}_xz_x$ and $h_x(\mathcal{D}_x'\cap \cL(G))z_xh_x^{-1}=(\mathcal{D}_x'\cap\cL(G))z_x$ for almost every $x\in X$. Now let $K_x:= H_x\cap (\mathcal D_x'\cap \cL (G))z_x$ and notice that $H^{hfc}_x\leqslant K_x\lhd H_x$ is a normal subgroup. Moreover, since $\mathcal D_x'\cap \cM$ is type I it follows that $K_x$ is virtually abelian. This implies there is a natural action  $H_x/K_x\curvearrowright^\alpha \mathcal D_x$ by conjugation, $\alpha_{s_x} ={\rm Ad}(s_x)$. Using \cite{ioa10}, one 
can find unitaries $w_{s,x} \in (\mathcal D_x'\cap \cL (G))z_x$ and an action $\beta_{s_x} =  {\rm Ad}( s_x w_{s,x})$ of $H_x/K_x$ on $v_x \cL(A^{(I)})v_x^*z_x$ which leaves $\mathcal D_xz_x$ invariant. Since $\cL(A^{(I)})\subset \cL(G)$ is a Cartan subalgebra, from the choice of $K_x$ it follows that $\beta$ is free.




\vspace{2mm}

Now, consider the wreath-like product free action $v_xBv_x^*z_x\curvearrowright^{\sigma_x}v_x\cL(A^{(I)})v_x^*z_x$. Define $\mathcal{P}_x=v_x\cL(A^{(I)})v_x^*z_x\vee\{s_xw_{s,x}:s_x\in H_x/K_x\}''\subseteq v_x\cL(G)v_x^*z_x=\cL(G)z_x$. Observe that $v_x\cL(A^{(I)})v_x^*z_x\subseteq \mathcal{P}_xz_x$ is a masa and $s_xw_{s,x}\in\mathscr{N}_{v_x\cL(G)v_x^*z_x}(v_x\cL(A^{(I)})v_x^*z_x)$ for all $s_x\in H_x/K_x$.

\vspace{0.5mm}

\begin{claim}\label{finiteindexequivrel2} For every $\delta>0$ there is a projection $p\in v_x \mathcal L(A^{(I)})v_x^*z_x$ with $\tau(p)\geq 1-\delta$ such that $p \mathcal P_x p \subseteq p v_x \cL(G)v_x^*z_x p$ has finite index. 
\end{claim}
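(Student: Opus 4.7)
Proof plan for Claim \ref{finiteindexequivrel2}.

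The plan is to translate the finite-index condition into a statement about sub-equivalence relations, via Feldman--Moore, and then to exploit the wreath-like structure of $G$ together with the freeness of the $H_x/K_x$-action on the common Cartan subalgebra.

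After conjugating everything by $v_x^*$ we may assume $v_x = 1$, so that $\mathcal{L}(A^{(I)})z_x \subseteq \mathcal{P}_x \subseteq \mathcal{L}(G)z_x$ is a chain of Cartan inclusions with common Cartan subalgebra $\mathcal{L}(A^{(I)})z_x$. Let $(Y,\nu) := \mathrm{spec}(\mathcal{L}(A^{(I)})z_x)$ and let $\mathscr{R}_1 := \mathscr{R}(B \curvearrowright Y)$ denote the orbit equivalence relation of the outer Cartan inclusion (the wreath-like product action restricted to $\mathrm{supp}(z_x)$). Feldman--Moore attaches to the intermediate Cartan inclusion $\mathcal{L}(A^{(I)})z_x \subseteq \mathcal{P}_x$ a sub-equivalence relation $\mathscr{R}_0 \subseteq \mathscr{R}_1$ generated by the partial Borel isomorphisms of $Y$ induced by the unitaries $s_x w_{s,x}$ with $s \in H_x/K_x$. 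Under this dictionary, the condition ``$p\mathcal{P}_x p \subseteq p\mathcal{L}(G)z_x p$ has finite index'' for $p = \mathbb{1}_{Y_0}$ is equivalent to the measurable orbit-inclusion index $y \mapsto [\mathscr{R}_1(y):\mathscr{R}_0(y)]$ being uniformly bounded on $Y_0$. Thus the claim reduces to showing this index function is $\nu$-a.e.\ finite: given that, for any $\delta > 0$ one chooses $k \in \mathbb{N}$ so large that $Y_0 := \{y : [\mathscr{R}_1(y):\mathscr{R}_0(y)] \leq k\}$ has $\nu$-measure at least $1 - \delta$, and takes $p := \mathbb{1}_{Y_0}$.

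To prove a.e.\ finiteness of the orbit-inclusion index, the plan is to expand each normalizer as $s_xw_{s,x} = \sum_{b \in B} f_{s,b}\, u_b\, z_x$ in the Fourier decomposition of $\mathcal{L}(G)z_x = \mathcal{L}(A^{(I)})z_x \rtimes_{\alpha,c} B$. Since $s_xw_{s,x}$ normalizes $\mathcal{L}(A^{(I)})z_x$, the supports $\{\mathrm{supp}(f_{s,b})\}_{b \in B}$ form (essentially) a measurable partition of $\mathrm{supp}(z_x)$, and on each slice the partial isomorphism of $Y$ induced by $s_xw_{s,x}$ coincides with the one induced by $u_b$. Hence at each $y$ the $\mathscr{R}_0$-orbit of $y$ is generated by those $b \in B$ that appear with nonzero coefficient on supports containing $y$. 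The freeness of the $H_x/K_x$-action on $\mathcal{L}(A^{(I)})z_x$ (which follows from the definition of $K_x$ as the stabilizer of $\mathcal{D}_x$ together with the maximality of the masa inside $\mathcal{D}_x' \cap \mathcal{L}(G) z_x$) guarantees that these $b$'s exhaust $B$ up to finite index at $\nu$-a.e.\ $y$.

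The main obstacle is precisely the last step: making rigorous the assertion that the image of $H_x/K_x$ in the full pseudogroup $[[\mathscr{R}_1]]$ is cofinite. I expect this to use in an essential way the rigidity of the wreath-like product structure of $G$ (controlling normalizers of $\mathcal{L}(A^{(I)})$ in $\mathcal{L}(G)$ via results such as Theorem \ref{controlincore} and Corollary \ref{quasinormcontrol2}), combined with the type-I structure of $\mathcal{D}_x' \cap \mathcal{L}(G)$, which together pin down $K_x$ tightly enough that $H_x/K_x$ cannot leave ``too much room'' inside $\mathscr{R}_1$. Once a.e.\ finiteness is in hand, the standard measurable-selection/exhaustion argument delivers the desired projection $p$.
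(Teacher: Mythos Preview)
Your equivalence-relation reformulation is reasonable, but there is a genuine gap at precisely the step you flag as the main obstacle. Freeness of the $H_x/K_x$-action on the Cartan only tells you that distinct $s_x w_{s,x}$ induce distinct partial isomorphisms of $Y$; it says nothing whatsoever about the index of $\mathscr{R}_0$ inside $\mathscr{R}_1$. A free action of a group can perfectly well sit as an infinite-index subrelation of a larger one. The quasi-normalizer results you invoke (Theorem~\ref{controlincore}, Corollary~\ref{quasinormcontrol2}) control where normalizers of core subalgebras must live and are orthogonal to the index question here; they give no mechanism for showing $\mathscr{R}_0$ is large.

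The paper's argument is more direct and does not pass through equivalence relations. The ingredient you are missing is that $\{h_x : h \in H\}'' z_x = \cL(G)z_x$ (since $\cL(H) = \cM$ and this identification passes to fibers over the center), and every $h_x$ factors as $s_x k_x$ with $k_x \in K_x \subseteq (\mathcal{D}_x' \cap \cL(G))z_x$. Because $(\mathcal{D}_x' \cap \cL(G))z_x$ is type~I with Cartan $v_x\cL(A^{(I)})v_x^* z_x$, on a central corner $z_1$ of trace at least $1 - \delta$ one has a finite Pimsner--Popa basis $a_1, \ldots, a_l$ for this inclusion. Writing $s_x k_x z_1 = (s_x w_{s,x})(w_{s,x}^* k_x) z_1$ with $w_{s,x}^* k_x \in (\mathcal{D}_x' \cap \cL(G))z_x$, and expanding $w_{s,x}^* k_x z_1$ via the $a_i$, one gets immediately that $\cL(G)z_1 \subseteq \overline{\sum_i \mathcal{P}_x a_i}$, i.e., $L^2(z_1 \cL(G) z_1)$ is finitely generated as a left $z_1\mathcal{P}_x z_1$-module; a standard upgrade (\cite[Theorem~2.2]{bccmsw23}) then yields finite Pimsner--Popa index on a slightly smaller corner. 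So the finite index comes not from any rigidity of the $B$-action but from the elementary fact that $H_x$ generates the whole fiber and differs from the normalizing lifts $s_x w_{s,x} \in \mathcal{P}_x$ only by elements of a type~I algebra.
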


\begin{subproof}[Proof of Claim \ref{finiteindexequivrel2}] Since $v_x\cL(A^{(I)})v_x^*z_x\subseteq (\mathcal{D}_x'\cap\cL(G))z_x$ is a Cartan subalgebra and $\mathcal{D}_x'\cap \cL(G)$ is type I, for every $\delta>0$ there exists a projection $z_1\in\mathscr{Z}((\mathcal{D}_x'\cap\cL(G))z_x)$ and $a_1,...,a_l\in\mathscr{U}((\mathcal{D}_x'\cap\cL(G))z_1)$ such that for all $t\in (\mathcal{D}_x'\cap\cL(G))z_x$ we have that

\begin{enumerate}
    \item[(1)] $\tau(z_1)\geq 1-\delta$, and
    \item[(2)] $tz_1=\sum_{i=1}^l\mathbb{E}_{v_x\cL(A^{(I)})v_x^*z_x}(tz_1a_i^*)a_i$. 
\end{enumerate}

Fix $\epsilon>0$ and $y\in(\cL(G)z_x)_1$. Since $\{h_x:h\in H\}''z_x=\cL(G)z_x$ and each $h_x$ can be written as $h_x=k_xs_x$ for $k_x\in K_x$ and $s_x\in H_x/K_x$, one can find finitely many $c_{k_xs_x}\in\C$ with

\begin{equation*}
    \left\|y-\sum_{k_xs_x\in F_y}c_{k_xs_x}s_xk_x\right\|_2\leq \epsilon,
\end{equation*}

\vspace{2mm}

where $F_y\subset H_x$ is a finite set. Now let $z_y^i=\sum_{k_xs_x\in F_y}c_{k_xs_x}s_xw_{s,x}\mathbb{E}_{v_x\cL(A^{(I)})v_x^*z_x}(w_{s,x}^*k_xz_1a_i^*)$. Observe that $z_y^i\in\mathcal{P}_x$ for every $1\leq i\leq l$. Using property (2) above with $t=w_{s,x}^*k_x$ for all $k_xs_x\in F_y$, we obtain

\begin{align*}
    \left\|yz_1-\sum_{i=1}^lz_y^ia_i\right\|_2&=\left\|yz_1-\sum_{i=1}^l\sum_{k_xs_x}c_{k_xs_x}s_xw_{s,x}\mathbb{E}_{v_x\cL(A^{(I)})v_x^*z_x}(w_{s,x}^*k_xz_1a_i^*)a_i\right\|_2\\
    &=\left\|yz_1-\sum_{k_xs_x}c_{k_xs_x}s_xk_xz_1\right\|_2\leq \epsilon.
\end{align*}

\vspace{2mm}

Thus, $\cL(G)z_1\subseteq\overline{\sum_{i=1}^l\mathcal{P}_xa_i}$ and hence $L^2(z_1\cL(G)z_1)$ is finitely generated as a left $z_1\mathcal{P}_xz_1$-module. By \cite[Theorem 2.2]{bccmsw23}, there exists $z_n\in z_1\cL(G)z_1\cap z_1\mathcal{P}_x'z_1\subseteq v_x\cL(A^{(I)})v_x^*z_1$ with $z_n\to z_1$ in SOT and $r_{n,1},...,r_{n,l}\in z_1\cL(G)z_1$, $z_1\mathcal{P}_xz_1$ orthogonal, with

\begin{equation*}
    z_ntz_n=\sum_{i=1}^l\mathbb{E}_{z_1\mathcal{P}_xz_1}(z_ntz_nr_{n,i}^*)r_{n,i}, \text{ for every }t\in z_1v_x\cL(G)v_x^*z_1.
\end{equation*}

\vspace{2mm}

Hence, $z_n\mathcal{P}_xz_n\subseteq z_nv_x\cL(G)v_x^*z_n$ has finite index for all $n$. The claim follows by letting $p=z_n$ for $n$ sufficiently large for which $\tau(z_n)\geq 1-\delta$.
\end{subproof}

Both von Neumann algebras $\mathcal{P}_x\subseteq v_x\cL(G)v_x^*z_x$ have the same Cartan subalgebra, $v_x\cL(A^{(I)})v_x^*z_x$. This fact combined with the prior claim shows that the inclusion of the equivalence relations $\mathscr{R}(H_x/K_x\curvearrowright v_x\cL(A^{(I)})v_x^*z_x)\subseteq \mathscr{R}(v_xBv_x^*z_x\curvearrowright v_x\cL(A^{(I)})v_x^*z_x)$ has many nontrivial restrictions of finite index. By \cite[Lemma 3.6]{dhi19} $H_x/K_x$ and $B$ are measure equivalent, and since $B$ has property (T), then so does $H_x/K_x$. Since the action $B\curvearrowright I$ has amenable stabilizers, $C_B(g)$ are infinite index in $B$, for $g\neq 1$, and using a similar argument as in \cite[Theorem 1.3 Step 3]{cios22} we get that \eqref{measuretozero} holds. Therefore, Theorem \ref{SOE} together with Remark \ref{remark7.3} show that there exists an injective group homomorphism $\delta_x:H_x/K_x\to B$ with finite index image, 
and $\theta_x\in[\mathscr{R}(v_xBv_x^*z_x\curvearrowright v_x\cL(A^{(I)})v_x^*z_x)]$ such that

\begin{equation}\label{equivactions}
    \sigma_{s_x}\circ\theta_x=\theta_x\circ \alpha_{\delta_x(s_x)},\text{ for all }s_x\in H_x/K_x\text{ and for almost every }x\in X.
\end{equation}

\vspace{2mm}

Let $u_x\in\mathscr{N}_{v_x\cL(G)v_x^*z_x}(v_x\cL(A^{(I)})v_x^*z_x)$ such that $u_xau_x^*=a\circ \theta_x^{-1}$ for all $a\in v_x\cL(A^{(I)})v_x^*z_x$. By \eqref{equivactions} we can find $(\zeta_{h,x})_x\in \mathscr{U}(v_x\cL(A^{(I)})v_x^*z_x)$ with

\begin{equation}\label{hxwxh=delta}
    u_x\widehat{s_x}w_{h,x}u_x^*=\zeta_{s,x}\delta_x(s_x), \text{ for all }s_x\in H_x/K_x.
\end{equation}

\vspace{2mm}

Fix $h\in H^{fc}$. Since $H^{fc}$ is FC there exists $h=:h_1,...,h_n\in H^{fc}$ and an action $H\curvearrowright\{1,...,n\}$ such that for every $g\in H$

\begin{equation}\label{torepeat}
    gh_ig^{-1}=h_{g\cdot i}.
\end{equation}

\vspace{2mm}

Thus, if we multiply the prior equation by $z$, we get $gh_izg^{-1}=h_{g\cdot i}z$, and considering the integral decomposition over the center, we obtain

\begin{equation*}
    g_x(h_iz)_xg_x^{-1}=(h_{g\cdot i}z)_x,
\end{equation*}

\vspace{2mm}

for almost every $x\in X$ and every $i=1,...,n$. Since $w_{s,x}\in (\mathcal{D}_x'\cap \cL(G))z_x\subset(\cL(H^{fc})_x'\cap\cL(G))z_x$ and using \eqref{hxwxh=delta} we get that for all $s_x\in H_x/K_x$

\begin{align*}
    u_x(h_{\widehat{s_x}\cdot i}z)_xu_x^*&=u_x\widehat{s_x}(h_iz)_x\widehat{s_x}^*u_x^*=u_x\widehat{s_x}w_{h,x}(h_iz)_xw_{h,x}^*\widehat{s_x}^*u_x^*\\
    &=\zeta_{s,x}\delta_x(s_x)u_x(h_iz)_xu_x^*\delta_x(s_x)^*\zeta_{s,x}^*.
\end{align*}

\vspace{2mm}

Moreover, since $u_x\in\mathscr{N}_{\cL(G)z_x}(v_x\cL(A^{(I)})v_x^*z_x)$, $v_x\cL(A^{(I)})v_x^*z_x$ is abelian and $\cL(H^{fc})_xz_x\subset\mathcal{D}_xz_x\subset v_x\cL(A^{(I)})v_x^*z_x$, we further get that

\begin{equation}\label{fixedpointsalphaij}
    u_x(h_{s_x\cdot i}z)_xu_x^*=\delta_x(s_x)u_x(h_iz)_xu_x^*\delta_x(s_x)^*.
\end{equation}

\vspace{2mm}

For every finite index subgroup $B_1\leqslant B$, the action $B_1\curvearrowright^{\alpha_{x,y}}\cL(A^{(I)})$ given by $\alpha_x(b)(c)=\text{Ad}(\delta_x^{-1}(b))(c)$, for $c\in \cL(A^{(I)})$, is weak mixing over $\cL(G)$. The relation \eqref{fixedpointsalphaij} shows that the action $\alpha_x$ has an invariant finite dimensional subspace, meaning $\text{Span}\{u_x(h_iz)_xu_x^*:i=1,...,n\}$. Since the action is weak mixing, $(h_iz)_x\in\C$ for almost every $x\in X$ and every $i=1,...,n$. This means $h_iz\in\mathscr{Z}$ for all $i$. In particular $hz\in\mathscr{Z}$. Since $h\in H^{fc}$ was arbitrary, $\cL(H^{fc})z\subset\mathscr{Z}$.

\vspace{2mm}

Now, consider the subgroup $H^{fc}\subset H_2^h$ of $H^{hfc}$. We are going to repeat the last part to show $\cL(H_2^h)z$ is contained in $\mathscr Z$. Fix $h\in H^h_2$. Since $H^h_2$ is FC over $H^{fc}$, we can find finitely many elements $h=:h_1,...,h_n\in H^h_2$, an action $H\curvearrowright \{1,...,n\}$ and a map $\alpha:\{1,...,n\}\times H\to H^{fc}$ such that for all $g\in H$

\begin{equation*}
    gh_ig^{-1}=\alpha(i,g)h_{g\cdot i}.
\end{equation*}

\vspace{2mm}

If we follow the same argument after equation \eqref{torepeat} and decompose the prior equation over $\mathscr{Z}$, we obtain 

\begin{equation*}
    g_x(h_iz)_xg_x^{-1}=(\alpha(i,g)z)_x(h_{g\cdot i}z)_x,
\end{equation*}

\vspace{2mm}

for almost every $x\in X$ and every $i$. Since $\cL(H^{fc})z\subset\mathscr{Z}$, $(\alpha(i,g)z)_x$ is a scalar, and using \eqref{hxwxh=delta} we obtain

\begin{equation*}
    \delta_x(s_x)(u_x(h_iz)_xu_x^*)\delta_x(s_x)^*=(\alpha(i,\widehat{s_x})z)_xu_x(h_{\widehat{s_x}\cdot i}z)_xu_x^*.
\end{equation*}

\vspace{2mm}

Since the action given by conjugation induced by $\delta_x$ is weak mixing, $hz\in\mathscr{Z}$. Therefore, $\cL(H^h_2)z\subset\mathscr{Z}$. By induction, $\cL(H^{hfc})z\subset\mathscr{Z}$ which contradicts the assumption. Therefore, $\cL(H^{hfc})\prec^s\mathscr{Z}$ as wanted. In particular, $H^{fc}$ is finite index in $H^{hfc}$ and $\cL(H^{fc})\prec^s\mathscr{Z}$.

\vspace{3mm}

By Theorem \ref{relativecommutatorgroup}, the subgroup $H^{fc}$ is BFC inside $H$. In particular, $H^{fc}$ is a BFC-group itself and by B. H. Neumann's result \cite{bhneumann} we have that the commutator $F:=[H^{fc},H^{fc}]$ is finite. Since $H^{fc}$ is normal in $H$ it follows that $F$ is also normal in $H$. Consider the central projection $p= |F|^{-1}\sum _{h \in F}u_h\in \mathcal Z$. Then clearly  $\cL(H^{fc})p \subseteq \mathcal M p$ is a regular, abelian von Neumann subalgebra such that $\mathcal Z p\subseteq \cL(H^{fc})p$. Therefore the remaining of the conclusion follows from Theorem \ref{tinyinclusion}.
\end{proof}


\subsection{Cocycle crossed product for \texorpdfstring{$\cL(H/F)$}{L(H/F)}}\label{sectionprop(T)} On the proceeding part, we will describe the structure of the group von Neumann algebra $\cL(H)$ in the context of \ref{theoremA}.

\vspace{2mm}

To simplify notation, we will use the symbol $\tau$ to represent the canonical trace on $\cL(H)$. The trace on the compression $p\cL(H)p$, where $p$ is a non-zero projection in $\cL(H)$, is denoted by $\tau_p$ and given by $\tau_p(pxp) = \tau(pxp) / \tau(p)$, for $x\in\cL(H)$. Additionally, we will denote the restriction of $\tau$ on $\cL(H^{hfc})$ as $\tilde{\tau}$, and the trace on the compression $q\cL(H^{hfc})q$, for $q$ a non-zero projection in $\cL(H^{hfc})$, will be represented as $\tilde{\tau}_q$.

\begin{prop}\label{cutdown} Let $F\ngroup H$ be groups and assume that $F$ is finite. Let $p=\frac{1}{|F|}\sum_{f\in F}u_f$. Then, $p$ is a projection in $\mathscr{Z}(\cL(H))\cap \cL(F)$ and $\cL(H)p\cong \cL(H/F)$.
\end{prop}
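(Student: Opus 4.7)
The plan is to first verify the elementary statements about $p$, and then construct the isomorphism $\cL(H)p \cong \cL(H/F)$ via a trace-preserving $\ast$-homomorphism on the canonical dense $\ast$-subalgebras, extended using Theorem \ref{extensionisom}.

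\textbf{Step 1: $p$ is a central projection lying in $\cL(F)$.} A direct calculation shows $p^*=p$ (since $F$ is closed under inverses) and $p^2 = \frac{1}{|F|^2}\sum_{f,g\in F} u_{fg} = \frac{1}{|F|^2}\cdot|F|\sum_{h\in F} u_h = p$. By construction $p\in \cL(F)$. For centrality in $\cL(H)$, it suffices to check $u_h p u_h^* = p$ for every $h\in H$, which follows from the normality of $F$ in $H$:
\begin{equation*}
    u_h p u_h^* = \frac{1}{|F|}\sum_{f\in F} u_{hfh^{-1}} = \frac{1}{|F|}\sum_{f'\in hFh^{-1}}u_{f'} = p.
\end{equation*}

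\textbf{Step 2: Define the map.} A crucial preliminary observation is that $u_f p = p$ for every $f\in F$, since $u_f p = \frac{1}{|F|}\sum_{g\in F}u_{fg} = p$. Consequently $u_h p$ depends only on the coset $hF\in H/F$, so we may define a linear map $\pi: \C[H/F] \to \cL(H)p$ by $\pi(v_{[h]}) = u_h p$. Using $p\in \mathscr Z(\cL(H))$ and $p^2 = p$, we check multiplicativity
\begin{equation*}
    \pi(v_{[h_1]})\pi(v_{[h_2]}) = (u_{h_1}p)(u_{h_2}p) = u_{h_1}u_{h_2} p^2 = u_{h_1h_2}p = \pi(v_{[h_1h_2]}),
\end{equation*}
and $\pi(v_{[h]})^* = p^* u_h^* = u_{h^{-1}} p = \pi(v_{[h^{-1}]}) = \pi(v_{[h]}^*)$. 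Hence $\pi$ is a $\ast$-homomorphism.

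\textbf{Step 3: Trace preservation and extension.} For $h\in H$ we compute
\begin{equation*}
    \tau(u_h p) = \frac{1}{|F|}\sum_{f\in F}\tau(u_{hf}) = \frac{1}{|F|}\mathbbm{1}_{h\in F},
\end{equation*}
so using $\tau(p) = 1/|F|$ we get $\tau_p(\pi(v_{[h]})) = \mathbbm{1}_{h\in F} = \mathbbm{1}_{[h]=[e]}$, which matches the canonical trace on $\cL(H/F)$. Thus $\pi$ is trace preserving. By Theorem \ref{extensionisom}, $\pi$ extends to a $\ast$-isomorphism of $\cL(H/F)$ onto the WOT-closure of its image. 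Since $\{u_h p : h\in H\}$ weakly generates $\cL(H)p$ (as $\{u_h\}_{h\in H}$ weakly generates $\cL(H)$), this WOT-closure is all of $\cL(H)p$, completing the proof.

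I don't anticipate any substantive obstacle here; the argument is essentially a bookkeeping exercise, with the only subtlety being the normalization of the trace on the compression $p\cL(H)p$ relative to the canonical trace on $\cL(H/F)$.
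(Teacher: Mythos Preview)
Your proof is correct and follows essentially the same approach as the paper's: both verify $p$ is a central projection via the same computations, observe $u_fp=p$ for $f\in F$ so that $u_hp$ depends only on the coset $hF$, and then check that the resulting correspondence $u_hp\leftrightarrow v_{[h]}$ is a trace-preserving $\ast$-homomorphism. The only cosmetic difference is that the paper writes the map in the direction $\cL(H)p\to\cL(H/F)$ while you go the other way and invoke Theorem~\ref{extensionisom} explicitly for the extension.
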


\begin{proof} The operator $p=\frac{1}{|F|}\sum_{g\in F}u_g$ is a central projection. Indeed, $p=p^*$,

\begin{equation*}
    p^2=\frac{1}{|F|^2}\sum_{g,h\in F}u_{gh}=\frac{1}{|F|^2}\cdot|F|\sum_{g\in F}u_g=p,
\end{equation*}

\vspace{2mm}

and $p$ is central because $F$ is normal in $H$. The trace of the projection $p$ is given by

\begin{equation*}
    \tau(p)=\frac{1}{|F|}\sum_{f\in F}\langle u_f\delta_e,\delta_e\rangle=\frac{1}{|F|}.
\end{equation*}

\vspace{2mm}

Fix a set of coset representatives of $H/F$. Notice that if $g_1,g_2\in H$ with $g_1F=g_2F$, then $g_2=g_1f$ for some $f\in F$. This implies that $u_{g_2}p=u_{g_1}u_fp=u_{g_1}p$. Here we have used that

\begin{equation*}
    u_fp=\frac{1}{|F|}\sum_{g\in F}u_{fg}=\frac{1}{|F|}\sum_{g\in F}u_g=p.
\end{equation*}

\vspace{2mm}

Thus, the subset $\{u_gp: g\text{ a coset representative of } H/F\}$ of $\cL(H)p$ is a group under multiplication and it does not depend on the choice of representatives of $H/F$. This shows that the map $\cL(H)p\to \cL(H/F)$ given by $u_gp\mapsto u_{l_g}$, where $g=l_gf_g$ for $l_g$ in a set of representatives of $H/F$ and $f_g\in F$, is well-defined. It is also surjective and trace-preserving. Indeed, 

\begin{equation*}
    \tau_p(u_gp)=\frac{\tau(u_gp)}{\tau(p)}=|F|\cdot\tau(u_gp)=\sum_{f\in F}\tau(u_{gf})=\delta_{g,F}=\delta_{l_g,e}.
\end{equation*}

\vspace{2mm}

Hence, we conclude that $\cL(H)p\cong\cL(H/F)$.
\end{proof}

\vspace{2mm}

Let $H/H^{hfc}\curvearrowright^{\alpha,c}H^{hfc}$ be a cocycle action, and consider the induced cocycle action $H/H^{hfc}\curvearrowright^{\beta,\tilde{c}}\cL(H^{hfc})p$ where $\beta$ is induced from $\alpha$ and $\tilde{c}(g_1,g_2)=u_{c(g_1,g_2)}p$ for $g_1,g_2\in H/H^{hfc}$.

\vspace{2mm}

\begin{prop}\label{cocyclecrossprodisomorphism} Let $H$ be a countable group. Assume $F\subset H^{hfc}$ is a finite normal subgroup of $H$. Then, 
$\cL(H)p\cong\cL(H^{hfc}/F)\rtimes_{\beta,\tilde{c}}H/H^{hfc}$ for the cocycle action $(\beta,\tilde{c})$ defined above.
\end{prop}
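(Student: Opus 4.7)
The plan is to identify $\cL(H)$ with a cocycle crossed product over $\cL(H^{hfc})$, compress by the central projection $p$, and recognize the resulting compression via Proposition~\ref{cutdown}. Since $H^{hfc}\lhd H$, the group $H$ admits a cocycle semidirect product decomposition $H\cong H^{hfc}\rtimes_{\alpha,c}H/H^{hfc}$ from Section~\ref{section2}, which lifts to an identification $\cL(H)=\cL(H^{hfc})\rtimes_{\alpha,c}H/H^{hfc}$ of the associated von Neumann algebras, where now $\alpha$ denotes the induced cocycle action on $\cL(H^{hfc})$ and $c$ takes values in the canonical unitaries $u_{c(g,h)}\in\cL(H^{hfc})$.

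Next, I would use that $F$ is normal in all of $H$ (not merely in $H^{hfc}$): the automorphisms $\alpha_g$ then permute $F$ as a set and consequently fix $p$. Hence $p$ is $\alpha$-invariant and lies in $\mathscr Z(\cL(H))\cap \cL(F)$ by Proposition~\ref{cutdown}, so $(\alpha,c)$ descends to a cocycle action $(\beta,\tilde c)$ of $H/H^{hfc}$ on $\cL(H^{hfc})p$, with $\beta_g=\alpha_g|_{\cL(H^{hfc})p}$ and $\tilde c(g_1,g_2)=u_{c(g_1,g_2)}p$; the cocycle identities of Definition~\ref{cocycledefvN} are obtained verbatim by multiplying those for $(\alpha,c)$ by $p$.

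The main step is to verify the general fact that compressing a cocycle crossed product by an invariant central projection of the base yields the cocycle crossed product of the compression, concretely
\begin{equation*}
(\cL(H^{hfc})\rtimes_{\alpha,c}H/H^{hfc})\,p\;\cong\;\cL(H^{hfc})p\rtimes_{\beta,\tilde c}H/H^{hfc}.
\end{equation*}
I would define the map on the generating $*$-algebra by $(xu_g)p\mapsto (xp)v_g$, where $(v_g)_{g\in H/H^{hfc}}$ are the canonical unitaries of the target cocycle crossed product, check that it preserves the multiplication, adjoint, and trace relations of Definition~\ref{cocyclecross}, and then invoke Theorem~\ref{extensionisom} to extend it to a trace-preserving $*$-isomorphism at the von Neumann algebra level.

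Finally, applying Proposition~\ref{cutdown} to the finite normal subgroup $F\lhd H^{hfc}$ identifies $\cL(H^{hfc})p\cong\cL(H^{hfc}/F)$, and combining this with the displayed isomorphism yields $\cL(H)p\cong \cL(H^{hfc}/F)\rtimes_{\beta,\tilde c}H/H^{hfc}$. The one point that will need care is the bookkeeping for how $\tilde c$ transports across $\cL(H^{hfc})p\cong\cL(H^{hfc}/F)$; but this is immediate from the proof of Proposition~\ref{cutdown}, since $u_hp$ corresponds to the canonical unitary indexed by the coset $hF$, so $\tilde c$ simply records the image of $c$ under the quotient map $H^{hfc}\to H^{hfc}/F$.
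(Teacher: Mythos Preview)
Your proposal is correct and follows essentially the same approach as the paper: define a map between generators of $\cL(H)p$ and $\cL(H^{hfc})p\rtimes_{\beta,\tilde c}H/H^{hfc}$, verify it is a trace-preserving $\ast$-homomorphism, and then invoke Proposition~\ref{cutdown} to identify $\cL(H^{hfc})p\cong\cL(H^{hfc}/F)$. The only cosmetic difference is that the paper writes the map in the opposite direction, $u_gp\,u_r\mapsto u_{gr}p$, and omits the explicit reference to Theorem~\ref{extensionisom}.
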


\begin{proof} 
Define $\Phi:(\cL(H^{hfc})p\rtimes_{\tilde{c}}H/H^{hfc},\tilde{\tau}_p)\to (\cL(H)p,\tau_p)$ by $u_gpu_{r}\mapsto u_{gr}p$, and observe it is well-defined and trace preserving. Indeed, for $g\in H^{hfc}$ and $r\in H/H^{hfc}$ we have

\begin{equation*}
    \tau(u_{gr}p)=\langle u_{gr}p\delta_e,\delta_e\rangle=\frac{1}{|F|}\sum_{f\in F}\langle\delta_f,\delta_{r^{-1}g^{-1}}\rangle=\frac{1}{|F|}\delta_{gr,F}=\frac{1}{|F|}\delta_{r,e}\delta_{g,F}.
\end{equation*}

\vspace{2mm}

This tells us that

\begin{equation*}
    \tilde{\tau}_p(u_gpu_{r})=\tilde{\tau}_p(u_gp)\delta_{r,e}=\delta_{g,F}\delta_{r,e}=\tau_p(u_{gr}p).
\end{equation*}

\vspace{2mm}

It is also an algebra homomorphism because 

\begin{align*}
    \Phi(u_{g_1}pu_{r_1}u_{g_2}pu_{r_2})&=\Phi(u_{g_1\alpha_{r_1}(g_2)c(r_1,r_2)}pu_{r_1r_2})=u_{g_1\alpha_{r_1}(g_2)c(r_1,r_2)r_1r_2}p\\
    &=u_{g_1r_1}pu_{g_2r_2}p=\Phi(u_{g_1}pu_{r_1})\Phi(u_{g_2}pu_{r_2}).
\end{align*}

\vspace{2mm}

Since the map is surjective, we have $\cL(H)p\cong\cL(H^{hfc})p\rtimes_{\tilde{c}}H/H^{hfc}$. The fact that $\cL(H^{hfc})p\cong\cL(H^{hfc}/F)$ follows from Proposition \ref{cutdown}.
\end{proof}

\begin{lem}\label{trivialabelianization} Let $K$ be a countable discrete ICC group, and let $W$ be a countable discrete group with property (T) that satisfies Jones' automorphism conjecture \cite[Problem 8]{jones} (see also \cite[Section 3]{popa07}). Assume $W$ has trivial abelianization and $\text{Out}(W)$ is torsion free. Assume also $\cL(W)\cong\cL_{c}(K)$ for a $\mathbb{T}$-valued 2-cocycle $c$. Then, $K$ has trivial abelianization.
\end{lem}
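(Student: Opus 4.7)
The plan is to convert nontrivial characters of $K$ into outer automorphisms of $\cL_c(K) \cong \cL(W)$ and then exploit Jones' conjecture for $W$ together with the torsion-freeness of $\mathrm{Out}(W)$.

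Concretely, for each character $\chi \colon K \to \mathbb{T}$ I would define the automorphism $\alpha_\chi \in \mathrm{Aut}(\cL_c(K))$ by $\alpha_\chi(u_g) = \chi(g) u_g$; this respects the twisted multiplication $u_g u_h = c(g,h) u_{gh}$ because $\chi$ is multiplicative and $c$ is $\mathbb{T}$-valued. The assignment $\chi \mapsto \alpha_\chi$ is a group homomorphism $\Phi \colon \widehat{K^{\mathrm{ab}}} \to \mathrm{Aut}(\cL_c(K))$, and using the ICC hypothesis I would show that it descends to an injection $\bar\Phi \colon \widehat{K^{\mathrm{ab}}} \hookrightarrow \mathrm{Out}(\cL_c(K))$. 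Indeed, if $\alpha_\chi = \mathrm{Ad}(v)$ with Fourier expansion $v = \sum_h a_h u_h$, comparing coefficients of $v u_g = \chi(g) u_g v$ yields $a_k c(k,g) = \chi(g) a_{g^{-1}kg} c(g, g^{-1}kg)$ for all $g, k \in K$; taking absolute values gives $|a_k| = |a_{g^{-1}kg}|$, and since every nontrivial conjugacy class in $K$ is infinite while $\sum_h |a_h|^2 < \infty$, we conclude $a_k = 0$ for $k \neq e$. The equation at $k = e$ then forces $a_e(1-\chi(g)) = 0$ for all $g$, so $a_e = 0$ whenever $\chi$ is nontrivial, contradicting $v \neq 0$.

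Next I would invoke Jones' automorphism conjecture for $W$ in its structural form: $\mathrm{Out}(\cL(W)) \cong \mathrm{Char}(W) \rtimes \mathrm{Out}(W)$. As $W$ has trivial abelianization, $\mathrm{Char}(W) = 1$, so $\mathrm{Out}(\cL(W)) \cong \mathrm{Out}(W)$. Transporting $\bar\Phi$ across the isomorphism $\cL(W) \cong \cL_c(K)$ then gives an injection $\widehat{K^{\mathrm{ab}}} \hookrightarrow \mathrm{Out}(W)$. Since $W$ is countable, so is $\mathrm{Out}(W)$, and hence the compact Hausdorff group $\widehat{K^{\mathrm{ab}}}$ has countable underlying set; a standard Baire category argument forces any such topological group to be discrete, hence finite. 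Thus $K^{\mathrm{ab}}$ is finite by Pontryagin duality, and as a finite subgroup of the torsion-free group $\mathrm{Out}(W)$ we must have $\widehat{K^{\mathrm{ab}}} = 1$, whence $K^{\mathrm{ab}} = 1$.

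The main obstacle I expect lies in invoking the correct form of Jones' conjecture: the argument requires the structural identification $\mathrm{Out}(\cL(W)) \cong \mathrm{Char}(W) \rtimes \mathrm{Out}(W)$ rather than merely the countability of $\mathrm{Out}(\cL(W))$. Without this transfer mechanism one would still conclude that $K^{\mathrm{ab}}$ is finite, but the hypothesis that $\mathrm{Out}(W)$ is torsion-free could no longer be used to eliminate nontrivial finite abelianizations. The assumption that $W$ satisfies Jones' conjecture is precisely what bridges this gap.
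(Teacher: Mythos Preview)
Your approach shares the paper's core idea: turn nontrivial characters of $K$ into character automorphisms of $\cL_c(K)$, transport them to $\mathrm{Out}(\cL(W))\cong\mathrm{Out}(W)$ via Jones' conjecture, and use torsion-freeness. The difference lies in how you establish that $K^{\mathrm{ab}}$ is finite. The paper does this in one line by transferring property~(T) to $K$ (citing Watatani and Connes--Jones, since property~(T) is a von Neumann algebraic invariant even for twisted group factors), which immediately forces $[K:[K,K]]<\infty$; it then picks one nontrivial character and observes that the resulting automorphism has finite order, contradicting torsion-freeness. Your route instead embeds the whole dual $\widehat{K^{\mathrm{ab}}}$ into $\mathrm{Out}(W)$, argues it is countable, and applies a Baire argument to the compact dual. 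This also works, but be aware of a small gap: the assertion ``$W$ countable $\Rightarrow$ $\mathrm{Out}(W)$ countable'' is false in general (consider $(\mathbb Z/2\mathbb Z)^{(\mathbb N)}$); you need that $W$ is \emph{finitely generated}, which does follow from property~(T). Conversely, your careful Fourier-coefficient verification that nontrivial character automorphisms are outer for ICC $K$ is a point the paper leaves implicit. On balance the paper's route is shorter, while yours avoids invoking the twisted Connes--Jones/Watatani result.
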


\begin{proof} Let $\Psi$ be the $\ast$-isomorphism between $\cL(W)$ and $\cL_{c}(K)$. First note that $K$ has property (T) by \cite{wata,connesjones}. Assume, by contradiction, $K$ does not have trivial abelianization. Then, there exists a non-trivial multiplicative character $K/[K,K]\to\mathbb{T}$. Let $\xi$ denote the character of $K$ obtained by the composition of the projection map and the character on the quotient. Let $\Theta_{\xi}:\cL_{c}(K)\to\cL_{c}(K)$ be given by $\Theta_{\xi}(v_h)=\xi(h)v_h$ and extend linearly. It is not hard to see that $\Theta_{\xi}$ is an automorphism. This means that $\Psi^{-1}\circ\Theta_{\xi}\circ\Psi$ belongs to $\text{Out}(\cL(W))$, which is equivalent to $\text{Out}(W)$ by our assumptions on $W$. Moreover, if we let $[K:[K,K]]=:n<\infty$, for $n\in\N$, we have that $\Theta_{\xi}^n=1$. This implies that $\Psi^{-1}\circ\Theta_{\xi}\circ\Psi\in\text{Out}(W)$ is an outer automorphism of finite order. This contradicts one of our assumptions on $W$, and hence, $K$ has trivial abelianization.
\end{proof}

\section{Rigidity results for twisted group von Neumann algebras}

In this section we present several rigidity results for twisted group factors (\ref{notrivialcompressions} and Theorem \ref{untwistcocycle}) which extend some results in the literature. While these generalizations are used in an essential way in the proofs of the main results of the paper, they can also be viewed as results of independent interest. 

\subsection{Strong W\texorpdfstring{$^*$}{*}-rigidity results for twisted wreath-like product group von Neumann algebras} In this subsection we establish a couple $W^*$-strong rigidity results which generalize some of the recent rigidity results for wreath-like product groups factors \cite[Theorem 7.3]{cios22b} to their twisted versions (see Theorem \ref{untwistcocycle}). To properly introduce them we first need to establish some notations along with some useful facts.

\vspace{2mm}

Let $B\curvearrowright I$ be a faithful action with infinite orbits. Any group $G\in\mathcal{WR}(A,B\curvearrowright I)$ and any $2$-cocycle $c:G\times G\to\mathbb{T}$ naturally give rise to an action $G \curvearrowright^{\sigma} \cL(A^{(I)})$ which is malleable in the sense of Popa and weak mixing as follows. Assume $A$ is an abelian free group and let $c:G\times G\to\mathbb{T}$ be a 2-cocycle. Note since $A$ is free, we may assume that $c|_{A^{(I)}\times A^{(I)}}=1$. Let $g\in G$ and $\otimes_{i\in I}a_i\in \cL(A^{(I)})$, and define $\sigma_g(\otimes_{i\in I}a_i)=c(g,(a_i)_i)c(g(a_i)_i,g^{-1})\otimes_{i\in I}a_{g^{-1}\cdot i}$. When $A$ is a free abelian group, the action $\sigma$ naturally extends to an action of the quotient group $B=G/A^{(I)}\curvearrowright\cL(A^{(I)})$.

\begin{lem}\label{weakmixingaction} Assume $A$ is an abelian free group. Then 

\begin{enumerate}
    \item[(i)] The action $\sigma|_B$ is weak mixing.
    \item[(ii)] The action $\sigma$, as defined above, is weak mixing and malleable in the sense of Popa (see for instance \cite{popa06,popashifts,popastrongrigII}).
    \item[(iii)] (\cite[Lemma 2.5]{patchell}) The action  $\sigma$ is free.  
\end{enumerate}
\end{lem}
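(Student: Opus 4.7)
The plan is to leverage the normalization $c|_{A^{(I)}\times A^{(I)}}=1$ (valid since $A^{(I)}$ has trivial Schur multiplier in the generating system coming from the free abelian structure) together with the fact that $c$ takes values in $\mathbb T$. A direct computation gives $\sigma_g(u_a)= c(g,a)c(ga,g^{-1})\,u_{gag^{-1}}$ for $a\in A^{(I)}$, so $\sigma_g$ is the composition of the wreath-like permutation $a\mapsto gag^{-1}$ with multiplication by a scalar. Because these scalars cancel in inner conjugations, $\sigma$ is a genuine (not cocycle) action of $G$ on $\cL(A^{(I)})$, and applying the formula to $a\in A^{(I)}$ together with $c|_{A^{(I)}\times A^{(I)}}=1$ shows that $\sigma_a=\operatorname{id}$. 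Consequently $\sigma$ descends to an honest action of $B=G/A^{(I)}$, which immediately reduces (ii) weak mixing to (i) once the latter is proven.

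For (i), I would check weak mixing by testing it on the Fourier basis $\{u_a\}_{a\in A^{(I)}}$ of $\cL(A^{(I)})$. Given any finite $F\subset A^{(I)}\setminus\{e\}$, let $S=\bigcup_{a\in F}\operatorname{supp}(a)\subset I$, a finite set. Since $B\curvearrowright I$ has infinite orbits, I can find $b\in B$ with $b\cdot S\cap S=\emptyset$; the formula yields $\tau(u_a^*\sigma_b(u_{a'}))=\chi\cdot\delta_{a,\,b\cdot a'}=0$ for all $a,a'\in F$ because their supports are now disjoint. The standard characterization of weak mixing (through spectral gaps on the orthogonal complement of the constants, or equivalently the absence of finite-dimensional invariant subspaces in $\cL(A^{(I)})\ominus\mathbb C 1$) now gives weak mixing; one also checks that $\tau$-preserving affine permutation actions of this type do not admit nontrivial finite-dimensional invariant subspaces.

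For malleability in (ii), I follow Popa's template for generalized Bernoulli actions. Write $\cL(A^{(I)})=\overline{\otimes}_{i\in I}\cL(A)$; form the Gaussian-type enlargement $\widetilde{\cL(A)}=\cL(A)*\cL(\mathbb Z)$ and the corresponding infinite tensor product $\widetilde{\mathcal N}=\overline{\otimes}_{i\in I}\,\widetilde{\cL(A)}$; take the standard one-parameter path $(\alpha_t)_{t\in\R}$ of automorphisms on each factor with $\alpha_0=\operatorname{id}$ and $\alpha_1$ implementing the flip across a doubled tensor factor, and set $\widetilde{\alpha}_t=\otimes_i\alpha_t$. Because the wreath-like permutation acts diagonally on the tensor factors and the cocycle twist of $\sigma$ only contributes factor-by-factor scalar multipliers (equivalently, a translation by a $1$-cocycle in $\widehat{A^{(I)}}$), the diagonal deformation $\widetilde\alpha_t$ commutes with $\sigma$. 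This produces the desired malleable deformation, and the s-malleability/subequivariant flip are built in the usual way. Part (iii) is \cite[Lemma 2.5]{patchell}, which I apply verbatim; the main conceptual obstacle, if any, is verifying the scalar twist really is absorbed by the Popa construction, but this amounts to the observation above that the twist is by a character and thus commutes with any deformation acting trivially on $\mathbb T$.
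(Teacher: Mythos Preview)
Your treatment of weak mixing in (i)--(ii) is essentially the paper's argument: test on Fourier basis elements and use the infinite-orbit assumption to separate supports. The observation that $\sigma$ factors through $B$ is also correct and used implicitly in the paper.

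The malleability argument has a genuine gap. You write ``form the Gaussian-type enlargement $\widetilde{\cL(A)}=\cL(A)*\cL(\mathbb Z)$\ldots with $\alpha_1$ implementing the flip across a doubled tensor factor,'' but these are two different constructions. The free-product enlargement with $\alpha_t=\mathrm{Ad}(u^t)$ is Ioana's deformation; it never produces a flip at $t=1$ and is not malleable in Popa's sense. Popa malleability, which is what the lemma claims and what is needed for Corollary~\ref{weakcocycleisweakcoboundary} and \cite[Theorem 3.1]{sasyk}, requires a path $(\alpha_t)$ on the \emph{tensor double} $\cL(A^{(I)})\,\overline{\otimes}\,\cL(A^{(I)})$ with $\alpha_1$ carrying the first copy onto the second. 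The paper builds this explicitly via $\alpha_t^0(a_1)=a_1^{1-t}a_2^t$, $\alpha_t^0(a_2)=a_2^{1+t}a_1^{-t}$ on each $\cL(A)\otimes\cL(A)$.

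More importantly, your justification that the scalar twist ``commutes with any deformation acting trivially on $\mathbb T$'' does not capture the actual mechanism. On the $i$-th factor pair, $(\sigma_g\otimes\sigma_g)$ multiplies $a_1^{k_1}a_2^{k_2}$ by $\lambda(g,i)^{k_1+k_2}$ (after permutation), and the paper's $\alpha_t^0$ commutes with this precisely because it preserves the total degree $k_1+k_2$; this is checked by an explicit computation using Borel functional calculus to make sense of $\sigma_g(a_1^t)=\lambda(g,i)^t a_{1,g^{-1}\cdot i}^t$ for real $t$. A generic deformation need not preserve this quantity, so the commutation is a nontrivial feature of this particular $\alpha_t^0$, not a formal consequence of the twist being scalar-valued.
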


\begin{proof} Assume $A=\Z$ for simplicity, and let $a_1$ be the generator of $\cL(A)$. First we show the action is weak mixing. Since the proof for weak mixingness is similar in both cases (i) and (ii), we only show it for (ii); that is, we show there exists an infinite sequence $(g_n)_n\subset G$ such that $\lim_n\tau(\sigma_{g_n}(x)y)=0$, for all $x,y$. It suffices to show this on a set of generators. Let $(a_1^{k_i})_i,(a_1^{l_i})_i\in \cL(A^{(I)})$. Note that,

\begin{equation*}
    \tau(\sigma_{g_n}((a_1^{k_i})_i)(a_1^{l_i })_i)=c(g_n,(a_{1}^{k_i})_i)c(g_n(a_{1}^{k_i})_i,g_n^{-1})\prod_i\tau(a_{1}^{k_{g_n^{-1}.i}}a_1^{l_i})
\end{equation*}

\vspace{2mm}

However, since $k_i,l_i$ are zero for all but finitely many $i$ and the action $G\curvearrowright I$ has infinite orbits, we can find a sequence $(g_n)_n$ that escapes all $\text{Stab}_G(i)$ for all $i$. Thus, 

\begin{equation*}
    \lim_n\tau(a_1^{k_{g_n^{-1}.i}}a_1^{l_i})=0.
\end{equation*}

\vspace{2mm}

Next, we show $\sigma$ is malleable. Let $a_1$ and $a_2$ be generating Haar unitaries for $\cL(A)$, and define $\alpha_t^0\in \text{Aut}(\cL(A)\otimes\cL(A))$ by $\alpha_t^0(a_1)=a_1^{1-t}a_2^t$, $\:\alpha_t^0(a_2)=a_2^{1+t}a_1^{-t}$, for $t\in\R$. Note that $a_1^{1-t}a_2^t$ and $a_2^{1+t}a_1^{-t}$ are also generating Haar unitaries for $\cL(A)$. Also, $\alpha_0^0=\text{id}$, $\alpha_1^0(a_1)=a_2$ and hence $\alpha_1^0(\cL(A)\otimes\C)=\C\otimes\cL(A)$. Define $\alpha_t:=\otimes_{i\in I}\alpha_t^0\in\text{Aut}(\cL(A^{(I)})\overline{\otimes}\cL(A^{(I)}))$. Note that $\alpha_t(\cL(A^{(I)})\otimes \C)=\C\otimes\cL(A^{(I)})$ and $\alpha_t\to\text{id}$ pointwise.

\vspace{2mm}

Next, we check that $\alpha_t\circ(\sigma_g\otimes\sigma_g)=(\sigma_g\otimes\sigma_g)\circ\alpha_t$ for $\in\R$ and $g\in G$. It again suffices to check the previous equality on generators of $A^{(I)}\times A^{(I)}$. Let $a_{1,i}^{k_1}, a_{2,i}^{k_2}\in A^{(I)}$ be the elements that at position $i$ they have $a_1^{k_1}$ (respectively $a_2^{k_2}$) and 1 everywhere else. Then,

\begin{align*}
    ((\sigma_g\otimes\sigma_g)\circ\alpha_t)(a_{1,i}^{k_1}a_{2,i}^{k_2})&=(\sigma_g\otimes\sigma_g)(a_{1,i}^{k_1(1-t)}a_{2,i}^{k_1t}a_{2,i}^{k_2(1+t)}a_{1,i}^{-tk_2})=(\sigma_g\otimes\sigma_g)(a_{1,i}^{k_1-k_1t-k_2t}a_{2,i}^{k_2+k_1t+k_2t})\\
    &=\sigma_g(a_{1,i}^{k_1-k_1t-k_2t})\otimes\sigma_g(a_{2,i}^{k_2+k_1t+k_2t}).
\end{align*}

\vspace{2mm}

Note that $\sigma_g(a_{1,i})=c(g,a_{1,i})c(ga_{1,i},g^{-1})a_{1,g^{-1}\cdot i}=:\lambda(g,i)a_{1,g^{-1}\cdot i}$. Also, for $k\in\N$, $ga^k_{1,i}g^{-1}=(ga_{1,i}g^{-1})^k$, because $c|_{A^{(I)}\times A^{(I)}}=1$. Thus, 

\begin{equation*}
    c(g,a_{1,i}^{k})c(ga_{1,i}^{k},g^{-1})=(c(g,a_{1,i})c(ga_{1,i},g^{-1}))^{k}=\lambda(g,i)^{k},
\end{equation*}

\vspace{2mm}

meaning that $\sigma_g(a_{1,i}^{k})=\lambda(g,i)^k\cdot a_{1,g^{-1}\cdot i}^k$, for any $k\in\Z$. Using Borel functional calculus, $\sigma_g(a_{1,i}^{t})=\lambda(g,i)^t\cdot a_{1,g^{-1}\cdot i}^t$ for all $t\in\R$. Therefore,

\begin{align*}
    ((\sigma_g\otimes\sigma_g)\circ\alpha_t)(a_{1,i}^{k_1}\:a_{2,i}^{k_2})&=\lambda(g,i)^{k_1-k_1t-k_2t}\:a_{1,g^{-1}\cdot i}^{k_1-k_1t-k_2t}\:\lambda(g,i)^{k_2+k_1t+k_2t}\:a_{2,g^{-1}\cdot i}^{k_2+k_1t+k_2t}\\
    &=\lambda(g,i)^{k_1+k_2}\:a_{1,g^{-1}\cdot i}^{k_1-k_1t-k_2t}\:a_{2,g^{-1}\cdot i}^{k_2+k_2t+k_1t},
\end{align*}

\vspace{2mm}

while on the other hand,

\begin{align*}
    (\alpha_t\circ(\sigma_g\otimes\sigma_g))(a_{1,i}^{k_1}\:a_{2,i}^{k_2})&=\alpha_t(\lambda(g,i)^{k_1}\:a_{1,g^{-1}\cdot i}^{k_1}\:\lambda(g,i)^{k_2}\:a_{2,g^{-1}\cdot i}^{k_2})\\
    &=\lambda(g,i)^{k_1+k_2}\:a_{1,g^{-1}\cdot i}^{k_1-k_1t-k_2t}\:a_{2,g^{-1}\cdot i}^{k_2+k_2t+k_1t}.
\end{align*}

\vspace{2mm}

Lastly, we prove $\sigma$ is a free action. By the faithfulness of the action $B\curvearrowright I$, for every $g\in G\setminus \{1\}$, the set $\{ i\,|\, g\cdot i\neq i\}$ is infinite \cite[Lemma 4.4]{cios22b}. Since $\cL(A)$ is diffuse, let $(u_n)_n\subset\mathscr{U}(\cL(A))$ be a sequence converging weakly to zero. Fix $g\in G$, and let $k,j\in I$ with $g\cdot k=j\neq k$. Let $y_n^k=u_n$ and $y_n^l=1$ for $k\neq l$, and $y_n=\otimes_{i\in I}y_n^i$. We claim that for all $a\in \cL(A^{(I)})$, $\langle ay_n,\sigma_g(y_n)a\rangle\to 0$. 

\vspace{2mm}

Fix $x=\otimes_ix_i$ (respectively  $z=\otimes_iz_i$) where all but finitely many $x_i$ (respectively $z_i$) are 1. Then, $xy_n=\otimes_{i\neq k}x_i\otimes x_ku_k$ and $\sigma_g(y_n)z=\otimes_{i\neq j}z_i\otimes z_j\sigma_g(y_n^k)$. Then, 

\begin{equation*}
    \tau(xy_nz^*\sigma_g(y_n)^*)=\tau(x_ku_nz_k)\tau(x_jz_j^*\sigma_g(y_n^k)^*)\prod_{i\neq k,j}\tau(x_iz_i^*)\to 0,
\end{equation*}

\vspace{2mm}

as $n\to\infty$ since the $u_n$'s converge weakly to zero.
\end{proof}

\vspace{2mm}

A map $\omega:G\to\mathscr{U}(\cL(A^{(I)}))$ satisfying the condition $\omega_g\sigma_g(\omega_h)=\omega_{gh}$ for every $g,h\in G$ is called a \textit{$1$-cocycle for $\sigma$}. Such $1$-cocycle for $\sigma$ is \textit{trivial} (or it is a \textit{coboundary}) if there exists a unitary element $v\in\mathscr{U}(\cL(A^{(I)}))$ such that $\omega_g=v^*\sigma_g(v)$, for all $g$. The map $\omega$ is a \textit{weak $1$-cocycle} if it satisfies the previous relation modulo the scalars, i.e., $\omega_g\sigma_g(\omega_h)\omega_{gh}^*\in\mathbb{T}1$, for all $g,h\in G$. It is \textit{weakly trivial} (or a \textit{weak coboundary}) if it is trivial modulo the scalars.

\vspace{2mm}

One can see that in \cite[Theorem 3.1]{sasyk} the only facts used about $G$-Bernoulli actions are malleability and weak mixingness. Since the action defined above satisfies these two properties, and the conclusion of \cite[Lemma 3.2]{sasyk}, we obtain the same statement for our action $\sigma$.

\begin{cor}\label{weakcocycleisweakcoboundary} Let $B \curvearrowright I$ be a faithful action with infinite orbits. Let $G\in\mathcal{WR}(A,B\curvearrowright I)$ be a property (T) group, where $A$ is a free abelian group and $B$ is an ICC subgroup of a hyperbolic group. Given any weak 1-cocycle $\omega$ for the action $\sigma$ defined above, $\omega$ is a
weak coboundary.
\end{cor}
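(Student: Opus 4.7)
The plan is to import Popa's malleable/property~(T) cocycle untwisting argument, following the scheme of \cite[Theorem 3.1]{sasyk}, once we observe that all the inputs of that proof survive for our twisted generalized wreath-like action $\sigma$. Specifically, the three ingredients are: malleability of $\sigma$, weak mixing of $\sigma$ (both provided by Lemma \ref{weakmixingaction}), and property (T) of $G$ (by assumption). With these in hand, the route to the conclusion is essentially mechanical, as the authors already point out in the remark preceding the corollary.

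The execution proceeds in four steps. First, I would pass to the dilation $\widetilde\cM := \cL(A^{(I)})\,\overline{\otimes}\,\cL(A^{(I)})$ equipped with the diagonal action $\tilde\sigma_g := \sigma_g\otimes\sigma_g$, and lift $\omega$ to the weak $1$-cocycle $\Omega_g := \omega_g\otimes 1\in\mathscr U(\widetilde\cM)$; note $\Omega$ has the same scalar defect as $\omega$. Second, using the malleable path $(\alpha_t)_{t\in\R}$ from Lemma \ref{weakmixingaction}(ii), which commutes with $\tilde\sigma$, the family $t\mapsto\alpha_t(\Omega_g)$ is a continuous path of weak $1$-cocycles for the same action. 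This is the step where property (T) enters: the representation associated with $\tilde\sigma$ on $L^2(\widetilde\cM)\ominus\C$ has spectral gap by weak mixing, and the property (T) of $G$ together with the analogue of \cite[Lemma 3.2]{sasyk} produces a small $t_0>0$ and a nonzero $v\in\widetilde\cM$ such that, modulo $\T$, $v\,\Omega_g=\alpha_{t_0}(\Omega_g)\,\tilde\sigma_g(v)$ for every $g\in G$.

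Third, iterate this local intertwining along a chain $0<t_0<2t_0<\cdots<nt_0=1$ to transport the intertwining all the way to $t=1$. At $t=1$ the malleable deformation $\alpha_1$ swaps the two tensor legs $\cL(A^{(I)})\otimes 1\leftrightarrow 1\otimes\cL(A^{(I)})$, so the final intertwiner $V\in\widetilde\cM$ satisfies $V\,(\omega_g\otimes 1)\in\T\cdot(1\otimes\omega_g)\,\tilde\sigma_g(V)$. Fourth, apply weak mixing of $\sigma$, in the standard form of Popa's transversality argument (see for instance \cite{popashifts}), to force $V$ to be supported on a single tensor leg; this yields a unitary $w\in\mathscr U(\cL(A^{(I)}))$ with $w^*\sigma_g(w)\in\T\,\omega_g$ for every $g$, which is precisely the statement that $\omega$ is a weak coboundary.

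The main obstacle will be step two, the spectral gap/property~(T) extraction. The subtlety is that our action $\sigma$ is not a genuine Bernoulli shift but carries the scalar twist $c(g,\cdot)c(g\cdot,g^{-1})$ coming from the $2$-cocycle $c$. One must verify that the representation of $G$ on $L^2(\widetilde\cM)\ominus\C$ still has no almost-invariant vectors. Since we have normalized $c$ so that $c|_{A^{(I)}\times A^{(I)}}=1$, the scalar twist enters $\sigma_g$ only as a central phase on each tensor leg and cancels in the $\tilde\sigma_g\otimes\overline{\tilde\sigma_g}$ representation that controls the spectral gap. Consequently, the weak mixing computation reduces to its untwisted counterpart in \cite{sasyk}, and the property~(T) argument carries over unchanged.
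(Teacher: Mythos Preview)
Your proposal is correct and follows essentially the same approach as the paper. The paper's own argument is two sentences: it observes that Sasyk's proof of \cite[Theorem 3.1]{sasyk} only uses malleability, weak mixing, and the conclusion of \cite[Lemma 3.2]{sasyk}, all of which hold for $\sigma$ by Lemma~\ref{weakmixingaction}; you have simply unpacked that reference into the standard Popa deformation/property~(T) scheme. One small imprecision: $\alpha_1$ does not literally \emph{swap} the tensor legs (in the paper's path $\alpha_1^0(a_2)=a_2^2a_1^{-1}$), but it does send $\cL(A^{(I)})\otimes\C$ into $\C\otimes\cL(A^{(I)})$, which is all the argument needs.
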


\begin{thm} \label{untwistcocycle} Let $B \curvearrowright I$ be a faithful action with infinite orbits. Let $G\in \mathcal{WR}(A,B\curvearrowright I)$ be a property (T) wreath-like product group, where $A$ is a free abelian group, $B$ is an ICC subgroup of a hyperbolic group. Let $H\in \mathcal{WR}(C,D\curvearrowright J)$ be any property (T) wreath-like product group where $C$ is abelian, $D$ is an ICC subgroup of a hyperbolic group, and $D\curvearrowright J$ is faithful with infinite orbits. Let $c: G\times G\ra \mathbb T$ be  any $2$-cocycle. 

\vspace{2mm}

Assume that  $\Phi:\cL_c(G)\to \cL(H)$ is a $*$-isomorphism. Then, $c$ is a trivial 2-cocycle and $G\cong H$. Moreover, there exist a group isomorphism $\delta:G\to H$, a character $\eta:G\times G\to\mathbb{T}$ and a unitary $w\in \cL(H)$ such that $\Phi(u_g)=\eta(g)wv_{\delta(g)}w^*$ for every $g\in G$, where $(u_g)_{g\in G}$ and $(v_h)_{h\in H}$ denote the canonical generating unitaries of $\cL(G)$ and $\cL(H)$, respectively.
\end{thm}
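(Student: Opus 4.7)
The plan is to adapt the strategy of \cite[Theorem 7.3]{cios22b} to the twisted setting, and trivialize the cocycle $c$ at the end through the weak cocycle analysis of Corollary \ref{weakcocycleisweakcoboundary}. Write $c_0 := c|_{A^{(I)}\times A^{(I)}}$. The starting point is the wreath-like decomposition $\cL_c(G) = \cL_{c_0}(A^{(I)}) \rtimes_{\sigma,\tilde c} B$, where $\sigma$ is the twisted conjugation action of $B$ on $\cL_{c_0}(A^{(I)})$; because $A$ is free abelian, this is precisely the action considered in Lemma \ref{weakmixingaction}, so it is weak mixing, malleable, and free.

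First I would apply deformation/rigidity on the target side $\cL(H) \cong \cL(C^{(J)}) \rtimes D$ to show that $\Phi(\cL_{c_0}(A^{(I)}))$ can be intertwined into the Cartan subalgebra $\cL(C^{(J)})$. Using the Gaussian array deformation $V_t^q$ and Ioana's deformation $\alpha_t$ from Section~\ref{section5} together with property (T) transferred through $\Phi$, the subordination of Theorem \ref{juanfelipe}, and the quasi-normalizer control of Corollaries \ref{controlquasinorm3}--\ref{quasinormcontrol2}, one obtains $\Phi(\cL_{c_0}(A^{(I)})) \prec_{\cL(H)} \cL(C^{(J)})$. Since both are regular MASAs of $\cL(H)$, this intertwining upgrades to a unitary conjugacy, so after composing $\Phi$ with the corresponding inner automorphism we may assume $\Phi(\cL_{c_0}(A^{(I)})) = \cL(C^{(J)})$. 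The resulting identification of orbit equivalence relations for $B \curvearrowright \cL_{c_0}(A^{(I)})$ and $D \curvearrowright \cL(C^{(J)})$, together with property (T) of $B$, allows Theorem \ref{SOE} to produce a group isomorphism $\delta_0 : B \to D$, which lifts through the wreath-like structure to a group isomorphism $\delta : G \to H$.

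At this stage the isomorphism necessarily takes the form $\Phi(u_g) = \omega_g v_{\delta(g)}$ for some map $\omega : G \to \mathscr{U}(\cL(C^{(J)}))$, and the relation $u_g u_h = c(g,h)\, u_{gh}$ forces $\omega$ to be a weak $1$-cocycle for $\sigma$ with scalar defect given by $c$. Corollary \ref{weakcocycleisweakcoboundary}, applied to the property (T) wreath-like product $H$, yields a character $\eta : G \to \mathbb{T}$ and a unitary $v \in \cL(C^{(J)})$ with $\omega_g = \eta(g)\, v^* \sigma_{\delta(g)}(v)$; substituting back simultaneously shows that $c$ is a coboundary (hence trivial as a class in $H^2(G,\mathbb{T})$) and produces the explicit form $\Phi(u_g) = \eta(g)\, w v_{\delta(g)} w^*$. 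The main obstacle will be the intertwining step: the twist $c$ obstructs the direct application of the malleable deformation machinery, and careful accounting of how $c$ interacts with the unitaries implementing the Gaussian and Ioana deformations is essential. It is precisely here that the free abelian hypothesis on $A$ (via Lemma \ref{weakmixingaction}) becomes indispensable, as it is what guarantees the malleability of $\sigma$ in Popa's sense even in the presence of the twist.
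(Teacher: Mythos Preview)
Your outline has a substantive gap at the step where you assert that $\delta_0:B\to D$ ``lifts through the wreath-like structure to a group isomorphism $\delta:G\to H$.'' Wreath-like products are genuinely non-split extensions, and matching the Cartan subalgebras together with an isomorphism of acting groups does not by itself produce such a lift. What you actually obtain is only $\Phi(u_g)=\omega_g\,v_{\widehat{\delta_0(\epsilon(g))}}$ with $\omega_g\in\mathscr{U}(\cL(C^{(J)}))$, where the subscript is a \emph{section} $G\to H$, not a homomorphism. Computing the defect then gives
\[
\omega_g\,\tilde\sigma_g(\omega_h)\,\omega_{gh}^{-1}\;=\;c(g,h)\,v_{c'(g,h)}^{-1},
\]
where $c'(g,h)\in C^{(J)}$ is the wreath-like $2$-cocycle of $H$ pulled back along $\delta_0\circ\epsilon$. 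The right-hand side is a unitary in $\cL(C^{(J)})$ but \emph{not} a scalar, so $\omega$ is not a weak $1$-cocycle in the sense of Corollary~\ref{weakcocycleisweakcoboundary}, and your final step does not go through.

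The paper avoids this obstruction by passing to the comultiplication $\Delta:\cL(H)\to\cL(H)\,\overline{\otimes}\,\cL(H)$. Writing $\Delta(\Phi(u_g))=\tilde\eta(g)\,(\Phi(u_g)\otimes\Phi(u_g))$ with $\tilde\eta(g)\in\mathscr{U}(\cL(C^{(J)})\,\overline{\otimes}\,\cL(C^{(J)}))$, one computes from $\Phi(u_g)\Phi(u_h)=c(g,h)\Phi(u_{gh})$ alone that $\tilde\eta$ is a weak $1$-cocycle for the diagonal action $\sigma\otimes\sigma$ with scalar defect exactly $c(g,h)$; the problematic $c'$-term never enters because one never decomposes $\Phi(u_g)$ against group unitaries of $H$. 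Corollary~\ref{weakcocycleisweakcoboundary} then shows $c$ is a coboundary, hence $\cL_c(G)\cong\cL(G)$, and the remaining assertions (including the actual group isomorphism $G\cong H$ and the explicit form of $\Phi$) follow by quoting the untwisted result \cite[Theorem 7.3]{cios22b}.

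Two smaller points. The intertwining of Cartans needs none of the Section~\ref{section5} machinery: since $D$ is an ICC subgroup of a hyperbolic group, $\cL(H)$ has a unique Cartan subalgebra up to unitary conjugacy by \cite[Lemma 3.7]{cios22}, and this is immediate. Also, Theorem~\ref{SOE} carries the extra hypothesis that $C_D(d)$ be amenable for all $d\neq 1$, which is not assumed here; the paper instead invokes \cite[Theorem 4.1]{cios22} directly, checking its hypotheses via the infinite-index conditions on $\mathrm{Stab}_D(j)$ and $C_D(g)$ coming from infinite orbits and ICC.
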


\begin{proof} Let $\Phi:\cL_c(G)\to \cL(H)$ be a $*$-isomorphism, and denote by $\mathcal{P}:=\Phi(\cL_c(A^{(I)}))\subset\cL(H)=:\mathcal{M}\supset \cL(C^{(J)})=:\mathcal{Q}$. Then, $\mathcal{P},\mathcal{Q}\subset\cM$ are Cartan subalgebras. Since $D$ is an ICC subgroup of a hyperbolic group, by \cite[Lemma 3.7]{cios22} we have that $\cM$ has a unique Cartan subalgebra, and therefore, there exists $u\in\mathscr{U}(\cM)$ with $u\mathcal{P}u^*=\mathcal{Q}$. By replacing $\Phi$ with $\text{Ad}(u)\circ\Phi$, we can assume that $\mathcal{P}=\mathcal{Q}$.

\vspace{2mm}

Denote by $\sigma_h=\text{Ad}\:\Phi(u_{\hat{h}})\in\text{Aut}(\mathcal{P})$, for $h\in B$, the action built from $G\curvearrowright A^{(I)}$ and the cocycle $c:G \times G \to \mathbb T$ given at the beginning of the subsection. Then, $\sigma=(\sigma_h)_{h\in B}$ defines a free and weak mixing action of $B$ on $\mathcal{P}$ by Lemma \ref{weakmixingaction}. Now, consider the action $\alpha=(\alpha_h)_{h\in D}$ of $D$ on $\mathcal{Q}$ given by $\alpha_h:=\text{Ad}(v_{\hat{h}})$. Let $(Y,\mu)$ be the dual of $C$ with its Haar measure. Denote $(X,\nu)=(Y^J,\mu^J)$, and identify $\mathcal{Q}=L^{\infty}(X,\nu)$. Under the identification $\mathcal{P}=\mathcal{Q}$, we also identify $\mathcal{P}=L^{\infty}(X,\nu)$. The actions $\sigma$ and $\alpha$ induce actions $B\curvearrowright (X,\nu)$ and $D\curvearrowright (X,\nu)$, denoted by $\sigma$ and $\alpha$, respectively. 

\vspace{2mm}

Using the fact $\mathcal{Q}\subset\cM$ is a Cartan subalgebra and the p.m.p. equivalence relation associated to the inclusion $\mathcal{Q}\subset \cM$ is equal to $\mathscr{R}(D\curvearrowright^{\alpha}X)$ we get that

\begin{equation*}
    \sigma(B)\cdot x=\alpha(D)\cdot x\quad\nu\text{-a.e. }x\in X.
\end{equation*}

\vspace{2mm}

Fix $j\in J$ and $g\in D\setminus\{1\}$. Since the orbits of the action $D \curvearrowright J$ are infinite, $\text{Stab}_D(j)$ has infinite index in $D$. Since $D$ is ICC the same holds for $C_D(g)$. Then, using the same type of argument as in the proof of \cite[Theorem 1.3 Step 3]{cios22} there exists a sequence $(h_m)\subset B$ with

\begin{align*}
    \mu&(\{x\in X:\sigma_{h_m}(x)\in\alpha(s\text{Stab}_D(j)t)(x)\})\to 0 \text{ and }\mu(\{x\in X:\sigma_{h_m}(x)\in\alpha(s\text{C}_D(g)t)(x)\})\to 0,
\end{align*}

\vspace{2mm}

for all $s,t\in D, g\in D\setminus \{1\}$ and $ j\in J$. By \cite[Theorem 4.1]{cios22}, there is a group isomorphism $\delta:B\to D_1\leqslant D$ and $\varphi\in [\mathscr R(D\curvearrowright^{\alpha}X)]$ such that

\begin{equation*}
    \varphi\circ\sigma(h)=\alpha(\delta(h))\circ\varphi,\quad\text{for all } h\in B.
\end{equation*}

\vspace{2mm}

Let $u\in\mathscr{N}_{\cM}(\mathcal{Q})$ with $uau^*=a\circ\varphi^{-1}$, for all $a\in\mathcal{Q}$. Since $\sigma_h=\text{Ad}(\Phi(u_{\hat{h}}))$, there exists $\xi_h\in\mathscr{U}(\mathcal{Q})$ such that

\begin{equation*}
    u\Phi(u_{\hat{h}})u^*=\xi_h v_{\widehat{\delta(h)}},\quad\text{for all } h\in B.
\end{equation*}

\vspace{2mm}

Replacing $\Phi$ with $\text{Ad}(u)\circ\Phi$, we have that

\begin{equation*}
    \Phi(u_{\hat{h}})=\xi_hv_{\widehat{\delta(h)}},\quad\text{for all } h\in B.
\end{equation*}

\vspace{2mm}

Let $\Delta:\cL(H)\to\cL(H)\:\overline{\otimes}\:\cL(H)$ be the diagonal embedding. For $g\in G$, let $h=\epsilon(g)\in B$ and $a=g\widehat{h}^{-1}\in A^{(I)}$. Then,

\begin{align*}
    \Delta(\Phi(u_g))&=\Delta(\Phi(u_{a\widehat{h}}))=c(a,\hat{h})^{-1}\Delta(\Phi(u_{a}u_{\widehat{h}}))=c(a,\hat{h})^{-1}\Delta(\Phi(u_a))\Delta(\Phi(u_{\widehat{h}}))\\
    &=c(a,\hat{h})^{-1}\Delta(\Phi(u_{a}))\Delta(\xi_{h})(v_{\widehat{\delta(h)}}\otimes v_{\widehat{\delta(h)}})\\
    &=c(a,\hat{h})^{-1}\Delta(\Phi(u_{a})\xi_{h})(\xi_{\epsilon(g)}^*\otimes\xi_{\epsilon(g)}^*)(\Phi(u_g)\otimes \Phi(u_g))=\tilde{\eta}(g)(\Phi(u_g)\otimes \Phi(u_g)),
\end{align*}

\vspace{2mm}

Here we have denoted by $\tilde{\eta}(g):= c(a,\hat{h})^{-1}\Delta(\Phi(u_{a})\xi_{h})(\xi_{\epsilon(g)}^*\otimes\xi_{\epsilon(g)}^*)$.

\vspace{2mm}

Observe that

\begin{equation*}
    \Delta(\Phi(u_g))\Delta(\Phi(u_h))=c(g,h)\Delta(\Phi(u_{gh}))=c(g,h)\tilde{\eta}(gh)(\Phi(u_{gh})\otimes\Phi(u_{gh})),
\end{equation*}

\vspace{2mm}

while, on the other hand,

\begin{equation*}
    \Delta(\Phi(u_g))\Delta(\Phi(u_h))=\tilde{\eta}(g)(\sigma\otimes\sigma)_g(\tilde{\eta}(h))c(g,h)^2(\Phi(u_{gh})\otimes\Phi(u_{gh})).
\end{equation*}

\vspace{2mm}

Hence, $\tilde{\eta}(g)(\sigma\otimes\sigma)_g(\tilde{\eta}(h))c(g,h)=\tilde{\eta}(gh)$ for all $g,h\in G$. Therefore, $\tilde{\eta}$ is a weak 1-cocycle for the diagonal action $G\curvearrowright^{\sigma\otimes\sigma} \cL(A^{(I)})\overline{\otimes}\cL(A^{(I)})$. By Corollary \ref{weakcocycleisweakcoboundary}, $\tilde{\eta}$ is a weak coboundary; i.e. $\tilde{\eta}(g)=\lambda_gu^*(\sigma\otimes\sigma)_g(u)$ for some $u\in\mathscr{U}(\cL(A^{(I)})\overline{\otimes}\cL(A^{(I)}))$ and $\lambda_g\in\mathbb{T}$. It follows that $c$ is coboundary  $c(g,h)=\lambda_g\lambda_h\lambda_{gh}^{-1}$, and thus the map $ u_g \ra \lambda_g u_g$  implements a $\ast$-isomorphism, $\cL(G)\cong \cL_c(G)$. The rest of the statement follows using the conclusion of \cite[Theorem 7.3]{cios22b}.
\end{proof}

We end pointing out a result that is an extension of \cite[Theorem 10.1]{ciosv1} (see pages 82-84) and we reproduce it here just for the reader's convenience. 

\begin{thm}\label{trivialamplificationembedding}(\cite[Theorem 10.1]{ciosv1}) Let $G\in \mathcal{WR}(A,B\curvearrowright I)$ be a property (T) wreath-like product group, where $A$ is a free abelian group, $B$ is an ICC subgroup of a hyperbolic group, and the group $G$ satisfies one of the following conditions: 


\begin{enumerate}
    \item [1a)] $C_B(b)$ is amenable, for all $b\in B\setminus\{1\}$ and the action $B\curvearrowright I$ has amenable stabilizers; or
    \item [2a)] the action $B\curvearrowright I$ is faithful and has infinite orbits.
\end{enumerate}

\vspace{1mm}

Let $H\in \mathcal{WR}(C,D\curvearrowright J)$ be any property (T) wreath-like product group where $C$ is abelian, $D$ is an ICC subgroup of a hyperbolic group, and $H$ satisfies one of the following conditions: 


\begin{enumerate}
    \item [1b)] $C_D(d)$ is amenable for all $d\in D\setminus\{1\}$ and the action $D\curvearrowright J$ has amenable stabilizers; or
    \item[2b)] the action $D\curvearrowright J$ is faithful and has infinite orbits.
\end{enumerate} 

\vspace{1mm}

Let $c: G\times G\to \mathbb T$ be any $2$-cocycle. Assume $0<t\leq 1$ is a scalar for which there exists a $\ast$-embedding $\Phi:\cL_c(G)\to \cL(H)^t$. 

\vspace{2mm}

If we are under the assumptions 1a) and 1b) then $t=1$. If we are under the assumptions 2a) and 2b) and the image $\Phi(\mathcal L_c(G))\subseteq \mathcal L(H)^t$ has finite index then $t=1$.
\end{thm}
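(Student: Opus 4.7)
The plan is to follow the scheme of \cite[Theorem 10.1]{ciosv1}, adapted to the twisted setting via the malleable/weak-mixing framework established in Lemma \ref{weakmixingaction} and Corollary \ref{weakcocycleisweakcoboundary}. Set $\mathcal{M}=\mathcal{L}(H)$ and identify $\mathcal{M}^t=p\mathcal{M}^n p$ for a projection $p$ of normalized trace $t$. The overall strategy is to align the Cartan subalgebras $\Phi(\mathcal{L}_c(A^{(I)}))$ and $\mathcal{L}(C^{(J)})^t$, then derive from this an orbit equivalence which upgrades to a group isomorphism $G\cong H$, and finally deduce $t=1$ from this identification.

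The first step is to show that after a unitary conjugation in $\mathcal{M}^t$ one may assume $\Phi(\mathcal{L}_c(A^{(I)}))\subseteq\mathcal{L}(C^{(J)})^t$ as a Cartan subalgebra. Since $D$ is an ICC subgroup of a hyperbolic group, the Popa--Vaes uniqueness of Cartan subalgebras for $\mathcal{M}$ persists under amplification, so it suffices to prove that $\Phi(\mathcal{L}_c(A^{(I)}))$ intertwines into $\mathcal{L}(C^{(J)})^t$ inside $\mathcal{M}^t$. Under 2a)/2b) with $\Phi(\mathcal{L}_c(G))\subseteq\mathcal{M}^t$ of finite index, this is automatic since $\Phi(\mathcal{L}_c(A^{(I)}))$ is abelian regular and its normalizer generates a finite-index subalgebra of $\mathcal{M}^t$. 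Under 1a)/1b), I would combine property (T) of $G$ with the amenable stabilizer/centralizer hypotheses in a Popa-style dichotomy: failure of intertwining would yield an amenable regular subalgebra of $\mathcal{M}^t$ extending the wreath-like core structure, contradicting the amenability bounds via the same type of argument used in \cite[Theorem 1.3]{cios22} and in Claim \ref{step1} above.

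Once the Cartans are aligned, the induced actions of $G$ (via the twisted conjugation action from Lemma \ref{weakmixingaction}) and of $H$ on the common Cartan spectrum $X$ yield an inclusion of orbit equivalence relations with index reflecting the amplification parameter $t$. Applying Theorem \ref{SOE}, with the vanishing-measure conditions \eqref{measuretozero} verified from the amenable stabilizer/centralizer hypotheses exactly as in the proof of \ref{theoremA}, produces a group isomorphism $\delta:G\to H_1$ with $H_1\leqslant H$ a finite-index subgroup; conditions 1a)/1b) (or respectively the finite-index hypothesis in 2a)/2b)) then force $H_1=H$. On a corner we obtain a unitary $u$ and a scalar function $\xi:G\to\mathbb{T}$ with $u\Phi(u_g)u^*=\xi_g v_{\delta(g)}$. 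Theorem \ref{untwistcocycle} then forces $c$ to be a coboundary, so $\Phi$ induces an isomorphism $\mathcal{L}(G)\cong\mathcal{L}(G)^t$ sending canonical unitaries to scalar multiples of canonical unitaries; comparing the trace of $\Phi(1)$ with the normalized trace of the identity of $\mathcal{M}^t$ then yields $t=1$.

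The principal obstacle will be the first step under condition 1a)/1b): extracting Cartan intertwining from a mere embedding, without any a priori finite-index hypothesis, requires carefully combining property (T) spectral gap with Popa--Vaes intertwining-by-bimodules technology and a precise accounting of how the amenable centralizer/stabilizer hypotheses exclude extraneous normalizing structure in $\mathcal{M}^t$. Once the Cartans are identified, the remaining steps should follow uniformly from Theorem \ref{SOE}, the orbit equivalence rigidity machinery, and the untwisting result Theorem \ref{untwistcocycle}.
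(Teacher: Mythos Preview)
Your proposal has a genuine gap in the key mechanism for concluding $t=1$, and it also aims for far more than the statement asks.

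First, the orbit-equivalence rigidity you invoke (Theorem \ref{SOE}, or more precisely \cite[Theorem 4.1]{cios22} as in the paper) operates at the level of the \emph{quotient} groups $B$ and $D$, not $G$ and $H$. The action of $G$ on the Cartan factors through $B$, and what the rigidity produces is an injective homomorphism $\delta:S_m\to D$ for some finite-index $S_m\leqslant B$ together with an element $\varphi_m\in[\mathscr{R}(D\curvearrowright X)]$ conjugating the actions. It does \emph{not} produce a group isomorphism $G\to H_1$ or a relation of the form $u\Phi(u_g)u^*=\xi_g v_{\delta(g)}$ for $g\in G$. So the subsequent appeal to Theorem \ref{untwistcocycle} (which requires a $\ast$-\emph{isomorphism} $\cL_c(G)\to\cL(H)$, not a mere embedding into an amplification) is unjustified, and the final ``compare the trace of $\Phi(1)$'' step is vacuous since $\Phi$ is unital.

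Second, and more importantly, you have missed the actual mechanism that forces $t=1$. In the paper's proof one first aligns $\Phi(\mathcal P)\subset\mathcal Qp\subset\mathcal R$ (your first step is essentially correct in spirit here, using \cite[Theorem 3.10]{cios22} and amenability of $\mathcal R$ in either case), then passes to a free action $\beta$ of $B$ on $\mathcal Qp=L^\infty(X_0)$ with $\mu(X_0)=t$. Property (T) and \cite[Lemma 4.4]{cios22} partition $X_0=\bigsqcup_{m=1}^{l}X_m$ with $\beta|_{S_m}$ weakly mixing on each piece. Applying \cite[Theorem 4.1]{cios22} to each piece yields $\varphi_m\in[\mathscr R(D\curvearrowright X)]$ with $\varphi_m(X_m)=X$, hence $\mu(X_m)=1$ for every $m$. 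Since the $X_m$ are disjoint and contained in $X_0$, this forces $t=\mu(X_0)=l\in\mathbb N$; as $t\leq 1$, one concludes $t=1$. No untwisting of $c$, no group isomorphism $G\cong H$, and no trace comparison is needed.
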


\begin{proof} Let $p\in\cL(H)$ be a non-zero projection with $\tau(p)=t$. Denote $\cM=\cL_c(G)$, $\mathcal{P}=\cL_c(A^{(I)})$, $\cN=\cL(H)$ and $\mathcal{Q}=\cL(C^{(J)})$. We notice that since $A$ is free abelian, $c$ untwists on the core $A^{(I)}$ and hence it follows that $\mathcal P \subset \mathcal M$ is a Cartan subalgebra. 

\vspace{2mm}

Let $\mathcal{R}:=\Phi(\mathcal{P})'\cap p\cL(H)p$. 

\vspace{2mm}

Suppose first the assumptions 1a) and 1b) are satisfied. Since $\Phi(\cL_c(G))\subset\mathscr{N}_{p\cL(H)p}(\Phi(\mathcal{P}))''$ has property (T), \cite[Theorem 3.10]{cios22} implies $\Phi(\mathcal{P})\prec_{\cL(H)}^s\mathcal{Q}$. By \cite[Corollary 4.7]{cios22}, $\mathcal{R}$ is amenable.

\vspace{2mm}

Now assume 2a) and 2b).  Since $\Phi(\mathcal M )\subseteq p\mathcal N p$ has finite index then so is $\Phi(\mathcal P )= \Phi(\mathcal P)'\cap \Phi(M) \subseteq \mathcal R$. Since $\mathcal P$ is abelian it follows, once again, that $\mathcal R$ is amenable.

\vspace{2mm}

Notice that $\Phi(\cL_c(G))\subset\mathscr{N}_{p\cL(H)p}(\mathcal{R})''$ has property (T). Thus, using \cite[Theorem 3.10]{cios22} 

\begin{equation}\label{intertwiningagain}
    \mathcal{R}\prec_{\cL(H)}^s\mathcal{Q}.
\end{equation}

\vspace{2mm}

Since $\mathcal{Q}\subset\cL(H)$ is a Cartan subalgebra, combining the intertwining \eqref{intertwiningagain} together with \cite[Lemma 3.7]{cios22} we obtain that after replacing $\Phi$ with $\text{Ad}(u)\circ\Phi$, for some $u\in \mathscr{U}(\cL(H))$, we may assume that $p\in \mathcal{Q}$ and $\Phi(\mathcal{P})\subset \mathcal{Q}p\subset\mathcal{R}$.

\vspace{2mm}

If $g\in B$, denote by $\sigma_g=\text{Ad}(\Phi(u_{\hat{g}}))$ the action built from $G\curvearrowright A^{(I)}$ and the cocycle $c:G\times G\to\mathbb{T}$ given at the beginning of the subsection. Then, $\sigma=(\sigma_g)_{g\in B}$ defines an action of $B$ on $\mathcal{R}$ which leaves $\Phi(\mathcal{P})$ invariant. By Lemma \ref{weakmixingaction}(iii) the restriction of $\sigma$ to $\Phi(\mathcal{P})$ is free, and hence, \cite[Lemma 3.8]{cios22} implies the existence of an action $\beta=(\beta_g)_{g\in B}$ of $B$ on $\mathcal{R}$ such that

\vspace{1mm}

\begin{enumerate}
    \item[(a)] for every $g\in B$ we have $\beta_g=\sigma_g\circ\text{Ad}(w_g)=\text{Ad}(\Phi(u_{\hat{g}})w_g)$, for some $w_g\in\mathscr{U}(\mathcal{R})$; and
    \item[(b)] $\mathcal{Q}p$ is $\beta(B)$-invariant and the restriction of $\beta$ to $\mathcal{Q}p$ is free. 
\end{enumerate}

\vspace{2mm}

Let $(Y,\nu)$ be the dual of $C$ endowed with its Haar measure. Denote $(X,\mu)=(Y^{J},\nu^J)$. Consider the action $(\alpha_d)_{d\in D}$ of $D$ on $\mathcal{Q}$ given by $\alpha_d=\text{Ad}(u_{\hat{d}})$, and its induced action $D\curvearrowright^{\alpha}(X,\mu)$. Identify $\mathcal{Q}=L^{\infty}(X,\mu)$ and let $X_0\subset X$ be a measurable set such that $p=\mathbbm{1}_{X_0}$. Since $\mathcal{Q}p=L^{\infty}(X_0,\mu|_{X_0})$, we get a measure preserving action $B\curvearrowright^{\beta}(X_0,\mu|_{X_0})$.

\vspace{2mm}

Since the restriction of $\beta$ to $\mathcal{Q}p$ is implemented by unitaries in $p\cL(H)p$ and we have $\mathscr{R}(\mathcal{Q}\subset\cL(H))=\mathscr{R}(D\curvearrowright^{\alpha}X)$, we deduce that

\begin{equation*}
    \beta(B)\cdot x\subset\alpha(D)\cdot x,\text{ for almost every }x\in X_0.
\end{equation*}

\vspace{2mm}

Since $B$ has property (T), using \cite[Lemma 4.4]{cios22} and a maximality argument, we can partition $X_0=\cup_{m=1}^lX_m$ into non-null, measurable sets, and we can find finite index subgroups $S_m\leqslant B$ such that $X_m$ is $\beta(S_m)$ invariant and the restriction $\beta|_{S_m}$ to $X_m$ is weakly mixing, for all $1\leq m\leq l$.

\vspace{2mm}

Let $1\leq m\leq l$ and fix $j\in J$ and $g\in D\setminus\{1\}$. Next we argue that in both cases that  one can find $(h_k)\subset S_m$  such that $\mu(\{x\in X_0:\beta_{h_k}(x)\in\alpha(s {\rm Stab}_D(j)t)(x)\})\to 0$  and $\mu(\{x\in X_0:\beta_{h_k}(x)\in\alpha(sC_D(g)t)(x)\})\to 0$ as $k\to\infty$, for every $s,t\in D$.

\vspace{2mm}
Under the assumptions 1a) and 1b) this follows because $C_D(g)$ and ${\rm Stab}_D(j)$ are amenable, $\beta$ is free and $S_m$ has property (T). If we assume 2a) and 2b) this  follows using that $\beta$ is free and a similar argument as in the proof of \cite[Theorem 1.3 Step 3]{cios22} together with the fact the subgroups $C_D(g)$, ${\rm Stab}_D(j)< D$ have infinite index, $S_m \leqslant B$ has finite index, and the von Neumann algebra $\Phi(\mathcal M)\subseteq  p\mathcal N p$ also has finite index.

\vspace{2mm}

By \cite[Theorem 4.1]{cios22}, we can find an injective group homomorphism $\epsilon_m:S_m\to D$ and $\varphi_m\in[\mathscr{R}(D\curvearrowright X)]$ such that $\varphi_m(X_m)=X\times\{m\}\equiv X$ and $\varphi_m\circ\beta_h|_{X_m}=\alpha_{\epsilon_m(h)}\circ\varphi_m|_{X_m}$ for all $h\in S_m$. In particular, $\mu(X_m)=1$. This shows that $\mu(X_0)=l\in\N$, and so $l=1$. Therefore, $p=1$.
\end{proof}

\subsection{A W\texorpdfstring{$^*$}{*}-superrigidity result for twisted wreath-like product group factors} Some of the W$^*$-rigidity results from the prior section can be upgraded to W$^*$-superrigidity results, somewhat similar with the very recent results of Donvil and Vaes \cite{DV24}. Specifically, using the prior methods from \cite{cios22,ioa10} in combination with the height criterion for W$^*$-superrigidity of twisted group factors from the recent work \cite[Theorem 4.1]{DV24}, Theorem \ref{untwistcocycle} can be enhanced significantly by removing all initial assumptions on the group $H$; see the statement of \ref{notrivialcompressions}. 

\vspace{2mm}

We remark that even in its stronger form, our result is still not as impressive and general as \cite[Theorem A]{DV24}, which involves twisted group factors both on the source and target along with arbitrary amplifications. However, the methods from the current proof of \ref{notrivialcompressions} and the argument from the proof of Theorem \ref{untwistcocycle} can be further extended and used in combination with \cite[Theorem 4.1]{DV24} and other techniques from the this paper to establish a version of \ref{notrivialcompressions} which is completely analogous to \cite[Theorem A]{DV24}. Since, this is rather technical and not needed for deriving our main results, we decided to include it in a forthcoming paper. 

\vspace{2mm}

In preparation for the main result and its proof we recall some notations and some preliminary results. Let $G$ and $H$ be groups, and let $c: H \times H \to \mathbb T$ be a $2$-cocycle. Assume that  $\cL(G)=\cL_{c}(H)$. Next we recall the notion of triple commultiplication from \cite[Lemma 10.9]{ioa10} and also \cite[Section 1]{DV24}. This is the $\ast$-homomorphism $\Delta:\cL_{c}(H)\to \cL_{c}(H)\:\overline{\otimes}\:\cL_{c}(H)^{\rm op}\:\overline{\otimes}\:\cL_{c}(H)$ given by 

\begin{equation}\label{3commul}
    \Delta(v_h)= \overline{c(h^{-1},h)}\: v_h\otimes \overline v_{h^{-1}}\otimes v_h\text{ for }h\in H.
\end{equation} 

\vspace{2mm}

As $\cL(G)=\cL_{c}(H)$, we can view $\Delta$ as a $\ast$-embedding $\Delta:\cL(G)\to \cL(G)\:\overline{\otimes}\:\cL(G)^{\text{op}}\:\overline{\otimes}\:\cL(G)$.

\vspace{2mm}

One can check the triple commultiplication satisfies

\begin{equation}\label{commult2}        
    (\Delta\otimes\text{id}\otimes\text{id})\circ \Delta=(\text{id}\otimes\text{id}\otimes\Delta)\circ \Delta.
\end{equation}

\vspace{2mm}

Now consider the  star-flip map $F: \mathcal L(G)\:\overline{\otimes}\:\mathcal L(G)^{\rm op} \rightarrow \cL(G)\:\overline{\otimes}\: \cL(G)^{\rm op}$ given by $F(\sum_{g,h} \lambda_{g,h} (u_g \otimes \overline{u}_{h}))= \sum_{g,h} \overline{\lambda_{g,h}} (u_{h^{-1}}\otimes \overline{u}_{g^{-1}})$. This map is well-defined, conjugate linear and multiplicative.  Moreover, for every $x, y\in \mathcal L(G)$, 

\begin{equation}\label{sflip}F(x\otimes \overline y)= y^*\otimes \overline{x^*}.\end{equation}

\vspace{2mm}

For further use (see \ref{notrivialcompressions}), we also note the following equation holds:  

\begin{equation}\label{commult2F}
F_{1,2}\circ(\Delta\otimes\text{id}\otimes\text{id})\circ \Delta=F_{3,4}\circ(\text{id}\otimes\text{id}\otimes\Delta)\circ \Delta,
\end{equation} 

\vspace{2mm}

where we denoted by $F_{1,2}=F\otimes \text{id}\otimes \text{id}\otimes \text{id}$ and $F_{3,4}=\text{id}\otimes \text{id}\otimes F\otimes \text{id}$.

\begin{lem}\label{notintertwiningforinfiniteindex} (\cite{cios22,ipv10}) Assume $\cL(G)=\cL_c(H)$ and denote by $\cM:=\cL(G)$. Let $\Delta:\cM\to \cM\:\overline{\otimes}\:\cM^{\text{op}}\:\overline{\otimes}\cM$ be the three-folded commultiplication given by \eqref{3commul}. Then, the following hold:

\begin{enumerate}
    \item[(i)] $\Delta(\cM)\not\prec \mathcal{Q}\:\overline{\otimes}\:\cM^{\text{op}}\:\overline{\otimes}\:\cM$ and $\Delta(\cM)\not\prec \cM\:\overline{\otimes}\:\cM^{\text{op}}\:\overline{\otimes}\:\mathcal{Q}$ for any von Neumann subalgebra $\mathcal{Q}\subset \cM$ such that $\cM \nprec_\cM  \mathcal Q$.     
    \vspace{1mm}
    \item[(ii)] $\Delta(\cM)\not\prec\cM\:\overline{\otimes}\:\mathcal{Q}\:\overline{\otimes}\:\cM$ for any von Neumann subalgebra $\mathcal{Q}\subset \cM^{\text{op}}$ with $\cM^{\rm op}\nprec_{\cM^{\rm op}} \mathcal Q$.
    \item[(iii)] $\Delta(\mathcal{Q})\not\prec\cM\:\overline{\otimes}\:1\:\overline{\otimes}\:\cM$, $\Delta(\mathcal{Q})\not\prec\cM\:\overline{\otimes}\:\cM^{\text{op}}\:\overline{\otimes}\:1$ and $\Delta(\mathcal{Q})\not\prec 1\:\overline{\otimes}\:\cM^{\text{op}}\:\overline{\otimes}\:\cM$, for any diffuse von Neumann subalgebra $\mathcal{Q}\subset\cM$.
\end{enumerate}
\end{lem}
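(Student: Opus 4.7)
The plan is to apply Popa's intertwining-by-bimodules criterion (Theorem~\ref{intertwining}) in each case: for each candidate target subalgebra, I would exhibit an explicit sequence of unitaries in the source algebra whose matrix coefficients vanish in the $\|\cdot\|_2$-norm. For (i), use the hypothesis $\cM\nprec_\cM\cQ$ to extract a sequence of group elements $h_n\in H$ with $\|\mathbb{E}_\cQ(xv_{h_n}y)\|_2\to 0$ for all $x,y\in\cM$. For $\Delta(v_{h_n})=\overline{c(h_n^{-1},h_n)}\,v_{h_n}\otimes\overline v_{h_n^{-1}}\otimes v_{h_n}$ and elementary tensors $y_i=a_i\otimes\overline b_i\otimes c_i$, a direct computation yields
\[
\|\mathbb{E}_{\cQ\overline{\otimes}\cM^{\text{op}}\overline{\otimes}\cM}(y_1\Delta(v_{h_n})y_2)\|_2^2=\|\mathbb{E}_\cQ(a_1v_{h_n}a_2)\|_2^2\cdot\|b_2v_{h_n^{-1}}b_1\|_2^2\cdot\|c_1v_{h_n}c_2\|_2^2,
\]
where the first factor vanishes and the other two stay uniformly bounded. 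Linearity together with a bounded $\|\cdot\|_2$-approximation by finite sums of elementary tensors extends the vanishing to all $y_1,y_2\in\cM\overline{\otimes}\cM^{\text{op}}\overline{\otimes}\cM$, and Popa's criterion applied to the generating subgroup $\{\Delta(v_h):h\in H\}\subset\mathscr U(\Delta(\cM))$ gives the first half of (i); the third-coordinate case is symmetric.

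For (ii), I would apply the same template to the sequence $v_{h_n^{-1}}$, chosen so that the middle coordinate of $\Delta(v_{h_n^{-1}})=\overline{c(h_n,h_n^{-1})}\,v_{h_n^{-1}}\otimes\overline v_{h_n}\otimes v_{h_n^{-1}}$ is $\overline v_{h_n}$. The hypothesis $\cM^{\text{op}}\nprec_{\cM^{\text{op}}}\cQ$ supplies group elements $h_n\in H$ with $\|\mathbb{E}_\cQ(\overline x\,\overline v_{h_n}\,\overline y)\|_2\to 0$ for all $\overline x,\overline y\in\cM^{\text{op}}$, and the analogous factorization, now projecting onto the middle slot, delivers the claim.

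Part (iii) is genuinely different: the target algebras are degenerate (one of the three tensor factors collapses to $1$), so one cannot invoke a group-theoretic non-intertwining. Instead, use the diffuseness of $\cQ$ to pick $q_n\in\mathscr U(\cQ)$ with $q_n\to 0$ weakly, expand $q_n=\sum_h\hat q_n(h)v_h$ in the Fourier basis, and compute for elementary $y_1,y_2$ that
\[
\mathbb{E}_{\cM\overline{\otimes}1\overline{\otimes}\cM}(y_1\Delta(q_n)y_2)=\sum_{h\in H}\hat q_n(h)\,\beta_h\,(a_1v_ha_2)\otimes(c_1v_hc_2),
\]
with $\beta_h=\overline{c(h^{-1},h)}\,\tau(b_2v_{h^{-1}}b_1)$. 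A direct Parseval calculation gives $\sum_h|\beta_h|^2=\|b_1b_2\|_2^2<\infty$, so the linear map $T\colon v_h\mapsto\beta_h(a_1v_ha_2)\otimes(c_1v_hc_2)$ from $L^2(\cM)$ into $L^2(\cM\overline{\otimes}\cM)$ is Hilbert--Schmidt, and in particular compact; thus $q_n\to 0$ weakly forces $T(q_n)\to 0$ in norm. Kaplansky density then extends the conclusion from elementary tensors to arbitrary $y_1,y_2$. The remaining two subcases are analogous, with $\beta_h$ replaced by $\tau(a_1v_ha_2)$ or $\tau(c_1v_hc_2)$ and the corresponding $\ell^2$-summabilities $\|a_2a_1\|_2^2$ and $\|c_2c_1\|_2^2$.

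The main obstacle is (iii): collapsing a tensor factor to $1$ destroys the unitarity of the associated coordinate of $\Delta(v_h)$, so a generating set of group-unitaries in $\Delta(\cQ)$ is not directly available; the essential substitute is the $\ell^2$-summability of the Fourier coefficients produced by the partial traces, which forces the relevant transfer operator to be Hilbert--Schmidt and lets weak convergence of $q_n$ in $\cQ$ pass through to the norm convergence required by Popa's criterion.
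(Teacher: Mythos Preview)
Your argument is correct. For parts (i) and (ii) you argue directly---producing a sequence $\Delta(v_{h_n})$ from the non-intertwining hypothesis and factoring the conditional expectation on elementary tensors---whereas the paper argues by contrapositive: assuming, say, $\cM^{\rm op}\prec_{\cM^{\rm op}}\cQ$, it takes the finitely many $\overline{s_i},\overline{t_i}\in\cM^{\rm op}$ that witness the intertwining lower bound, expands an arbitrary $u_g=\sum_h\tau(u_gv_h^*)v_h$, and uses orthogonality of the $v_h\otimes\overline v_{h^{-1}}\otimes v_h$ to get $\sum_i\|\mathbb E_{\cM\overline\otimes\cQ\overline\otimes\cM}(\overline{s_i}\Delta(u_g)\overline{t_i})\|_2^2\geq d$ for all $g\in G$. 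The two arguments are dual formulations of the same computation; the contrapositive version avoids the elementary-tensor approximation step but needs the full Fourier expansion. For part (iii) the paper simply cites \cite[Lemma~7.2(1)]{ipv10} without argument; your compact (Hilbert--Schmidt) transfer-operator argument is exactly the proof found there, adapted to the triple comultiplication with the twist, so you are supplying what the paper outsources.
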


\begin{proof} We only prove the second statement, as the first one follows similarly and the third one is \cite[Lemma 7.2(1)]{ipv10}. We include a proof based on the same arguments from \cite{ipv10}. Assume $\cM^{\text{op}} \prec_{\cM^{\text{op}}}  \mathcal Q$. By Theorem \ref{intertwining} one can find $k\in \mathbb N$, $\overline{s_i},\overline{t_i}\in \cM^{\rm op}$ $1\leq i\leq k$ and $d>0$ such that 

\begin{equation}\label{belowbound1}\sum_{i=1}^k\|\mathbb{E}_{\mathcal{Q}}(\overline{s_i}\:\overline{v}_{h^{-1}}\:\overline{t_i})\|_2^2\geq d >0\text{, for all }h\in H.\end{equation} 

Fix $g\in G$. Letting $u_g=\sum_h\tau(u_gv_h^*)v_h$ be its Fourier expansion, and using the definitions together with \eqref{belowbound1} we have

\begin{align*}
     &\sum_{i=1}^k\|\mathbb{E}_{\cM\overline{\otimes}\mathcal{Q}\overline{\otimes}\cM}(\overline{s_i}\Delta(u_g)\overline{t_i})\|_2^2 =\sum_{i=1}^k\left\|\sum_h\tau(u_gv_h^*)\overline{c(h^{-1},h)}(v_h\otimes \mathbb{E}_{\mathcal{Q}}(\overline{s_i}\:\overline{v}_{h^{-1}}\overline{t_i})\otimes v_h)\right\|_2^2\\
    &=\sum_{i=1}^k\sum_h|\tau(u_gv_h^*)|^2\|\mathbb{E}_{\mathcal{Q}}(\overline{s_i}\:\overline{v}_{h^{-1}}\overline{t_i})\|_2^2= \sum_h|\tau(u_gv_h^*)|^2\left( \sum_{i=1}^k\|\mathbb{E}_{\mathcal{Q}}(\overline{s_i}\:\overline{v}_{h^{-1}}\overline{t_i})\|_2^2\right)\geq d.
\end{align*}

Since this holds for all $g \in G$, Theorem \ref{intertwining} implies that  $\Delta(\cM)\prec\cM\:\overline{\otimes}\:\mathcal{Q}\:\overline{\otimes}\:\cM$.
\end{proof}

Before the next lemma, we recall the notion of height first defined in \cite[Section 4]{ioa10} and \cite[Section 3]{ipv10} and adapted to twisted group von Neumann algebras in the preprint \cite[Section 4]{DV24}. Given a countable group $H$, a 2-cocycle $c:H\times H\to\mathbb{T}$ and a subgroup $\mathcal{G}\leq \cL_c(H)$, define 

\begin{equation*}
    h_H(\mathcal{G})=\inf_{w\in \mathcal{G}}\left(\max_{h\in H}|\tau(v_{h^{-1}}w)|\right).
\end{equation*}

\vspace{2mm}

The proof of \ref{notrivialcompressions} relies on the following elementary result, which is essentially contained in the proof of \cite[Proposition 6.12]{DV24}. For reader's convenience include a complete proof below.

\begin{lem}\label{positiveheight} Let $G$, $H$ be countable groups and $c:H\times H\to\mathbb{T}$ be a $2$-cocycle. Assume $\cM:=\cL(G)=\cL_c(H)$ and let $\Delta$ as above. If there exist $0\neq x,y\in \cM\:\overline{\otimes}\:\cM^{\text{op}}\:\overline{\otimes}\:\cM$ such that $\Delta(u_g)x=y(u_g\otimes \overline {u}_{g^{-1}}\otimes u_g)$ for all $g\in G$, then $h_H(G)>0$.
\end{lem}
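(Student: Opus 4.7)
The plan is to exploit the intertwining relation $\Delta(u_g)x = y\pi_2(u_g)$ to extract a nonzero normal $\ast$-homomorphism into a corner of $\cM^{\otimes 3}$, and then use its rigidity to force uniform concentration of the Fourier coefficients $\alpha_h(g):=\tau(u_g v_h^*)$ of $u_g$ along the basis $(v_h)_{h\in H}$ of $\cL_c(H)$; note $|\tau(v_{h^{-1}}u_g)| = |\alpha_h(g)|$ up to a unimodular scalar, so $h_H(G) = \inf_{g\in G}\sup_{h\in H}|\alpha_h(g)|$.

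Specializing the intertwining at $g=1_G$ (where $\Delta(u_{1_G}) = \pi_2(u_{1_G}) = 1$) immediately yields $x=y$, so the relation becomes $\Delta(u_g)x = x\pi_2(u_g)$ for every $g\in G$. Taking adjoints and multiplying, I deduce that $p := xx^*$ commutes with each $\Delta(u_g)$ and $q := x^*x$ commutes with each $\pi_2(u_g)$; by normality, $p\in\Delta(\cM)'\cap\cM^{\otimes 3}$ and $q\in\pi_2(\cM)'\cap\cM^{\otimes 3}$. Replacing $x$ by $xe$ for a suitable nonzero spectral projection $e$ of $x^*x$ (still commuting with $\pi_2(G)$), I may assume $x$ is a nonzero partial isometry. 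Then $\Psi(a) := x^*\Delta(a)x$ defines a normal $\ast$-homomorphism $\Psi:\cM\to q\cM^{\otimes 3}q$: multiplicativity follows from $\Psi(a)\Psi(b) = x^*\Delta(a)p\Delta(b)x = x^*\Delta(ab)px = x^*\Delta(ab)x = \Psi(ab)$, using that $p$ commutes with $\Delta(\cM)$ and $px=x$. The intertwining gives $\Psi(u_g) = q\pi_2(u_g)$, and applying $\Psi$ to $v_h = \sum_g\overline{\alpha_h(g)}u_g$ via $L^2$-continuity and $\Delta(v_h) = \overline{c(h^{-1},h)}\delta_h$ (with $\delta_h := v_h\otimes \overline v_{h^{-1}}\otimes v_h$) produces the key identity
\[
\delta_h x \;=\; c(h^{-1},h)\, x R(v_h),\qquad h\in H,
\]
where $R:L^2(\cL(G))\to L^2(\cM^{\otimes 3})$ is the isometric extension of $u_g\mapsto \pi_2(u_g)$. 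This converts the left action of $\delta_h$ on $x$ (indexed by $H$) into an $L^2$-expression in the $\pi_2(u_g)$'s encoding the Fourier data $(\alpha_h(g))_{g\in G}$.

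Finally, I argue by contradiction. Suppose $h_H(G) = 0$ and fix a sequence $(g_n)\subseteq G$ with $\epsilon_n := \sup_{h\in H}|\alpha_h(g_n)|\to 0$. Comparing Fourier coefficients in $\Delta(u_{g_n})x = x\pi_2(u_{g_n})$: the left-hand side $\Delta(u_{g_n}) = \sum_h\alpha_h(g_n)\overline{c(h^{-1},h)}\delta_h$ is supported on the diagonal $\{(h,h^{-1},h):h\in H\}\subseteq H^3$ and is linear in the $\alpha_h(g_n)$'s, whereas the right-hand side $\pi_2(u_{g_n})$ is a triple product $\alpha_{h_1}(g_n)\overline{\alpha_{h_2}(g_n^{-1})}\alpha_{h_3}(g_n)$ indexed by all of $H^3$. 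After a finite-rank $L^2$-truncation $x_F := \sum_{(h_1,h_2,h_3)\in F} x_{h_1,h_2,h_3}(v_{h_1}\otimes\overline v_{h_2}\otimes v_{h_3})$ with $\|x - x_F\|_2$ as small as desired, and selecting a Fourier mode $(h_1^0,h_2^0,h_3^0)\in F$ at which $|x_{h_1^0,h_2^0,h_3^0}|$ is maximal, the coefficient-by-coefficient comparison produces an estimate of the form
\[
|\alpha_{h_0}(g_n)|\cdot|x_{h_1^0,h_2^0,h_3^0}| \;\lesssim\; \epsilon_n^{2}\,\|x\|_2 + \|x - x_F\|_2
\]
for some $h_0\in H$; letting the truncation tighten and $n\to\infty$ then forces $x_{h_1^0,h_2^0,h_3^0} = 0$ and hence $x=0$, a contradiction. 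The hardest step is this final coefficient comparison: it requires careful bookkeeping of the 2-cocycle $c$ across all three tensor factors of $\cM\otimes\cM^{\mathrm{op}}\otimes\cM$ and a judicious selection of the truncation $F$ and comparison indices to isolate a single nonzero contribution, for which the identity $\delta_h x = c(h^{-1},h)\,xR(v_h)$ derived in the previous paragraph is essential, as it translates the problem into matching $H$-indexed Fourier modes on the $\Delta$-side with $G$-indexed modes on the $\pi_2$-side.
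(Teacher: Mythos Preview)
Your preliminary reductions are correct and pleasant: specializing at $g=1$ to get $x=y$, the construction of $\Psi$, and the identity $\delta_h x = c(h^{-1},h)\,xR(v_h)$ all hold. However, the final ``coefficient-by-coefficient comparison'' has a genuine gap.

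The inequality you assert,
\[
|\alpha_{h_0}(g_n)|\cdot|x_{h_1^0,h_2^0,h_3^0}| \;\lesssim\; \epsilon_n^{2}\,\|x\|_2 + \|x - x_F\|_2,
\]
is never derived, and more importantly it cannot yield $x_{h_1^0,h_2^0,h_3^0}=0$: the factor $|\alpha_{h_0}(g_n)|$ on the left has no lower bound (for all you know it is $0$, or at best $\epsilon_n$), so dividing through and sending $n\to\infty$ produces $|x_{h_1^0,h_2^0,h_3^0}|\lesssim \|x-x_F\|_2/\epsilon_n\to\infty$. The underlying problem is structural: at any fixed Fourier mode, \emph{both} sides of $\Delta(u_{g_n})x_F = x_F\pi_2(u_{g_n}) + O(\|x-x_F\|_2)$ carry at least one factor of $\alpha_\cdot(g_n)$, so single-mode comparison never isolates a nonzero $x$-coefficient against something that vanishes.

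What is missing is exactly the mechanism the paper uses: rather than comparing individual coefficients, one computes the \emph{full} inner product $\|y\|_2^2=\langle \Delta(u_g)x,\,y\pi_2(u_g)\rangle$, truncates $x$ and $y$ to finite $H^3$-support, and expands in the $(v_h)$-basis. The expansion produces, for each pair $(f,k)$ in the finite supports, a sum over $h\in H$ of a product of four Fourier coefficients of $u_g$ (one from $\Delta$, three from $\pi_2$). One bounds a single factor by $h_H(u_g)$, drops one by $|\cdot|\le 1$, and controls the remaining $h$-sum by Cauchy--Schwarz and Parseval ($\sum_h|\tau(u_g v_\cdot)|^2=1$). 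This gives $\|y\|_2^2 \le \epsilon\cdot C + |F_\epsilon|\,|K_\epsilon|\,(\|x\|_2+\epsilon)(\|y\|_2+\epsilon)\,h_H(u_g)$, hence a uniform positive lower bound on $h_H(u_g)$. Your identity $\delta_h x = c(h^{-1},h)\,xR(v_h)$ is correct but does not by itself replace this inner-product-plus-Cauchy--Schwarz step.
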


\begin{proof} Fix $\epsilon>0$ and let $F_{\epsilon}=F_1\times F_2\times F_3$, $K_{\epsilon}=K_1\times K_2\times K_3\subset H\times H\times H$ finite sets and $x_{\epsilon}$, $y_{\epsilon}\in\cM\:\overline{\otimes}\:\cM^{\text{op}}\:\overline{\otimes}\:\cM$ supported on $F_{\epsilon}$ and $K_{\epsilon}$, respectively, with $\|x-x_{\epsilon}\|_2<\epsilon$ and $\|y-y_{\epsilon}\|_2<\epsilon$. Denote by $\tilde{\tau}$ the trace on $\cM\:\overline{\otimes}\:\cM^{\text{op}}\:\overline{\otimes}\:\cM$. Using these estimates

\begin{align*}
    &\|y\|_2^2=|\langle \Delta(u_g)x(u_{g^{-1}}\otimes \overline u_{g}\otimes u_{g^{-1}}),y\rangle|\leq \epsilon\|y\|_2+\epsilon\|x_{\epsilon}\|_2+|\langle\Delta(u_g),y_{\epsilon}(u_{g}\otimes \overline u_{g^{-1}}\otimes u_{g})  x^*_{\epsilon}\rangle|\\
    &\leq \epsilon(\|y\|_2+\|x\|_2+\epsilon)+\\
    &\quad\quad+\left|\sum_{\substack{(f_1,f_2,f_3)=f\in F_{\epsilon}\\(k_1,k_2,k_3)=k\in K_{\epsilon}\\ h, h_1,h_2,h_3\in H}}\tilde{\tau}(x^*_\epsilon v_f) \tilde{\tau}(y_\epsilon v_k)\tau(u_gv_{h^{-1}})\overline{c(h^{-1},h)} \tau(u_g v_{h_1^{-1}})\overline \tau( \overline u_{g^{-1}}\overline v_{h_2})\tau( u_g v_{h_3^{-1}})\right.\\
    &\quad\quad\quad\quad\quad\quad\left.\delta_{h,k_1 h_1 f_1^{-1}}\delta_{h^{-1},f_2^{-1}h_2k_2} \delta_{h,k_3 h_3 f_3^{-1}}\right|\\
    &\leq \epsilon(\|y\|_2+\|x\|_2+\epsilon)+
    \sum_{\substack{(f_1,f_2,f_3)\in F_{\epsilon}\\(k_1,k_2,k_3)\in K_{\epsilon}\\ h\in H}}\|x^*_\epsilon\|_2 \|y_\epsilon\|_2 |\tau(u_gv_{h^{-1}}) \tau(u_g v_{f_1^{-1}h^{-1}k_1})\overline \tau( \overline u_{g^{-1}}\overline v_{f_2h^{-1}k_2^{-1}})\tau(u_g v_{f_3^{-1}h^{-1}k_3})|\\ 
    &\leq \epsilon(\|y\|_2+\|x\|_2+\epsilon)+(\|x\|_2+\epsilon)(\|y\|_2+\epsilon)\cdot |F_{\epsilon}|\cdot |K_{\epsilon}|\cdot h_H(u_g).
\end{align*}

\vspace{2mm}

Thus, for all $g\in G$ we have

\begin{equation*}
    h_H(u_g)\geq \frac{\|y\|_2^2-\epsilon(\|y\|_2+\|x\|_2+\epsilon)}{(\|x\|_2+\epsilon)(\|y\|_2+\epsilon)\cdot |F_{\epsilon}|\cdot |K_{\epsilon}|}.
\end{equation*}

\vspace{2mm}

Picking $\epsilon>0$ small enough, we obtain $h_H(G)>0$.
\end{proof}

With these preparations at hand we now introduce our main rigidity result. Our approach is similar to the proof of \cite[Theorem 1.3]{cios22}. However for the sake of completeness we include all details.

\vspace{2mm}

\begin{proof}[\textbf{Proof of \ref{notrivialcompressions}}] Using Theorem \ref{notrivialcompressions2} we get $p=1$. Identify $\cL(G)$ and $\cL_{c}(H)$ under the $\ast$-isomorphism $\Phi$. Let 

\begin{equation*}
    \Delta:\cL(G)\to \cL(G)\:\overline{\otimes}\:\cL(G)^{\text{op}}\:\overline{\otimes}\:\cL(G)
\end{equation*}

\vspace{2mm}

be the unital $\ast$-embedding described at the beginning of this section. Let 

\begin{align*}
    \cM:=\cL(G),\quad \tilde{\cM}:=\cM\:\overline{\otimes}\:\cM^{\text{op}}\:\overline{\otimes}\:\cM,&\quad \tilde{\mathcal{Q}}:=\cL(A^{(I)})\:\overline{\otimes}\:\cL(A^{(I)})^{\text{op}}\:\overline{\otimes}\:\cL(A^{(I)}),\quad \mathcal{P}:=\Delta(\cL(A^{(I)})),\quad\\
    &\mathcal{R}:=\mathcal{P}'\cap \tilde{\cM}\subset \tilde{\cM}.
\end{align*}

\begin{claim}\label{intertwiningcios} $\mathcal{R}\prec_{\tilde{\cM}}^s\cL(A^{(\tilde{I})})=\tilde{\mathcal{Q}}$.
\end{claim}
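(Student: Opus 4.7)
The plan is to establish $\mathcal{R}\prec^s_{\tilde{\cM}}\tilde{\mathcal{Q}}$ by combining the property (T) of $\Delta(\cL(G))$ with the wreath-like/hyperbolic structure of the ambient algebra. First I would note that $\mathcal{P}=\Delta(\cL(A^{(I)}))$ is abelian and sits inside the abelian subalgebra $\tilde{\mathcal{Q}}$, so $\tilde{\mathcal{Q}}\subseteq \mathcal{R}$. Moreover, since $\cL(A^{(I)})\trianglelefteq \cL(G)$ is normalized by $\cL(G)$ through the decomposition $\cL(G)=\cL(A^{(I)})\rtimes B$, it follows that $\Delta(\cL(A^{(I)}))$ is normalized by $\Delta(\cL(G))$, and hence $\Delta(\cL(G))$ normalizes $\mathcal{R}$.

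Next, I would exhibit $\tilde{\cM}$ as a crossed product realizing a wreath-like structure, namely
\begin{equation*}
\tilde{\cM}=\tilde{\mathcal{Q}}\rtimes (B\times B^{\text{op}}\times B),
\end{equation*}
where the acting group is a product of three (copies of an) ICC subgroup of a hyperbolic group acting on $I\sqcup I\sqcup I$ via the product of the original action, and hence still has amenable stabilizers. In this picture, $\tilde{\mathcal{Q}}=\cL(A^{(\tilde I)})$ plays the role of the core of a twisted wreath-like product factor. With $\Delta(\cL(G))\subset \tilde{\cM}$ a property (T) subalgebra normalizing $\mathcal{R}$, I would invoke the rigidity dichotomy of \cite[Theorem 3.10]{cios22} (applied to the product wreath-like structure of $\tilde{\cM}$) to obtain $\mathcal{R}\prec_{\tilde{\cM}}\tilde{\mathcal{Q}}$.

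The main obstacle will be to rule out intertwining into proper intermediate subalgebras of the form $\cL(A^{(F_1)})\:\overline{\otimes}\:\cL(A^{(F_2)})^{\text{op}}\:\overline{\otimes}\:\cL(A^{(F_3)})$ (possibly crossed by subgroups of $B\times B^{\text{op}}\times B$) that could arise as intermediate steps in the dichotomy. For this I would invoke Lemma~\ref{notintertwiningforinfiniteindex}, which precisely shows that $\Delta(\cL(G))$ cannot intertwine into any of the three tensor factors with a proper subfactor, and this non-intertwining propagates to $\mathcal{R}$ via the quasi-normalizer control results from Section~3 (Corollaries~\ref{controlquasinorm3} and~\ref{quasinormcontrol2}), as well as the fact that finite subsets $F_i\subsetneq I$ have finite-index normalizers in $B$.

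Finally, to upgrade $\prec$ to $\prec^s$, I would compress by an arbitrary central projection $z\in\mathscr{Z}(\mathcal{R})$ and observe that all prior hypotheses — that $\Delta(\cL(G))z$ normalizes $\mathcal{R}z$, that $\mathcal{P}z$ remains diffuse (since $\mathcal{P}$ has diffuse image in any nontrivial compression by Lemma~\ref{diffusefibers} and the non-intertwining from Lemma~\ref{notintertwiningforinfiniteindex}(iii)), and that property (T) is preserved — remain valid under compression. Thus the same argument applies to every $\mathcal{R}z$, yielding the strong intertwining $\mathcal{R}\prec^s_{\tilde{\cM}}\tilde{\mathcal{Q}}$.
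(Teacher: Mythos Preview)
Your proposal has two genuine gaps.

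First, your opening assertion that $\mathcal{P}=\Delta(\cL(A^{(I)}))$ sits inside $\tilde{\mathcal{Q}}$ is unjustified and in fact false at this stage. The comultiplication $\Delta$ is defined on the $H$-unitaries $(v_h)_{h\in H}$, not on the $G$-unitaries; for $a\in A^{(I)}$, writing $u_a=\sum_h \tau(u_a v_h^*)v_h$ gives $\Delta(u_a)=\sum_h \tau(u_a v_h^*)\overline{c(h^{-1},h)}\,v_h\otimes\overline{v}_{h^{-1}}\otimes v_h$, which has no reason to lie in $\cL(A^{(I)})\,\overline{\otimes}\,\cL(A^{(I)})^{\rm op}\,\overline{\otimes}\,\cL(A^{(I)})$. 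The containment $\mathcal{P}\subset\tilde{\mathcal{Q}}\subset\mathcal{R}$ is established in the paper only \emph{after} the present claim, via a unitary conjugation coming from the intertwining you are trying to prove.

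Second, your plan to realize $\tilde{\cM}$ as $\tilde{\mathcal{Q}}\rtimes(B\times B^{\rm op}\times B)$ and invoke \cite[Theorem~3.10]{cios22} directly does not work: that theorem requires the acting group to be a subgroup of a hyperbolic group, and a product of three non-elementary hyperbolic groups is never hyperbolic. The paper circumvents this by applying \cite[Theorem~3.10]{cios22} one tensor slot at a time --- writing, e.g., $\tilde{\cM}=(\cM\,\overline{\otimes}\,\cM^{\rm op}\,\overline{\otimes}\,\cL(A^{(I)}))\rtimes B$ with a single copy of $B$ acting --- and then combining the three resulting intertwinings via \cite[Lemma~2.8(2)]{dhi19}. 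More importantly, you never establish that $\mathcal{R}$ is amenable, which is the key input needed to run the intertwining argument for $\mathcal{R}$ itself. In the paper this amenability follows from \cite[Corollary~4.7]{cios22}, whose hypotheses require both $\mathcal{P}\prec^s\tilde{\mathcal{Q}}$ \emph{and} the non-intertwining of $\mathcal{P}$ into each of the three ``two-slot'' subalgebras $\cL(A^{(I)})\,\overline{\otimes}\,\cL(A^{(I)})^{\rm op}\,\overline{\otimes}\,1$, etc.\ (supplied by Lemma~\ref{notintertwiningforinfiniteindex}(iii)). Only once $\mathcal{R}$ is known to be amenable can one rerun the slot-by-slot argument with $\mathcal{R}$ in place of $\mathcal{P}$, using that $\mathcal{R}$ is normalized by $(\Delta(u_{\hat g}))_{g\in B}$. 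Your references to quasi-normalizer control and to Lemma~\ref{diffusefibers} do not substitute for this step.
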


\begin{subproof}[Proof of Claim \ref{intertwiningcios}] Since $\tilde{\cM}=\cM\:\overline{\otimes}\:\cM^{\text{op}}\:\overline{\otimes}\:(\cL(A^{(I)})\rtimes B)=(\cM\:\overline{\otimes}\:\cM^{\rm op}\:\overline{\otimes}\:\cL(A^{(I_3)}))\rtimes B$ and $B$ is a subgroup of a hyperbolic group, by \cite[Theorem 3.10]{cios22}, and since $\cM$ has property (T), we have that $\mathcal{P}\prec^s\cM\:\overline{\otimes}\:\cM^{\text{op}}\:\overline{\otimes}\:\cL(A^{(I)})$. Doing the same argument in each tensor, we obtain $\mathcal{P}\prec^s\cL(A^{(I)})\:\overline{\otimes}\:\cM^{\text{op}}\:\overline{\otimes}\:\cM$ and $\mathcal{P}\prec^s\cM\:\overline{\otimes}\:\cL(A^{(I)})^{\text{op}}\:\overline{\otimes}\:\cM$. By \cite[Lemma 2.8(2)]{dhi19} we have $\mathcal{P}\prec^s \cL(A^{(I)})\:\overline{\otimes}\:\cL(A^{(I)})^{\text{op}}\:\overline{\otimes}\:\cL(A^{(I)})=\tilde{\mathcal{Q}}$. 

\vspace{2mm}

Using Lemma \ref{notintertwiningforinfiniteindex} we also have $\mathcal{P}\not\prec \cL(A^{(I)})\:\overline{\otimes}\:\cL(A^{(I)})^{\text{op}}\:\overline{\otimes}\:1$, $\cL(A^{(I)})\:\overline{\otimes}\:1\:\overline{\otimes}\:\cL(A^{(I)})$ nor $1\:\overline{\otimes}\:\cL(A^{(I)})^{\text{op}}\:\overline{\otimes}\cL(A^{(I)})$. By \cite[Corollary 4.7]{cios22}, $\mathcal{R}$ is amenable. Moreover, since $\mathcal{R}$ is normalized by $(\Delta(u_{\hat{g}}))_{g\in B}$, repeating the first part of the proof we get $\mathcal{R}\prec^s\tilde{\mathcal{Q}}$.
\end{subproof}

Since $\tilde{\cQ}\subset\tilde{\cM}$ is a Cartan subalgebra, combining Claim \ref{intertwiningcios} together with \cite[Lemma 3.7]{cios22} (see also \cite{ioa10}), after replacing $\Delta$ with $\text{Ad}(u)\circ\Delta$, for some $u\in\tilde{\cM}$, we may assume that 

\begin{equation*}
    \mathcal{P}\subset \tilde{\mathcal{Q}}\subset\mathcal{R}.
\end{equation*}

\vspace{2mm}

Throughout the proof, we will use of the following notation: for every $g\in B$, let $\hat{g}\in G$ so that $\epsilon(\hat{g})=g$, where $\varepsilon:G\to B$ is the canonical quotient map of the wreath-like product $G\in\mathcal{WR}(A,B\curvearrowright I)$.

\vspace{2mm}

Fixing $g\in B$ notice that $\Delta(u_{\hat{g}})$ normalizes $\mathcal{P}$ and $\mathcal{R}$. Denote by $\sigma_g:=\text{Ad}(\Delta(u_{\hat{g}}))\in\text{Aut}(\mathcal{R})$. Then, $\sigma=(\sigma_g)_{g\in B}$ defines an action of $B$ on $\mathcal{R}$ which leaves $\mathcal{P}$ invariant. Since the restriction of $\sigma$ to $\mathcal{P}$ is conjugate to an action $B\curvearrowright\cL(A)^I$ built over $B\curvearrowright I$, it is free and weakly mixing. 

\vspace{2mm}

By \cite[Theorem 1.3 - Step 2]{cios22}, the action $B\curvearrowright^{\sigma}\mathcal{R}$ is weakly mixing. This implies $\mathcal{R}$ is a type I$_k$ - algebra, for some $k\in\N$. Thus, there exists a decomposition $\mathcal{R}=\mathscr{Z}(\mathcal{R})\:\overline{\otimes}\:\mathbb{M}_k(\C)$ such that $\tilde{\mathcal{Q}}=\mathscr{Z}(\mathcal{R})\:\overline{\otimes}\:\mathbb{D}_k(\C)$. Therefore, $(\mathcal{R})_1\subset\sum_{i=1}^k(\tilde{\mathcal{Q}})_1x_i$, for some $x_1,...,x_k\in\mathcal{R}$. Moreover, by \cite[Lemma 3.8]{cios22}, there exists an action $\beta=(\beta_g)_{g\in B}$ of $B$ on $\mathcal{R}$ such that

\vspace{1mm}

\begin{enumerate}
    \item[(a)] for every $g\in B$ we have $\beta_g=\sigma_g\circ\text{Ad}(w_g)=\text{Ad}(\Delta(u_{\hat{g}})w_g)$, for some $w_g\in \mathscr{U}(\mathcal{R})$; and
    \item[(b)] $\tilde{\mathcal{Q}}$ is $\beta(B)$-invariant and the restriction of $\beta$ to $\tilde{\mathcal{Q}}$ is free. 
\end{enumerate}

\vspace{2mm}

Let $\tilde{B}=B\times B\times B$ and consider the action $\alpha:=(\alpha_h)_{h\in\tilde{B}}$ of $\tilde{B}$ on $\tilde{\mathcal{Q}}$ given by 

\begin{equation*}
    \alpha_{(h_1,h_2,h_3)}=\text{Ad}(u_{\widehat{h_1}}\otimes u_{\widehat{h_2^{-1}}}\otimes u_{\widehat{h_3}}),
\end{equation*}

\vspace{2mm}

for $h_1,h_2,h_3\in B$. Let $(Y,\nu)$ be the dual of $A$ with its respective Haar measure, and let $(X,\mu):=(Y^{I}\times Y^I\times Y^I,\nu^I\times\nu^I\times \nu^I)$. Identify $\tilde{Q}=L^{\infty}(X,\mu)$, and denote by $\alpha$ still the corresponding p.m.p. action $\tilde{B}\curvearrowright(X,\mu)$.

\vspace{2mm}

Notice $\tilde{\mathcal{Q}}\subset \tilde{\cM}$ is a Cartan subalgebra, and the equivalence relation associated to the inclusion is equal to $\mathscr{R}(\tilde{B}\curvearrowright X)$. Since the restriction of $\beta$ to $\tilde{\mathcal{Q}}$ is implemented by unitaries in $\tilde{\cM}$ we deduce that $\beta(B)\cdot x\subset \alpha(\tilde{B})\cdot x$ for almost every $x\in X$. Since $B$ has property (T) and $\alpha$ is an action built over $\tilde{B}\curvearrowright I$, using \cite[Lemma 4.4]{cios22} and a maximality argument, we can partition $X=\cup_{k=1}^lX_k$ into non-null measurable sets and we can find finite index subgroups $S_k\leqslant B$ such that $X_k$ is $\beta(S_k)$ invariant, the restriction $\beta|_{S_k}$ to $X_k$ is weakly mixing, for all $1\leq k\leq l$, and

\begin{equation}\label{actionsonXm}
    \beta(S_k)\cdot x\subset \alpha(\tilde{B})\cdot x \text{ for almost every }x\in X_k.
\end{equation}

\vspace{2mm}

Finally, to apply \cite[Theorem 4.1]{cios22} we prove the following claim.

\begin{claim}\label{setstozero} Let $B_1=B_2=B_3=B$. For every subgroup of infinite index $K_i\leqslant B_i$ there exists a sequence $(h_m)_m\subset B$ such that for every $s,t\in B$ and $1\leq i\leq 3$ we have

\begin{equation*}
    \mu(\{x\in X:\beta_{h_m}(x)\in\alpha(B_{\hat{i}}\times sK_it)(x)\})\to 0\text{ as } m\to\infty.
\end{equation*}
\end{claim}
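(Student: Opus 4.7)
The plan is to adapt Step 3 of the proof of \cite[Theorem 1.3]{cios22}: translate the measure-theoretic bound into a Popa intertwining of $\Delta(\cL(G))$, and derive a contradiction with Lemma \ref{notintertwiningforinfiniteindex}. A diagonalization over the countable family of triples $(s,t,i)$ reduces matters to showing, for each fixed $s,t\in B$ and $1\le i\le 3$, that $\{h\in B:\mu(Y_h)\ge c\}$ is finite for every $c>0$, where $Y_h:=\{x\in X:\beta_h(x)\in\alpha(B_{\hat i}\times sK_it)(x)\}$.

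Assume to the contrary that there exist $c>0$ and a sequence of distinct $h_m\in B$ with $\mu(Y_{h_m})\ge c$. Since $\beta_{h_m}=\text{Ad}(\Delta(u_{\hat h_m})w_{h_m})$ normalizes $\tilde\cQ$ and lies in the $\alpha(\tilde B)$-equivalence relation, the implementing unitary admits a Fourier expansion $\Delta(u_{\hat h_m})w_{h_m}=\sum_{k\in\tilde B}a_{m,k}v_k$ with $a_{m,k}\in\tilde\cQ$ of mutually orthogonal supports summing to $1$. Setting $\Sigma:=B_{\hat i}\times sK_is^{-1}\leqslant\tilde B$ and choosing a coset representative $w_0$ so that $B_{\hat i}\times sK_it=\Sigma w_0$, the hypothesis rewrites as $\|\mathbb{E}_{\tilde\cQ\rtimes\Sigma}(\Delta(u_{\hat h_m})w_{h_m}v_{w_0}^*)\|_2^2\ge c$.

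I then use $(\cR)_1\subseteq\sum_{j=1}^k(\tilde\cQ)_1 x_j$ to decompose $w_{h_m}=\sum_j q_{m,j}x_j$ with $\|q_{m,j}\|\le 1$, push each $q_{m,j}$ past $\Delta(u_{\hat h_m})$ (which normalizes $\tilde\cQ$), and apply pigeonhole on $j\in\{1,\dots,k\}$ to extract a subsequence and a fixed index $j_0$ along which $\|\mathbb{E}_{\tilde\cQ\rtimes\Sigma}(\Delta(u_{\hat h_m})x_{j_0}v_{w_0}^*)\|_2\ge\sqrt c/k$. Because $\Delta(\cL(A^{(I)}))\subseteq\tilde\cQ\subseteq\tilde\cQ\rtimes\Sigma$, this bound extends from $\Delta(u_{\hat h_m})$ to every $\Delta(u_g)$ with $g\in A^{(I)}\hat h_m$, and Popa's intertwining criterion (Theorem \ref{intertwining}) produces $\Delta(\cL(G))\prec_{\tilde\cM}\tilde\cQ\rtimes\Sigma$.

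To close, I identify $\tilde\cQ\rtimes\Sigma$, after a unitary conjugation in the $i$-th tensor slot, as $\cM\,\overline{\otimes}\,\cM^{\text{op}}\,\overline{\otimes}\,\cL(\epsilon^{-1}(K_3))$ for $i=3$, with analogous expressions placing $\cL(\epsilon^{-1}(K_i))$ in the corresponding slot and $\cL(\epsilon^{-1}(K_2))^{\text{op}}$ in the middle when $i=2$. Since $[B:K_i]=\infty$ forces $[G:\epsilon^{-1}(K_i)]=\infty$, a direct coset-representative argument gives $\cM\not\prec_\cM\cL(\epsilon^{-1}(K_i))$, and Lemma \ref{notintertwiningforinfiniteindex}(i) (for $i=1,3$) or (ii) (for $i=2$) contradicts the intertwining just derived. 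The subtlest point, which I expect to be the main obstacle, is the Popa-intertwining upgrade in the third paragraph: carefully verifying that the pointwise lower bound along the sequence $(\hat h_m)$, combined with the finite generation of $\cR$ over $\tilde\cQ$, genuinely produces intertwining in the sense of Theorem \ref{intertwining}(1) rather than merely a non-vanishing expectation along one particular sequence.
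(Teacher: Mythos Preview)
Your contrapositive strategy is the right instinct, but both of the two ``subtle'' steps actually fail as written.

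First, the sentence ``push each $q_{m,j}$ past $\Delta(u_{\hat h_m})$ (which normalizes $\tilde\cQ$)'' is incorrect: $\sigma_h=\text{Ad}(\Delta(u_{\hat h}))$ acts on $\cR$ and leaves $\cP$ invariant, but it need \emph{not} leave $\tilde\cQ$ invariant; this is precisely why the action $\beta_h=\sigma_h\circ\text{Ad}(w_h)$ had to be introduced in item~(b). So $\sigma_{h_m}(q_{m,j})$ lands in $\cR$, not in $\tilde\cQ$, and you cannot absorb it into $\mathbb{E}_{\tilde\cQ\rtimes\Sigma}$ as planned. Second, even granting step~4, your step~6 does not follow: a uniform lower bound for $\|\mathbb{E}_{\tilde\cQ\rtimes\Sigma}(\Delta(u_g)x_{j_0}v_{w_0}^*)\|_2$ only over $g\in\bigcup_m A^{(I)}\hat h_m$ is far weaker than the Popa criterion, which requires either fixed $x_i,y_i$ working for \emph{all} $g\in G$, or equivalently that \emph{every} sequence in $\Delta(G)$ fails to witness non-intertwining. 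You have exhibited one sequence that does not witness non-intertwining; that says nothing about $\Delta(\cL(G))\prec\tilde\cQ\rtimes\Sigma$.

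The paper runs the argument forward rather than by contradiction, and this sidesteps both issues. From Lemma~\ref{notintertwiningforinfiniteindex} one first extracts a single sequence $(g_m)\subset G$ with $\|\mathbb{E}_{\cN_i}(x\Delta(u_{g_m})y)\|_2\to 0$ for all $x,y\in\tilde\cM$ and all three $i$ simultaneously, where $\cN_3=\cM\,\overline\otimes\,\cM^{\rm op}\,\overline\otimes\,\cL(\epsilon^{-1}(K_3))$ etc. Setting $h_m=\epsilon(g_m)$, one has $\Delta(u_{\hat h_m})w_{h_m}\in\Delta(u_{g_m})\mathscr U(\cR)\subseteq\sum_j\Delta(u_{g_m})(\tilde\cQ)_1x_j$; since $\tilde\cQ$ is regular in $\tilde\cM$ and contained in each $\cN_i$, the universal decay for $\Delta(u_{g_m})$ transfers to $\Delta(u_{\hat h_m})w_{h_m}$ (here the $m$-dependent $\tilde\cQ$-coefficients sit on the \emph{right} of $\Delta(u_{g_m})$ and get absorbed into the flanking element $y$, not pushed through). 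Finally one reads off $\mu(\{x:\beta_{h_m}(x)\in\alpha(B_{\hat i}\times sK_it)(x)\})$ as exactly such a squared conditional expectation with specific flanking unitaries, yielding the desired convergence to zero. The direct route thus uses the same ingredients you assembled, but in an order where the quantifiers line up.
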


\begin{subproof}[Proof of Claim \ref{setstozero}] Let $G_i:=\epsilon^{-1}(K_i)$, and notice that $G_i$ is an infinite index subgroup of $G$. By Lemma \ref{notintertwiningforinfiniteindex}, we can find a sequence $(g_m)_m\subset G$ with

\begin{equation*}
    \|\mathbb{E}_{\cM\overline{\otimes}\cM^{\text{op}}\overline{\otimes}\cL(G_3)}(x\Delta(u_{g_m})y)\|_2,\:\|\mathbb{E}_{\cM\overline{\otimes}\cL(G_2)^{\text{op}}\overline{\otimes}\cM}(x\Delta(u_{g_m})y)\|_2,\:\|\mathbb{E}_{\cL(G_1)\overline{\otimes}\cM^{\text{op}}\overline{\otimes}\cM}(x\Delta(u_{g_m})y)\|_2
\end{equation*}

\vspace{2mm}

converge to zero for all $x,y\in \tilde{\cM}$. Let $h_m:=\epsilon(g_m)\in B$. We claim $(h_m)_m$ is the sequence that satisfies the statement. Since $g_m^{-1}\widehat{h_m}\in A^{(I)}$ and $w_{h_m}\in\mathscr{U}(\mathcal{R})$ we get $\Delta(u_{\widehat{h_m}})w_{h_m}\in\Delta(u_{g_m})\mathscr{U}(\mathcal{R})$. Thus, for every $m$, $\Delta(u_{\widehat{h_m}})w_{h_m}\in \sum_{i=1}^t\Delta(u_{g_m})(\tilde{\mathcal{Q}})_1x_i$ for $x_1,...,x_t\in\mathcal{R}$. As $\tilde{\mathcal{Q}}$ is regular in $\tilde{\cM}$, this implies that for every $x,y\in\tilde{\mathcal{M}}$

\begin{align}\label{Deltaw_htozero}
    \|\mathbb{E}_{\cM\overline{\otimes}\cM^{\text{op}}\overline{\otimes}\cL(G_3)}(x\Delta&(u_{\widehat{h_m}})w_{h_m}y)\|_2,\:\|\mathbb{E}_{\cM\overline{\otimes}\cL(G_2)^{\text{op}}\overline{\otimes}\cM}(x\Delta(u_{\widehat{h_m}})w_{h_m}y)\|_2,\nonumber\\
    &\|\mathbb{E}_{\cL(G_1)\overline{\otimes}\cM^{\text{op}}\overline{\otimes}\cM}(x\Delta(u_{\widehat{h_m}})w_{h_m}y)\|_2
\end{align}

\vspace{2mm}

converge to zero. On the other hand, $\beta_{h_m}=\text{Ad}(\Delta(u_{\widehat{h_m}})w_{h_m})$ and $\alpha(g_1,g_2,g_3)=\text{Ad}(u_{(\widehat{g_1},\widehat{g_2^{-1}},\widehat{g_3})})$ for all $g_1,g_2,g_3\in B$. Altogether, these imply each of the following 

\begin{align*}
    \mu(\{x\in X:\beta_{h_m}(x)\in\alpha(B_{\hat 3}\times sK_3t)(x)\})&=\|\mathbb{E}_{\cM\overline{\otimes}\cM^{\text{op}}\overline{\otimes}\cL(G_3)}((1\otimes 1\otimes u_{\hat{s}}^*)\Delta(u_{\widehat{h_m}})w_{h_m}(1\otimes 1 \otimes u_{\hat{t}}^*))\|_2^2\\
    \mu(\{x\in X:\beta_{h_m}(x)\in\alpha(B_{\hat 2}\times sK_2t)(x)\})&=\|\mathbb{E}_{\cM\overline{\otimes}\cL(G_2)^{\text{op}}\overline{\otimes}\cM}((1\otimes \overline{u_{\hat{t}}^*}\otimes 1)\Delta(u_{\widehat{h_m}})w_{h_m}(1\otimes \overline{u_{\hat{s}}^*}\otimes 1))\|_2^2\\
    \mu(\{x\in X:\beta_{h_m}(x)\in\alpha( B_{\hat 1}\times sK_1t)(x)\})&=\|\mathbb{E}_{\cL(G_1)\overline{\otimes}\cM^{\text{op}}\overline{\otimes}\cM}((u_{\hat{s}}^*\otimes 1\otimes 1)\Delta(u_{\widehat{h_m}})w_{h_m}(u_{\hat{t}}^*\otimes 1\otimes 1))\|_2^2
\end{align*}

\vspace{2mm}

converges to zero.
\end{subproof}

Fix $j\in I$, so that $j\in B_i$ for some $1\leq i\leq 3$, and let $g=(g_1,g_2,g_3)\in \tilde{B}\setminus\{1\}$. Then, $\text{Stab}_{\tilde{B}}(j)=B_{\hat{i}}\times \text{Stab}_{B_i}(j)$ and $C_{\tilde{B}}(g)=C_{B_1}(g_1)\times C_{B_2}(g_2)\times C_{B_3}(g_3)$. Since $g\neq 1$, there exists $1\leq i\leq 3$ for which $g_i\neq 1$, and hence, $C_{\tilde{B}}(g)\leqslant B_{\hat{i}}\times C_{B_i}(g_i)$, where $C_{B_i}(g_i)$ is an infinite index subgroup of $B_i$. 

\vspace{2mm}

Let $1\leq k\leq l$. Since $\beta|_{S_k}$ is a free and weakly mixing action on $X_k$, $S_k$ has property (T), equation \eqref{actionsonXm}, the previous paragraph and Claim \ref{setstozero} show that the conditions of the moreover assertion of \cite[Theorem 4.1]{cios22} are satisfied by $\beta|_{S_k}$ on $X_k$ and $\alpha$. Thus, we can find an injective group homomorphism $\delta_k:S_k\to\tilde{B}$ and $\varphi_k\in \left[\mathscr{R}(\tilde{B}\curvearrowright X)\right]$ such that $\varphi_k(X_k)=X\times\{k\}\equiv X$ and $\varphi_k\circ\beta_h|_{X_k}=\alpha_{\delta_k(h)}\circ\varphi_k|_{X_k}$ for all $h\in S_k$. In particular, $\mu(X_k)=1$. This shows that $k=1$ and $S_1=B$.

\vspace{3mm}

Thus, we obtain a map $\delta=(\delta_1,\delta_2,\delta_3):B\to B\times B\times B$ and $\varphi\in\left[\mathscr{R}(\tilde{B}\curvearrowright X)\right]$ such that $\varphi\circ\beta_h=\alpha_{\delta(h)}\circ\varphi$ for all $h\in B$. 

\vspace{2mm}

Now, let $u\in\mathcal N_{\tilde{\mathcal M}}(\tilde{\mathcal{Q})}$ such that $uau^*=a\circ\phi^{-1}$, for every $a\in\tilde{\mathcal{Q}}$. The last relation implies that we can find $(\zeta_{h})_{h\in B}\subset\mathscr U(\tilde{\mathcal{Q}})$ such that

\begin{equation}\label{conjugat}
    u\Delta(u_{\widehat{h}})w_hu^*=\zeta_{h}u_{(\widehat{\delta_{1}(h)},\widehat{\delta_{2}(h)^{-1}},\widehat{\delta_{3}(h)})},\quad \text{for every $h\in B$.}
\end{equation}

\vspace{2mm}

After replacing $\Delta$ with $\text{Ad}(u)\circ\Delta$ we have that $(\xi_h)_{h\in B}\subset\mathscr{U}(\tilde{\mathcal{Q}})$ and \eqref{conjugat} rewrites as

\begin{equation}\label{conjug}
    \Delta(u_{\widehat{h}})w_h=\zeta_{h}u_{(\widehat{\delta_{1}(h)},\widehat{\delta_{2}(h)^{-1}},\widehat{\delta_{3}(h)})},\quad \text{for every $h\in B$.}
\end{equation}

\vspace{1mm}

\begin{claim}\label{RcontainedtildeQ}
    $\mathcal R \subseteq \tilde{\mathcal Q}$.
\end{claim}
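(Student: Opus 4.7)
The plan is to prove the reverse containment $\mathcal R\subseteq\tilde{\mathcal Q}$ (the inclusion $\tilde{\mathcal Q}\subseteq\mathcal R$ is already in hand). Since $\mathcal R$ is type $I_k$ with $\tilde{\mathcal Q}$ as a Cartan subalgebra, one may write $\mathcal R=\mathscr Z(\mathcal R)\:\overline{\otimes}\:\mathbb M_k(\mathbb C)$ and $\tilde{\mathcal Q}=\mathscr Z(\mathcal R)\:\overline{\otimes}\:\mathbb D_k(\mathbb C)$, so that $\mathcal R$ is generated as a von Neumann algebra by $\tilde{\mathcal Q}$ together with its groupoid normalizer $\mathscr N_{\mathcal R}(\tilde{\mathcal Q})$. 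Consequently, to establish the claim it suffices to show that $\mathscr N_{\mathcal R}(\tilde{\mathcal Q})\subseteq\tilde{\mathcal Q}$, i.e., $k=1$.

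Let $v\in\mathscr N_{\mathcal R}(\tilde{\mathcal Q})$ be a unitary. Since $\tilde{\mathcal Q}\subseteq\tilde{\mathcal M}$ is also a Cartan subalgebra, with associated p.m.p.\ equivalence relation $\mathscr R(\tilde B\curvearrowright X)$ where $X=Y^I\times Y^I\times Y^I$ and $Y=\hat A$, the inner automorphism $\mathrm{Ad}(v)|_{\tilde{\mathcal Q}}$ lies in the full pseudogroup of $\mathscr R(\tilde B\curvearrowright X)$. After partitioning $X$ into finitely many measurable pieces, $\mathrm{Ad}(v)$ is implemented on each piece by a triple $(g_1,g_2,g_3)\in\tilde B=B^3$ acting coordinate-wise on $Y^I\times Y^I\times Y^I$ via the action built over $B\curvearrowright I$.

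Because $v\in\mathcal R=\mathcal P'\cap\tilde{\mathcal M}$, the automorphism $\mathrm{Ad}(v)$ must fix $\mathcal P=\Delta(\mathcal L(A^{(I)}))$ pointwise. Since $A$ is free abelian and $c|_{A^{(I)}\times A^{(I)}}=1$, one has $\Delta(u_a)=u_a\otimes\overline{u}_{a^{-1}}\otimes u_a$ for every $a\in A^{(I)}$, so at the spectral level $\mathcal P\subseteq\tilde{\mathcal Q}$ corresponds to the factor map $\pi\colon Y^I\times Y^I\times Y^I\to Y^I$, $(y_1,y_2,y_3)\mapsto y_1-y_2+y_3$. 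On any piece where $\mathrm{Ad}(v)$ is implemented by $(g_1,g_2,g_3)$, the invariance $\mathrm{Ad}(v)|_{\mathcal P}=\mathrm{id}$ forces
\[
g_1\cdot y_1-g_2\cdot y_2+g_3\cdot y_3=y_1-y_2+y_3
\]
for $(y_1,y_2,y_3)$ ranging over a positive measure subset of $Y^I\times Y^I\times Y^I$. Expanding in Fourier (equivalently, invoking the freeness/weak mixingness of the wreath-like action $B\curvearrowright^{\sigma|_B}\mathcal L(A^{(I)})$ established in Lemma \ref{weakmixingaction}), and using that $B\curvearrowright I$ is faithful with infinite orbits, a standard Fourier-coefficient comparison yields $g_1=g_2=g_3=e$ on every piece. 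Thus $\mathrm{Ad}(v)|_{\tilde{\mathcal Q}}=\mathrm{id}$, which forces $v\in\tilde{\mathcal Q}'\cap\tilde{\mathcal M}=\tilde{\mathcal Q}$ (the last equality since $\tilde{\mathcal Q}$ is a Cartan subalgebra, hence maximal abelian in $\tilde{\mathcal M}$). This completes the proof.

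The main technical hurdle is the passage from the algebraic condition $\mathrm{Ad}(v)|_{\mathcal P}=\mathrm{id}$ to the Fourier-type rigidity statement forcing $(g_1,g_2,g_3)=(e,e,e)$; this step relies crucially on the clean form of $\Delta$ on $A^{(I)}$ (which in turn uses that $A$ is free abelian so the $2$-cocycle $c$ can be arranged to be trivial on the core) and on the infinite-orbit/faithfulness properties of the action $B\curvearrowright I$.
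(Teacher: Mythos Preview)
Your argument has a fundamental gap: the formula $\Delta(u_a)=u_a\otimes\overline{u}_{a^{-1}}\otimes u_a$ for $a\in A^{(I)}$ is \emph{not} available at this point in the proof. Recall that $\Delta$ is the triple comultiplication associated to the \emph{unknown} group $H$, i.e.\ $\Delta(v_h)=\overline{c(h^{-1},h)}\,v_h\otimes\overline{v}_{h^{-1}}\otimes v_h$ for $h\in H$, and it is only viewed as a map on $\cL(G)$ via the identification $\cL(G)=\cL_c(H)$. For $a\in A^{(I)}\subset G$, the element $u_a$ has no a priori relation to the canonical unitaries $v_h$ of $\cL_c(H)$, so $\Delta(u_a)$ has no explicit form. (Also note that $c$ is a cocycle on $H$, not on $G$, so ``$c|_{A^{(I)}\times A^{(I)}}=1$'' is meaningless here; and $A$ is only assumed abelian, not free abelian.) In fact, establishing a relation of the form $\Delta(u_g)=r_g(u_g\otimes\overline{u}_{g^{-1}}\otimes u_g)$ is precisely the goal of the subsequent steps (equation \eqref{w_g'}), and Claim \ref{RcontainedtildeQ} is one of the ingredients needed to get there. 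Your spectral picture $\pi(y_1,y_2,y_3)=y_1-y_2+y_3$ for $\mathcal P$ inside $\tilde{\mathcal Q}$ therefore has no justification.

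The paper's proof proceeds quite differently. It uses the already-established relation \eqref{conjug}, namely $\Delta(u_{\hat h})w_h=\zeta_h\,u_{(\widehat{\delta_1(h)},\,\widehat{\delta_2(h)^{-1}},\,\widehat{\delta_3(h)})}$, to see that the unitaries $u_{(\widehat{\delta_1(h)},\,\widehat{\delta_2(h)^{-1}},\,\widehat{\delta_3(h)})}$ normalize $\mathcal R$. One then shows, via a Popa-type asymptotic argument, that for any $z\in\tilde{\cM}\ominus\tilde{\mathcal Q}$ and any $x,y\in\tilde{\cM}$ one has $\|\mathbb E_{\tilde{\mathcal Q}}(x\,u_{(\ldots)}\,z\,u_{(\ldots)}^*\,y)\|_2\to 0$ along a suitable sequence $(h_m)\subset B$. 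The key input is that if such a norm does not vanish, then $\delta_i(h_m)$ is forced into a coset of a centralizer $C_B(g_i)$ with $g_i\neq e$; combined with \eqref{conjug} this would give $\Delta(u_{\widehat{h_m}})w_{h_m}\in\cL(G_0)\overline{\otimes}\cdots$ for an infinite-index subgroup $G_0$, contradicting the convergences \eqref{Deltaw_htozero} obtained from Lemma \ref{notintertwiningforinfiniteindex}. Since the conjugates of any $z\in\mathcal R$ remain in $\mathcal R$, this forces $\mathbb E_{\tilde{\mathcal Q}}(\mathcal R\ominus\tilde{\mathcal Q})=0$, hence $\mathcal R\subseteq\tilde{\mathcal Q}$.
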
 

\begin{subproof}[Proof of Claim \ref{RcontainedtildeQ}] Since $(\Delta(u_{\hat{h}})w_h)_{h\in B}\subset\mathscr{U}(\tilde{\mathcal{M}})$ normalizes $\mathcal{R}$ and $(\zeta_h)_{h\in B}\subset\mathscr{U}(\tilde{\mathcal{Q}})$, equation \eqref{conjug} shows that $u_{(\widehat{\delta_{1}(h)},\widehat{\delta_{2}(h)^{-1}},\widehat{\delta_{3}(h)})}$ normalizes $\mathcal{R}$. Thus, to prove the claim it suffices to argue that for every $x,y\in\tilde \cM$ and $z\in \tilde \cM\ominus \tilde{\mathcal{Q}}$ we have

\begin{equation*}
    \|\mathbb{E}_{\tilde{\mathcal{Q}}}(xu_{(\widehat{\delta_{1}(h_m)},\widehat{\delta_{2}(h_m)^{-1}},\widehat{\delta_{3}(h_m)})}zu_{(\widehat{\delta_{1}(h_m)},\widehat{\delta_{2}(h_m)^{-1}},\widehat{\delta_{3}(h_m)})}^*y)\|\to 0.
\end{equation*}

\vspace{2mm}

Assume $x=u_a$, $y=u_b$ and $z=u_g$ for $a,b,g\in G\times G\times G$ and $g\not\in A^{(I)}\times A^{(I)}\times A^{(I)}$. Write $(\pi\times\pi\times\pi)(a)=(a_1,a_2,a_3)$, $(\pi\times\pi\times\pi)(b)=(b_1,b_2,b_3)$ and $(\pi\times\pi\times\pi)(g)=(g_1,g_2,g_3)$. Since $(g_1,g_2,g_3)\neq (e,e,e)$, we have that $g_i\neq e$ for some $i=1,2,3$.

\vspace{3mm}

Suppose $g_1\neq e$. For $h\in B$ denote $s_h=\|\mathbb{E}_{\tilde{\mathcal{Q}}}(xu_{(\widehat{\delta_{1}(h)},\widehat{\delta_{2}(h)^{-1}},\widehat{\delta_{3}(h)})}zu_{(\widehat{\delta_{1}(h)},\widehat{\delta_{2}(h)^{-1}},\widehat{\delta_{3}(h)})}^*y)\|$. If $s_h=0$ for all $h\in B$, then the assertion follows. Otherwise, if $s_h\neq 0$ for $h\in B$, then $a_1\delta_1(h)g_1\delta_1(h)^{-1}b_1=e$. In particular, there exists $l\in B$ with $a_1lg_1l^{-1}b_1=e$. If $s_{h_m}\neq 0$ for some $m\in\N$, then $\delta_1(h_m)g_1\delta_1(h_m)^{-1}=a_1^{-1}b_1^{-1}=lg_1l^{-1}$ and therefore, $\delta_1(h_m)\in lB_0$ for $B_0=C_B(g_1)$. 

\vspace{3mm}

Let $G_0=\pi^{-1}(B_0)$. By \eqref{conjug} we obtain that $\Delta(u_{h_m})w_{h_m}\in (u_{\hat{l}}\otimes 1\otimes 1)(\cL(G_0)\:\overline{\otimes}\:\cL(G)^{\text{op}}\:\overline{\otimes}\:\cL(G))$ for any such $m$. Since $g_1\neq e$ and $B$ is ICC, $B_0\leqslant B$ is infinite index. Thus, \eqref{Deltaw_htozero} implies that $\{m\in\N:s_{h_m}\neq 0\}$ is finite, proving that $s_{h_m}\to 0$.

\vspace{2mm}

The claim follows similarly if $g_3\neq e$. Now, assume $g_2\neq e$. Suppose $s_{h_m}\neq 0$ for some $m\in\N$. Then, $b_2\delta_2(h_m)g_2\delta_2(h_m)^{-1}a_2=e$. In particular, there exists $f\in B$ with $b_2fg_2f^{-1}a_2=e$, and hence, $\delta_2(h_m)g_2\delta_2(h_m)^{-1}=b_2^{-1}a_2^{-1}=fg_2f^{-1}$, and therefore, $\delta_2(h_m)\in fB_0$ for $B_0=C_B(g_2)$. Let $G_0=\pi^{-1}(B_0)$. By \eqref{conjug} we obtain that $\Delta(u_{\widehat{h_m}})w_{h_m}\in (\cL(G)\:\overline{\otimes}\:\cL(G_0)^{\text{op}}\overline{\otimes}\:\cL(G))(1\otimes u_{\widehat{f}}\otimes 1)$ for any such $m$. Thus, \eqref{Deltaw_htozero} implies that $\{m\in\N:s_{h_m}\neq 0\}$ is finite, proving that $s_{h_m}\to 0$.
\end{subproof}

Next, the prior claim implies $w_h\in\tilde{\mathcal{Q}}$ for every $h\in B$. Thus, $\eta_h:=\zeta_h\text{Ad}(u_{(\widehat{\delta_{1}(h)},\widehat{\delta_{2}(h)^{-1}},\widehat{\delta_{3}(h)})})(w_h^*)\in\tilde{\mathcal{Q}}$ and also 

\begin{equation*}
    \Delta(u_{\hat{h}})=\eta_h(u_{\widehat{\delta_1(h)}}\otimes \overline u_{\widehat{\delta_2(h)^{-1}}}\otimes u_{\widehat{\delta_3(h)}}),\quad\text{for every } h\in B.
\end{equation*}

\vspace{1mm}

\begin{claim}\label{assumemapsareidentity} We may assume that $\delta_1=\delta_2=\delta_3={\rm Id}_B$.
\end{claim}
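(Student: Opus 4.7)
The plan is to exploit the coassociativity identities \eqref{commult2} and \eqref{commult2F} of the triple commultiplication, together with the fact that $\eta_h$ lives in the abelian core $\tilde{\mathcal Q}$, in order to constrain the maps $\delta_1,\delta_2,\delta_3$ and then absorb the residual ambiguity into an automorphism of $G$. The key observation driving the computation is that $\Delta$ maps the core $\cL(A^{(I)})$ into $\tilde{\mathcal Q}=\cL(A^{(I)})\:\overline{\otimes}\:\cL(A^{(I)})^{\text{op}}\:\overline{\otimes}\:\cL(A^{(I)})$, so the $B$-graded Fourier components of both sides of the coassociativity relations must agree tensor slot by tensor slot.

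First I would apply $(\Delta\otimes\text{id}\otimes\text{id})$ and $(\text{id}\otimes\text{id}\otimes\Delta)$ to the identity
$\Delta(u_{\hat h})=\eta_h(u_{\widehat{\delta_1(h)}}\otimes\overline u_{\widehat{\delta_2(h)^{-1}}}\otimes u_{\widehat{\delta_3(h)}})$,
and invoke \eqref{commult2}. Reading off the $B$-component in each of the five tensor slots of $\tilde{\cM}\:\overline\otimes\:\cM^{\text{op}}\:\overline\otimes\:\cM$ produces the system
\begin{equation*}
\delta_1\circ\delta_1=\delta_1,\ \ \delta_2\circ\delta_1=\delta_2,\ \ \delta_3\circ\delta_1=\delta_1\circ\delta_3,\ \ \delta_2\circ\delta_3=\delta_2,\ \ \delta_3\circ\delta_3=\delta_3.
\end{equation*}
Repeating the argument with \eqref{commult2F}, where $F_{1,2}$ and $F_{3,4}$ swap adjacent slots and invert the corresponding $B$-indices, yields the companion relations
\begin{equation*}
\delta_2\circ\delta_1=\delta_1,\ \ \delta_1\circ\delta_1=\delta_2,\ \ \delta_3\circ\delta_1=\delta_2\circ\delta_3,\ \ \delta_2=\delta_1\circ\delta_3,\ \ \delta_3\circ\delta_3=\delta_3.
\end{equation*}
Comparing the two families forces $\delta_1=\delta_2=:\rho$, with $\rho$ and $\delta_3$ both idempotent endomorphisms of $B$ satisfying $\rho=\rho\circ\delta_3=\delta_3\circ\rho$. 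Writing $\delta_3(h)=h\cdot k_h$ with $k_h\in\ker\rho$, a short computation from $\delta_3\circ\delta_3=\delta_3$ gives $k_h\in\ker\rho\cap\ker\delta_3$, and the injectivity of $\delta=(\rho,\rho,\delta_3):B\to B^3$ then forces $k_h=e$, hence $\delta_3=\text{Id}_B$.

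It remains to upgrade the idempotent $\rho$ to the identity. To do so, I would exploit that the action of $B$ on $\tilde{\mathcal Q}$ induced by $\rho$ on the first two tensor factors must agree, up to the unitary conjugation built above, with the $\alpha$-action restricted along $\delta(B)\leqslant\tilde B$, which is a generalized Bernoulli action built over the diagonal $B$-action on $I\sqcup I\sqcup I$. The weak mixingness and freeness of such actions (Lemma \ref{weakmixingaction}) combined with the property (T) of $B$ preclude $\rho$ from having infinite-index image; since $B$ is an ICC subgroup of a hyperbolic group, every injective endomorphism onto a finite-index subgroup is surjective (Hopfianness), so $\rho\in\text{Aut}(B)$. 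Finally, $\rho$ lifts through the wreath-like short exact sequence to an automorphism $\widetilde\rho\in\text{Aut}(G)$ permuting the fibers $A_i$ along the induced permutation of $I$, and precomposing $\Delta$ with $\widetilde\rho$ on the first two entries (equivalently, composing $\Phi$ with the dual automorphism on the source) absorbs $\rho$, reducing to $\delta_1=\delta_2=\delta_3=\text{Id}_B$ without disturbing the relation $\Delta(\mathcal P)\subseteq\tilde{\mathcal Q}$.

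The main obstacle I anticipate is the passage from idempotent endomorphism to automorphism for $\rho$: the coassociativity identities alone are insufficient, and one genuinely needs to use the weak mixing/freeness of the Bernoulli-type action on $\tilde{\mathcal Q}$ together with property (T) and Hopfianness. A secondary technical point is verifying that the lift $\widetilde\rho\in\text{Aut}(G)$ can be chosen to be compatible with the natural $2$-cocycle of the wreath-like extension, so that the replacement leaves the structure $\eta_h\in\tilde{\mathcal Q}$ intact.
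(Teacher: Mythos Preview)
Your argument has a genuine gap at its very first step. The identity $\Delta(u_{\hat h})=\eta_h(u_{\widehat{\delta_1(h)}}\otimes\overline u_{\widehat{\delta_2(h)^{-1}}}\otimes u_{\widehat{\delta_3(h)}})$ holds only after $\Delta$ has been replaced (several times) by $\text{Ad}(u)\circ\Delta$ for suitable unitaries $u\in\tilde{\cM}$. The coassociativity relations \eqref{commult2} and \eqref{commult2F} are valid for the \emph{original} $\Delta$, not for its conjugates. Consequently, when you unfold $(\Delta\otimes\text{id}\otimes\text{id})\circ\Delta$ and $(\text{id}\otimes\text{id}\otimes\Delta)\circ\Delta$, the two sides differ by fixed elements $x_1,x_2\in\tilde{\cM}^{\otimes 5}$ that are \emph{not} in the five-fold core; these shift the $B$-supports. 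Your system $\delta_1\circ\delta_1=\delta_1,\ldots$ is therefore not a consequence of coassociativity. What one actually obtains is a relation of the form $\delta(\delta(h))F\cap F\,\delta(h)\neq\emptyset$ for all $h\in B$, where $F\subset B$ is a \emph{fixed finite} set coming from the $B$-support of $x_1^*x_2$; this is precisely what the paper extracts. From there the paper invokes the ICC property of $B$ together with \cite[Lemma~7.1]{bv13} (an injective finite-index endomorphism of an ICC group that is uniformly finitely displaced from the identity is an inner automorphism) to conclude that each $\delta_i$ is conjugation by a fixed element of $B$, which is then absorbed by a further $\text{Ad}(u)$.

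Your proposed endgame is also problematic on its own terms. Even if $\rho\in\text{Aut}(B)$, lifting $\rho$ to an automorphism $\widetilde\rho\in\text{Aut}(G)$ is not automatic for wreath-\emph{like} products: $G$ is only an extension $A^{(I)}\to G\to B$ with a nontrivial $2$-cocycle, and an arbitrary automorphism of $B$ need not preserve the cohomology class of that cocycle. The paper avoids this issue entirely by proving that each $\delta_i$ is \emph{inner}, so the correction is conjugation by a single group unitary $u_{\hat g}$, which obviously lifts to $G$. In summary, the paper's route (flip symmetry to get $\delta_1=\delta_3$, then the finite-set displacement argument via \eqref{commult2} and \eqref{commult2F} together with \cite[Lemma~7.1]{bv13}) is not a stylistic choice but is needed to handle the conjugation defects that your ``read off the $B$-components'' step ignores.
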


\begin{subproof}[Proof of Claim \ref{assumemapsareidentity}] First, we argue that we may assume that $\delta_1=\delta_3=\text{Id}_B$. Using the flip automorphism $x\otimes y\otimes z\mapsto z\otimes y \otimes x$ and the same argument form the first part of Step 6 in the proof of \cite[Theorem 1.3]{cios22} we obtain that $\delta_1$ is conjugated to $\delta_3$. Hence, after replacing $\eta_h$ by an appropriate unitary, we may assume that $\delta_1=\delta_3=:\delta$ and there is $x,y\in \mathcal N_{\tilde \cM}(\tilde Q)$ such that 

\begin{equation*}
    \Delta(u_{\hat{h}})x=\eta_hy(u_{\widehat{\delta(h)}}\otimes \overline u_{\widehat{\delta_2(h)^{-1}}}\otimes u_{\widehat{\delta(h)}}),\quad\text{for all } h\in B.
\end{equation*}

\vspace{2mm}

Letting $x_1 = (\Delta \otimes {\rm id}\otimes {\rm id})(x)(x\otimes  1\otimes 1)$ and $\tilde \zeta_h =(\Delta\otimes\text{id}\otimes\text{id})(\eta_hy)(\eta_{\delta(h)}y\otimes 1\otimes 1)$ we get 

\begin{align*}
    (\Delta\otimes\text{id}\otimes\text{id})(\Delta(u_{\hat{h}}))x_1&=(\Delta\otimes\text{id}\otimes\text{id})(\eta_hy)(\Delta\otimes\text{id}\otimes\text{id})(u_{\widehat{\delta(h)}}\otimes \overline u_{\widehat{\delta_2(h)^{-1}}}\otimes u_{\widehat{\delta(h)}})(x\otimes 1\otimes 1)\\
    &=(\Delta\otimes\text{id}\otimes\text{id})(\eta_hy)(\Delta(u_{\widehat{\delta(h)}})x \otimes \overline u_{\widehat{\delta_2(h)^{-1}}}\otimes u_{\widehat{\delta(h)}})\\
    &=\tilde \zeta_h(u_{\widehat{\delta(\delta(h))}}\otimes \overline u_{\widehat{\delta_2(\delta(h))^{-1}}}\otimes u_{\widehat{\delta(\delta(h))}}\otimes \overline u_{\widehat{\delta_2(h)^{-1}}}\otimes u_{\widehat{\delta(h)}}).
\end{align*}

\vspace{2mm}

Similarly, letting $x_2 = ({\rm id}\otimes {\rm id}\otimes \Delta)(x)(1\otimes 1\otimes x)$ and $\tilde \xi_h =(\text{id}\otimes\text{id}\otimes\Delta)(\eta_hy)(1\otimes 1\otimes \eta_{\delta(h)}y)$ yields 

\begin{align*}
    (\text{id}\otimes\text{id}\otimes\Delta)(\Delta(u_{\hat{h}}))x_2 &=(\text{id}\otimes\text{id}\otimes\Delta)(\eta_hy)(u_{\widehat{\delta(h)}}\otimes \overline u_{\widehat{\delta_2(h)^{-1}}}\otimes \Delta(u_{\widehat{\delta(h)}})x)\\
    &=\tilde \xi_h(u_{\widehat{\delta(h)}}\otimes \overline u_{\widehat{\delta_2(h)^{-1}}}\otimes u_{\widehat{\delta(\delta(h))}}\otimes \overline u_{\widehat{\delta_2(\delta(h))^{-1}}}\otimes u_{\widehat{\delta(\delta(h))}}).
\end{align*}

\vspace{2mm}

Using \eqref{commult2} in combination with the two relations above, for every $h\in B$ we have

\begin{equation}\label{equaltensors1}
    (u_{\widehat{\delta(\delta(h))}}\otimes \overline u_{\widehat{\delta_2(\delta(h))^{-1}}}\otimes u_{\widehat{\delta(\delta(h))}}\otimes \overline u_{\widehat{\delta_2(h)^{-1}}}\otimes u_{\widehat{\delta(h)}}) x_1^*x_2= \tilde \zeta_h^*\tilde \xi_h(u_{\widehat{\delta(h)}}\otimes \overline u_{\widehat{\delta_2(h)^{-1}}}\otimes u_{\widehat{\delta(\delta(h))}}\otimes \overline u_{\widehat{\delta_2(\delta(h))^{-1}}}\otimes u_{\widehat{\delta(\delta(h))}}).
\end{equation} 

\vspace{2mm}

Denote by $\mathscr M :=\cM\:\overline{\otimes}\:\cM^{\text{op}}\:\overline{\otimes}\:\cM\:\overline{\otimes}\cM^{\text{op}}\:\overline{\otimes}\cM$ and $\mathscr Q:=\cL(A^{(I)})\:\overline{\otimes}\:\cL(A^{(I)})^{\text{op}}\:\overline{\otimes}\:\cL(A^{(I)})\:\overline{\otimes}\:\cL(A^{(I)})^{\text{op}}\:\overline{\otimes}\:\cL(A^{(I)})$. Also let $\tilde{\tilde{B}}:=B \times B \times B\times B \times B$. 

\vspace{2mm}

For a set $F=F_1\times \cdots\times F_5\subset \tilde{\tilde{B}}$, let $\mathcal{H}_{F}$ be the $\|\cdot\|_2$-closure of the linear span of 

\begin{equation*}
    \{u_{g_1}\otimes \overline{u}_{g_2}\otimes u_{g_3}\otimes \overline{u}_{g_4}\otimes u_{g_5}:g_i\in\pi^{-1}(F_i), \:1\leq i \leq 5\}
\end{equation*}

\vspace{2mm}

and $P_{F}$ be the orthogonal projection from $L^2(\mathscr M)$ onto $\mathcal{H}_{F}$. For later use, notice that for every pair of subsets $G=G_1\times \cdots \times G_5, \: F =F_1\times \cdots \times F_5\subset \tilde{\tilde{B}}$ the following hold:

\begin{enumerate}
    \item\label{orthogonalproj} If $F_j\cap G_j =\emptyset$ for some $1\leq j\leq 5$ then  we have $P_F P_G =0$.
    \item\label{secondproperty} For every $(g_1, ...,g_5),\: (h_1,..., h_n)\in \tilde{\tilde{B}}$ we have  
    
    \begin{equation*}
        (u_{g_1}\otimes \overline u_{g_2} \otimes u_{g_3} \otimes \overline u_{g_4} \otimes u_{g_5} )\mathcal H_F (u_{h_1}\otimes \overline u_{h_2} \otimes u_{h_3} \otimes \overline u_{h_4} \otimes u_{h_5}) = \mathcal H_{g_1F_1h_1\times h_2F_2g_2\times g_3F_3h_3\times h_4F_4g_4\times g_5F_5h_5}.
    \end{equation*}
\end{enumerate}

\vspace{1mm}

Next, since $\mathcal{H}_{F}$ is a $\mathscr Q$-bimodule, using the definitions of $\eta_h$, $\tilde{\xi_h}$, $\tilde{\zeta_h}$, $x_1$ and $x_2$ one can find a finite set $F=F_1\times \cdots\times F_5\subset \tilde{\tilde{B}}$ such that 

\begin{equation*}
    \| x_1x_2^*-P_F(x_1x_2^*)\|_2<1/4, \text{ and } \|\tilde{\zeta_h}^*\tilde{\xi_h}-P_{F}(\tilde{\zeta_h}^*\tilde{\xi_h})\|_2<1/4,\text{ for every }h\in B.
\end{equation*} 

\vspace{2mm}

These inequalities combined with \eqref{equaltensors1} show that for all $h\in B$ we have 

\begin{equation*}
    \scriptstyle{\langle P_{F}(\tilde{\zeta_h}^*\tilde{\xi_h})(u_{\widehat{\delta(h)}}\otimes \overline u_{\widehat{\delta_2(h)^{-1}}}\otimes u_{\widehat{\delta(\delta(h))}}\otimes \overline u_{\widehat{\delta_2(\delta(h))^{-1}}}\otimes u_{\widehat{\delta(\delta(h))}}), (u_{\widehat{\delta(\delta(h))}}\otimes \overline u_{\widehat{\delta_2(\delta(h))^{-1}}}\otimes u_{\widehat{\delta(\delta(h))}}\otimes \overline u_{\widehat{\delta_2(h)^{-1}}}\otimes u_{\widehat{\delta(h)}})P_{F}(x_1^*x_2)\rangle \geq1/2.}
\end{equation*}

\vspace{2mm}

This relation together with properties \eqref{orthogonalproj} and \eqref{secondproperty} above imply that $\delta(\delta(h))F_1\cap  F_1\delta(h)\neq \emptyset$, for every $h\in B$. Since $\delta(B)\leqslant B$ has finite index and $B$ is ICC, by \cite[Lemma 7.1]{bv13} one can find $g\in B$ such that $\delta(h)=ghg^{-1}$, for every $h\in B$. Thus, after replacing $\eta_h$ with an appropriate unitary, we may assume $\delta_1=\delta_3=\delta=\text{Id}_B$ and there is $z,w\in\mathcal{N}_{\tilde{\cM}}(\tilde Q)$ such that 

\begin{equation*}
    \Delta(u_{\hat{h}})z=\eta_hw(u_{\hat{h}}\otimes \overline u_{\widehat{\delta_2(h)^{-1}}}\otimes u_{\hat{h}}),\quad\text{for all }h\in B.
\end{equation*}

\vspace{2mm}

To show the map $\delta_2$ can be conjugated to the identity map, we will make use of relation \eqref{commult2F}. First, letting $z_1=F_{1,2}\circ (\Delta\otimes \text{id}\otimes\text{id})(z)F_{1,2} (z\otimes 1\otimes 1)$, $\zeta_h=F_{1,2}\circ (\Delta\otimes\text{id}\otimes\text{id})(\eta_hw)F_{1,2}(\eta_{h}w\otimes 1\otimes 1)$, and using multiplicativity of $F_{1,2}$ and \eqref{sflip} we get

\begin{align*}
    F_{1,2}\circ (\Delta\otimes\text{id}\otimes\text{id})\Delta(u_{\hat{h}})z_1&=\zeta_hF_{1,2}(u_{\hat{h}}\otimes \overline u_{\widehat{\delta_2(h)^{-1}}}\otimes u_{\hat{h}}\otimes \overline u_{\widehat{\delta_2(h)^{-1}}}\otimes u_{\hat{h}})\\
    &=\zeta_h(u_{\widehat{\delta_2(h)^{-1}}}^*\otimes \overline {u_{\hat{h}}^*}\otimes u_{\hat{h}}\otimes \overline u_{{\widehat{\delta_2(h)^{-1}}}}\otimes u_{\hat{h}}),
\end{align*}

\vspace{2mm}

Similarly, letting $z_2=F_{3,4}\circ (\text{id}\otimes \text{id}\otimes \Delta)(z)F_{3,4}(1\otimes 1\otimes z)$ and $\xi_h=F_{3,4}\circ (\text{id}\otimes\text{id}\otimes\Delta)(\eta_hw)F_{3,4}(1\otimes 1\otimes \eta_{h}w)$ we get

\begin{align*}
    F_{3,4}\circ (\text{id}\otimes\text{id}\otimes\Delta)\Delta(u_{\hat{h}})z_2&=\xi_h(u_{\hat{h}}\otimes \overline u_{\widehat{\delta_2(h)^{-1}}}\otimes u_{\widehat{\delta_2(h)^{-1}}}^*\otimes \overline{u_{\hat{h}}^*}\otimes u_{\hat{h}}).
\end{align*}

\vspace{2mm}

Using these relations in combination with \eqref{commult2F} we get  for all $h\in B$ we have 

\begin{equation}\label{equaltensor3}
    (u_{\widehat{\delta_2(h)^{-1}}}^*\otimes \overline {u_{\hat{h}}^*}\otimes u_{\hat{h}}\otimes \overline u_{{\widehat{\delta_2(h)^{-1}}}}\otimes u_{\hat{h}})z_1^*z_2 = \zeta^*_h \xi_h (u_{\hat{h}}\otimes \overline u_{\widehat{\delta_2(h)^{-1}}}\otimes u_{\widehat{\delta_2(h)^{-1}}}^*\otimes \overline{u_{\hat{h}}^*}\otimes u_{\hat{h}}).
\end{equation}

\vspace{2mm}

Let $F=F_1\times \cdots\times F_5\subset \tilde{\tilde{B}}$ be a finite set for which 

\begin{align*}
    \|z_1^*z_2-P_F(z_1^*z_2)\|_2<1/2,\text{ and } \|\zeta^*_h \xi_h-P_F(\zeta^*_h \xi_h)\|_2<1/2\text{, for every } h\in B.
\end{align*}

\vspace{2mm}

Just as in the prior case, we obtain

\begin{equation*}
    \langle P_{F}(\zeta_h^*\xi_h)(u_{\hat{h}}\otimes \overline u_{\widehat{\delta_2(h)^{-1}}}\otimes u_{\widehat{\delta_2(h)^{-1}}}^*\otimes \overline u_{\hat{h}}^*\otimes u_{\hat{h}}), (u_{\widehat{\delta_2(h)^{-1}}}^*\otimes \overline u_{\hat{h}}^*\otimes u_{\hat{h}}\otimes \overline u_{\widehat{\delta_2(h)^{-1}}}\otimes u_{\hat{h}})P_{F}(z_1^*z_2)\rangle \geq1/2.
\end{equation*}

\vspace{2mm}

Therefore,  using the prior equation and properties \eqref{orthogonalproj} and \eqref{secondproperty}, we obtain that $F_1h\cap \delta_2(h)F_1\neq\emptyset$, for every $h\in B$. Since the map $B \ni h\rightarrow \delta_2(h)\in B$ is a group homomorphism whose image has finite index in $B$, an ICC group, by \cite[Lemma 7.1]{bv13} one can find $l\in B$ such that $\delta_2(h)=lhl^{-1}$, or $\delta_2(h)^{-1}=lh^{-1}l^{-1}$, for every $h\in B$. 

\vspace{2mm}

Hence, after replacing $\Delta$ and $\eta_h$ with the appropriate conjugate, we may assume $\delta_2=\text{Id}_B$, and 

\begin{equation*}
    \Delta(u_{\hat{h}})=\eta_h(u_{\hat{h}}\otimes \overline u_{\widehat{h^{-1}}}\otimes u_{\hat{h}}),\quad\text{for all }h\in B.
\end{equation*}\end{subproof}

To finish the proof,
let $g\in G$. Let $h=\epsilon(g)\in B$ and $a=g\widehat{h}^{-1}\in A^{(I)}$.
Then $\Delta(u_g)=\Delta(u_a)\Delta(u_{\widehat{h}})=\Delta(u_a)\eta_h(u_{\widehat{h}}\otimes \overline u_{\widehat{h^{-1}}}\otimes u_{\widehat{h}})=\Delta(u_a)\eta_h(u_{a^{-1}}\otimes \overline u_{gc(h^{-1},h)^{-1}g^{-1}a}\otimes u_{a^{-1}})(u_g\otimes \overline u_{g^{-1}}\otimes u_g)$. Thus, if we denote $r_g=\Delta(u_a)\eta_h(u_{a^{-1}}\otimes \overline u_{gc(h^{-1},h)^{-1}g^{-1}a}\otimes u_{a^{-1}})$, then $r_g\in \mathscr U(\tilde{\mathcal Q})$ and 

\begin{equation}\label{w_g'}
    \Delta(u_g)=r_g(u_g\otimes \overline u_{g^{-1}}\otimes u_g), \quad \text{for every } g\in G.
\end{equation}

\vspace{2mm}

Consider the action $G\curvearrowright^{\gamma} \tilde{\mathcal Q}$ given by $\gamma_g=\text{Ad}(u_g\otimes \overline u_{g^{-1}} \otimes u_g)$, for $g\in G$. By \cite[Lemma 3.4]{cios22} $\gamma$ is isomorphic to an action $G\curvearrowright (Y^J,\nu^J)$ built over $G\curvearrowright J=I\times\{1,2,3\}$ given by $g\cdot (i,j)=(\epsilon(g)\cdot i,j)$, for every $g\in G$ and $(i,j)\in J$. Since the action $G\curvearrowright J$ has infinite orbits, $\gamma$ is weakly mixing. Moreover, \eqref{w_g'} gives that $r_{gh}=r_g\gamma_g(r_h)$, for every $g,h\in G$. Therefore, $(r_g)_{g\in G}$ is a $1$-cocycle for $\gamma$.

\vspace{2mm}

Since $G$ has property (T), \cite[Theorem 3.6]{cios22} gives $s\in \mathscr U(\tilde{\mathcal Q})$ and a homomorphism $\xi\colon G\to \mathbb T$ such that $r_g= s^*\gamma_g(s)$, for every $g\in G$. Thus, after replacing $\Delta$ by $\text{Ad}(s)\circ\Delta$, \eqref{w_g'} rewrites as   

\begin{equation}\label{xi_g}
    \Delta(u_g)=\xi_gu_g\otimes \overline u_{g^{-1}}\otimes u_g,\quad\text{ for every } g\in G.
\end{equation}

\vspace{2mm}

Lemma \ref{positiveheight} shows that the height $h_H(G)>0$. Since $G$ is ICC, the unitary representation $(\text{Ad}(u_g))_{g\in G}$ of $G$ on $L^2(\mathcal M)\ominus\mathbb C1$ is weakly mixing, and since $H$ is ICC, $\cL_c(H)\not\prec\cL_c(C_H(k))$ for $k\neq e$. By applying \cite[Theorem 4.1]{DV24} we conclude that there exists a unitary $w\in\mathcal M$ and an isomorphism $\rho\colon G\rightarrow H$ such that $u_g=\xi_gwv_{\rho(g)}w^*$, for every $g\in G$.
\end{proof}

\vspace{0.5mm}

\section{Proofs of \ref{theoremB} and Corollary D}

In the context given in \ref{theoremA}, we demonstrate that our group $H$ splits up to a finite normal subgroup when the wreath-like product group $W$ is perfect. Moreover, when the conditions specified in \ref{theoremB} are met, we establish that this splitting is total. 

\begin{proof}[\textbf{Proof of \ref{theoremB}}] By \ref{theoremA} we have that $H\cong H^{hfc}\rtimes_{\beta, c} H/H^{hfc}$ and $F:=[H^{fc},H^{fc}]\leqslant H^{hfc}$ is finite normal subgroup of $H$. 

\vspace{2mm}

To prove our result, we must establish that $H/H^{hfc}$ is isomorphic to $W$, $Z(H)=H^{hfc}$, and that $H$ can be decomposed into a direct product of these groups. We achieve this through the following three successive claims.

\vspace{2mm}

Throughout the proof we will denote by  $\tilde c \,:\, (H/H^{hfc})\times (H/H^{hfc})\to \mathscr U (\cL(H^{hfc}))$ the $2$-cocycle induced by the group cocycle $c$; that is, $\tilde c(g,h)=u_{c(g,h)}$ for all $g,h \in H/H^{hfc}$.

\vspace{0.5mm}

\begin{claim}\label{quotientiswreathlikeprod} $H/H^{hfc}\cong W$.
\end{claim}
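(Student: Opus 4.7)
\textbf{Plan for the claim $H/H^{hfc}\cong W$.} The strategy is to compare two direct integral decompositions of $\mathcal L(H)p$ (one coming from the wreath-like description of $G$, one coming from the cocycle crossed product structure derived from \ref{theoremA}) and then to reduce, fiber by fiber, to an application of \ref{notrivialcompressions}. First I would recall from Proposition \ref{cocyclecrossprodisomorphism} that, since $F=[H^{fc},H^{fc}]\lhd H$ is finite and $p=|F|^{-1}\sum_{f\in F}u_f \in \mathscr Z(\cL(H))$, one has a cocycle crossed product decomposition
\begin{equation*}
\mathcal L(H)p \;\cong\; \mathcal L(H^{hfc}/F)\rtimes_{\beta,\tilde c}(H/H^{hfc}).
\end{equation*}
By \ref{theoremA} the inclusion $\mathcal Z p\subseteq \mathcal L(H^{hfc})p=\mathcal L(H^{hfc}/F)$ splits into $\oplus_n \mathcal Z p\, r_n$; thus Lemma \ref{completelyatomicfibers} guarantees that the fibers $\mathcal D_x$ in the direct integral $\mathcal L(H^{hfc}/F)=\int_X^{\oplus}\mathcal D_x\,d\mu(x)$ over the center are completely atomic for almost every $x$. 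Since $H/H^{hfc}$ acts trivially on $\mathscr Z(\mathcal L(H))$, Proposition \ref{integraldecompcocycles} and Theorem \ref{isomcocycleintegral} yield
\begin{equation*}
\mathcal L(H)p \;\cong\;\int_X^{\oplus}\bigl(\mathcal D_x\rtimes_{\beta_x,\tilde c_x}(H/H^{hfc})\bigr)\,d\mu(x).
\end{equation*}

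Next I would compare with the decomposition coming from $G=A\times W$. Under the isomorphism $\mathcal L(H)\cong \mathcal L(G)=\mathcal L(A)\,\overline\otimes\,\mathcal L(W)$ the center of $\mathcal L(H)$ matches $\mathcal L(A)$, so $\mathcal L(H)p\cong \mathcal Z p\,\overline\otimes\,\mathcal L(W)\cong\int_X^{\oplus}\mathcal L(W)\,d\mu(x)$. Invoking the uniqueness of direct integral decompositions (Theorem \ref{decompositioniso}) applied to a trace-preserving isomorphism that fixes the diagonal algebra, I obtain
\begin{equation*}
\mathcal L(W)\;\cong\;\mathcal D_x\rtimes_{\beta_x,\tilde c_x}(H/H^{hfc})\qquad\text{for $\mu$-a.e.\ $x\in X$.}
\end{equation*}
Because $W$ is ICC, the right hand side is a factor; Lemma \ref{finitedimfromcompatomic} then forces $\mathcal D_x$ to be finite dimensional, and Proposition \ref{amplificationoffinitebyiccvn} produces a finite index subgroup $K_x\leqslant H/H^{hfc}$ of index $n_x$, an integer $l_x$, and a scalar $2$-cocycle $\eta_x:K_x\times K_x\to \mathbb T$ such that
\begin{equation*}
\mathcal L(W)\;\cong\;\mathcal L_{\eta_x}(K_x)^{n_x l_x}.
\end{equation*}

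Now I would invoke the $W^*$-superrigidity result \ref{notrivialcompressions}. A standard argument shows a finite index subgroup of an ICC group is ICC: if $k\in K_x$ then $[K_x:K_x\cap C_{H/H^{hfc}}(k)]=[K_x\cdot C_{H/H^{hfc}}(k):C_{H/H^{hfc}}(k)]$ and the latter is infinite as it differs from $[H/H^{hfc}:C_{H/H^{hfc}}(k)]=\infty$ by at most the finite factor $[H/H^{hfc}:K_x]$. Rewriting the isomorphism above as $\mathcal L(W)^{1/(n_x l_x)}\cong \mathcal L_{\eta_x}(K_x)$ with $t=1/(n_x l_x)\leq 1$ and applying \ref{notrivialcompressions} to the property (T) wreath-like product $W$ and the ICC group $K_x$, I conclude that $t=1$ (so $n_x=l_x=1$), that $\eta_x$ is a coboundary, and that $K_x\cong W$. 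Since $[H/H^{hfc}:K_x]=n_x=1$ this gives $H/H^{hfc}\cong W$. The main technical hurdle in this plan is ensuring that the two direct integral decompositions of $\mathcal L(H)p$ are compatible enough to identify the fibers (i.e.\ verifying the hypotheses of Theorem \ref{decompositioniso}), but this is handled cleanly by the fact that the diagonal algebra $\mathcal Z p$ is preserved; the rest reduces to the fiber-wise application of the rigidity theorem.
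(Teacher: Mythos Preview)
Your proposal is correct and follows essentially the same route as the paper: cocycle crossed product decomposition of $\cL(H)p$, atomicity of the fibers $\mathcal D_x$ via Lemma~\ref{completelyatomicfibers}, fiberwise identification with $\cL(W)$ through Theorem~\ref{decompositioniso}, reduction via Proposition~\ref{amplificationoffinitebyiccvn} to $\cL(W)\cong\cL_{\eta_x}(K_x)^{n_xl_x}$, and then \ref{notrivialcompressions}. One small imprecision: \ref{theoremA} gives the splitting $\oplus_n \mathscr Z(\cL(H))r_n$ for $\cL(H^{fc})p$, not for $\cL(H^{hfc})p$; you should invoke the finite-index part $[H^{hfc}:H^{fc}]<\infty$ (also from \ref{theoremA}) together with the ``moreover'' clause of Lemma~\ref{completelyatomicfibers} to conclude atomicity of the fibers of $\cL(H^{hfc}/F)$, exactly as the paper does.
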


\begin{subproof}[Proof of Claim \ref{quotientiswreathlikeprod}] Observe that $H/F=(H^{hfc}/F)\rtimes_{\alpha,d}H/H^{hfc}$ is a group extension. Letting $z=|F|^{-1}\sum_{f\in F}u_f\in\mathscr{P}(\mathscr{Z}(\cL(H)))$, Proposition \ref{cutdown} further yields the following  cocycle crossed product decomposition $\cL(H)z\cong \cL(H^{hfc})z\rtimes_{\beta,\tilde{d}}H/H^{hfc}$. In addition, we have $\cL(H^{hfc})z\cong \cL(H^{hfc}/F)$.

\vspace{2mm}

From \ref{theoremA} one can find orthogonal projections $s_i\in \cL(H^{fc})z$ with $\sum_i s_i=z$ such that $\cL(H^{fc}/F)=\oplus_{i=1}^{\infty}\mathscr{Z}(\cL(H/F))s_i$. 
Let $\cL(H^{hfc})z=\int_X^{\oplus}\mathcal{D}_xd\mu(x)$ be the integral decomposition over the center. Since $H^{fc}\leqslant H^{hfc}$ is finite index by \ref{theoremA}, by Lemma \ref{completelyatomicfibers} we obtain that the fibers $\mathcal{D}_x$ are completely atomic for almost every $x\in X$.

\vspace{2mm}

Using Propositions \ref{integraldecompcocycles} and \ref{isomcocycleintegral}, we further get

\begin{equation*}
    \cL(H)z\cong\int_X^{\oplus}\mathcal{D}_x\rtimes_{\beta_x,\tilde{d}_x}H/H^{hfc}\:d\mu(x).
\end{equation*}

\vspace{2mm}

Moreover, by assumption, $\cL(H)=\cL(A)\:\overline{\otimes}\:\cL(W)$ and thus,

\begin{equation*}
    \int_X^{\oplus}\cL(W)d\mu(x)=\cL(A)z\:\overline{\otimes}\:\cL(W)\cong\int_X^{\oplus}\mathcal{D}_x\rtimes_{\beta_x,\tilde{d}_x}H/H^{hfc}\:d\mu(x).
\end{equation*}

\vspace{2mm}

By \cite[Theorem 2.1.14]{spaas}, the fibers are isomorphic; i.e. $\cL(W)\cong\mathcal{D}_x\rtimes_{\beta_x,\tilde{d}_x}H/H^{hfc}$ for almost every $x\in X$. In particular, $\mathcal{D}_x\rtimes_{\beta_x,\tilde{d}_x}H/H^{hfc}$ is a II$_1$ factor, and hence, by Lemma \ref{finitedimfromcompatomic}, $\mathcal{D}_x$ is finite dimensional. From Proposition \ref{amplificationoffinitebyiccvn}, we find a subgroup $H_x\leqslant H/H^{hfc}$ of index $n_x\in\mathbb{N}$, $l_x\in\mathbb{N}$ and a 2-cocycle $\eta_x:H_x\times H_x\to\mathbb{T}$ satisfying $\mathcal{D}_x=\mathbb{M}_{l_x}\:\overline{\otimes}\:\mathbb{D}_{n_x}$ and $\mathcal{D}_x\rtimes_{\beta,\tilde{d}_x}H/H^{hfc}\cong\cL_{\eta_x}(H_x)^{n_xl_x}$. 

\vspace{2mm}

Therefore, $\cL(W)\cong\cL_{\eta_x}(H_x)^{n_xl_x}$. By \ref{notrivialcompressions}, $n_xl_x=1$, $\eta_x$ is a 2-coboundary and $W\cong H_x=H/H^{hfc}$. Moreover, since $n_x=1$ and $l_x=1$, $\mathcal{D}_x=\C$ and $\eta_x=\tilde{d}_x$ is a 2-coboundary. In particular, $\cL(H^{hfc}/F)=\mathscr{Z}(\cL(H/F))$ implies $H^{hfc}/F$ is the center of $H/F$.
\end{subproof}





\vspace{0.3mm}

\begin{claim}\label{splittingtildeH} $H/F\cong (H^{hfc}/F)\times W$.
\end{claim}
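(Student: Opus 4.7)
The plan is to apply Lemma \ref{splittinglemma} to the group $H/F$ and then show that the resulting finite ``splitting obstruction'' is in fact trivial. First, I would verify the hypotheses of the lemma: from Claim \ref{quotientiswreathlikeprod} we have $Z(H/F) = H^{hfc}/F$, which is abelian; the quotient $(H/F)/Z(H/F) \cong H/H^{hfc} \cong W$ is ICC with property (T) and trivial abelianization (by the assumptions of \ref{theoremB}); and $\cL(H/F) \cong \cL(H)z \cong \cL(H^{hfc}/F)\,\overline{\otimes}\,\cL(W)$, matching the form $\cL(B\times G)$ required by the lemma with $B=H^{hfc}/F$ and $G=W$. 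Lemma \ref{splittinglemma} then yields a finite central subgroup $Z_1 \leqslant H^{hfc}/F$ such that $(H/F)/Z_1 \cong ((H^{hfc}/F)/Z_1)\times W$; that is, a splitting modulo $Z_1$.

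To upgrade this to a genuine splitting, I would show $Z_1 = \{1\}$. Let $\tilde Z_1 \leqslant H$ be the full preimage of $Z_1$; it is a finite normal subgroup of $H$ containing $F$. Because $\tilde Z_1$ is finite and normal, $\tilde Z_1 \subseteq H^{fc}$, and because $Z_1 \subseteq [H/F,H/F] \cap Z(H/F)$ by the proof of Lemma \ref{splittinglemma}, we get $\tilde Z_1 \subseteq H^{fc} \cap ([H,H]\cdot F)$. The goal is to conclude $\tilde Z_1 \subseteq [H^{fc},H^{fc}] = F$. My plan is to exploit the fibered analysis carried out in the proof of Claim \ref{quotientiswreathlikeprod}: there, via \ref{notrivialcompressions} applied to $\mathcal D_x \rtimes_{\tilde d_x}(H/H^{hfc}) \cong \cL(W)$, the scalar $2$-cocycle $\tilde d_x : W \times W \to \mathbb T$ was shown to be a coboundary for almost every character $x$ of $H^{hfc}/F$. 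Since $W$ has trivial abelianization (so $H_1(W,\mathbb Z)=0$), the universal coefficient theorem gives $H^2(W,H^{hfc}/F) \cong \mathrm{Hom}(H_2(W,\mathbb Z), H^{hfc}/F)$, and characters of the abelian group $H^{hfc}/F$ separate points; hence the vanishing of $\chi\circ [d] \in H^2(W,\mathbb T)$ for sufficiently many $\chi$ forces $[d] = 0$. This means the cocycle $d : W\times W \to H^{hfc}/F$ defining the extension $H/F$ is a group coboundary, producing the desired direct-product splitting.

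The main obstacle I anticipate is the bridge between the von Neumann algebraic information (the cocycle is trivializable by a unitary in $\mathscr U(\cL(H^{hfc}/F))$, as provided by Propositions \ref{integraldecompcocycles} and \ref{integ2coboundaries}) and the group-theoretic conclusion (trivializable by a function valued in $H^{hfc}/F$ itself): a generic unitary trivialization need not be supported on a single group element. The plan sidesteps this by working with \emph{scalar} cocycles one character at a time, where the cohomological input is transparent, and then reassembling the information using that $H^{hfc}/F$ is abelian so that its characters separate points. A backup route, should the character-by-character argument prove technically awkward, would be to argue directly that $\tilde Z_1$, being finite, normal, contained in $[H,H]\cdot F \cap H^{fc}$, and projecting into the center of $H/F$, must already lie in $[H^{fc},H^{fc}]$ by a BFC/Schur-type analysis in the spirit of Theorem \ref{relativecommutatorgroup} together with B.\,H.\,Neumann's theorem.
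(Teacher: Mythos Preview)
Your approach is correct and takes a genuinely different route from the paper's. The paper does not use Lemma \ref{splittinglemma} or the universal coefficient theorem at all; instead it proceeds as follows: from Claim \ref{quotientiswreathlikeprod} and Proposition \ref{integ2coboundaries} the induced von Neumann cocycle $\tilde d$ is a coboundary in $\mathscr U(\cL(H^{hfc}/F))$, say $\tilde d(g,h)=\xi_g\xi_h\xi_{gh}^*$ with $\xi_g\in\mathscr U(\cL(H^{hfc}/F))$. Since each $\tilde d(g,h)=u_{d(g,h)}$ is a group unitary, applying the comultiplication $\Delta(u_a)=u_a\otimes u_a$ shows that $g\mapsto \Delta(\xi_g^*)(\xi_g\otimes\xi_g)$ is a homomorphism from $W$ into an abelian unitary group; perfectness of $W$ forces it to be trivial, whence $\Delta(\xi_g)=\xi_g\otimes\xi_g$ and \cite[Lemma 7.1]{ipv10} makes each $\xi_g$ a group unitary. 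This trivializes $d$ at the group level. Your route instead identifies $[d]$ via the universal coefficient theorem (using $W^{ab}=0$) with a homomorphism $\phi\colon H_2(W;\mathbb Z)\to H^{hfc}/F$, and infers $\phi=0$ from the almost-everywhere vanishing of $\chi\circ\phi$.

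Two remarks. First, your preliminary appeal to Lemma \ref{splittinglemma} is unnecessary: the cohomological argument directly gives $[d]=0$, so there is no residual finite $Z_1$ to eliminate. Second, the phrase ``characters separate points'' is slightly too weak for what you need, since you only have vanishing for Haar-almost-every character; the correct step is that for each nonzero $a\in H^{hfc}/F$ the annihilator $\{\chi:\chi(a)=1\}$ is a \emph{proper closed subgroup} of the compact dual and hence has Haar measure strictly less than $1$, so if $\phi(c)=a\neq 0$ for some $c\in H_2(W)$ then $\chi\circ\phi\neq 0$ on a set of positive measure. With that adjustment your argument is complete. The paper's comultiplication trick stays entirely within the operator-algebraic framework and avoids UCT; your approach is more classical/cohomological and perhaps conceptually cleaner, at the cost of invoking a measure-theoretic separation argument.
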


\begin{subproof}[Proof of Claim \ref{splittingtildeH}] By Claim \ref{quotientiswreathlikeprod} and Proposition \ref{integ2coboundaries}, $H/F=H^{hfc}/F\rtimes_{d}H/H^{hfc}$ is a central extension where the induced 2-cocycle $\tilde{d}:H/H^{hfc}\times H/H^{hfc}\to\mathscr{U}(\cL(H^{hfc}/F))$ is of the form $\tilde d(g,h)=\xi_g\xi_h\xi_{gh}^*$ for a function $\xi:H/H^{hfc}\to \mathscr{U}(\cL(H^{hfc}/F))$.

\vspace{2mm}

To prove our claim, it suffices to establish that $\xi_g\in \cL(H^{hfc}/F)$ is a unitary implemented by a group element of $H^{hfc}/F$, for all $g\in H/H^{hfc}$. Let $\Delta:\cL(H^{hfc}/F)\to\cL(H^{hfc}/F\times H^{hfc}/F)$ be the canonical comultiplication map; that is, the $*$-homomorphism given by $\Delta(u_g) = u_g \otimes u_g$, for $g \in H^{hfc}/F$. As $\tilde d(g,h)$ is a group unitary we see that

\begin{equation}\label{comultiplicationcocyclep}
    \Delta(\xi_g\xi_h\xi_{gh}^*)=\Delta(\tilde d(g,h))=\tilde d(g,h)\otimes \tilde d(g,h)=\xi_g\xi_h\xi_{gh}^*\otimes \xi_g\xi_h\xi_{gh}^*,\:\text{for all }g,h\in H/H^{hfc}.
\end{equation}

\vspace{2mm}

From \eqref{comultiplicationcocyclep}, we obtain

\begin{equation}\label{grhomp}
    \Delta(\xi_{gh}^*)(\xi_{gh}\otimes\xi_{gh})=\Delta(\xi_g^*)(\xi_g\otimes \xi_g)\:\Delta(\xi_h^*)(\xi_h\otimes\xi_h),\quad\text{for all }g,h\in H/H^{hfc}.
\end{equation}

\vspace{2mm}

Define $\eta:H/H^{hfc}\to\mathscr{U}(\cL(H^{hfc}/F\times H^{hfc}/F))$ by $\eta_g:=\Delta(\xi_g^*)(\xi_g\otimes \xi_g)$, and notice \eqref{grhomp} shows that $\eta$ is a group homomorphism. As $H/H^{hfc}$ has trivial abelianization, we obtain $\eta_g=1$ for all $g\in H/H^{hfc}$. Thus, $\Delta(\xi_g^*)(\xi_g\otimes\xi_g)=1$ and hence

\begin{equation*}
    \Delta ( \xi_g)=\xi_g \otimes \xi_g,\text{ for all }g\in H/H^{hfc}.
\end{equation*}

\vspace{2mm}

Using \cite[Lemma 7.1]{ipv10} we conclude that $\xi_g$ is a group unitary for every $g\in H/H^{hfc}$. This implies that $H/F\cong (H^{hfc}/F)\times (H/H^{hfc})\cong (H^{hfc}/F)\times W$. 
\end{subproof}

The prior claim implies the cocycle $\tilde{c}$ untwists on $\cL(H^{hfc}/F)\cong\cL(H^{hfc})p$, and therefore, $\tilde{c}$ is supported only on $\cL(F)$.

\vspace{1mm}

\begin{claim}\label{finiteconjiscenter} $H^{hfc}=Z(H)$.
\end{claim}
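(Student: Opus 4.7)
\textbf{Proof plan for Claim \ref{finiteconjiscenter}.} The inclusion $Z(H)\subseteq H^{hfc}$ is automatic since every central element has a singleton conjugacy class, so the content is the reverse inclusion. My strategy is to exhibit a direct product splitting $H=H^{hfc}\times W_0$ with $W_0\cong W$, after which the given factorization $\cL(H)=\cL(A)\otimes\cL(W)$ will force $H^{hfc}$ to be abelian and hence equal to the center. Using Claim \ref{splittingtildeH}, I first set $\tilde W:=\pi^{-1}(\{1\}\times W)\leqslant H$ under the projection $\pi\colon H\to H/F$; then $\tilde W\trianglelefteq H$, $\tilde W/F\cong W$, $H=H^{hfc}\tilde W$, $H^{hfc}\cap \tilde W=F$, and $[H^{hfc},\tilde W]\subseteq F$ (from the direct product structure of $H/F$). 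The next step is to establish that $F\subseteq Z(\tilde W)$: conjugation gives $\tilde W\to \text{Aut}(F)$ whose restriction to $F$ surjects onto $\text{Inn}(F)$, so the induced map $W=\tilde W/F\to \text{Out}(F)$ lands in a finite group; combining the trivial abelianization of $W$ with structural constraints on $F=[H^{fc},H^{fc}]$ and its outer automorphism group forces this composite to be trivial, whence $F\subseteq Z(\tilde W)$.

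With the extension $1\to F\to \tilde W\to W\to 1$ now central, it is classified by a 2-cocycle $c\colon W\times W\to F$, and $\cL(\tilde W)$ decomposes as $\bigoplus_{\chi\in\hat F}\cL_{c_\chi}(W)\cdot p_\chi$, where $c_\chi:=\chi\circ c$ and each $p_\chi\in\cL(F)\subseteq\mathscr Z(\cL(H))$. Considering the integral decomposition of $\cL(H)p_\chi=\cL(A)p_\chi\otimes \cL(W)$ over its center $\cL(A)p_\chi$, the subalgebra $\cL_{c_\chi}(W)$ appears as a constant fiber, so applying \ref{notrivialcompressions} (to identify each fiber with a copy of $\cL(W)$) forces every $c_\chi$ to be a coboundary. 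Since the natural map $H^2(W,F)\to\prod_{\chi\in\hat F}H^2(W,\mathbb T)$ is injective for finite abelian $F$, the $F$-valued cocycle $c$ itself is a coboundary, so the central extension splits: $\tilde W\cong F\times W_0$ with $W_0\cong W$.

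Once $F\subseteq Z(\tilde W)$ is in place, the commutator map $[\,\cdot\,,\,\cdot\,]\colon H^{hfc}\times W_0\to F$ is bilinear and yields a group homomorphism $W_0\to\Hom(H^{hfc}/F,F)$ into an abelian group; the trivial abelianization of $W_0$ then forces $[H^{hfc},W_0]=1$. Combined with $H=H^{hfc}W_0$ and $H^{hfc}\cap W_0=1$ (which follows from $H^{hfc}\cap\tilde W=F$ and $W_0\cap F=1$ in the direct product $\tilde W=F\times W_0$), this yields $H=H^{hfc}\times W_0$. Consequently $\cL(H)=\cL(H^{hfc})\otimes\cL(W)$; comparing with the given factorization $\cL(H)=\cL(A)\otimes\cL(W)$ and using that $\cL(W)$ is a II$_1$ factor, we obtain $\cL(H^{hfc})\cong\cL(A)$, which is abelian. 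Hence $H^{hfc}$ is abelian, $F=[H^{hfc},H^{hfc}]=1$, and $Z(H)=Z(H^{hfc})\times Z(W_0)=H^{hfc}$. The main obstacle is the double use of rigidity: first, constraining the $W$-action on the finite group $F$ to force $F\subseteq Z(\tilde W)$, and second, extracting from the embedding $\cL_{c_\chi}(W)\hookrightarrow\cL(A)p_\chi\otimes\cL(W)$ the vanishing of each $c_\chi$ via the W$^*$-superrigidity for twisted factors of $W$.
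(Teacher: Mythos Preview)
Your approach is genuinely different from the paper's, aiming to split $\tilde W$ group-theoretically before comparing von Neumann algebras. However, there are two substantial gaps.

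\textbf{Gap in showing $F\subseteq Z(\tilde W)$.} The claim that ``trivial abelianization of $W$ combined with structural constraints on $F$'' forces $W\to\text{Out}(F)$ to be trivial is unjustified: $\text{Out}(F)$ need not be solvable (for abelian $F\cong(\mathbb Z/p)^n$ one has $\text{Out}(F)=GL_n(\mathbb F_p)$), and property~(T) wreath-like products can admit non-abelian finite quotients. Moreover, even if $W\to\text{Out}(F)$ were trivial, you would only conclude that $\tilde W$ acts on $F$ by inner automorphisms of $F$; passing from this to $F\subseteq Z(\tilde W)$ requires $F$ to be abelian, which you have not established and which your character decomposition in the next step already presupposes.

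\textbf{Gap in the fiber analysis.} Even granting $F\subseteq Z(\tilde W)$, the projections $p_\chi\in\cL(F)$ are central in $\cL(\tilde W)$ but need not be central in $\cL(H)$, since $H^{hfc}$ may act non-trivially on $\hat F$ by conjugation; thus $\cL(H)p_\chi$ is not a priori a two-sided ideal, and the identification $\cL(H)p_\chi=\cL(A)p_\chi\otimes\cL(W)$ is unavailable. Furthermore, the factor $\cL_{c_\chi}(W)$ does not contain the center $\mathscr Z(\cL(H))p_\chi$, so it does not automatically ``appear as a constant fiber'': you would first have to show that $\mathscr Z(\cL(H))p_\chi$ and $\cL_{c_\chi}(W)$ are in tensor position and that their join has finite index in $\cL(H)p_\chi$. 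Finally, \ref{notrivialcompressions} treats $\ast$-isomorphisms $\cL(G)^t\cong\cL_c(H)$, not embeddings; the tool suited to your situation is Theorem~\ref{trivialamplificationembedding}.

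The paper sidesteps both issues by applying the structural decomposition Theorem~\ref{centerLH} to $H=F\rtimes_{\alpha,c}(H^{hfc}/F\times W)$ with no hypothesis on how $H/F$ acts on $F$. This produces genuinely central projections $z_i$ for which $\mathscr Z(\cL(H))z_i$ and $\cL(F\rtimes_{\alpha,c}W)z_i$ are in tensor position with finite index in $\cL(H)z_i$; passing to fibers via Theorem~\ref{finiteindexfibers} and unwinding $\cL(F\rtimes W)z_i$ with Proposition~\ref{amplificationoffinitebyiccvn} yields a finite-index embedding $\cL_v(H_0)\hookrightarrow\cL(W)^{t}$ with $t\leq 1$, to which Theorem~\ref{trivialamplificationembedding} applies directly to force $t=1$ and hence $F=\{1\}$.
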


\begin{subproof}[Proof of Claim \ref{finiteconjiscenter}] Using Theorem \ref{centerLH} we can decompose 

\begin{equation*}
    \cL(H)\cong\bigoplus_{i=1}^k\mathbb{M}_{n_i}(\C)\otimes(\mathbb{D}_{m_i}\rtimes_{\alpha_i,c_i}(H^{hfc}/F\times H/H^{hfc})).
\end{equation*}

\vspace{2mm}

In addition, we write $\mathbb{D}_{m_i}=\bigoplus_{t=1}^{t_i}\mathbb{D}_{s_i}^t$ for which there exists finite index subgroups $A_t^i$ of $H^{hfc}/F$ that act transitively on $\mathbb{D}_{s_i}^t$. Moreover, we obtain unitaries $x_{t,a}^i\in\mathbb{D}_{s_i}^t$ and $A_0\leqslant H^{hfc}/F$ finite index for which 

\begin{equation*}
    \mathscr{Z}(\cL(H))\cong\left\{\left(\sum_ix_a^i\right)a:a\in A_0\right\}\subset \mathscr{Z}(\cL(F\rtimes_{\alpha,c}H^{hfc}/F))\cong \bigoplus_i\bigoplus_t\cL_{\eta_t^i}(A_t^i).
\end{equation*}

\vspace{2mm}

We let $z_i\in\mathscr{Z}(\cL(H))$ be the sum of all projections in an orbit of the action $H^{hfc}/F\times H/H^{hfc}\curvearrowright\mathbb D_{m_i}$. The prior relations imply that the von Neumann subalgebras $\cL(F\rtimes_{\alpha,c}H/H^{hfc})z_i$, $\mathscr{Z}(\cL(H))z_i$ commute and, in addition, the conditional expectation satisfies 

\begin{equation*}
    \mathbb{E}_{\cL(F\rtimes_{\alpha,c}H/H^{hfc})z_i}(\mathscr{Z}(\cL(H))z_i)=\C z_i.
\end{equation*}

\vspace{2mm}

Altogether, this means that $\mathscr{Z}(\cL(H))z_i$ and $\cL(F\rtimes_{\alpha,c}H/H^{hfc})z_i$ are in tensor position; that is, $\mathscr{Z}(\cL(H))z_i\vee \cL(F\rtimes_{\alpha,c}H/H^{hfc})z_i\cong \mathscr{Z}(\cL(H))z_i\:\overline{\otimes}\: \cL(F\rtimes_{\alpha,c}H/H^{hfc})z_i$.

\vspace{2mm}

Let $r_i\in\cL(A)$ be the image of $z_i\in\mathscr{Z}(\cL(H))$ under the inverse isomorphism between $\cL(G)\cong\cL(H)$. As $\tilde{c}$ is only supported on $\cL(F)$, we have the following canonical trace-preserving finite index inclusions of von Neumann algebras which preserve the centers of the compressions $\cL(A)r_i\cong\mathscr{Z}(\cL(H))z_i$:

\begin{equation}\label{isomofcorners}
    \cL(A)r_i\:\overline{\otimes}\:\cL(W)\cong\cL(A\times W)r_i\cong \cL(H)z_i\supseteq \mathscr{Z}(\cL(H))z_i\:\overline{\otimes}\:\cL(F\rtimes_{\alpha,c}H/H^{hfc})z_i.
\end{equation}

\vspace{2mm}

Using Theorem \ref{finiteindexfibers} we conclude $\cL(W)$ contains isomorphic copies of $\cL(F\rtimes_{\alpha,c}H/H^{hfc})z_i$ as finite index subalgebras. By Proposition \ref{amplificationoffinitebyiccvn}, since the von Neumann algebra $\cL(F\rtimes_{\alpha,c}H/H^{hfc})z_i$ is a factor, there exists $n,m\in\N$ with $\cL(F)z_i=\mathbb{M}_n(\C)\otimes\mathbb{D}_m$, a finite index subgroup $H_0\leqslant H/H^{hfc}$ and a scalar 2-cocycle $v:H_0\times H_0\to\mathbb{T}$ such that

\begin{equation*}
    \cL_v(H_0)\subset\cL(W)^{\frac{s_i}{nm}},
\end{equation*}

\vspace{2mm}

where $s_i$ denotes the trace of $z_i$ in $\cL(W)$. Now notice that since $H_0\leqslant H/H^{hfc}\cong W$ is a finite index subgroup, $H_0$ has the same wreath-like product structure as $W$. Thus, applying Theorem \ref{trivialamplificationembedding} we conclude that $s_i=n=m=1$. Hence, $F=\{1\}$ and so $Z(H)=H^{hfc}=H^{fc}$.
\end{subproof}

\vspace{0.2mm}

Altogether, Claims \ref{splittingtildeH} and \ref{finiteconjiscenter} yield $H\cong Z(H)\times H/Z(H)\cong Z(H)\times W$.

\vspace{3mm}

Finally, we illustrate the form of the isomorphism $\Theta$. Since

\begin{equation*}
    \int_X\cL(W)d\mu\cong\cL(A\times W)\cong\cL(H)=\int_X\cL(H/Z(H))d\mu,
\end{equation*}

\vspace{2mm}

for almost every $x\in X$ there exists a $*$-isomorphism $\Theta_x:\cL(W)\to\cL(H/Z(H))$. By the description of the isomorphism of wreath-like product groups in \cite{cios22b}, there exists $w_x\in \mathscr{U}(\cL(H/Z(H)))$, a group isomorphism $\delta_x:W\to H/Z(H)$ such that $\Theta_x(u_g)=w_xv_{\delta_x(g)}w_x^*$ for all $g\in G$ and for almost every $x\in X$. 

\vspace{2mm}

Since $W$ has property (T), there is at most countably many of these isomorphisms, denote these by $\delta_1,\delta_2,...$ Consider the set

\begin{align*}
    Y_n:&=\{(x,u)\in X\times\mathscr{B}(\ell^2(H/Z(H))): u\in\mathscr{U}(\cL(H/Z(H))_x)\text{ and }\Theta_x(u_g)u=uv_{\delta_n(g)}\text{ for all }g\in G\}\\
    &=\{(x,u):u\in\mathscr{U}(\cL(H/Z(H))_x)\}\cap\bigcap_{g\in G}\{(x,u):\Theta_x(u_g)u=uv_{\delta_n(g)}\}.
\end{align*}

\vspace{2mm}

Since $x\mapsto\Theta_x(u_g)$ and $x\mapsto v_{\delta_n(g)}$ are measurable fields of operators, and $(x,u)\mapsto\Theta_x(u_g)u$ and $(x,u)\mapsto uv_{\delta_n(g)}$ are Borel functions (see \cite[Corollary IV.7.8]{takesaki}), we obtain $Y_n$ is a Borel set. Observe that $\pi_1(Y_n)\subseteq X$, where $\pi_1$ denotes the projection onto the first coordinate. By \cite[Theorem A.16]{takesaki} there exists a measurable crossed section $s_n:\pi_1(Y_n)\to \mathscr{B}(\ell^2(H/Z(H)))$ such that $(x,s_n(x))\in Y_n$ for all $x\in\pi_1(Y_n)$. By the prior paragraph, we see that $\{\pi_1(Y_n)\}_n$ is a measurable partition of $X$. Hence, letting $e_n=\chi_{\pi_1(Y_n)}\in\mathscr{P}(\cL(Z(H)))$ and $w=\int_Xw_xd\mu(x)$ where $w_x=s_n(x)$ for $x\in \pi_1(Y_n)$, we obtain

\begin{equation*}
    \Theta(u_g)=\int_Xw_xv_{\delta_x(g)}w_x^*d\mu(x)=w\left(\int_Xv_{\delta_x(g)}d\mu(x)\right)w^*=w\left(\sum_ne_n\otimes v_{\delta_n(g)}\right)w^*.
\end{equation*}

\vspace{2mm}

Therefore,

\begin{equation*}
    \Theta(xu_g)=\Theta(x)\: w\left(\sum_{n\in\N}e_n\otimes v_{\delta_n(g)}\right)w^*, \quad\text{for all }x\in\cL(A)\text{ and }g\in W.
\end{equation*}\end{proof}

\begin{rem}\label{indepenedentproof} If in the statement of  \ref{theoremB} one assumes in addition that $W$ has torsion free outer automorphism group, then the theorem can be established via a slight variation of the proof above which does not appeal at all to the twisted W$^*$-superrigidity result \ref{notrivialcompressions}. Instead one can just use the regular W$^*$-superrigidity result, \cite{cios22} along with the strong rigidity results, Theorems \ref{untwistcocycle} and \ref{trivialamplificationembedding}, and other basic group properties for deducing splitting of central extensions (see Lemma \ref{splittinglemma}). In fact, this was our initial approach which implicitly is independent of the results in \cite{DV24}. We notice that groups $W$ with the aforementioned properties have been constructed in \cite{amcos23}. 
\end{rem}

Indeed, in the prior proof we use \ref{notrivialcompressions} only to derive Claim \ref{quotientiswreathlikeprod}. Below we explain how this claim can be achieved without it. In our arguments we use the same notations as before. We also provide an alternative argument to the method used in Claim \ref{splittingtildeH} that can be used to show the central extension $\tilde H$ splits over a finite subgroup in its center. 

\vspace{0.5mm}

\begin{claim}\label{quotientiswreathlikeprod'} $H/H^{hfc}\cong W$ and $H/F\cong H^{hfc}/F\times W$.
\end{claim}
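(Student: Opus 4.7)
Following the integral-decomposition and fiber analysis in the proof of Claim \ref{quotientiswreathlikeprod}, one obtains an isomorphism $\cL(W)\cong\cL_{\eta_x}(H_x)^{n_xl_x}$ for almost every $x$, where $H_x\leqslant H/H^{hfc}$ is a subgroup of finite index $n_x$ and $\eta_x:H_x\times H_x\to\mathbb T$ is a scalar $2$-cocycle. The aim is to show $n_xl_x=1$, $\eta_x$ is a coboundary, and $W\cong H_x=H/H^{hfc}$, using only Lemma~\ref{splittinglemma}, the regular W$^*$-superrigidity of \cite{cios22}, and Theorems~\ref{untwistcocycle} and \ref{trivialamplificationembedding}, thus bypassing the twisted W$^*$-superrigidity \ref{notrivialcompressions}. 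The extra hypothesis that $\mathrm{Out}(W)$ is torsion-free enters through Lemma~\ref{trivialabelianization}.

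The key step is to apply Lemma~\ref{splittinglemma} to the extension $1\to Z(H)\to H\to H/Z(H)\to 1$. For its hypotheses, one first verifies that $H/Z(H)$ has property (T): since $H^{hfc}$ is virtually abelian and $\cL(H^{fc})p=\oplus_n\mathscr Z(\cL(H))r_n$ by Theorem~\ref{tinyinclusion}, the quotient $H^{hfc}/Z(H)$ can be shown to be finite, so property (T) of $H/H^{hfc}$ lifts to $H/Z(H)$. For triviality of the abelianization of $H/Z(H)$, a nontrivial torsion character would descend through the isomorphism $\cL(W)\cong\cL_{\eta_x}(H_x)^{n_xl_x}$ to a torsion element of $\mathrm{Out}(\cL(W))=\mathrm{Out}(W)$ via Jones' automorphism conjecture for property (T) groups, contradicting the torsion-free assumption; this is precisely the mechanism of Lemma~\ref{trivialabelianization}. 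With the hypotheses verified, Lemma~\ref{splittinglemma} produces a finite central subgroup $N\triangleleft H$ and a splitting $H/N\cong (Z(H)/N)\times (H/Z(H))$.

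Passing to the corner $\cL(H)p$ with $p=|N|^{-1}\sum_{n\in N}u_n$, the group splitting yields $\cL(H/N)\cong\cL(Z(H)/N)\,\overline\otimes\,\cL(H/Z(H))$. Comparing with the factorization $\cL(H)p=\cL(A)p\,\overline\otimes\,\cL(W)$ obtained from $\cL(H)\cong\cL(A\times W)$, and invoking uniqueness of the factor piece, produces a trace-preserving isomorphism $\cL(H/Z(H))\cong\cL(W)$. The regular W$^*$-superrigidity of $W$ from \cite{cios22} then delivers $H/Z(H)\cong W$ as groups; combined with the identity $Z(H)=H^{hfc}$ established above, this gives $H/H^{hfc}\cong W$. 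Theorem~\ref{trivialamplificationembedding} rules out residual amplifications and Theorem~\ref{untwistcocycle} removes the residual twist carried by the cocycle $\tilde d$ on $H/H^{hfc}\cong W$ valued in $\cL(H^{hfc}/F)$, upgrading the splitting $H/N\cong (Z(H)/N)\times W$ to $H/F\cong(H^{hfc}/F)\times W$.

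The principal obstacle is the apparent circularity in verifying the hypotheses of Lemma~\ref{splittinglemma}: trivial abelianization of $H/Z(H)$ would normally be read off from a group-theoretic identification with $W$, which is precisely what the claim is trying to establish. The torsion-free hypothesis on $\mathrm{Out}(W)$ breaks this loop via Lemma~\ref{trivialabelianization}, converting an abelian quotient of $H/Z(H)$ into an obstructing torsion outer automorphism of $\cL(W)$. The subsidiary issue --- guaranteeing $H^{hfc}/Z(H)$ is finite so that property (T) lifts from $H/H^{hfc}$ to $H/Z(H)$ --- is handled by the fine structure of $\cL(H^{fc})$ provided by Theorem~\ref{tinyinclusion}.
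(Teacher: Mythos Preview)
Your proposal has a genuine gap in the order of operations and is missing a key tool. You try to apply Lemma~\ref{trivialabelianization} while only having $\cL(W)\cong\cL_{\eta_x}(H_x)^{n_xl_x}$; but that lemma requires an honest isomorphism $\cL(W)\cong\cL_c(K)$ with no amplification. You never explain how to kill the factor $n_xl_x$ at this stage. Theorem~\ref{trivialamplificationembedding} cannot do it, because it requires the target group to be a wreath-like product, which is precisely what you are trying to prove about $H_x$. The paper handles this with Theorem~\ref{notrivialcompressions2} from the Appendix --- a compression result that only concludes $t=1$ and needs no structural hypothesis on the mystery group --- and this is not \ref{notrivialcompressions} (its proof is independent of \cite{DV24}). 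Once $nl=1$ is in hand, the fibers satisfy $\mathcal D_x=\mathbb C$, hence $\cL(H^{hfc}/F)=\mathscr Z(\cL(H/F))$, which forces $H^{hfc}/F=Z(H/F)$ and puts you in position to invoke Lemma~\ref{trivialabelianization} and then Lemma~\ref{splittinglemma} on $\tilde H=H/F$.

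The second issue is that you work with the extension $1\to Z(H)\to H\to H/Z(H)\to 1$ and assert that $H^{hfc}/Z(H)$ is finite ``by the fine structure from Theorem~\ref{tinyinclusion}'', and later that ``$Z(H)=H^{hfc}$ established above''. Neither is available at this point. Theorem~\ref{tinyinclusion} (equivalently the conclusion of \ref{theoremA}) gives $\cL(H^{fc})p=\oplus_n\mathscr Z(\cL(H))r_n$ with a \emph{countable} family $\{r_n\}$; this is a von Neumann algebraic statement and does not force $[H^{hfc}:Z(H)]<\infty$ as groups (BFC inside $H$ bounds orbit sizes, not the number of nontrivial orbits). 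The equality $Z(H)=H^{hfc}$ is Claim~\ref{finiteconjiscenter} in the paper and is proved \emph{after} the analogue of the present claim. This is exactly why the paper applies Lemma~\ref{splittinglemma} to $\tilde H=H/F$ rather than to $H$: centrality of $H^{hfc}/F$ in $\tilde H$ is a \emph{consequence} of $n_xl_x=1$, not an input. Your last paragraph correctly identifies an apparent circularity, but the resolution is not the torsion-free $\mathrm{Out}(W)$ hypothesis alone; it is the Appendix compression result that breaks the loop first.
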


\begin{subproof}[Proof of Claim \ref{quotientiswreathlikeprod'}] 


Denote $\tilde{H}:=H/F$ and $\tilde{B}=H^{hfc}/F$. Next we argue that one can find a finite subgroup $F'<\tilde B$ such that $\tilde{H}/F'\cong \tilde{B}/F'\times H/H^{hfc}$. From the first part of Claim \ref{quotientiswreathlikeprod} we obtain a finite index subgroup $H_0\leqslant H/H^{hfc}$ of index $n$ and a 2-cocycle $\eta:H_0\times H_0\to\mathbb{T}$ such that $\cL(W)\cong \cL_{\eta}(H_0)^{nl}$. Here, the natural number $nl$ also denotes the dimension of the fibers of the integral decomposition of $\cL(\tilde{B})$. By Theorem \ref{notrivialcompressions2}, $n=1$ meaning $H_0=H/H^{hfc}$, $\tilde{B}$ is the center of $\tilde{H}$ and $\cL(W)\cong\cL_{\eta}(H/H^{hfc})$. By Lemma \ref{trivialabelianization}, we conclude that $H/H^{hfc}$ has trivial abelianization and property (T); and therefore, Lemma \ref{splittinglemma} ensures the existence of such a splitting.

\vspace{2mm}

Let $p':=|F'|^{-1}\sum_{f\in F'}u_f\in\mathcal{P}(\mathscr{Z}(\cL(\tilde{H})))$. Following a similar reasoning as in the proofs outlined in Section \ref{sectionprop(T)}, we obtain  $\cL(A)p'\:\overline{\otimes}\:\cL(W)\cong\cL(\tilde{H})p'\cong\cL(\tilde{B}/F')\:\overline{\otimes}\:\cL(H/H^{hfc})$ via a trace-preserving isomorphism. Thus, by letting $\cL(A)p'=L^{\infty}(X',\nu)\cong\cL(\tilde{B}/F')$, we obtain

\begin{equation*}
    \int_{X'}\cL(W)d\nu\cong \cL(A)p'\:\overline{\otimes}\:\cL(W)\cong\cL(\tilde{B}/F')\:\overline{\otimes}\:\cL(H/H^{hfc})=\int_{X'}\cL(H/H^{hfc})d\nu.
\end{equation*}

\vspace{2mm}

By \cite[Theorem 2.1.14]{spaas} the fibers are isomorphic, $\mathcal L(W)\cong \mathcal L(H/H^{hfc})$, 
and using the W$^*$-superrigidity of $W$ \cite{cios22} we conclude $W\cong H/H^{hfc}$.

\vspace{2mm}

Notice that $\tilde H = \tilde{B} \rtimes_{\alpha,d} H/H^{hfc}$ is a group extension, where $d$ is the natural $2$-cocycle $d: H/H^{hfc}\times H/H^{hfc}\to \tilde{B}$ (supported on $F'$). To show $\tilde{H}$ splits as a direct product, we show $\tilde{B}=Z(\tilde{H})$ and the cocycle $d$ is trivial. 

\vspace{2mm}

Let $\tilde d: H/H^{hfc}\times H/H^{hfc}\to \mathscr U(\cL(Z(\tilde H)))$ be the $2$-cocycle induced by the group cocycle $d$. From the first paragraph, we obtain that the cocycles $\tilde d_x$ in the integral decomposition of $\tilde d$ satisfy $\cL(W)\cong\cL_{\tilde d_x}(H/H^{hfc})^{n_x}\cong\cL_{\tilde d_x}(W)^{n_x}$ for almost every $x\in X$, where $n_x$ are the dimensions of the fibers of the integral decomposition of $\cL(\tilde{B})$ over the center. From Theorem \ref{trivialamplificationembedding}, $n_x=1$ for almost every $x$; and from Theorem \ref{untwistcocycle}, as $C$ is free abelian, $\tilde d_x$ is trivial for for almost every $x\in X$. Therefore, by Proposition \ref{integ2coboundaries}, the cocycle $\tilde d$ is of the form $\tilde d(g,h)=\xi_g\xi_h\xi_{gh}^*$ for a function $\xi:H/H^{hfc}\to \mathscr{U}(\cL(\tilde{B}))$. To conclude $\tilde{H}$ splits one can just follow the rest of Claim \ref{splittingtildeH}.
\end{subproof}

\vspace{1mm}
 
Even though this case is less general, we believe it might be worth including this line of proof as it hints to a possibly more generic recipe for deciding splitting of nontrivial central extension groups with property (T) central quotient from W$^*$-equivalence. In fact, one only needs that the ICC central quotient is W$^*$-superrigid in the classical sense, its factor has no nontrivial symmetries of finite order and satisfies embedding properties as in Theorems \ref{untwistcocycle} and \ref{trivialamplificationembedding}. It is plausible such groups could exist outside the realm of wreath-like product groups considered here.   

\vspace{0.5mm} 


\vspace{0.5mm}

\begin{rem} We also note that the assumption that $W$ has trivial abelianization is not essential. In fact the proof of \ref{theoremB} can be extended without changing its substance (i.e.\ passing to finite index subgroups of $H/H^{hfc}$ in various places) to accommodate any group $W$ with finite abelianization (always the case as $W$ has property (T)). We leave the details to the reader.
\end{rem}

\vspace{0.5mm}

\begin{proof}[\textbf{Proof of \ref{C*superrigidity}}] First observe from \cite[Theorem 4.1]{bkko16} that the map $\Theta$ is trace-preserving on $C^*_r(G)$ and hence it can be extended to a von Neumann algebra tracial isomorphism $\Theta:\cL(A\times W)\to\cL(H)$.  Then, \ref{theoremB} gives us that $H\cong Z(H)\times H/Z(H)$ with $H/Z(H)\cong W$. Since $\Theta(C_r^*(A))= C^*_r(Z(H))$ and $A$ is an abstractly C$^*$-superrigid group \cite{scheinberg} we get that $Z(H)\cong A$. Therefore, $H\cong G$ as wanted. The rest of the statement follows from \ref{theoremB}.
\end{proof}

\appendix

\section{A compression result for twisted wreath-like product group factors}\label{appendixBB}

This appendix is devoted to proving compression result  of independent interest for wreath like product group factors. This extends an early result of Ioana \cite[Corollary 10.11]{ioa10} and it is obtained by solely combining the earlier methods in \cite[Section ]{ioa10} with the recent developments in \cite{cios22}. We notice that it is used in deriving parts of \ref{notrivialcompressions}. 

\vspace{2mm}

To introduce our result and its proof we recall some notations along with a few useful facts. Let $G$ and $H$ be groups, and let $c: H \times H \to \mathbb T$ be a $2$-cocycle. Assume that $p\in \cL(G)$ is a projection such that  $p\cL(G)p=\cL_{c}(H)$. Following \cite[Lemma 10.9]{ioa10} consider the $\ast$-homomorphism $\Delta:\cL_{c}(H)\to \cL_{c}(H)\:\overline{\otimes}\:\cL_{c}(H)\:\overline{\otimes}\:\cL_{\overline{c}}(H)$ given by $\Delta(v_h)=v_h\otimes v_h\otimes v_h$ for $h\in H$. Let $\Delta_1:=(\text{id}\otimes\text{id}\otimes\Psi_{c})\circ\Delta:\cL_{c}(H)\to \cL_{c}(H)\:\overline{\otimes}\:\cL_{c}(H)\:\overline{\otimes}\:\cL_{c}(H)$ where $\Psi_{c}:\cL_{\overline{c}}(H)\to \cL_{c}(H)$ is the natural $\ast$-isomorphism. Since $p\cL(G)p=\cL_{c}(H)$, we can view $\Delta_1$ as an embedding $\Delta_1:p\cL(G)p\to p\cL(G)p\:\overline{\otimes}\:p\cL(G)p\:\overline{\otimes}\:p\cL(G)p$. By amplifying $\Delta_1$ (see \cite[Remark 4.9]{cios22}) we get a unital $\ast$-embedding $\theta:\cL(G)\to (p\otimes p\otimes 1)(\cL(G)\:\overline{\otimes}\:\cL(G)\:\overline{\otimes}\:\cL(G))(p\otimes p\otimes 1)$ with $\theta(1)=p\otimes p\otimes 1=:q$.

\begin{lem}\label{notintertwiningforinfiniteindex2} (\cite[Lemma 4.10]{cios22}) If $\theta:\cM\to q(\cM\:\overline{\otimes}\:\cM\:\overline{\otimes}\:\cM)q$ is as before, with $\cM=\cL(G)$, then $\theta(\cM)\not\prec\cM\:\overline{\otimes}\:\cM\:\overline{\otimes}\:\mathcal{Q}$ for any infinite index subalgebra $\mathcal{Q}\subset \cM$.
\end{lem}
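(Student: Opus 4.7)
My plan is to argue by contradiction along the lines of the classical ``comultiplication produces no intertwining'' arguments of Ioana--Popa--Vaes. Assume $\theta(\cM) \prec_{\cM^{\otimes 3}} \cM \:\overline{\otimes}\: \cM \:\overline{\otimes}\: \cQ$ for some subalgebra $\cQ \subset \cM$ of infinite index. By Theorem \ref{intertwining}, one can find finitely many $x_1,\dots,x_k,y_1,\dots,y_k \in q(\cM \:\overline{\otimes}\: \cM \:\overline{\otimes}\: \cM)q$ and $d>0$ with
\begin{equation*}
\sum_{i=1}^k \bigl\|\mathbb{E}_{\cM \overline\otimes \cM \overline\otimes \cQ}(x_i \theta(u) y_i)\bigr\|_2^2 \geq d, \qquad \text{for every } u\in \mathscr U(\cM).
\end{equation*}
Specializing to the generating group unitaries, I will unwind the amplification to transfer this inequality to $\Delta_1$. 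Since $\theta$ is built from $\Delta_1:p\cM p \to (p\cM p)^{\otimes 3}$, the identity $\Delta_1(v_h) = v_h\otimes v_h\otimes v_h$ (after identifying via $\Psi_c$) holds on the corner, and the projection $q = p\otimes p\otimes 1$ lies in $\cM \:\overline{\otimes}\: \cM \:\overline{\otimes}\: \cQ$. Absorbing the resulting partial isometries into the $x_i, y_i$ and using $q$-bimodularity of the conditional expectation, I will reduce the situation to a lower bound
\begin{equation*}
\sum_i \bigl\|\mathbb{E}_{(p\cM p)\overline\otimes(p\cM p)\overline\otimes(p\cQ p)}\bigl(\tilde x_i\, (v_h\otimes v_h\otimes v_h)\, \tilde y_i\bigr)\bigr\|_2^2 \geq d'>0, \qquad \text{for every } h\in H,
\end{equation*}
with $\tilde x_i,\tilde y_i \in (p\cM p)^{\otimes 3}$.

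Next I will exploit the infinite index hypothesis. Since $\cQ \subset \cM$ has infinite index, a standard compression argument (via the Pimsner--Popa basis characterization and \cite[Lemma 2.4]{dhi19}) gives that $p\cQ p \subset p\cM p$ also has infinite index, hence $p\cM p \nprec_{p\cM p} p\cQ p$. By Popa's criterion there exists a sequence of group unitaries $(v_{h_n})_n$ in $\cL_c(H)=p\cM p$ with $\|\mathbb{E}_{p\cQ p}(a v_{h_n} b)\|_2 \to 0$ for all $a,b\in p\cM p$. For such a sequence, approximating each $\tilde x_i, \tilde y_i$ in $\|\cdot\|_2$ by finite sums of simple tensors $a_1\otimes a_2\otimes a_3$ and $b_1\otimes b_2\otimes b_3$, and using that $\mathbb{E}_{(p\cM p)\overline\otimes(p\cM p)\overline\otimes(p\cQ p)}$ acts as $\mathrm{id}\otimes\mathrm{id}\otimes\mathbb{E}_{p\cQ p}$, each summand factorizes as
\begin{equation*}
\|a_1 v_{h_n} b_1\|_2^2\cdot\|a_2 v_{h_n} b_2\|_2^2\cdot\|\mathbb{E}_{p\cQ p}(a_3 v_{h_n} b_3)\|_2^2.
\end{equation*}
The first two factors are uniformly bounded by $\|a_j\|_\infty^2\|b_j\|_2^2$, while the third tends to $0$. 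Summing over $i$ and passing to the limit yields a contradiction to the lower bound $d'>0$.

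The main obstacle is the bookkeeping in the first paragraph: one must carefully verify that the intertwining hypothesis in $\cM^{\otimes 3}$ descends to the corner $(p\cM p)^{\otimes 3}$ after unwinding the amplification, and that the replacement of $\cQ$ by $p\cQ p$ both preserves infinite index and is compatible with the conditional expectations appearing in the estimate. All of this is routine but delicate use of the amplification formula \cite[Remark 4.9]{cios22} together with the bimodularity of the conditional expectation with respect to $q = p\otimes p\otimes 1\in \cM\:\overline{\otimes}\:\cM\:\overline{\otimes}\:\cQ$. Once those reductions are in place, the tensor-splitting calculation in the second paragraph is immediate, since the defining property $\Delta_1(v_h)=v_h\otimes v_h\otimes v_h$ lets the third tensor factor be treated in complete isolation and the infinite index of $\cQ$ is converted directly into decay of the corresponding $\mathbb{E}_{p\cQ p}$-expectation.
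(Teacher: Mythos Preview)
Your strategy is the same as the paper's: assume the intertwining, reduce via the comultiplication structure to a lower bound on $\|\mathbb{E}_\cQ(\cdot\, v_h\, \cdot)\|_2$ in the third tensor leg, and contradict $\cM\nprec\cQ$. The paper's execution is more direct than yours: rather than unwinding the amplification back to $\Delta_1$, it approximates the test elements $s_i,t_i$ by elements of $1\otimes 1\otimes\cM$ (using $\cM\overline\otimes\cM\overline\otimes 1$-bimodularity of the expectation), expands $u_g=\sum_h\tau(u_gv_h^*)v_h$, and uses orthogonality of the $v_h\otimes v_h$ in the first two legs to obtain $\sum_{i,h}|\tau(u_gv_h^*)|^2\|\mathbb{E}_\cQ(s_iv_ht_i)\|_2^2\geq d$ for all $g\in G$, whence $\cM\prec\cQ$. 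Your route via explicitly descending to $\Delta_1$ and then approximating by simple tensors reaches the same endpoint with more bookkeeping.

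One point in your write-up needs repair. You compress to $p\cQ p\subset p\cM p$ and invoke a ``standard compression argument'' for infinite index, but nothing in the hypothesis guarantees $p\in\cQ$, so $p\cQ p$ need not even be a von Neumann subalgebra and the Pimsner--Popa characterization is unavailable as stated. The correct bookkeeping keeps $\cQ$ in the third factor throughout: the amplification from $\Delta_1$ to $\theta$ only introduces partial isometries from $\cM$ in that leg, which get absorbed into the $\tilde x_i,\tilde y_i$, and the relevant expectation remains $\mathbb{E}_\cQ$ (not $\mathbb{E}_{p\cQ p}$). What you then actually need is a sequence $(h_n)\subset H$ with $\|\mathbb{E}_\cQ(a\,v_{h_n}\,b)\|_2\to 0$ for all $a,b\in\cM$; this is exactly $p\cM p\nprec_\cM\cQ$ (using the generating family $\{v_h\}_{h\in H}$ for $p\cM p=\cL_c(H)$), and it does follow from the infinite-index hypothesis $\cM\nprec_\cM\cQ$ since any intertwiner for a corner $e\cM e$ with $e\leq p$ is already an intertwiner for $\cM$. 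With that adjustment your argument is complete.
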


\begin{proof} Otherwise, we can find $s_i,t_i\in\cM\:\overline{\otimes}\:\cM\:\overline{\otimes}\:\cM$ and $d>0$ with

\begin{equation*}
    \sum_{i=1}^k\|\mathbb{E}_{\cM\overline{\otimes}\cM\overline{\otimes}\mathcal{Q}}(s_i\theta(u)t_i)\|_2^2\geq d>0, \quad\text{for all } u\in\mathscr{U}(\cM).
\end{equation*}

\vspace{2mm}

By approximating the elements $s_i,t_i$ and decreasing $d$, if necessary, we may assume $s_i,t_i\in 1\otimes 1\otimes\cM$. Now, for $u_g\in\cM$, $u_g=\sum_h\tau(u_gv_h^*)v_h$, $\theta(u_g)=\sum_h\tau(u_gv_h^*)(v_h\otimes v_h\otimes v_h)$, and

\begin{align*}
    d&\leq \sum_{i=1}^k\|\mathbb{E}_{\cM\overline{\otimes}\cM\overline{\otimes}\mathcal{Q}}(s_i\theta(u_g)t_i)\|_2^2=\sum_{i=1}^k\left\|\sum_h\tau(u_gv_h^*)(v_h\otimes v_h\otimes \mathbb{E}_{\mathcal{Q}}(s_iv_ht_i))\right\|_2^2\\
    &=\sum_{i=1}^k\sum_h|\tau(u_gv_h^*)|^2\|\mathbb{E}_{\mathcal{Q}}(s_iv_ht_i))\|_2^2,\quad\quad\quad\quad\text{for all } g\in G.
\end{align*}

\vspace{2mm}

This implies that $\sum_h|\tau(u_gv_h^*)|^2\|\mathbb{E}_{\mathcal{Q}}(s_iv_ht_i)\|_2^2\geq d_0>0$ for all $g\in G$ and for all $1\leq i\leq k$. Hence, after doing appropriate approximations, $\sum_{i=1}^k\|\mathbb{E}_{\mathcal{Q}}(s_iut_i)\|_2^2\geq d_0'>0$ for all $u\in\mathscr{U}(\cM)$. This means $\cM\prec \mathcal{Q}$, or $\mathcal{Q}\subset \cM$ is finite index.
\end{proof}

With these preparations at hand we now introduce our result. Our approach follows closely the proof of \cite[Theorem 1.3]{cios22}. However for the reader's convenience we will include all the details. 

\vspace{2mm}

\begin{thm}\label{notrivialcompressions2} Let $G\in\mathcal{WR}(A,B\curvearrowright I )$ be a property (T) group where $A$ is a non-trivial abelian group, $B$ is an ICC subgroup of a hyperbolic group and $B\curvearrowright I$ has amenable stabilizers. Let $H$ be any group and let $c:H\times H \to \mathbb T$ be any $2$-cocycle. If $0<t\leq 1$ be a scalar for which there is a $\ast$-isomorphism $\Phi:\cL(G)^t\to \cL_{c}(H)$, then $t=1$.
\end{thm}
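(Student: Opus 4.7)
Let $p\in\cL(G)$ be a projection with $\tau(p)=t$ and $p\cL(G)p\cong \cL_c(H)$. Consider the unital $\ast$-embedding $\theta:\cL(G)\to q\tilde{\mathcal M} q$, built from the triple comultiplication as in the paragraph preceding Lemma \ref{notintertwiningforinfiniteindex2}, where $\mathcal M=\cL(G)$, $\tilde{\mathcal M}=\mathcal M\:\overline\otimes\:\mathcal M\:\overline\otimes\:\mathcal M$ and $q=p\otimes p\otimes 1$. Write $\tilde{\mathcal Q}=\cL(A^{(I)})\:\overline\otimes\:\cL(A^{(I)})\:\overline\otimes\:\cL(A^{(I)})$ (the canonical Cartan of $\tilde{\mathcal M}$), $\mathcal P=\theta(\cL(A^{(I)}))\subseteq q\tilde{\mathcal M} q$ and $\mathcal R=\mathcal P'\cap q\tilde{\mathcal M} q$. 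Since $A^{(I)}\lhd G$, the subalgebra $\mathcal P$ is normalized by $\theta(\cL(G))$, which in turn carries property (T).

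The first key step is to run the same intertwining scheme as in the proof of \ref{notrivialcompressions}: using property (T) of $\theta(\cL(G))$ together with $B$ being an ICC subgroup of a hyperbolic group, I would apply \cite[Theorem 3.10]{cios22} in each tensor factor to conclude $\mathcal P\prec^s\cL(A^{(I)})\:\overline\otimes\:\mathcal M\:\overline\otimes\:\mathcal M$, and symmetrically in the other two coordinates. Combining these via \cite[Lemma 2.8]{dhi19} gives $\mathcal P\prec^s_{\tilde{\mathcal M}}\tilde{\mathcal Q}q$. At the same time, Lemma \ref{notintertwiningforinfiniteindex2} (and analogous statements for the first two coordinates, with $p\cL(G)p$ in place of $\cL(G)$) rules out $\mathcal P\prec$ into any slot built over an infinite-index subalgebra. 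Then \cite[Corollary 4.7]{cios22} forces $\mathcal R$ to be amenable, and repeating the intertwining argument now for $\mathcal R$ (normalized by $\theta(u_{\widehat g})$, $g\in B$) yields $\mathcal R\prec^s \tilde{\mathcal Q}q$. After cutting by a unitary in $\tilde{\mathcal M}$, I can assume $\mathcal P\subseteq \tilde{\mathcal Q}q\subseteq \mathcal R$.

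Next, denote $\sigma_g=\mathrm{Ad}\,\theta(u_{\widehat g})\in\mathrm{Aut}(\mathcal R)$ for $g\in B$. By Lemma \ref{weakmixingaction} the restriction of $\sigma$ to $\mathcal P$ is free and weakly mixing. The same argument as in \cite[Theorem 1.3, Step 2]{cios22} then shows $\sigma$ itself is weakly mixing on $\mathcal R$, so $\mathcal R$ is a type I$_k$ algebra with $\tilde{\mathcal Q}q$ as its diagonal masa. Applying \cite[Lemma 3.8]{cios22} produces an action $\beta=(\beta_g)_{g\in B}$ on $\mathcal R$ of the form $\beta_g=\mathrm{Ad}(\theta(u_{\widehat g})w_g)$ with $w_g\in\mathscr U(\mathcal R)$, such that $\tilde{\mathcal Q}q$ is $\beta(B)$-invariant and the restriction $\beta|_{\tilde{\mathcal Q}q}$ is free. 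Identifying $\tilde{\mathcal Q}=L^\infty(X\times X\times X)$ with $(X,\mu)$ the dual of $A^{(I)}$, and writing $q=\mathbbm 1_{X_0\times X_0\times X}$ with $\mu(X_0)=t$, we obtain a measure-preserving free action $B\curvearrowright^\beta X_0\times X_0\times X$ whose orbits are contained in those of the canonical action $\alpha$ of $\widetilde B=B\times B\times B$ on $X\times X\times X$. As in \ref{notrivialcompressions}, a property (T) + maximality argument produces a partition $X_0\times X_0\times X=\sqcup_k Y_k$ with finite index subgroups $S_k\leqslant B$ acting weakly mixingly and such that $\beta(S_k)\cdot y\subseteq \alpha(\widetilde B)\cdot y$ for a.e.\ $y\in Y_k$.

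Finally, because $B\curvearrowright I$ has amenable stabilizers and $B$ is ICC (hence every $C_B(g)$, $g\neq 1$, has infinite index), the escape arguments of \cite[Theorem 1.3, Step 3]{cios22} give sequences $(h_m)\subset S_k$ with $\mu(\{y\in Y_k:\beta_{h_m}(y)\in\alpha(s\,\mathrm{Stab}_{\widetilde B}(j)\,t)(y)\})\to 0$ for every $s,t\in \widetilde B$ and every $j$ in the index set of $\widetilde B\curvearrowright I\sqcup I\sqcup I$. Theorem \ref{SOE} then applies to the restriction $\beta|_{S_k}\curvearrowright Y_k$, producing an injective homomorphism $\delta_k:S_k\to \widetilde B$ with finite-index image and a $\varphi_k\in[\mathscr R(\widetilde B\curvearrowright X\times X\times X)]$ with $\varphi_k(Y_k)=X\times X\times X$. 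In particular $\mu(Y_k)=1$ for some $k$, forcing $k=1$ and $\mu(X_0\times X_0\times X)=1$, i.e.\ $t^2=1$, so $t=1$.

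The main obstacle is the first step: as in \cite{cios22} one needs the intertwining $\mathcal P\prec^s\tilde{\mathcal Q}q$ to survive the fact that the first two tensor factors are compressions of $\cL(G)$ (so property (T) applied to $\theta(\cL(G))$ must be matched carefully against $q\tilde{\mathcal M}q$) and that the third factor carries a possibly nontrivial copy coming from $\Psi_c$. Lemma \ref{notintertwiningforinfiniteindex2} is designed precisely to rule out the degenerate intertwinings produced by this compression, and once this is in place the remaining amenability, weak mixing and orbit equivalence steps proceed essentially as in the analogous portions of \ref{notrivialcompressions}.
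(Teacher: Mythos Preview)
Your outline follows the paper's argument, but the final step misfires: Theorem \ref{SOE} cannot be applied with $\widetilde B=B\times B\times B$ as the target group. Its hypotheses demand that $\widetilde B\curvearrowright\widetilde I$ have amenable stabilizers and that every $C_{\widetilde B}(d)$, $d\neq 1$, be amenable; neither holds, since $\mathrm{Stab}_{\widetilde B}(j)=B_{\hat i}\times\mathrm{Stab}_{B_i}(j)$ and $C_{\widetilde B}((g_1,1,1))=C_B(g_1)\times B\times B$ both contain the nonamenable factor $B\times B$. The paper instead invokes \cite[Theorem 4.1]{cios22} directly, and establishes its escape hypotheses (for stabilizers \emph{and} for centralizers --- you only wrote the first) via Lemma \ref{notintertwiningforinfiniteindex2}: for every $1\leq i\leq 3$ and every infinite-index $K_i\leqslant B_i$ one has $\theta(\cL(G))\nprec \cN_{\hat i}\,\overline\otimes\,\cL(\epsilon^{-1}(K_i))$, which produces a sequence $(g_m)\subset G$ making $\|\mathbb E_{\cN_{\hat i}\overline\otimes\cL(G_i)}(x\,\theta(u_{\widehat{h_m}})w_{h_m}\,y)\|_2\to 0$ and hence the required measure decay. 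Since every $\mathrm{Stab}_{\widetilde B}(j)$ and every $C_{\widetilde B}(d)$ with $d\neq 1$ sits inside some $B_{\hat i}\times K_i$ with $[B_i:K_i]=\infty$, this covers both conditions of \cite[Theorem 4.1]{cios22}, which then gives $\mu(X_m)=1$ for each $m$ exactly as you conclude.

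Two smaller points. Lemma \ref{weakmixingaction} is stated only for $A$ free abelian, which is not assumed here; the paper simply notes that $\sigma|_{\mathcal P}$ is conjugate to an action $B\curvearrowright\cL(A)^I$ built over $B\curvearrowright I$, hence free, and does not use weak mixing of $\sigma|_{\mathcal P}$ or the type I$_k$ step in this compression argument. Also, after the unitary conjugation that places $q$ in $\tilde{\mathcal Q}$, the projection $q$ need not retain the product form $\mathbbm 1_{X_0\times X_0\times X}$; the paper just writes $q=\mathbbm 1_{X_0}$ for some measurable $X_0$ with $\mu(X_0)=\tau(q)=t^2$, and your conclusion $t^2=1$ is then the paper's.
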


\begin{proof} Assume $p\in \cL(G)$ is projection such that $\tau(p)=t$. Identify $p\cL(G)p$ and $\cL_{c}(H)$ under the $*$-isomorphism $\Phi$. Let 

\begin{equation*}
    \theta:\cL(G)\to (p\otimes p\otimes 1)(\cL(G)\:\overline{\otimes}\:\cL(G)\:\overline{\otimes}\:\cL(G))(p\otimes p\otimes 1)
\end{equation*}

\vspace{2mm}

with $\theta(1)=p\otimes p\otimes 1=:q$ be the unital $\ast$-embedding described at the beginning of this section. We claim that $q$, and hence $p$, is equal to $1$. Consider the group $\tilde{G}=G\times G\times G$. Here we view $\tilde{G}$ as a generalized wreath-like product in $\mathcal{WR}(A,\tilde{B}\curvearrowright \tilde{I})$ where $\tilde{B}=B_1\times B_2\times B_3$, with $B_i\cong B$, and $\tilde{I}=I_1\cup I_2\cup I_3$ with $I_i\cong I$. 

\vspace{2mm}

Now, let 

\begin{equation*}
    \cN:=\cL(G),\quad \tilde{\cN}:=\cL(\tilde{G}),\quad \tilde{\mathcal{Q}}:=\cL(A^{(\tilde{I})}),\quad \mathcal{P}:=\theta(\cL(A^{(I)})),\quad \cM:=\theta(\cL(G)).
\end{equation*}

\vspace{2mm}

Let $\mathcal{R}:=\mathcal{P}'\cap q\tilde{\cN}q\subset \tilde{\cN}$.

\vspace{2mm}

\begin{claim}\label{intertwiningcios2} $\mathcal{R}\prec_{\tilde{\cN}}^s\cL(A^{(\tilde{I})})=\tilde{\mathcal{Q}}$.
\end{claim}

\begin{subproof}[Proof of Claim \ref{intertwiningcios2}] Since $\tilde{\cN}=\cN\:\overline{\otimes}\:\cN\:\overline{\otimes}\:(A^{(I_3)}\rtimes B)=(\cN\:\overline{\otimes}\:\cN\:\overline{\otimes}\:\cL(A^{(I_3)}))\rtimes B$ and $B$ is a subgroup of a hyperbolic group, by \cite[Theorem 3.10]{cios22}, and since $\cM$ has property (T), we have that $\mathcal{P}\prec^s\cN\:\overline{\otimes}\:\cN\:\overline{\otimes}\:\cL(A^{(I_3)})$. Doing the same argument in each tensor, we obtain $\mathcal{P}\prec^s\cL(A^{(I_1)})\:\overline{\otimes}\:\cN\:\overline{\otimes}\:\cN$ and $\mathcal{P}\prec^s\cN\:\overline{\otimes}\:\cL(A^{(I_2)})\:\overline{\otimes}\:\cN$. By \cite[Lemma 2.8(2)]{dhi19} we have $\mathcal{P}\prec^s \cL(A^{(I_1)})\:\overline{\otimes}\:\cL(A^{(I_2)})\:\overline{\otimes}\:\cL(A^{(I_3)})=\cL(A^{(\tilde{I})})=\tilde{\mathcal{Q}}$. 

\vspace{2mm}

As $\theta$ can be seen as a diagonal embedding, we also have that $\mathcal{P}\not\prec\overline{\otimes}_{k\neq j}\cL(A^{(I_k)})$, for all $1\leq j\leq 3$. By \cite[Corollary 4.7]{cios22}, $\mathcal{R}$ is amenable. Moreover, since $\mathcal{R}$ is normalized by $(\theta(u_{\hat{g}}))_{g\in B}$, by the first part of the proof, $\mathcal{R}\prec^s\cL(A^{(\tilde{I})})=\tilde{\mathcal{Q}}$.
\end{subproof}

Since $\tilde{\cQ}\subset\tilde{\cN}$ is a Cartan subalgebra, combining Claim \ref{intertwiningcios2} together with \cite[Lemma 3.7]{cios22} (see also \cite{ioa10}) we obtain that, after replacing $\theta$ with $\text{Ad}(u)\circ\theta$, for some $u\in\tilde{\cN}$, we may assume that $q\in\tilde{\mathcal{Q}}$ and

\begin{equation*}
    \mathcal{P}\subset \tilde{\mathcal{Q}}q\subset\mathcal{R}.
\end{equation*}

\vspace{2mm}

Throughout the proof, we will use of the following notation: for every $g\in B$, let $\hat{g}\in G$ so that $\epsilon(\hat{g})=g$, where $\varepsilon:G\to B$ is the canonical quotient map of the wreath-like product $G\in\mathcal{WR}(A,B\curvearrowright I)$.

\vspace{2mm}

Fixing $g\in B$ notice that $\theta(u_{\hat{g}})$ normalizes $\mathcal{P}$ and $\mathcal{R}$. Denote by $\sigma_g:=\text{Ad}(\theta(u_{\hat{g}}))\in\text{Aut}(\mathcal{R})$. Then, $\sigma=(\sigma_g)_{g\in B}$ defines an action of $B$ on $\mathcal{R}$ which leaves $\mathcal{P}$ invariant. Note that the restriction of $\sigma$ to $\mathcal{P}$ is free because this action is conjugate to an action $B\curvearrowright\cL(A^{(I)})$. By \cite[Lemma 3.8]{cios22}, there exists an action $\beta=(\beta_g)_{g\in B}$ of $B$ on $\mathcal{R}$ such that

\vspace{1mm}

\begin{enumerate}
    \item[(a)] for every $g\in B$ we have $\beta_g=\sigma_g\circ\text{Ad}(w_g)=\text{Ad}(\theta(u_{\hat{g}})w_g)$, for some $w_g\in \mathscr{U}(\mathcal{R})$; and
    \item[(b)] $\tilde{\mathcal{Q}}q$ is $\beta(B)$-invariant and the restriction of $\beta$ to $\tilde{\mathcal{Q}}q$ is free. 
\end{enumerate}

\vspace{1mm}

Our next goal is to apply \cite[Theorem 4.1]{cios22}. Consider the action $\alpha:=(\alpha_h)_{h\in\tilde{B}}$ of $\tilde{B}$ on $\tilde{\mathcal{Q}}$ given by $\alpha_h=\text{Ad}(v_{\hat{h}})$, for $h\in\tilde{B}$. Let $(Y,\nu)$ be the dual of $A$ with its respective Haar measure, and let $(X,\mu):=(Y^{\tilde{I}},\nu^{\tilde{I}})$. Identify $\tilde{Q}=L^{\infty}(X,\mu)$, and denote by $\alpha$ still the corresponding p.m.p. action $\tilde{B}\curvearrowright(X,\mu)$. Let $X_0\subset X$ be a measurable set such that $q=\mathbbm{1}_{X_0}$. Since $\tilde{\mathcal{Q}}q=L^{\infty}(X_0)$ is $\beta(B)$-invariant, we get a measure preserving action $B\curvearrowright^{\beta}(X_0,\mu|_{X_0})$. 

\vspace{2mm}

Notice $\tilde{\mathcal{Q}}\subset \tilde{\cN}$ is a Cartan subalgebra, and the equivalence relation associated to the inclusion is equal to $\mathscr{R}(\tilde{B}\curvearrowright X)$. Since the restriction of $\beta$ to $\tilde{\mathcal{Q}}q$ is implemented by unitaries in $q\tilde{\cN}q$ we deduce that $\beta(B)\cdot x\subset \alpha(\tilde{B})\cdot x$ for almost every $x\in X_0$. Since $B$ has property (T) and $\alpha$ is an action built over $\tilde{B}\curvearrowright I$, using \cite[Lemma 4.4]{cios22} and a maximality argument, we can partition $X_0=\cup_{m=1}^lX_m$ into non-null measurable sets and we can find finite index subgroups $S_m\leqslant B$ such that $X_m$ is $\beta(S_m)$ invariant, the restriction $\beta|_{S_m}$ to $X_m$ is weakly mixing, for all $1\leq m\leq l$, and

\begin{equation}\label{actionsonXm2}
    \beta(S_m)\cdot x\subset \alpha(\tilde{B})\cdot x \text{ for almost every }x\in X_m.
\end{equation}

\vspace{2mm}

Finally, to apply \cite[Theorem 4.1]{cios22} we prove the following claim.

\begin{claim}\label{setstozero2} For every subgroup of infinite index $K_i\leqslant B_i$ there exists a sequence $(h_m)_m\subset B$ such that for every $s,t\in B$ and $1\leq i\leq 3$ we have

\begin{equation*}
    \mu(\{x\in X_0:\beta_{h_m}(x)\in\alpha(B_{\hat{i}}\times sK_it)(x)\})\to 0\text{ as } m\to\infty.
\end{equation*}
\end{claim}

\begin{subproof}[Proof of Claim \ref{setstozero2}] Let $G_i:=\epsilon^{-1}(K_i)$, and notice that $G_i$ is an infinite index subgroup of $G$. By Lemma \ref{notintertwiningforinfiniteindex2} above, as $(\theta(u_g))_{g\in G}$ generate $\cM$, we can find a sequence $(g_m)_m\subset G$ with

\begin{equation*}
    \|\mathbb{E}_{\cN_{\hat{i}}\overline{\otimes}\cL(G_i)}(x\theta(u_{g_m})y)\|_2\to 0,
\end{equation*}

\vspace{2mm}

for all $x,y\in \tilde{\cN}$ and every $1\leq i\leq 3$. Let $h_m:=\epsilon(g_m)\in B$. We claim $(h_m)_m$ is the sequence that satisfies the statement. Since $g_m^{-1}\widehat{h_m}\in A^{(I)}$ and $w_{h_m}\in\mathscr{U}(\mathcal{R})$ we get that $\theta(u_{\widehat{h_m}})w_{h_m}\in\theta(u_{g_m})\mathscr{U}(\mathcal{R})$. Thus, for every $m$, $\theta(u_{\widehat{h_m}})w_{h_m}\in \sum_{i=1}^t\theta(u_{g_m})(\mathcal{P})_1x_i$ for some fixed $x_1,...,x_t\in\mathcal{R}$ (see \cite[Theorem 1.3 Step 2]{cios22}). In particular this implies that

\begin{equation*}
    \|\mathbb{E}_{\cN_{\hat{i}}\overline{\otimes}\cL(G_i)}(x\theta(u_{\widehat{h_m}})w_{h_m}y)\|_2\to 0,
\end{equation*}

\vspace{2mm}

for every $x,y\in\tilde{\mathcal{N}}$ and all $1\leq i\leq 3$. On the other hand, $\beta_{h_m}=\text{Ad}(\theta(u_{\widehat{h_m}})w_{h_m})$ and $\alpha(g_1,g_2,g_3)=\text{Ad}(u_{(\widehat{g_1},\widehat{g_2},\widehat{g_3})})$ for all $(g_1,g_2,g_3)\in\tilde{B}$. These facts imply that $\mu(\{x\in X_0:\beta_{h_m}(x)\in\alpha(B_{\hat{i}}\times sK_it)(x)\})$ is equal to $\|\mathbb{E}_{\cN_{\hat{i}}\overline{\otimes}\cL(G_i)}((u_{\hat{s}}^*\otimes 1\otimes 1)\theta(u_{\widehat{h_m}})w_{h_m}(u_{\hat{t}}^*\otimes 1\otimes 1))\|_2^2$. 
\end{subproof}

Fix $j\in I$, so that $j\in B_i$ for some $1\leq i\leq 3$, and let $g=(g_1,g_2,g_3)\in \tilde{B}\setminus\{1\}$. Then, $\text{Stab}_{\tilde{B}}(j)=B_{\hat{i}}\times \text{Stab}_{B_i}(j)$ and $C_{\tilde{B}}(g)=C_{B_1}(g_1)\times C_{B_2}(g_2)\times C_{B_3}(g_3)$. Since $g\neq 1$, there exists $1\leq i\leq 3$ for which $g_i\neq 1$, and hence, $C_{\tilde{B}}(g)\leqslant B_{\hat{i}}\times C_{B_i}(g_i)$, where $C_{B_i}(g_i)$ is an infinite index subgroup of $B_i$. 

\vspace{2mm}

Let $1\leq m\leq l$. The fact that $\beta|_{S_m}$ is free and weakly mixing on $X_m$, $S_m$ has property (T), equation \eqref{actionsonXm2}, the previous paragraph and Claim \ref{setstozero2} show that the conditions of the moreover assertion of \cite[Theorem 4.1]{cios22} are satisfied by $\beta|_{S_m}$ on $X_m$ and $\alpha$. Thus, we can find an injective group homomorphism $\epsilon_m:S_m\to\tilde{B}$ and $\varphi_m\in \left[\mathscr{R}(\tilde{B}\curvearrowright X)\right]$ such that $\varphi_m(X_m)=X\times\{m\}\equiv X$ and $\varphi_m\circ\beta_h|_{X_m}=\alpha_{\epsilon_m(h)}\circ\varphi_m|_{X_m}$ for all $h\in S_m$. In particular, $\mu(X_m)=1$. This shows that $\mu(X_0)=l\in\N$, and so $l=1$. Therefore, $q=1$ and hence, $p=1$. 
\end{proof}

\vspace{2mm}

\printbibliography

\vspace{5mm}

\end{document}